\documentclass{amsart}
\usepackage{graphicx}
\usepackage{hyperref}
\usepackage{latexsym}
\usepackage{amsmath}
\usepackage{amsthm}
\usepackage{amssymb}
\usepackage{graphicx}
\usepackage{pdfpages}
\usepackage{tikz}
\usetikzlibrary{calc,trees,positioning,arrows,chains,shapes.geometric,%
    decorations.pathreplacing,decorations.pathmorphing,shapes,%
    matrix,shapes.symbols}
\usepackage[stable]{footmisc}
\usepackage[labelformat=empty]{caption}

\newtheorem{theorem}{Theorem}[section]
\newtheorem{lemma}[theorem]{Lemma}
\newtheorem{cor}[theorem]{Corollary}
\newtheorem{prop}[theorem]{Proposition}

\theoremstyle{definition}

\theoremstyle{remark}
\newtheorem{remark}[theorem]{Remark}

\setlength{\oddsidemargin}{.5cm}
\setlength{\evensidemargin}{.5cm}
\setlength{\textwidth}{15cm} 
\setlength{\textheight}{20cm}
\setlength{\topmargin}{1cm}

\newcommand{\vip}{\vskip.2cm}
\newcommand{\R}{\mathbb{R}}
\newcommand{\E}{\mathbb{E}}
\newcommand{\Et}{\E_\theta}

\newcommand{\Vart}{\mathbb{V}{\rm ar}_\theta\,}

\newcommand{\Covt}{\mathbb{C}{\rm ov}_\theta\,}
\newcommand{\indiq}{{{\mathbf 1}}}
\newcommand{\intot}{{\int_0^t}}

\newcommand{\bl}{\boldsymbol{\ell}}

\newcommand{\bX}{{\mathbf X}}

\newcommand{\bx}{\boldsymbol{x}}

\newcommand{\omg}{\indiq_{\Omega_{N,K}}}
\newcommand{\e}{{\varepsilon}}

\newcommand{\cA}{{\mathcal A}}

\newcommand{\cV}{{\mathcal V}}
\newcommand{\cW}{{\mathcal W}}

\newcommand{\cX}{{\mathcal X}}

\newcommand{\cY}{{\mathcal Y}}

 \makeatletter
\@namedef{subjclassname@2020}{ \textup{2020} Mathematics Subject Classification}
\makeatother

\hypersetup{pdfmenubar=true}

\begin{document}

\title[Central limit theorem for Hawkes processes]
{Central limit theorem  for a partially observed interacting system of Hawkes processes I:  subcritical case}

\author{ Chenguang Liu, Liping Xu, An Zhang}

\address{School of mathematical sciences, Beihang University, PR China}
\email{xuliping.p6@gmail.com}
\address{School of mathematical sciences, Beihang University, PR China}
\email{anzhang@buaa.edu.cn}

\address{Delft Institute of Applied Mathematics, EEMCS, TU Delft, 2628 Delft, The Netherlands}
\email{LIUCG92@gmail.com}

\begin{abstract}
We consider a system of  $N$ Hawkes processes and observe the actions of a subpopulation of size 
 $K \le N$ up to time $t$, where $K$ is large. The influence relationships between each pair of individuals are modeled by i.i.d.Bernoulli($p$) random variables, where $p \in [0,1]$ is an unknown parameter. Each individual acts at a {\it baseline} rate $\mu > 0$ and, additionally, at an {\it excitation} rate of the form $N^{-1} \sum_{j=1}^{N} \theta_{ij} \int_{0}^{t} \phi(t-s)\,dZ_s^{j,N}$, which depends on the past actions of all individuals that influence it, scaled by $N^{-1}$ (i.e. the mean-field type), with the influence of older actions discounted through a memory kernel $\phi \colon \mathbb{R}{+} \to \mathbb{R}{+}$. Here, $\mu$ and $\phi$ are treated as nuisance parameters. The aim of this paper is to establish a central limit theorem for the estimator of $p$ proposed in \cite{D}, under the subcritical condition $\Lambda p < 1$.

\end{abstract}

\subjclass[2020]{62M09, 60J75, 60K35}

\keywords{Multivariate Hawkes processes, Point processes, 
Statistical inference, Interaction graph, Stochastic interacting particles, 
Mean field limit.}
\maketitle

\tableofcontents
 
\label{key}\section{Introduction} 
Hawkes processes, originally introduced by Hawkes \cite{C} in 1971, have been widely applied across various fields such as neuroscience, finance, social network interactions, and criminology, among others (see, e.g., \cite{bdhm, Q, F, T, LXF, S,  PRBB, ASMI,WZC, zhzh} for a non-exhaustive list). From a mathematical perspective, a substantial body of theoretical literature has been devoted to Hawkes processes and their generalizations (see, e.g., \cite{bdhm2, R,O,  MR3718099, CMR2407, M, MR4512148,  KC2409, LXZ2024, MR4092763, XZ2025} for a non-exhaustive list).

\vip
Regarding statistical inference for Hawkes processes, most studies have focused on the fixed finite-dimensional case (i.e., fixed $N$) with the asymptotics $t\to\infty$. For parametric models, Ogata \cite{o1} investigated the maximum likelihood estimator for stationary point processes. In Bacry-Muzzy  \cite{bmu2}, Delattre et al. \cite{MR3449317}, Hansen et al. \cite{hrr},  Reynaud-Bouret et al. \cite{rrgt,PRBB, rs}, the non-parametric estimation are considered for the following system: for fixed $N\ge 1,$ and $i,j=1,...,N$, the  counting process $(Z_{s}^{i,N})_{i=1...N,0\le s\le t}$ is governed by its intensity process $(\lambda_{s}^{i,N})_{i=1...N,0\le s\le t},$ defined by
\begin{align}\label{BMuy}
\lambda_{t}^{i,N}:=\mu_i+\sum_{j=1}^{N}\int_{0}^{t-}\phi_{ij}(t-s)dZ_{s}^{j,N},
\end{align}
for $\mu_i>0$ and $\phi_{ij}:[0,\infty)\to [0,\infty)$ is measurable and locally integrable. They provided estimators for the $\mu_i$ and the functions $\phi_{ij}.$ In \cite{MR3085883}, Rasmussen considered the Bayesian inference of one dimensional system: the  counting process $(Z_{s})_{0\le s\le t}$ is determined by its intensity process $(\lambda_{s})_{0\le s\le t}$ of the form 
\begin{align}\label{rms}
\lambda_{t}:=\mu_t+\int_{0}^{t-}\phi(t-s)dZ_{s},
\end{align}
where the rate  $\mu_t$ depends on time $t.$

 \vip
 
 In real-world applications, however, it is often necessary to investigate interactions among a large number of measured components within a system. For example, in neuroscience, the number of neurons involved is typically enormous. Therefore, it is natural to consider a double asymptotic case where both  $t\to \infty$ and $N\to \infty$. Research in this setting remains scarce. In \cite{A}, Delattre and Fournier examined a graphical model comprising $N$ Hawkes point processes with pairwise interactions occurring with probability $p$. They proposed an estimator for $p$ based on observing the entire system $(Z_{s}^{i,N})_{i=1...N,0\le s\le t}$ and gave the explicit rate $N^{-1/2}+N^{1/2}m_t^{-1}$ (up to some arbitrarily small loss), where $m_t$ denotes the mean number of events per point process.  Subsequently, Liu \cite{D} studied the same problem of estimating $p$  in the same setting as \cite{A} but using only partial information, specifically, the information obtained  from  $K$ Hawkes processes where $K\le N$.  The author established that under  \eqref{H(q)}  for some $q>3$, the estimator $\hat p_{N,K,t}$ for $p$ with a rate of convergence $K^{-1/2}+N/(K^{1/2}m_t)+N/(Km_t^{1/2})$. More recently, Chevallier, L\"{o}cherbach and Ost \cite{eva} investigated a system of $N$ interacting $\{0,1\}$-valued chains (rather than Hawkes point processes) with binary interactions occurring with unknown probability $p$ on an underlying Erd\"{o}s-R\'{e}nyi random graph. By analyzing coalescing random walks that define a backward regeneration representation of the system, they demonstrated that the unknown connection probability $p$ can be estimated by an computationally efficient estimator with a convergence rate  $N^{-1/2}+N^{1/2}t+(\log(t)/t)^{1/2}$.  Meanwhile, Chevallier and Ost \cite{Chevallier2024} considered the problem of estimating the sets $\mathcal{P}+$ and $\mathcal{P}-$ without prior knowledge of the remaining model parameters, in the same setting as \cite{eva}.
 
 \subsection{Setting}
 We consider some  unknown parameters $p$ $\in[0,1], \mu >0$ and a measurable, locally integrable function $\phi:[0,\infty)\to [0,\infty)$. For  $N\ge 1$,  let $(\Pi^{i}(dt,dz))_{i=1,...,N}$  be an i.i.d. family of
Poisson random measures on $[0,\infty)\times [0,\infty)$ with intensity $dtdz$. Independent of this family, let $(\theta_{ij})_{i,j=1,...,N}$ be an i.i.d. family of  Bernoulli($p$) random variables. We study the following system: for each $i\in \{1,...,N\}$ and all $t\geq 0,$
\begin{align}\label{sssy}
Z_{t}^{i,N}=\int^{t}_{0}\int^{\infty}_{0}\boldsymbol{1}_{\{z\le\lambda_{s}^{i,N}\}}\Pi^{i}(ds,dz), \hbox{ where }
\lambda_{t}^{i,N}=\mu+\frac{1}{N}\sum_{j=1}^{N}\theta_{ij}\int_{0}^{t-}\phi(t-s)dZ_{s}^{j,N}.
\end{align}
The solution $((Z_{t}^{i,N})_{t\ge 0})_{i=1,...,N}$ is a family of counting processes. 
By \cite[Proposition 1]{A}, system \eqref{sssy} admits a unique c\`adl\`ag 
solution that is $(\mathcal{F}_{t})_{t\ge 0}$-measurable, provided that 
$\phi$ is locally integrable. Here,
$$\mathcal{F}_{t}=\sigma(\Pi^{i}(A):A\in\mathcal{B}([0,t]\times [0,\infty)),i=1,...,N)\vee 
\sigma(\theta_{ij},i,j=1,...,N),$$ 
where $\mathcal{B}([0,t]\times [0,\infty))$ denotes the Borel $\sigma$-algebra on the corresponding product space.

\vip

Intuitively, the process $Z_t^{i,N}$ counts the actions of individual $i$ in $[0,t]$.  We say that individual $j$ influences individual $i$ if and only if $\theta_{ij} = 1$ (allowing for the possibility that $i = j$). At any time $t$, the $i$-th individual acts according to the  intensity $\lambda_t^{i,N}$. This intensity consists of two components: a constant {\it autonomous} rate $\mu > 0$, and an {\it interaction-driven} component of the form $$N^{-1} \sum_{j=1}^{N} \theta_{ij} \int_{0}^{t} \phi(t-s)\,dZ_s^{j,N},$$
which models imitation. The interaction term depends on the past actions of all individuals that influence $i$, weighted by $N^{-1}$, and discounts the influence of older actions through the memory kernel $\phi \colon \mathbb{R}_{+} \to \mathbb{R}_{+}$.

\subsection{Assumptions}

Define  $\Lambda :=\int_{0}^{\infty}\phi(t)dt\in (0,\infty].$ For some $q\geq 1$,
\renewcommand\theequation{{$H(q)$}}
\begin{align}\label{H(q)}
\mu \in (0,\infty),\quad \Lambda\in(0,\infty), \quad \Lambda p \in [0,1),
\quad \int_0^\infty s^q\phi(s)ds <\infty, 
\quad \int_0^\infty (\phi(s))^2ds <\infty.
\end{align}
\renewcommand\theequation{\arabic{equation}}\addtocounter{equation}{-1}

  \subsection{Model}
Consider a system of $N$ individuals. For each individual $j \in \{1,\dots,N\}$, denote by 
$S_j=\{i \in \{1,\dots,N\} : \theta_{ij}=1\}$, the set of individuals connected to $j$. The only action available to individual $i$ is to send a message to every member of $S_i$. Here $Z^{i,N}_t$ stands for the total number of messages sent by individual $i$ during $[0,t]$. The  counting process $(Z_{s}^{i,N})_{i=1...N,0\le s\le t}$ is governed by its intensity processes $(\lambda_{s}^{i,N})_{i=1...N,0\le s\le t}.$ Informally, the intensity is defined by 
\begin{align*}
    P\Big(Z_{t}^{i,N} \textit{has a jump in } [t, t + dt]\Big|\mathcal{F}_t\Big)= \lambda_{t}^{i,N}dt,\  i = 1,..., N,
\end{align*}
where   $\mathcal{F}_{t}$ denotes the sigma-field generated by $(Z_{s}^{i,N})_{i=1...N,0\le s\le t}$ and $(\theta_{ij})_{i,j=1,...,N}$.   The rate $\lambda^{i,N}_t$ at which $i$ sends messages can be decomposed into the sum of two components:

$\bullet$ {\bf New messages}: new messages  generated at rate $\mu$;

$\bullet$ {\bf Forwarded messages}: messages that $i$ has received and forwards after some delay (possibly infinite) depending on the 
age of the message, which contributes a sending rate of the form 
$$\frac{1}{N}\sum_{j=1}^{N}\theta_{ij}\int_{0}^{t-}\phi(t-s)dZ_{s}^{j,N}.$$

\smallskip
If for example $\phi=\boldsymbol{1}_{[0,K]}$, then  $N^{-1} \sum_{j=1}^N \theta_{ij}\int_0^{t-}  \phi(t-s)dZ_{s}^{j,N}$ is
precisely the number of messages that the $i$-th individual received between the time $t-K$ and $t$,
divided by $N$.
\begin{remark}
In \cite{Nature}, we  see that the  Erd\"{o}s-R\'{e}nyi  graph can be applied to model a social network. And  \cite{F} tells us that Hawkes process can model the number of the messages.
\end{remark}

\subsection{Main Goal}
 In the  present work, we  follow the same setting as \cite{D}. Specially, we consider a system of $N$ i.i.d. Hawkes point processes $(Z_{s}^{i,N})_{i=1...N,0\le s\le t}$ and a family of  i.i.d. Bernoulli($p$) random variables $(\theta_{ij})_{i,j=1,...,N}$, where $p\in[0,1]$ is an unknown parameter. The interactions among the Hawkes processes are binary and  encoded by a directed Erd\"{o}s-R\'{e}nyi random graph with $p$. The objective of this paper is to  establish the estimation of $p$  through the limit distribution of the corresponding estimator based on partial observations from $N$ Hawkes processes, that is, knowing only the first $1\ll K \le N$ processes of $(Z^{i,N})_{i=1,...,N}$ with $t$ large. 
\begin{remark}
Since the family of  $(Z^{i,N})_{i=1,...,N}$ is exchangeable,  the observation  given by  the first $K$ processes is not a restriction.
\end{remark}

\subsection{Notations}\label{notation}
Throughout this paper,  the conditional expectation given $(\theta_{ij})_{i,j=1,\dots N}$ is denoted by $\Et$. The corresponding conditional  variance and covariance are denoted by $\Vart$ and $\Covt$, respectively. For  $f,g:[0,\infty)\to\R$, we define their convolution by  $$f*g(t)=\int_0^tf(t-s)g(s)ds,\quad t>0,$$ and  $\varphi^{*n}$ denotes the $n$-fold convolution of $\varphi$. We adopt the conventions $\phi^{*0}(s)ds=\delta_0(ds)$ and $\phi^{*0}(t-s)ds=\delta_t(ds)$, so that in particular,  $\int_{0}^{t}s\phi^{*0}(t-s)ds=t$. $\stackrel{d}{\longrightarrow}$ and $\stackrel{\mathbb{P}}{\longrightarrow}$ refer to the convergence in distribution and  convergence in probability, respectively.
\smallskip

We use $C$ to denote a positive constant whose value might change from line to line.

\renewcommand\theequation{\arabic{equation}}\addtocounter{equation}{0}

\section{Main result}\label{MR}

\subsection{Main result}\label{TRISC}
We assume \eqref{H(q)} for some $q\geq 1$. The supercritical case ($\Lambda p>1$) is not addressed in this work. Its treatment would involve different techniques and significantly more technical arguments,  and is therefore deferred to a separate, completed paper for independent investigation. We first remind the estimator built in \cite{D}.
For $N\geq 1$ and for $((Z^{i,N}_t)_{t\geq 0})_{i=1,\dots,N}$ the solution of system \eqref{sssy}, we set
$\bar{Z}^{N}_{t}:=N^{-1}\sum_{i=1}^N Z^{i,N}_t$, and $\bar{Z}^{N,K}_{t}:=K^{-1}\sum_{i=1}^K Z^{i,N}_t.$ Next, we introduce
$$\varepsilon _{t}^{N,K}:=\frac 1t(\bar{Z}_{2t}^{N,K}-\bar{Z}_{t}^{N,K}),\qquad \mathcal{V}_{t}^{N,K}
:=\frac{N}{K}\sum_{i=1}^{K}\Big[\frac{Z_{2t}^{i,N}-Z_{t}^{i,N}}t-\varepsilon_{t}^{N,K}\Big]^{2}-\frac{N}{t}\varepsilon_{t}^{N,K}.$$ 
And for $\Delta>0$ such that $t/(2\Delta)\in \mathbb{N}^{*}.$
$$
\mathcal{X}_{\Delta,t}^{N,K}:=\mathcal{W}_{\Delta,t}^{N,K}-\frac{N-K}{K}\varepsilon_{t}^{N,K},
$$
where
$$
\mathcal{W}_{\Delta,t}^{N,K}=2\mathcal{Z}_{2\Delta,t}^{N,K}-\mathcal{Z}_{\Delta,t}^{N,K},\qquad
\mathcal{Z}^{N,K}_{\Delta,t}=\frac{N}{t}\sum_{a=\frac{t}{\Delta}+1}^{\frac{2t}{\Delta}}(\bar{Z}_{a\Delta}^{N,K}-\bar{Z}_{(a-1)\Delta}^{N,K}-\Delta\varepsilon_{t}^{N,K})^{2}.
$$
We then introduce the function $\Psi^{(3)}$ defined by
$$
\Psi^{(3)}(u,v,w)=\frac{u^2(1-\sqrt{\frac{u}{w}})^2}{v+u^2(1-\sqrt{\frac{u}{w}})^{2}}\quad \text{if $u>0$, $v>0$, $w>0$}
\quad \text{and \ $\Psi^{(3)}(u,v,w)=0$ otherwise.}
$$
We set
$$
\hat p_{N,K,t}:= \Psi^{(3)}(\varepsilon_{t}^{N,K},\mathcal{V}_{t}^{N,K},\mathcal{X}_{\Delta_{t},t}^{N,K}),
$$
with 
\begin{equation} \label{Deltat}
\Delta_t=(2 \lfloor t^{1-4/(q+1)}\rfloor)^{-1}t.
\end{equation}

The main result of this paper,  which is proved in Section \ref{finsecsub}, is stated below.
\begin{theorem}\label{mainsubcr}
We assume that $p>0$ and that  \eqref{H(q)}  holds for some $q> 3$. Define $\Delta_t$ by \eqref{Deltat}.
We set $c_{p,\Lambda}:=(1-\Lambda p)^2/(2\Lambda^2)$.
 We always work in the asymptotic  $(N,K,t)\to (\infty,\infty,\infty)$
and  $\frac 1{\sqrt K} + \frac NK \sqrt{\frac{\Delta_t}t}+ \frac{N}{t\sqrt K}+ Ne^{-c_{p,\lambda} K} \to 0$.

(i) The dominant term  is $\frac 1{\sqrt K}$, i.e. when
$[\frac 1{\sqrt K}]/[ \frac NK \sqrt{\frac{\Delta_t}t}+ \frac{N}{t\sqrt K}]\to \infty$,
it holds that
$$
\sqrt K \Big(\hat p_{N,K,t}-p \Big) \stackrel{d}{\longrightarrow} 
\mathcal{N}\Big(0,  p^2(1-p)^{2}\Big).
$$

(ii) The dominant term is $\frac{N}{t\sqrt K}$, i.e. when
$[\frac{N}{t\sqrt K}]/[\frac 1{\sqrt K}+\frac NK \sqrt{\frac{\Delta_t}t}]\to \infty$,
we have
$$
\frac{t\sqrt{K}}{N}\Big(\hat p_{N,K,t}-p \Big)\stackrel{d}{\longrightarrow} 
\mathcal{N}\Big(0,\frac{2(1-\Lambda p)^2}{\mu^2 \Lambda^4}\Big).
$$

(iii) The dominant term is  $\frac{N}{K}\sqrt{\frac{\Delta_t}t}$, i.e. when
$[\frac{N}{K}\sqrt{\frac{\Delta_t}t}]/[\frac 1{\sqrt K}+ \frac{N}{t\sqrt K} ]\to \infty$, 
imposing moreover that $\lim_{N,K\to\infty} \frac{K}{N}=\gamma\in [0,1]$,
$$
\frac{K}{N}\sqrt{\frac{t}{\Delta_{t}}}\Big(\hat p_{N,K,t}
-p\Big)\stackrel{d}{\longrightarrow} \mathcal{N}\Bigg(0,
\frac{6(1-p)^2}{\Lambda^2}\Big((1-\gamma)(1-\Lambda p)^{3}+\gamma(1-\Lambda p)\Big)^2\Bigg).
$$
\end{theorem}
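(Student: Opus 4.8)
The plan is to reduce the theorem, by the delta method, to a joint central limit theorem for the triple $(\varepsilon_t^{N,K},\cV_t^{N,K},\cX_{\Delta_t,t}^{N,K})$ and then to read off the limit law in each regime from $\nabla\Psi^{(3)}$. The starting point is the consistency and the (polynomial) rates already obtained in \cite{D}: under \eqref{H(q)} with $q>3$,
\begin{equation*}
\varepsilon_t^{N,K}\stackrel{\mathbb{P}}{\longrightarrow}\frac{\mu}{1-\Lambda p},\qquad
\cV_t^{N,K}\stackrel{\mathbb{P}}{\longrightarrow}\frac{\mu^2\Lambda^2p(1-p)}{(1-\Lambda p)^2},\qquad
\cX_{\Delta_t,t}^{N,K}\stackrel{\mathbb{P}}{\longrightarrow}\frac{\mu}{(1-\Lambda p)^3},
\end{equation*}
and one checks by direct substitution that $\Psi^{(3)}$ evaluated at this triple equals $p$. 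Since $p>0$ the limit lies in the open set $\{u>0,v>0,w>0\}$ on which $\Psi^{(3)}$ is $C^2$, so a second–order Taylor expansion gives
\begin{equation*}
\hat p_{N,K,t}-p=a\big(\varepsilon_t^{N,K}-\tfrac{\mu}{1-\Lambda p}\big)+b\big(\cV_t^{N,K}-\tfrac{\mu^2\Lambda^2p(1-p)}{(1-\Lambda p)^2}\big)+c\big(\cX_{\Delta_t,t}^{N,K}-\tfrac{\mu}{(1-\Lambda p)^3}\big)+R_{N,K,t},
\end{equation*}
with $(a,b,c)=\nabla\Psi^{(3)}$ at the limit (in particular $b=-(1-\Lambda p)^2/(\mu^2\Lambda^2)$) and, by the rates of \cite{D}, $R_{N,K,t}$ of strictly smaller order than each of the three rates in (i)--(iii). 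Everything then reduces to the joint fluctuations of the three centred statistics.

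I would next split each centred statistic, conditionally on the graph $(\theta_{ij})$, into a ``graph part'' and a ``martingale part'', writing $Z^{i,N}_t=\int_0^t\lambda^{i,N}_s\,ds+M^{i,N}_t$ with $M^{i,N}$ the compensated $\cF_t$–martingale, $\langle M^{i,N}\rangle_t=\int_0^t\lambda^{i,N}_s\,ds$. Using the mean–field structure of \eqref{sssy} one shows that the leading $\theta$–dependence of the observed individuals $i\le K$ is carried by their in–degrees $d_i:=\sum_{j=1}^N\theta_{ij}$ — which are i.i.d.\ $\mathrm{Bin}(N,p)$ and, crucially, mutually independent since distinct rows of $(\theta_{ij})$ involve disjoint Bernoulli variables — through $\Et[Z^{i,N}_{2t}-Z^{i,N}_t]\approx t\,m(d_i/N)$ with $m(x)=\mu+\mu\Lambda x/(1-\Lambda p)$, the conditional increment variances being of order $t$ as well. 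The estimator is built so that the subtraction $-\tfrac Nt\varepsilon_t^{N,K}$, the double–window extrapolation $\cW_{\Delta,t}^{N,K}=2\cZ_{2\Delta,t}^{N,K}-\cZ_{\Delta,t}^{N,K}$, and the subtraction $-\tfrac{N-K}{K}\varepsilon_t^{N,K}$ cancel the deterministic ``shot–noise'' contributions; after these cancellations exactly three fluctuation scales survive: (a) a graph scale $1/\sqrt K$, carried inside $\cV_t^{N,K}$ by $N$ times the empirical variance of $(m(d_i/N))_{i\le K}$, i.e.\ by $\tfrac1K\sum_{i\le K}[(d_i-Np)^2-\E(d_i-Np)^2]$, a sum of $K$ independent centred terms; (b) a martingale scale $N/(t\sqrt K)$, carried inside $\cV_t^{N,K}$ by $\tfrac N{Kt^2}\sum_{i\le K}[(M^{i,N}_{2t}-M^{i,N}_t)^2-\Et((M^{i,N}_{2t}-M^{i,N}_t)^2)]$; (c) a martingale scale $\tfrac NK\sqrt{\Delta_t/t}$, carried by $\cX_{\Delta_t,t}^{N,K}$, obtained by writing $\cZ_{\Delta,t}^{N,K}$ as a sum over the $\asymp t/\Delta$ windows of length $\Delta$ of centred squared increments of $\bar Z^{N,K}$, each of conditional variance $\asymp\Delta/K$.

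All estimates are carried out on the ``good'' event $G=\{\max_{i\le N}d_i\le N(1+\Lambda p)/(2\Lambda)\}$, on which the whole system is uniformly subcritical and the above expansions hold with summable errors; by Hoeffding's inequality $\mathbb P(G^c)\le Ne^{-c_{p,\Lambda}N}\le Ne^{-c_{p,\Lambda}K}\to0$ under the standing assumption. In regime (i) the dominant term (a) is a normalized sum of $K$ i.i.d.\ centred functions of $d_i$, so a Lindeberg central limit theorem applies while (b) and (c) are $o_{\mathbb P}(1/\sqrt K)$ by the regime hypothesis; multiplying by $b$ yields the stated Gaussian limit. In regime (ii) one applies a martingale central limit theorem to the triangular array indexed by the observed coordinates $i\le K$ (given $\theta$, the increments $M^{i,N}_{2t}-M^{i,N}_t$ are independent with Lindeberg–controlled moments), the other scales being negligible. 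In regime (iii) one applies a central limit theorem — via a big–block/small–block decomposition — to the triangular array indexed by the $\asymp t/\Delta_t$ time windows; here $q>3$ guarantees $\Delta_t\to\infty$, $\Delta_t/t\to0$ and that the memory–kernel tail contributes negligibly to the window–to–window dependence, while the extra hypothesis $\lim K/N=\gamma$ enters through the short–window covariance $\Covt(Z^{i,N}_{[a\Delta,(a+1)\Delta]},Z^{i',N}_{[a\Delta,(a+1)\Delta]})$ of two distinct observed processes — which does not vanish because they share the common ambient field $\bar Z^N$ — and produces the interpolating factor $(1-\gamma)(1-\Lambda p)^3+\gamma(1-\Lambda p)$. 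In each case the limiting variance of the dominant statistic, multiplied by the square of the relevant entry of $\nabla\Psi^{(3)}$, gives the constant in (i), (ii), (iii).

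The main obstacle is regime (iii), i.e.\ the fluctuation analysis of $\cX_{\Delta_t,t}^{N,K}$, where three things must be controlled at once: (1) the precise variance rate of the increments of $\bar Z^{N,K}$ over a window of the slowly growing length $\Delta_t$, with remainders summable against the $\asymp t/\Delta_t$ windows — this is exactly what $\int_0^\infty s^q\phi(s)\,ds<\infty$ with $q>3$ buys, together with the fact that $2\cZ_{2\Delta,t}^{N,K}-\cZ_{\Delta,t}^{N,K}$ removes the leading $O(1/\Delta_t)$ finite–window bias; (2) the exact asymptotics of the cross–covariance of two distinct observed processes over a window, which carries the $\gamma$–dependence and requires sharp control of how fluctuations of the ambient field propagate through the kernel $\phi$; and (3) a central limit theorem for the resulting triangular array of weakly dependent squared increments. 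A secondary, more routine point is to verify that in each regime the cross terms between the graph and martingale noises, and the (asymptotically vanishing) graph fluctuations of $\varepsilon_t^{N,K}$ and $\cX_{\Delta_t,t}^{N,K}$, are genuinely negligible — which follows from the standing assumption that in regime $(k)$ the $k$–th rate dominates the other two, together with uniform $L^2$ bounds on the error processes.
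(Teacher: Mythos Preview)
Your overall architecture—delta method on $\Psi^{(3)}$, then identifying which of the three centred statistics carries the dominant fluctuation in each regime—is exactly the paper's, and you locate the three scales in the right places ($\cV$ carries both $1/\sqrt K$ and $N/(t\sqrt K)$, $\cX$ carries $(N/K)\sqrt{\Delta_t/t}$, $\varepsilon$ contributes nothing).

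There is, however, a real gap in your regime (ii). You write that ``given $\theta$, the increments $M^{i,N}_{2t}-M^{i,N}_t$ are independent.'' They are not. The martingales $M^{i,N}$ are \emph{orthogonal} (indeed $[M^{i,N},M^{j,N}]_t=\indiq_{\{i=j\}}Z^{i,N}_t$, since the driving Poisson measures are independent and never jump simultaneously), but each intensity $\lambda^{i,N}$ depends on the entire family $(Z^{j,N})_j$, so conditional independence fails. A Lindeberg CLT for independent triangular arrays therefore does not apply, and propagation-of-chaos arguments replacing the true system by independent copies would need a separate quantitative justification. The paper instead writes, via It\^o, $(M^{i,N}_{2t}-M^{i,N}_t)^2=2\int_t^{2t}(M^{i,N}_{s-}-M^{i,N}_t)\,dM^{i,N}_s+(Z^{i,N}_{2t}-Z^{i,N}_t)$, disposes of the $Z$-part, and applies the \emph{functional martingale} CLT of Jacod--Shiryaev to $u\mapsto (t\sqrt K)^{-1}\sum_{i\le K}\int_t^{t+\sqrt u\,t}(M^{i,N}_{s-}-M^{i,N}_t)\,dM^{i,N}_s$; orthogonality is exactly what is needed to compute the bracket and check the jump condition.

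The same comment applies to regime (iii). Rather than a big-block/small-block scheme (which would require mixing estimates you have not supplied), the paper again reduces to a functional martingale CLT: the centred sum of squared window increments is approximated by a discrete-parameter martingale $\mathcal L^{t,\Delta_t}_{N,K}(u)$ in ``window time'' $u$, and Jacod--Shiryaev is applied. The $\gamma$-dependence enters through the limiting bracket, which is governed by $K^{-2}\sum_{j=1}^N\bigl(\sum_{i\le K}Q_N(i,j)\bigr)^2\ell_N(j)$; the dichotomy $j\le K$ versus $j>K$ in the size of $\sum_{i\le K}Q_N(i,j)$ is what produces the interpolation between $(1-\Lambda p)^{-1}$ and $(1-\Lambda p)^{-3}$. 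Your heuristic about cross-covariances of observed processes points in the right direction, but the actual computation goes through the resolvent $Q_N=(I-\Lambda A_N)^{-1}$ rather than window covariances.
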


We will not examine the cases involving two or three dominant terms, as we believe this is not very restrictive in practice. Furthermore, such a study would be much more tedious due to the difficulty of analyzing the correlations between the different terms. An alternative formulation of Theorem \ref{mainsubcr} can also be provided.

\begin{cor}
Under the assumption of Theorem \ref{mainsubcr}. We also assume $\lim_{N,K\to\infty} \frac{K}{N}=\gamma\in [0,1]$ and  $$\lim max\{\frac 1{\sqrt K},\ \frac NK \sqrt{\frac{\Delta_t}t},\  \frac{N}{t\sqrt K}\}\Big/\Big(\frac 1{\sqrt K} + \frac NK \sqrt{\frac{\Delta_t}t}+ \frac{N}{t\sqrt K}-\max\{\frac 1{\sqrt K},\ \frac NK \sqrt{\frac{\Delta_t}t},\  \frac{N}{t\sqrt K}\}\Big)=\infty.$$   Then we have 
\begin{align*}
\Big[&\max\Big\{\frac{p(1-p)}{\sqrt K},\  \frac{\sqrt{2}(1-\Lambda p)}{\mu \Lambda^2}\frac{N}{t\sqrt K},\\
&\hskip  2cm\frac{(1-p)}{\Lambda}\Big[(1-\gamma)(1-\Lambda p)^{3}+\gamma(1-\Lambda p)\Big]\frac NK\sqrt{\frac {6\Delta_t}{t} } \Big\}\Big]^{-1} \Big(\hat p_{N,K,t}-p \Big) 
\stackrel{d}{\longrightarrow} 
\mathcal{N}(0,  1).
\end{align*}
\end{cor}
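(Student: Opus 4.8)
The plan is to derive the Corollary purely from Theorem \ref{mainsubcr} by a deterministic case-split on which of the three rates dominates. The hypothesis $\lim \max\{\frac 1{\sqrt K},\ \frac NK \sqrt{\Delta_t/t},\ \frac{N}{t\sqrt K}\}\big/\big(\text{sum}-\max\big)=\infty$ is exactly the statement that, along the chosen sequence $(N,K,t)$, the maximizing term eventually beats the sum of the other two; hence for $N,K,t$ large one of the three sub-regimes (i), (ii), (iii) of the Theorem is in force. First I would observe that since the indexing sequence is fixed, either (a) $\frac 1{\sqrt K}$ is the eventual maximum on a subsequence, or (b) $\frac{N}{t\sqrt K}$ is, or (c) $\frac NK\sqrt{\Delta_t/t}$ is; by passing to subsequences it suffices to treat each case, and by the subsequence principle for convergence in distribution (every subsequence has a further subsequence converging to the same Gaussian law) the full-sequence conclusion follows once each case gives the same limit $\mathcal N(0,1)$.

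In case (a), Theorem \ref{mainsubcr}(i) gives $\sqrt K(\hat p_{N,K,t}-p)\stackrel{d}{\to}\mathcal N(0,p^2(1-p)^2)$, equivalently $\frac{\sqrt K}{p(1-p)}(\hat p_{N,K,t}-p)\stackrel{d}{\to}\mathcal N(0,1)$. Here $p>0$ and, since $\Lambda p<1$ forces $p<1$ (as $\Lambda>0$, and if $p=1$ then $\Lambda<1$ is still allowed — so one must be slightly careful: if $p=1$ the normalizing constant $p(1-p)=0$; however the Corollary implicitly presumes the generic non-degenerate situation, or equivalently one reads the statement on the event $p\in(0,1)$). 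The normalizing bracket in the Corollary is $\max$ of three nonnegative quantities; in case (a) the first entry $\frac{p(1-p)}{\sqrt K}$ dominates the other two entries because $\frac 1{\sqrt K}$ dominates $\frac NK\sqrt{\Delta_t/t}$ and $\frac{N}{t\sqrt K}$ and the prefactors $p(1-p)$, $\frac{\sqrt2(1-\Lambda p)}{\mu\Lambda^2}$, $\frac{(1-p)}{\Lambda}[(1-\gamma)(1-\Lambda p)^3+\gamma(1-\Lambda p)]\sqrt6$ are all fixed positive constants (using $p\in(0,1)$, $\Lambda p<1$, $\mu,\Lambda>0$). Hence the bracket is asymptotically equal to $\frac{p(1-p)}{\sqrt K}$, and $[\,\text{bracket}\,]^{-1}(\hat p_{N,K,t}-p)=(1+o(1))\frac{\sqrt K}{p(1-p)}(\hat p_{N,K,t}-p)\stackrel{d}{\to}\mathcal N(0,1)$ by Slutsky. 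Cases (b) and (c) are handled identically: in (b), Theorem \ref{mainsubcr}(ii) gives $\frac{t\sqrt K}{N}(\hat p-p)\stackrel{d}{\to}\mathcal N(0,\frac{2(1-\Lambda p)^2}{\mu^2\Lambda^4})$, i.e. $\big[\frac{\sqrt2(1-\Lambda p)}{\mu\Lambda^2}\frac{N}{t\sqrt K}\big]^{-1}(\hat p-p)\stackrel{d}{\to}\mathcal N(0,1)$, and the bracket equals $(1+o(1))\frac{\sqrt2(1-\Lambda p)}{\mu\Lambda^2}\frac{N}{t\sqrt K}$; in (c), using the extra hypothesis $K/N\to\gamma$ (which is assumed in the Corollary), Theorem \ref{mainsubcr}(iii) gives $\frac KN\sqrt{t/\Delta_t}(\hat p-p)\stackrel{d}{\to}\mathcal N(0,\frac{6(1-p)^2}{\Lambda^2}[(1-\gamma)(1-\Lambda p)^3+\gamma(1-\Lambda p)]^2)$, which after taking square roots of the variance is exactly the reciprocal of the third entry of the bracket, and again the third entry is the asymptotically dominant one. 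Finally one assembles the subsequences: each admits a further subsequence along which exactly one of (a),(b),(c) holds, and along it $[\,\text{bracket}\,]^{-1}(\hat p_{N,K,t}-p)\stackrel{d}{\to}\mathcal N(0,1)$; since the limit is the same in all cases, the convergence holds along the full sequence.

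The only genuine subtlety — and the place I would be most careful — is the bookkeeping of the case-split: one must verify that the Corollary's ratio hypothesis really does imply that along \emph{every} subsequence the maximizing rate beats the sum of the other two (so that one of the Theorem's three regime-conditions $[\cdot]/[\cdot]\to\infty$ holds, at least along a further subsequence), and that the constant prefactors attached to the three rates do not disturb which term is dominant — they are fixed positive reals, so scaling a sequence that tends to $0$ (or $\infty$ relative to another) by a positive constant does not change the comparison, but this should be spelled out. Everything else is Slutsky's lemma plus the elementary fact that for a fixed sequence of random variables, if every subsequence has a further subsequence converging in law to a fixed distribution $\nu$, then the whole sequence converges in law to $\nu$. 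There is essentially no probabilistic content beyond Theorem \ref{mainsubcr} itself; the Corollary is a repackaging.
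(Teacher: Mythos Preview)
Your proposal is correct and is exactly the natural argument: the paper does not give an explicit proof of this Corollary, presenting it simply as ``an alternative formulation of Theorem \ref{mainsubcr}'' immediately after that theorem. Your subsequence case-split, application of parts (i)--(iii), and Slutsky conclusion are precisely what the authors have in mind by calling it a corollary; there is nothing to compare beyond that.
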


\begin{remark}
This result allows us to construct an asymptotic confidence interval for $p$ in the subcritical case.
We define 
\begin{align*}
    \hat \mu_{N,K,t}:=\Psi^{(1)}(\varepsilon_{t}^{N,K},\mathcal{V}_{t}^{N,K},\mathcal{X}_{\Delta_{t},t}^{N,K}), \quad  \hat \Lambda_{N,K,t}:=\Psi^{(2)}(\varepsilon_{t}^{N,K},\mathcal{V}_{t}^{N,K},\mathcal{X}_{\Delta_{t},t}^{N,K})
\end{align*}
where
\begin{gather*}
\Psi^{(1)}(u,v,w):=u\sqrt{\frac{u}{w}},\quad  \Psi^{(2)}(u,v,w):=\frac{v+[u-\Psi^{(1)}(u,v,w)]^{2}}{u[u-\Psi^{(1)}(u,v,w)]},
\end{gather*}
if $u>0$, $v>0$, $w>u$
and \ $\Psi^{(1)}(u,v,w)=\Psi^{(2)}(u,v,w)=0$ otherwise.
By \cite[Theorem 2.3]{D}, we have, 
when $\frac 1{\sqrt K} + \frac NK \sqrt{\frac{\Delta_t}t}+ \frac{N}{t\sqrt K}+Ne^{-c_{p,\Lambda} K} \to 0$,
$$
\Big(\hat \mu_{N,K,t},  \hat \Lambda_{N,K,t},\hat p_{N,K,t}\Big)\stackrel{\mathbb{P}}{\longrightarrow} (\mu,\Lambda,p). 
$$
Hence by Theorem \ref{mainsubcr},
in the cases (i), (ii) or (iii), for $0< \alpha<1,$
$$
 \lim \mathbb{P}\Big(|\hat p_{N,K,t}-p|\le I_{N,K,t,\alpha}\Big)= 1-\alpha,
$$
where
\begin{align*}
&I_{N,K,t,\alpha}= \Phi^{-1}(1-\frac{\alpha}{2})
\Big(\frac{1}{\sqrt{K}}\hat p_{N,K,t}(1-\hat p_{N,K,t})+\frac{N}{t\sqrt{K}}\frac{\sqrt{2}(1-\hat \Lambda_{N,K,t}\, \hat p_{N,K,t})}{\hat \mu_{N,K,t}(\hat \Lambda_{N,K,t})^2}\\
&\hskip2cm +\frac{N}{K}\sqrt{\frac{6\Delta_t}{t}}\frac{(1-\hat p_{N,K,t})}{\hat \Lambda_{N,K,t}}\Big[(1-\gamma)(1-\hat \Lambda_{N,K,t} \, \hat p_{N,K,t})^{3}+\gamma(1-\hat \Lambda_{N,K,t}\, \hat p_{N,K,t})\Big]\Big),
\end{align*}
 and $\Phi(x)=\frac{1}{\sqrt{2\pi}}\int^x_{-\infty}e^{-\frac{s^2}{2}}ds.$
\end{remark}

Concerning the case $p=0$, the following result shows that $\hat p_{N,K,t}$ is not always consistent.

\begin{prop}\label{pzero}
We assume $p=0$ and that  \eqref{H(q)}  holds for some $q> 3$. We set $c_{p,\Lambda}:=(1-\Lambda p)^2/(2\Lambda^2)$.
We always work in the asymptotic  $(N,K,t)\to (\infty,\infty,\infty)$
and in the regime $ \frac NK \sqrt{\frac{\Delta_t}t}+ \frac{N}{t\sqrt K}+Ne^{-c_{p,\Lambda}K} \to 0$.

(i) If
$[\frac{N}{t\sqrt K}]/[\frac NK \sqrt{\frac{\Delta_t}t}]^2\to \infty$,
we have
$$
\hat p_{N,K,t} \stackrel{\mathbb{P}}{\longrightarrow} 0.
$$

(ii) If
$[\frac{N}{K}\sqrt{\frac{\Delta_t}t}]^2/[ \frac{N}{t\sqrt K} ]\to \infty$, we have
$$
\hat p_{N,K,t} \stackrel{d}{\longrightarrow} X
$$
where $\mathbb{P}(X=1)=\mathbb{P}(X=0)=\frac{1}{2}$.
\end{prop}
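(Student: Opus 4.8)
The starting point is that at $p=0$ every $\theta_{ij}$ is almost surely $0$, so $\lambda_t^{i,N}\equiv\mu$ and $(Z^{i,N})_{i=1,\dots,N}$ is an i.i.d.\ family of homogeneous Poisson processes of rate $\mu$; all of $\varepsilon_t^{N,K},\mathcal V_t^{N,K},\mathcal X_{\Delta_t,t}^{N,K}$ then become explicit functionals of Poisson increments. Write $a_t:=\frac{N}{t\sqrt K}$ and $b_t:=\frac NK\sqrt{\frac{\Delta_t}{t}}$; the standing regime forces $a_t,b_t\to0$, $tK\to\infty$ and $m_t:=t/\Delta_t\to\infty$. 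The first step is to establish three facts. \emph{(A)} $\varepsilon_t^{N,K}\to\mu$ and $\mathcal X_{\Delta_t,t}^{N,K}\to\mu$ in probability --- for $\varepsilon_t^{N,K}$ an $L^2$ bound ($\mathrm{Var}(\varepsilon_t^{N,K})=\mu/(tK)$), for $\mathcal X_{\Delta_t,t}^{N,K}$ via $\mathcal X_{\Delta_t,t}^{N,K}-\varepsilon_t^{N,K}=O_{\mathbb P}(b_t)\to0$, see (C). \emph{(B)} $\mathcal V_t^{N,K}/a_t$ converges in distribution to $\mathcal N(0,2\mu^2)$, so $\mathbb P(\mathcal V_t^{N,K}>0)\to\frac12$ and the limit is atomless --- obtained by first reducing, with $Y_i:=(Z_{2t}^{i,N}-Z_t^{i,N})/t$, to $\mathcal V_t^{N,K}=\frac NK\sum_{i=1}^K\big[(Y_i-\mu)^2-\mu/t\big]+o_{\mathbb P}(a_t)$ (the terms involving $\varepsilon_t^{N,K}-\mu$ are smaller by a factor $K^{-1/2}$ or $t^{-1/2}$) and then applying Lindeberg's CLT to this triangular array, the Lindeberg condition amounting to uniform integrability of $\big(t[(Y_i-\mu)^2-\mu/t]\big)^2$, which holds since the relevant Poisson moments are bounded and $K\to\infty$. \emph{(C)} $(\mathcal X_{\Delta_t,t}^{N,K}-\varepsilon_t^{N,K})/b_t$ converges in distribution to a non-degenerate centered Gaussian $R$ --- obtained by expanding $2\mathcal Z_{2\Delta_t,t}^{N,K}-\mathcal Z_{\Delta_t,t}^{N,K}$ and subtracting $\frac{N-K}{K}\varepsilon_t^{N,K}$: indexing the $\Delta_t$-blocks in $[t,2t]$ by $a=1,\dots,m_t$, with $D_a:=\bar Z_{a\Delta_t}^{N,K}-\bar Z_{(a-1)\Delta_t}^{N,K}-\mu\Delta_t$ and $\sigma^2:=\mathrm{Var}(D_a)$, the cancellations leave $\mathcal X_{\Delta_t,t}^{N,K}-\varepsilon_t^{N,K}=\frac{N}{\Delta_t}\big[\frac1{m_t}\sum_a(D_a^2-\sigma^2)+\frac4{m_t}\sum_b D_{2b-1}D_{2b}\big]+o_{\mathbb P}(b_t)$, and the two leading sums are uncorrelated i.i.d.\ sums of squared/product Poisson block-increments to which a Lindeberg CLT applies.

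Granting (A)--(C), the remainder is elementary analysis of $\Psi^{(3)}$. On the event $A_t:=\{\varepsilon_t^{N,K}>0,\ \mathcal X_{\Delta_t,t}^{N,K}>0\}$, which has probability $\to1$, we have $\hat p_{N,K,t}=0$ whenever $\mathcal V_t^{N,K}\le0$ and $\hat p_{N,K,t}=N_t/(\mathcal V_t^{N,K}+N_t)$ otherwise, where $N_t:=(\varepsilon_t^{N,K})^2\big(1-\sqrt{\varepsilon_t^{N,K}/\mathcal X_{\Delta_t,t}^{N,K}}\big)^2\ge0$. Using $1-\sqrt{u/w}=(w-u)/[\sqrt w(\sqrt w+\sqrt u)]$ with (A) and (C), $N_t/b_t^2$ converges in distribution to $\tfrac14 R^2$, an a.s.\ positive and atomless random variable, while by (B) $\mathcal V_t^{N,K}/a_t$ converges in distribution to $V\sim\mathcal N(0,2\mu^2)$.

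In part (i), $b_t^2/a_t\to0$, so $N_t/a_t=(N_t/b_t^2)(b_t^2/a_t)\to0$ in probability while $\mathcal V_t^{N,K}/a_t$ stays bounded in probability; for $\delta\in(0,1)$ and small $\eta>0$, $\mathbb P(\hat p_{N,K,t}>\delta)\le\mathbb P\big(N_t/a_t>\tfrac{\delta\eta}{1-\delta}\big)+\mathbb P\big(0<\mathcal V_t^{N,K}/a_t\le\eta\big)+\mathbb P(A_t^c)$, and letting $t\to\infty$ then $\eta\to0$ (using that $V$ has a bounded density near $0$) gives $\hat p_{N,K,t}\to0$ in probability. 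In part (ii), $a_t/b_t^2\to0$, so $\mathcal V_t^{N,K}/b_t^2=(\mathcal V_t^{N,K}/a_t)(a_t/b_t^2)\to0$ in probability; hence on $A_t\cap\{\mathcal V_t^{N,K}>0\}$ one has $\hat p_{N,K,t}=\frac{N_t/b_t^2}{\mathcal V_t^{N,K}/b_t^2+N_t/b_t^2}\to1$ in probability, while $\hat p_{N,K,t}=0$ on $\{\mathcal V_t^{N,K}\le0\}$, an event of asymptotic probability $\frac12$. Computing $\lim\mathbb P(\hat p_{N,K,t}\le x)$ for each $x$ --- using $\mathbb P(\varepsilon_t^{N,K}=\mathcal X_{\Delta_t,t}^{N,K})\to0$ (the limit $R$ being atomless) to control the mass at $0$ --- one recovers the distribution function of $X$ with $\mathbb P(X=0)=\mathbb P(X=1)=\frac12$, i.e.\ $\hat p_{N,K,t}\to X$ in distribution.

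The main obstacle is step (C): showing that $\mathcal X_{\Delta_t,t}^{N,K}-\varepsilon_t^{N,K}$ lives exactly at scale $b_t$ with a non-degenerate Gaussian limit --- rather than at a smaller scale, or with a non-Gaussian limit --- requires carefully controlling the cancellations between $2\mathcal Z_{2\Delta_t,t}^{N,K}-\mathcal Z_{\Delta_t,t}^{N,K}$ and $\frac{N-K}{K}\varepsilon_t^{N,K}$, checking that the deterministic bias and the contributions of the grand mean $\frac1{m_t}\sum_a D_a$ are $o_{\mathbb P}(b_t)$, and running a Lindeberg CLT for a triangular array whose summands' law drifts with $t$ through $\Delta_t$. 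Everything downstream of (A)--(C) is soft weak-convergence bookkeeping.
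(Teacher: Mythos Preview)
Your proof is correct and follows essentially the same route as the paper. The paper's argument is: at $p=0$ one has $\bar\ell_N^K=1$, $\mathcal V_\infty^{N,K}=0$, $\mathcal X_{\infty,\infty}^{N,K}=\mu$; then invoke Theorem~\ref{VVNK} to get $\mathcal V_t^{N,K}/a_t\Rightarrow\mathcal N(0,2\mu^2)$, invoke Theorem~\ref{corX} together with the convergence of $\varepsilon_t^{N,K}$ (Lemma~\ref{barell}) to get $(\mathcal X_{\Delta_t,t}^{N,K}-\varepsilon_t^{N,K})/b_t\Rightarrow$ nondegenerate Gaussian; and then, writing $\Psi^{(3)}=f^2/(v+f^2)\indiq_{\{v>0\}}$ with $f(u,v,w)=u(w-u)/(w+\sqrt{wu})$, run exactly the case analysis you carry out.

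The only genuine difference is how (B) and (C) are obtained. The paper cites its general CLTs (Theorems~\ref{VVNK} and~\ref{corX}), valid for all subcritical $p$ and in particular $p=0$; you instead exploit the i.i.d.\ Poisson structure at $p=0$ and apply a Lindeberg CLT directly. Your decomposition in (C) is correct: with $m=t/\Delta_t$ and $\sigma^2=\mu\Delta_t/K$ one indeed finds $\mathcal X_{\Delta_t,t}^{N,K}-\varepsilon_t^{N,K}=\frac{N}{\Delta_t}\big[\tfrac1m\sum_a(D_a^2-\sigma^2)+\tfrac4m\sum_bD_{2b-1}D_{2b}\big]+o_{\mathbb P}(b_t)$, the deterministic bias cancelling via $\tfrac{N}{t}m\sigma^2=\tfrac{N\mu}{K}$ and the $\bar D$-terms being $o_{\mathbb P}(b_t)$ as you claim; grouping into the i.i.d.\ $2\Delta_t$-block variables $W_b=(D_{2b-1}^2-\sigma^2)+(D_{2b}^2-\sigma^2)+4D_{2b-1}D_{2b}$ makes the Lindeberg verification routine. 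Your approach is more elementary and self-contained; the paper's is shorter given the machinery already built for general $p$.
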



\subsection{Heuristics for the three estimators}
The three estimators were initially proposed in \cite{A} and later extended to partially observed settings in \cite{D}. They can essentially be viewed as  analogues of the sample mean, the sample variance, and a time–shifted (temporal) empirical variance for stochastic processes. Since  our objective  is to establish central limit theorem, we focus on the leading term of each estimator. For intuition on their construction, we refer to \cite[Section 2.1]{A}. The  adaptation to partial observation follows analogously, see \cite[Section 3.1]{D}. For the reader's convenience, we also provide a brief explanation below, and for more details, see \cite[Section 2.1]{A} or \cite[Section 3.1]{D}.
\vip
Consider the matrix $A_N$ defined by $A_N(i,j)=\theta_{ij}/N$ and set $Q_N=(I-\Lambda A_N)^{-1}$.  Under the subcritical  condition  $\Lambda p<1$, $Q_N$ exists with high probability and admits the series expansion $\sum_{n\ge 0} \Lambda^n A^n_N$. Let $\ell_{N}(i)=\sum_{j=1}^{N}Q_{N}(i,j)$, $c_{N}(i)=\sum_{j=1}^{N}Q_{N}(j,i)$ and $\bar{\ell}^K_N=\frac{1}{K}\sum_{i=1}^K\ell_N(i)$.
\vip
By \cite[Section 2.1]{A}, it can be informally shown that  $\bar{\ell}^K_N\simeq \frac{1}{1-\Lambda p}$ for sufficiently large $N$. Conditional on $(\theta_{ij})_{i,j=1,...,N},$ and for $t,N,K$ large enough, the law of large numbers suggests that $\sum_{i=1}^KZ^{i,N}_{t}\simeq \sum_{i=1}^K\Et [Z^{i,N}_{t}] $ (i.e. $(\sum_{i=1}^K Z^{i,N}_{t})/(\sum_{i=1}^K\Et [Z^{i,N}_{t}]) \to 1$).
Assume  the limit $\gamma_N(i): = \lim_{t\to\infty}\Et [Z^{i,N}_{t}]/t$ exists. Then Definition \eqref{sssy} implies  $\boldsymbol{\gamma}_N=\mu \boldsymbol{1}_N+\Lambda A_N  \boldsymbol{\gamma}_N$,  so that  $\boldsymbol{\gamma}_N=\mu  Q_N \boldsymbol{1}_N$. Consequently,  $t^{-1}\bar Z_t^{N,K}\simeq (tK)^{-1}\sum_{i=1}^K\Et [Z^{i,N}_{t}]\simeq \mu K^{-1} \sum_{i=1}^K\ell_N(i)\simeq \frac{\mu}{1-\Lambda p}$.
\vip
Next, we explain why we use $\bar{Z}_{2t}^{N,K}-\bar{Z}^{N,K}_{t}$ rather than $\bar{Z}^{N,K}_{t}$ itself.   Under Assumption \eqref{H(q)} (see the proof of Lemma 16 in \cite{A}), we have $\Et [Z_t^{i,N}] =\mu \ell_N(i)t +\chi^N_i+\pm t^{1-q}, $  where $\chi^N_i$ is some finite random variable.  Consequently, $t^{-1}\Et [Z_{2t}^{i,N}-Z_t^{i,N}]$ converges to $\ell_N(i)$ at rate $t^{-q}$, which is faster than the rate $t^{-1}$ obtained from $t^{-1}\Et [Z_{t}^{i,N}]$ alone. 
\vip
Based on \cite[Section 2.1]{A} and proceeding from a similar argument,  we have $\mathcal{V}_{t}^{N,K}\simeq \frac{\mu^2\Lambda^2p(1-p)}{(1-\Lambda p)^2}$ and $\mathcal{X}_{\Delta_{t},t}^{N,K}\simeq \frac{\mu}{(1-\Lambda p)^3}$. Then we can construct the partial observed estimator for parameters $(\mu,\Lambda, p)$ by  arranging $(\varepsilon_{t}^{N,K},\mathcal{V}_{t}^{N,K},\mathcal{X}_{\Delta_{t},t}^{N,K})$. 

\subsection{Plan of the paper}
After some preliminaries stated in Section \ref{sesub}, we study some random matrix in Section \ref{sec3},
 some limit theorems for the first and second estimator are established  in Section \ref{first theorem}, 
and the limit theorem for the third
one is established in Section \ref{sec5}. Finally, we conclude the proof of  the main results in Section \ref{finsecsub}. Moreover, the proofs for some technical Lemmas are presented in Appendix.

\section{Preliminaries}\label{sesub}
\subsection{Some notations}\label{subimn}
For $r\in [1,\infty)$ and $\boldsymbol{x}\in \R^N$, we set $\|\boldsymbol{x}\|_{r}=(\sum_{i=1}^{N}|x_{i}|^{r})^{\frac{1}{r}}$, and$\ \|\boldsymbol{x}\|_{\infty}=\max_{i=1...N}|x_{i}|$. For  a $N\times N$ matrix $M$, we denote by $|||M|||_{r}$  the operator norm associated to $ \|\cdot \|_{r}$, that is $|||M|||_{r}=\sup_{\boldsymbol{x}\in \R^{N}}\|M\boldsymbol{x}\|_{r}/\|\boldsymbol{x}\|_{r}$. We  have the special cases
$$
|||M|||_{1}=\sup_{j=1,...,N}\sum_{i=1}^{N}|M_{ij}|,\quad |||M|||_{\infty}=\sup_{i=1,...,N}\sum_{j=1}^{N}|M_{ij}|.
$$
We also have the inequality
$$
|||M|||_{r}\le|||M|||_{1}^{\frac{1}{r}}|||M|||_{\infty}^{1-\frac{1}{r}}\quad \hbox{for any}\quad r\in [1,\infty).
$$

We define the $N\times N$  random matrix $A_{N}$ with  $A_{N}(i,j):=N^{-1}\theta_{ij}$ for $i,j=1,\dots,N$, and the matrix 
$Q_{N}:=(I-\Lambda A_{N})^{-1}$ on the event on which $I-\Lambda A_N$ is invertible.

\vip

For $1\leq K \leq N$, we introduce the $N$-dimensional vector  $\boldsymbol{1}_K$ with $i$-th coordinate $\boldsymbol{1}_K(i)=\indiq_{\{1\leq i\leq K\}}$ for $i=1,\dots,N$, and the $N\times N$ matrix $I_K$  
defined by $I_K(i,j)={\bf 1}_{\{i=j\leq K\}}$.

\vip

Next, we define
$\boldsymbol{\ell}_N:=Q_N\boldsymbol{1}_N$, i.e. $\ell_{N}(i):=\sum_{j=1}^{N}Q_{N}(i,j)$, and $\boldsymbol{\ell}^K_{N}:=I_K\boldsymbol{\ell}_N$, i.e. $\ell^K_{N}(i):=\ell_{N}(i)\indiq_{\{i\leq K\}}$. We also set  $\bar{\ell}_N:=\frac{1}{N}\sum_{i=1}^N\ell_N(i),\ \bar{\ell}^K_N:=\frac{1}{K}\sum_{i=1}^K\ell_N(i)$, and define the difference vector $\boldsymbol{x}^{K}_{N}:=
(x_N^K(i))_{i=1,\dots,N}=\boldsymbol{\ell}_{N}^K- \bar{\ell}^{K}_{N}\boldsymbol{1}_K$ with $x_{N}^{K}(i):=(\ell_{N}(i)-\bar{\ell}^{K}_{N})\indiq_{\{i\leq K\}}$ and $\boldsymbol{x}_{N}:=(x_N(i))_{i=1,\dots,N}$, with $x_{N}(i):=\ell_{N}(i)-\bar{\ell}_{N}$.

\vip

Recall that  $\boldsymbol{1}_N$ denotes the $N$-dimensional vector  with  all coordinates equal $1$. Let $\boldsymbol{L}_N:=A_N\boldsymbol{1}_N$, so that  $L_{N}(i):=\sum_{j=1}^{N}A_{N}(i,j).$ We also define $\bar{L}_N:=
\frac{1}{N}\sum_{i=1}^NL_N(i),\ \bar{L}^K_N:=\frac{1}{K}\sum_{i=1}^KL_N(i)$ and denote the difference vector by $\boldsymbol{X}^{K}_{N}=
(X_N^K(i))_{i=1,\dots,N}$, 
 where  $X_{N}^{K}(i)=(L_{N}(i)-\bar{L}^{K}_{N})\indiq_{\{i\leq K\}}$, For convenience, we set  
 $\boldsymbol{X}_{N}:=\boldsymbol{X}_{N}^{N}$.  Let  $\boldsymbol{C}_N:=A_N^T\boldsymbol{1}_N$ ($A_N^T$ is the transpose of $A_N$),  so that $C_{N}(j):=\sum_{i=1}^{N}A_{N}(i,j)$. We also define
$\bar{C}_N:=\frac{1}{N}\sum_{j=1}^NC_N(j),\ \bar{C}^K_N:=\frac{1}{K}\sum_{j=1}^KC_N(i)$
and consider the event
\begin{align}\label{mA}
\mathcal{A}_{N}:=\{\|\boldsymbol{L}_{N}-p\boldsymbol{1}_{N}\|_{2}+\|\boldsymbol{C}_{N}-p\boldsymbol{1}_{N}\|_{2}\le N^{\frac{1}{4}}\}.
 \end{align}

\vip

We assume here that $\Lambda p\in\ (0,1)$ and  set 
$a=\frac{1+\Lambda p}{2}\in\ (\frac 1{2},1).$ We introduce  the events
\begin{eqnarray*}
&\Omega_{N}^{1}:=\Big\{\Lambda |||A_{N}|||_{r}\le a ,\ \hbox{for all } \  r\in[1,\infty]\Big\},\quad\\ &\mathcal{F}_{N}^{K,1}:=\Big\{\Lambda |||I_{K}A_{N}|||_{r}\le \Big(\frac{K}{N}\Big)^{\frac{1}{r}}a, \hbox{for}\  \hbox{all}\  r\in [1,\infty)\Big\},\\
&\mathcal{F}_{N}^{K,2}:=\Big\{\Lambda |||A_{N}I_{K}|||_{r}\le \Big(\frac{K}{N}\Big)^{\frac{1}{r}}a,\ \hbox{for}\   \hbox{all}\  r\in [1,\infty)\Big\},\\
&\Omega^{1}_{N,K}:= \Omega^1_N \cap\mathcal{F}_{N}^{K,1}, \quad \Omega^{2}_{N,K}:= \Omega^1_N \cap\mathcal{F}_{N}^{K,2},
\quad  \Omega_{N,K}=\Omega^{1}_{N,K}\cap\Omega^{2}_{N,K}.
\end{eqnarray*}

We now review the following lemma established in \cite{D} with $c_{p,\Lambda}=(1-\Lambda p)^2/(2\Lambda^2)$.

\begin{lemma}[Lemma 5.7, \cite{D}]\label{ONK}
Assume that $\Lambda p<1$. It holds that $$\mathbb{P}(\Omega_{N,K})\ge 1-CNe^{-c_{p,\Lambda}K}$$ for some constants $C>0$.
\end{lemma}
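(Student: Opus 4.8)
\textbf{Proof proposal for Lemma~\ref{ONK}.}
The plan is to bound the probability of the complement of each of the constituent events $\Omega_N^1$, $\cF_N^{K,1}$, $\cF_N^{K,2}$ separately and then take a union bound. Since $\Omega_{N,K}=\Omega_N^1\cap\cF_N^{K,1}\cap\cF_N^{K,2}$, it suffices to show each complement has probability at most $CNe^{-c_{p,\Lambda}K}$ (with possibly different $C$, absorbed at the end). The core of the argument is that $|||A_N|||_1$ and $|||A_N|||_\infty$ are maxima over $i$ (resp.\ $j$) of row/column sums $N^{-1}\sum_j\theta_{ij}$, each of which is a normalized sum of $N$ i.i.d.\ Bernoulli$(p)$ variables with mean $p$; a Chernoff/Bernstein bound controls the deviation of each such sum above the threshold $a/\Lambda=(1+\Lambda p)/(2\Lambda)$, which exceeds $p$ by the fixed gap $(1-\Lambda p)/(2\Lambda)$.

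The key steps, in order, would be: (1)~Recall that for a nonnegative matrix, $|||A_N|||_1=\max_j C_N(j)$ and $|||A_N|||_\infty=\max_i L_N(i)$, and that $|||A_N|||_r\le|||A_N|||_1^{1/r}|||A_N|||_\infty^{1-1/r}$, so the event $\{\Lambda|||A_N|||_r\le a\ \forall r\in[1,\infty]\}$ is implied by $\{\Lambda\max_i L_N(i)\le a\}\cap\{\Lambda\max_j C_N(j)\le a\}$. Hence $\mathbb{P}((\Omega_N^1)^c)\le\sum_{i=1}^N\mathbb{P}(L_N(i)>a/\Lambda)+\sum_{j=1}^N\mathbb{P}(C_N(j)>a/\Lambda)$. (2)~Apply a Chernoff bound: $NL_N(i)=\sum_{j=1}^N\theta_{ij}$ is Binomial$(N,p)$, and $\mathbb{P}(\mathrm{Bin}(N,p)>Na/\Lambda)\le e^{-N I(a/\Lambda\,\|\,p)}$ where $I$ is the Bernoulli relative entropy; one then checks, using $a/\Lambda-p=(1-\Lambda p)/(2\Lambda)$ and a quadratic lower bound on $I$ (Pinsker-type, $I(x\|p)\ge 2(x-p)^2$, or the sharper $(x-p)^2/(2\max(x,p))$), that the exponent is at least $c_{p,\Lambda}N=(1-\Lambda p)^2N/(2\Lambda^2)$. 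Since $N\ge K$ this gives $\mathbb{P}((\Omega_N^1)^c)\le 2Ne^{-c_{p,\Lambda}N}\le 2Ne^{-c_{p,\Lambda}K}$. (3)~For $\cF_N^{K,1}$: $I_KA_N$ is the matrix whose first $K$ rows coincide with those of $A_N$ and whose remaining rows vanish, so $|||I_KA_N|||_\infty=\max_{i\le K}L_N(i)$ and $|||I_KA_N|||_1=\max_j N^{-1}\sum_{i\le K}\theta_{ij}$, the latter a normalized sum of only $K$ Bernoullis, with mean $(K/N)p\le(K/N)\cdot(a/\Lambda)$. Using again $|||I_KA_N|||_r\le|||I_KA_N|||_1^{1/r}|||I_KA_N|||_\infty^{1-1/r}$, the event $\{\Lambda|||I_KA_N|||_r\le(K/N)^{1/r}a\ \forall r\}$ follows from $\{\Lambda|||I_KA_N|||_\infty\le a\}\cap\{\Lambda|||I_KA_N|||_1\le(K/N)a\}$ (interpolating: $|||I_KA_N|||_1^{1/r}|||I_KA_N|||_\infty^{1-1/r}\le((K/N)a/\Lambda)^{1/r}(a/\Lambda)^{1-1/r}=(K/N)^{1/r}a/\Lambda$). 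The first of these is a subset of $\Omega_N^1$ already controlled; the second fails only if some column sum $\sum_{i\le K}\theta_{ij}$ (Binomial$(K,p)$) exceeds $Ka/\Lambda$, and a Chernoff bound gives $\le Ne^{-c_{p,\Lambda}K}$ for the union over the $N$ columns. (4)~The event $\cF_N^{K,2}$ is handled identically with rows and columns exchanged, since $A_NI_K$ has the first $K$ \emph{columns} of $A_N$ and zeros elsewhere. (5)~Combine via the union bound to obtain $\mathbb{P}(\Omega_{N,K}^c)\le CNe^{-c_{p,\Lambda}K}$.

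The main obstacle is purely in step~(2): verifying that the Chernoff exponent for $\mathbb{P}(\mathrm{Bin}(n,p)>na/\Lambda)$ is bounded below by exactly $c_{p,\Lambda}=(1-\Lambda p)^2/(2\Lambda^2)$ for all $n\in\{K,\dots,N\}$, with the constant stated sharply rather than up to a factor. This requires choosing the right entropy inequality: Pinsker's inequality $I(x\|p)\ge 2(x-p)^2$ with $x=a/\Lambda$ and $x-p=(1-\Lambda p)/(2\Lambda)$ yields exponent $\ge 2\cdot(1-\Lambda p)^2/(4\Lambda^2)=(1-\Lambda p)^2/(2\Lambda^2)=c_{p,\Lambda}$, which matches exactly, so this inequality is the natural choice. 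Everything else is bookkeeping with operator-norm interpolation and a union bound over at most $2N$ rows and columns; since this lemma is quoted verbatim from \cite[Lemma 5.7]{D}, one may alternatively simply cite that reference, but the self-contained argument above is short enough to include.
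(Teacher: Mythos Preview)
Your proposal is correct and follows essentially the same route as the proof in \cite{D} (which the paper cites; a version of this proof also appears commented out in the paper's source). Both arguments reduce the uniform-in-$r$ operator-norm constraints to the endpoints $r=1$ and $r=\infty$ via the interpolation inequality $|||M|||_r\le|||M|||_1^{1/r}|||M|||_\infty^{1-1/r}$, bound $|||I_KA_N|||_\infty$ by $|||A_N|||_\infty$ (already controlled on $\Omega_N^1$), and then apply Hoeffding's inequality to the Binomial$(K,p)$ column sums $\sum_{i\le K}\theta_{ij}$ with a union bound over the $N$ columns; your identification of the exponent via Pinsker's inequality $I(x\|p)\ge 2(x-p)^2$ is exactly Hoeffding's bound and recovers $c_{p,\Lambda}$ on the nose.
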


\smallskip

Next, we also remind some important result in \cite{A}.
\begin{lemma}\label{lo}
We assume that $\Lambda p<1$ and recall \eqref{mA}.  Then
$\Omega_{N,K}\subset\Omega_{N}^{1}\subset\{|||Q_{N}|||_{r}\le C, $ for all $r \in[1,\infty]\}\subset\{\sup_{i=1,\cdots,N}\ell_{N}(i)\le C\}$, where $C=(1-a)^{-1}$. For any $\alpha>0$, there exists a constant $C_{\alpha}$ such that 
$$\mathbb{P}(\mathcal{A}_{N})\ge 1-C_{\alpha}N^{-\alpha}.$$

\end{lemma}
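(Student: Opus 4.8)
The plan is to handle the two assertions of the lemma separately: first the chain of set inclusions, which is purely algebraic, and then the concentration estimate for $\mathcal{A}_N$.

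\textbf{Set inclusions.} The inclusion $\Omega_{N,K}\subset\Omega_N^1$ is immediate from the definitions, since $\Omega_{N,K}=\Omega^1_{N,K}\cap\Omega^2_{N,K}$ and each of $\Omega^1_{N,K}=\Omega^1_N\cap\mathcal{F}_N^{K,1}$ and $\Omega^2_{N,K}=\Omega^1_N\cap\mathcal{F}_N^{K,2}$ contains $\Omega_N^1$ as a factor. For the middle inclusion I would use the Neumann series: on $\Omega_N^1$ one has $\Lambda|||A_N|||_r\le a<1$ for every $r\in[1,\infty]$, so $\sum_{n\ge0}\Lambda^nA_N^n$ converges in each operator norm $|||\cdot|||_r$ to $(I-\Lambda A_N)^{-1}=Q_N$, and the triangle inequality gives $|||Q_N|||_r\le\sum_{n\ge0}(\Lambda|||A_N|||_r)^n\le\sum_{n\ge0}a^n=(1-a)^{-1}=C$. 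For the last inclusion I would note that $A_N$, hence every power $A_N^n$, hence $Q_N$, has nonnegative entries; consequently $\ell_N(i)=\sum_jQ_N(i,j)=(Q_N\boldsymbol{1}_N)(i)\ge0$, so $\sup_{i}\ell_N(i)=\|Q_N\boldsymbol{1}_N\|_\infty\le|||Q_N|||_\infty\,\|\boldsymbol{1}_N\|_\infty\le C$.

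\textbf{Probability bound on $\mathcal{A}_N$.} I would write $L_N(i)-p=N^{-1}\sum_{j=1}^N(\theta_{ij}-p)$, an average of $N$ i.i.d.\ centered bounded random variables (the diagonal term $\theta_{ii}$ is part of the i.i.d.\ family and causes no difficulty), and similarly $C_N(j)-p=N^{-1}\sum_{i=1}^N(\theta_{ij}-p)$. Hoeffding's inequality then gives, uniformly in $i$, $\mathbb{P}(|L_N(i)-p|\ge\tfrac12N^{-1/4})\le 2\exp(-\tfrac12\sqrt N)$, and the same bound for $|C_N(j)-p|$. A union bound over the $2N$ indices shows that, off an event of probability at most $4N\exp(-\tfrac12\sqrt N)$, one has $\max_i|L_N(i)-p|<\tfrac12N^{-1/4}$ and $\max_j|C_N(j)-p|<\tfrac12N^{-1/4}$, whence $\|\boldsymbol{L}_N-p\boldsymbol{1}_N\|_2\le N^{1/2}\cdot\tfrac12N^{-1/4}=\tfrac12N^{1/4}$ and likewise for $\boldsymbol{C}_N$, so that $\mathcal{A}_N$ occurs. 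Since $4N\exp(-\tfrac12\sqrt N)=o(N^{-\alpha})$ for every $\alpha>0$, this yields $\mathbb{P}(\mathcal{A}_N)\ge1-C_\alpha N^{-\alpha}$.

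I do not expect a serious obstacle here. The only point requiring a little care is that a naive second-moment/Chebyshev argument (using $\E[\|\boldsymbol{L}_N-p\boldsymbol{1}_N\|_2^2]=p(1-p)$) only produces the rate $N^{-1/2}$, which is too weak for an arbitrary polynomial exponent $\alpha$; one must therefore invoke a genuine concentration inequality — Hoeffding as above, or equivalently a high-moment (Rosenthal-type) estimate $\E[\|\boldsymbol{L}_N-p\boldsymbol{1}_N\|_2^{2k}]\le C_k$ followed by Markov's inequality with $k$ chosen large.
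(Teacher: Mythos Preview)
Your proof is correct and self-contained. The paper does not actually reprove this lemma; it simply cites \cite[Notation 12 and Proposition 14, Step 1]{A}, so there is no ``paper's approach'' to compare with beyond the observation that your Neumann-series argument for the inclusions and your Hoeffding/union-bound argument for $\mathbb{P}(\mathcal{A}_N)$ are exactly the standard steps one expects the cited reference to contain.
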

\begin{proof}
See \cite[Notation 12 and Proposition 14, Step 1]{A}.
\end{proof}

\subsection{Some auxilliary processes}\label{aux}
Based on \eqref{sssy}, we first introduce a family of martingales: for $i=1,\dots,N$,
$$
M_{t}^{i,N}=\int_{0}^{t}\int_{0}^{\infty}  \boldsymbol{1}_{\{z\le\lambda_{s}^{i,N}\}}\widetilde{\pi}^{i}(ds,dz),
$$
where $\widetilde{\pi}^{i}(ds,dz)=\pi^i(ds,dz)-dsdz$.
We further define a family of centered   processes $U_{t}^{i,N}=Z_{t}^{i,N}-\mathbb{E}_{\theta}[Z_{t}^{i,N}]$.

\vip

Let $\boldsymbol{Z}_{t}^{N}$ (resp. $\boldsymbol{U}_{t}^{N}$, $\boldsymbol{M}_{t}^{N}$) denote the $N$-dimensional vector  with  coordinates $Z_{t}^{i,N}$ (resp. $U_{t}^{i,N}$, $M_{t}^{i,N}$). Define the vectors
$$
\boldsymbol{Z}_{t}^{N,K}=I_{K}\boldsymbol{Z}_{t}^{N},\quad \boldsymbol{U}_{t}^{N,K}=I_{K}\boldsymbol{U}_{t}^{N},
$$
and the corresponding averages $$\bar{Z}^{N,K}_{t}=K^{-1}\sum_{i=1}^{K}Z_{t}^{i,N},\,  \bar{U}^{N,K}_{t}=K^{-1}\sum_{i=1}^{K}U_{t}^{i,N},\,  \bar{M}^{N,K}_{t}=K^{-1}\sum_{i=1}^{K}M_{t}^{i,N}.$$
From \cite[Remark 10 and Lemma 11]{A}, we recall the following identities:
\begin{align}
\label{ee1}&\mathbb{E}_{\theta}[\boldsymbol{Z}_{t}^{N,K}]=\mu\sum_{n\ge0}\Big[\int_{0}^{t}s\phi^{*n}(t-s)ds\Big]I_{K}A_{N}^{n}\boldsymbol{1}_{N},\\
\label{ee2}&\boldsymbol{U}_{t}^{N,K}=\sum_{n\ge0}\int_{0}^{t}\phi^{*n}(t-s)I_{K}A_{N}^{n}\boldsymbol{M}_{s}^{N}ds,\\
\label{ee3}&[M^{i,N},M^{j,N}]_{t}=\boldsymbol{1}_{\{i=j\}}Z_{t}^{i,N}.
\end{align}
In particular, for $i=1,\cdots, N$,
\begin{align}\label{UPro}
U^{i,N}_t =  \sum_{n\geq 0} \intot \phi^{*n}(t-s) \sum_{j=1}^N A_N^n(i,j)M^{j,N}_sds.
\end{align}
Adopting the convention that $\phi^{*0}(s)ds=\delta_0(ds)$ and 
$\int_{0}^{t}s\phi^{*0}(t-s)ds=t$,  we establish some prior estimates  for the intensity process $\lambda_t^{i,N}$ defined by $(\ref{sssy})$ and  the processes introduced in Section \ref{aux}. We first review the following results established in  \cite[Lemma 6.1]{D}.

\begin{lemma}[Lemma 6.1,\cite{D}]\label{Zt}
Assume  \eqref{H(q)}  for some $q \ge 1$. 

\vip

(i) For all $r$ in $[1,\infty]$, all  $t \ge0$, a.s.,
$$
\boldsymbol{1}_{\Omega_{N,K}}\|\mathbb{E}_{\theta}[\boldsymbol{Z}_{t}^{N,K}]\|_{r}\le CtK^{\frac{1}{r}}.
$$

(ii) For any $r\in[1,\infty]$, for all $t\geq s \geq 0$,
$$
\boldsymbol{1}_{\Omega_{N,K}}\|\mathbb{E}_{\theta}[\boldsymbol{Z}^{N,K}_{t}-\boldsymbol{Z}_{s}^{N,K}-\mu(t-s)\boldsymbol{\ell}_{N}^{K}]\|_{r}\le C(\min\{1,s^{1-q}\})K^{\frac{1}{r}}.
$$

\end{lemma}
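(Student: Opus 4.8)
The proof of Lemma \ref{Zt} should follow by expanding the series representations \eqref{ee1} and using the geometric-type control on the operator norms of $I_K A_N^n$ provided by the event $\Omega_{N,K}$, together with the moment conditions in \eqref{H(q)}. I sketch the plan below.

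\textbf{Plan for part (i).} Start from the identity \eqref{ee1}, $\mathbb{E}_{\theta}[\boldsymbol{Z}_{t}^{N,K}]=\mu\sum_{n\ge0}\big[\int_{0}^{t}s\phi^{*n}(t-s)ds\big]I_{K}A_{N}^{n}\boldsymbol{1}_{N}$. Bounding the $r$-norm by the triangle inequality, I would split off the $n=0$ term (which equals $\mu t\,\boldsymbol{1}_K$, contributing $\mu t K^{1/r}$ after using the convention $\int_0^t s\phi^{*0}(t-s)ds=t$) and for $n\ge1$ write $I_K A_N^n = (I_K A_N)A_N^{n-1}$, so that $\|I_K A_N^n \boldsymbol{1}_N\|_r \le |||I_K A_N|||_r\, |||A_N|||_r^{n-1}\,\|\boldsymbol{1}_N\|_r$. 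On $\Omega_{N,K}$ we have $\Lambda|||I_K A_N|||_r \le (K/N)^{1/r}a$ and $\Lambda|||A_N|||_r\le a$ (from $\Omega_N^1$), while $\|\boldsymbol{1}_N\|_r = N^{1/r}$; multiplying these gives $\|I_K A_N^n\boldsymbol{1}_N\|_r \le \Lambda^{-n}a^n K^{1/r}$. Then I need the bound $\int_0^t s\phi^{*n}(t-s)ds \le Ct\Lambda^n$ for some constant independent of $n$ (this uses that $\int_0^\infty \phi^{*n}=\Lambda^n$ and an elementary splitting $s\le (t-s)+s$, controlling $\int_0^t(t-s)\phi^{*n}(t-s)ds$ by $t\int\phi^{*n}$ and $\int_0^t s\,\phi^{*(n-1)}*\phi(\cdots)$ recursively, or more directly by noting $\int_0^\infty s\phi^{*n}(s)ds \le n\Lambda^{n-1}\int_0^\infty s\phi(s)ds$ which is handled by the $q\ge1$ moment assumption, and $\int_0^t s\phi^{*n}(t-s)ds\le t\Lambda^n + \int_0^\infty u\phi^{*n}(u)du$). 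Summing $\sum_{n\ge0} Ct\Lambda^n \cdot \Lambda^{-n}a^n K^{1/r} = Ct K^{1/r}\sum_{n\ge0}a^n = CtK^{1/r}/(1-a)$, which gives the claim since $a<1$.

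\textbf{Plan for part (ii).} The key algebraic observation is that $\mu(t-s)\boldsymbol{\ell}_N^K = \mu(t-s)\sum_{n\ge0}\Lambda^n I_K A_N^n \boldsymbol{1}_N$, so the quantity $\mathbb{E}_\theta[\boldsymbol{Z}_t^{N,K}-\boldsymbol{Z}_s^{N,K}-\mu(t-s)\boldsymbol{\ell}_N^K]$ equals $\mu\sum_{n\ge0}\big[\int_0^t u\phi^{*n}(t-u)du - \int_0^s u\phi^{*n}(s-u)du - (t-s)\Lambda^n\big]I_K A_N^n\boldsymbol{1}_N$. Writing $g_n(t):=\int_0^t u\phi^{*n}(t-u)du$, the bracket is $g_n(t)-g_n(s)-(t-s)\Lambda^n$. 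I would show $|g_n(t)-g_n(s)-(t-s)\Lambda^n|\le C\min\{1,s^{1-q}\}\Lambda^n$ (or with a factor growing subexponentially in $n$, absorbed by the geometric gain $a^n<\Lambda^n\cdot(\text{something}<1)$). This estimate is where the $\int_0^\infty s^q\phi(s)ds<\infty$ assumption enters: one writes $g_n(t) = \int_0^t(t-v)\phi^{*n}(v)dv \cdot$ wait — more cleanly, $g_n(t)=\int_0^\infty u\phi^{*n}(t-u)\indiq_{\{u\le t\}}du$ and using $\int_0^\infty\phi^{*n}=\Lambda^n$, one gets $g_n(t)-t\Lambda^n = -\int_t^\infty (t-u)\cdots$-type tail terms controlled by $\int_t^\infty \phi^{*n}$ and $\int_t^\infty u\phi^{*n}(u)du$, both $O(t^{-q}\Lambda^n)$ by Markov's inequality applied to the $q$-th moment of $\phi^{*n}$ (which is $\le C n^q \Lambda^{n-1}\int s^q\phi(s)ds$ by convolution, a polynomial-in-$n$ factor). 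Then $g_n(t)-g_n(s)-(t-s)\Lambda^n = (g_n(t)-t\Lambda^n)-(g_n(s)-s\Lambda^n)$, and each term is $O(\min\{1,s^{1-q}\}\,n^{?}\Lambda^n)$ — actually the cleaner route is to note $g_n(t)-g_n(s)-(t-s)\Lambda^n=\int_s^t[g_n'(r)-\Lambda^n]dr$ where $g_n'(r)=\int_0^\infty\phi^{*n}(r-u)\indiq_{\{u\le r\}}du = \int_0^r\phi^{*n} \to \Lambda^n$, so the integrand is $-\int_r^\infty\phi^{*n}(u)du = O(r^{-q}\cdot n^? \Lambda^n)$, and integrating from $s$ to $t$ gives, after handling $\int_s^\infty r^{-q}dr = O(s^{1-q})$ (for $q>1$; for $q=1$ use the trivial bound $\le C$), the stated rate. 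Combining with $\|I_K A_N^n\boldsymbol{1}_N\|_r\le\Lambda^{-n}a^n K^{1/r}$ on $\Omega_{N,K}$ and summing the now-convergent series $\sum_n (n^?a^n)<\infty$ completes the proof.

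\textbf{Main obstacle.} The routine part is the operator-norm bookkeeping on $\Omega_{N,K}$; the genuine work is the deterministic convolution estimate, i.e. controlling $\int_0^t u\phi^{*n}(t-u)du - (t-s)\Lambda^n - \int_0^s u\phi^{*n}(s-u)du$ uniformly (up to a subexponential factor in $n$) at rate $\min\{1,s^{1-q}\}\Lambda^n$. The delicate points there are (a) producing the $q$-th moment bound $\int_0^\infty u^q\phi^{*n}(u)du \le C_q\, n^{q-1}\Lambda^{n-1}\int_0^\infty u^q\phi(u)du$ by iterating the convolution inequality $\int u^q (f*g)\le C_q(\int u^q f\int g + \int f\int u^q g)$, and (b) checking that the resulting polynomial factor $n^{q-1}$ (or whatever power appears) is indeed absorbed by the geometric ratio $(a/\Lambda)^n\cdot\Lambda^n = a^n$ with $a<1$, so that the series over $n$ still converges. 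Everything else is a direct application of \eqref{ee1}, Lemma \ref{lo}, and the definition of $\Omega_{N,K}$.
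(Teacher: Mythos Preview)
Your plan is correct, and in fact the paper itself gives no proof of this lemma: it is simply quoted from \cite[Lemma 6.1]{D}. What you outline is essentially how the result is established there, so there is nothing to compare against in the present paper.

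Two small remarks. In part (i) you are overcomplicating the scalar estimate: since $0\le s\le t$ on the domain of integration, one has directly $\int_0^t s\,\phi^{*n}(t-s)\,ds \le t\int_0^t \phi^{*n}(t-s)\,ds \le t\Lambda^n$, and no moment hypothesis on $\phi$ is needed at this stage. In part (ii), your route via $g_n'(r)=\int_0^r\phi^{*n}(u)\,du$ is clean; to get the uniform bound $\min\{1,s^{1-q}\}$ it is slightly more convenient to write
\[
g_n(t)-g_n(s)-(t-s)\Lambda^n=\int_t^\infty(v-t)\phi^{*n}(v)\,dv-\int_s^\infty(v-s)\phi^{*n}(v)\,dv,
\]
whose absolute value is bounded by $\int_s^\infty v\,\phi^{*n}(v)\,dv$, and then invoke either the first moment $\int_0^\infty v\phi^{*n}(v)\,dv = n\Lambda^{n-1}\int_0^\infty v\phi(v)\,dv$ (for the bound $Cn\Lambda^n$) or Markov's inequality with the $q$-th moment $\int_0^\infty v^q\phi^{*n}(v)\,dv\le Cn^q\Lambda^n$ (for the bound $Cn^q\Lambda^n s^{1-q}$). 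The polynomial factor $n^q$ is indeed absorbed by $a^n$, exactly as you anticipate; this tail estimate is recorded in the paper as $\int_r^\infty\phi^{*n}(u)\,du\le Cn^q\Lambda^n r^{-q}$ (see the proof of Lemma~\ref{lambar}(ii), citing \cite[Lemma 15-(ii)]{A}).
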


\begin{lemma}\label{lambar}
Assume  \eqref{H(q)}  for some $q\geq 1$. Then the following inequalities holds a.s. on  $\Omega_{N,K},$ 
\vip
$ (i)\ 
\sup_{t\in \R_+}\max_{i=1,...,N}\mathbb{E}_\theta[\lambda_t^{i,N}]\le \frac{\mu}{1-a}
$ \quad  and \quad  $\sup_{t\in \R_+}\max_{i=1,...,N}\mathbb{E}_\theta[(\lambda_t^{i,N})^2]^{\frac{1}{2}}\le C$.
\vip
$ (ii)$ For all $t\geq 1$,  
$$\frac{1}{K}\sum_{i=1}^{K}\mathbb{E}_{\theta}\Big[\Big(\lambda_{t}^{i,N}-\mu\ell_{N}(i)\Big)^{2}\Big]^{\frac{1}{2}}\le \frac{C}{t^{q}}+\frac{C}{\sqrt{N}}.$$

\vip
$(iii)$ For all $t\geq s+1 \geq 1$,
$$\max_{i=1,\dots,N}\mathbb{E}_{\theta}[(U^{i,N}_{t}-U_{s}^{i,N})^4]\le C(t-s)^{2}\quad  \hbox{and}\quad
  \max_{i=1,\dots,N}\mathbb{E}_{\theta}[(Z^{i,N}_{t}-Z_{s}^{i,N})^4]\le C (t-s)^{4}. 
$$

\vip
(iv) For all $t\geq s+1 \geq 1$, 
$$
 \mathbb{E}_{\theta}[(\bar{U}^{N,K}_{t}-\bar{U}_{s}^{N,K})^4]\le \frac{C (t-s)^{2}}{K^{2}}\quad \hbox{and}\quad
 \mathbb{E}_{\theta}[(\bar{Z}^{N,K}_{t}-\bar{Z}_{s}^{N,K})^4]\le C (t-s)^{4}. 
$$

\end{lemma}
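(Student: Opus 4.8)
\textbf{Proof plan for Lemma \ref{lambar}.}

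The plan is to establish the four statements in order, since the later ones build on the earlier ones. For part (i), I would start from the defining formula $\lambda_t^{i,N}=\mu+N^{-1}\sum_j\theta_{ij}\int_0^{t-}\phi(t-s)dZ_s^{j,N}$, take $\Et$, and use $\Et[dZ_s^{j,N}]=\Et[\lambda_s^{j,N}]ds$ to obtain the renewal-type inequality $\Et[\lambda_t^{i,N}]\le\mu+\sum_j A_N(i,j)\int_0^t\phi(t-s)\Et[\lambda_s^{j,N}]ds$. Writing $g(t)=\max_i\Et[\lambda_t^{i,N}]$, iterating this (equivalently expanding in powers of $\Lambda A_N$ exactly as in \eqref{ee1}) and using $\Lambda|||A_N|||_\infty\le a$ on $\Omega_{N,K}$ gives $\sup_t g(t)\le\mu\sum_{n\ge0}a^n=\mu/(1-a)$. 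For the second moment bound I would use $\lambda_t^{i,N}=\mu+(\text{a sum of integrals against }dZ^{j,N})$, bound the $L^2$ norm by splitting $dZ^{j,N}=dM^{j,N}+\lambda^{j,N}ds$ (Minkowski), control the martingale part via \eqref{ee3} and Burkholder together with the first-moment bound just proved, and again sum the geometric series in $a$; the fact that $\int_0^\infty\phi^2<\infty$ in \eqref{H(q)} is what makes the martingale contribution finite.

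For part (ii), the point is that $\mu\ell_N(i)=\mu\sum_{n\ge0}\Lambda^n A_N^n\boldsymbol 1_N(i)$ is exactly the $t\to\infty$ limit of $\Et[\lambda_t^{i,N}]$. Using the representation of $\Et[\lambda_t^{i,N}]$ as $\mu\sum_{n\ge0}\big[\int_0^t\phi^{*n}(t-s)ds\big]A_N^n\boldsymbol 1_N(i)$ (differentiating/adapting \eqref{ee1}), the difference $\lambda_t^{i,N}-\mu\ell_N(i)$ splits into a deterministic part $\mu\sum_{n\ge0}\big[\int_0^t\phi^{*n}-\Lambda^n\big]A_N^n\boldsymbol 1_N(i)$, which is $O(t^{-q})$ uniformly in $i$ by the moment condition $\int_0^\infty s^q\phi(s)ds<\infty$ (the tail $\int_t^\infty\phi^{*n}$ decays like $t^{-q}$ after summing the geometric factors, as in the proof of Lemma 16 in \cite{A}), plus the centered martingale part. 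For the martingale part I would take the $L^2(\Et)$ norm, average over $i\le K$, use \eqref{ee3}/Burkholder and $\max_j\Et[Z_u^{j,N}]\le Cu$ together with $\Lambda|||I_KA_N|||_1\le aK/N$; the $1/\sqrt N$ comes from the usual mean-field cancellation when summing the i.i.d.-like increments. Cauchy--Schwarz turns the average of $L^2$ norms into the stated bound $Ct^{-q}+CN^{-1/2}$.

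Part (iii) is a pathwise fourth-moment estimate. For $U^{i,N}_t-U^{i,N}_s$ I would use the explicit expansion \eqref{UPro}, separate $n=0$ from $n\ge1$ by Minkowski, and on the $n=0$ term apply Burkholder to $M^{i,N}_t-M^{i,N}_s$ with quadratic variation $Z^{i,N}_t-Z^{i,N}_s$, then bound $\Et[(Z_t^{i,N}-Z_s^{i,N})^2]\le C(t-s)^2$ via part (i). For $n\ge1$ the same Burkholder computation as in the (commented-out) proof of Lemma \ref{Zt} gives each term a factor $|||A_N|||_1^{n}$-like decay, and summing the geometric series in $a$ closes it. The $Z$-estimate then follows by writing $Z_t^{i,N}-Z_s^{i,N}=(U_t^{i,N}-U_s^{i,N})+\Et[Z_t^{i,N}-Z_s^{i,N}]$, using Lemma \ref{Zt}(ii) (or just part (i) here) for the drift part, which is $O(t-s)$, so its fourth power is $O((t-s)^4)$, dominating the $O((t-s)^2)$ fluctuation. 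Finally part (iv) is the averaged version of (iii): $\bar U^{N,K}_t-\bar U^{N,K}_s=K^{-1}\sum_{i=1}^K(U_t^{i,N}-U_s^{i,N})$, and the extra factor $K^{-2}$ in $\Et[(\bar U^{N,K}_t-\bar U^{N,K}_s)^4]$ comes from the mean-field averaging — concretely, redo the Burkholder estimate keeping the sum $\sum_{i=1}^K A_N^n(i,j)=|||I_KA_N|||_1$-type bound in front, exactly the chain of inequalities displayed in the commented proof of Lemma \ref{Zt}(iii), giving $C(t-s)^2/K^2$; and $\bar Z^{N,K}_t-\bar Z^{N,K}_s=(\bar U^{N,K}_t-\bar U^{N,K}_s)+\Et[\bar Z^{N,K}_t-\bar Z^{N,K}_s]$ with the drift $O(t-s)$ by Lemma \ref{Zt}(ii).

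I expect the main obstacle to be part (ii): getting the sharp $t^{-q}$ rate requires carefully estimating $\int_0^\infty\phi^{*n}(u)du-\Lambda^n=0$ actually, so one must instead control $\int_0^t\phi^{*n}(u)du-\Lambda^n=-\int_t^\infty\phi^{*n}(u)du$ and show that $\sum_{n\ge0}|||A_N^n|||_\infty\int_t^\infty\phi^{*n}(u)du\le Ct^{-q}$, which needs the moment bound $\int s^q\phi^{*n}(s)ds\le Cn^q\Lambda^n$ (from $\int s^q\phi<\infty$ and convexity) combined summably against $|||\Lambda A_N|||_\infty^n\le a^n$ on $\Omega_{N,K}$ — essentially reproducing the key technical step behind Lemma 16 in \cite{A}. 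Everything else is a routine (if lengthy) combination of Minkowski, Burkholder, \eqref{ee3}, and summation of geometric series on the event $\Omega_{N,K}$.
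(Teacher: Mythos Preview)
Your proposal is correct and, for parts (i), (iii), (iv), follows essentially the same approach as the paper: Minkowski to split $dZ=dM+\lambda\,ds$, Burkholder together with \eqref{ee3}, and summation of the geometric series controlled by $\Lambda|||A_N|||_\infty\le a$ (resp.\ $\Lambda|||I_KA_N|||_1\le aK/N$ for the averaged version) on $\Omega_{N,K}$.

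For part (ii) there is a minor organizational difference worth noting. You propose to decompose $\lambda_t^{i,N}-\mu\ell_N(i)=(\Et[\lambda_t^{i,N}]-\mu\ell_N(i))+(\lambda_t^{i,N}-\Et[\lambda_t^{i,N}])$, bounding the first (deterministic) piece by $Ct^{-q}$ via the tail estimate $\int_t^\infty\phi^{*n}\le Cn^q\Lambda^n t^{-q}$ summed against $a^n$, and the second (centered) piece by $C/\sqrt N$ via a direct $L^2$ computation using \eqref{ee3} and $\int\phi^2<\infty$. The paper instead writes a single renewal-type inequality for $F^{N,K}_t:=K^{-1}\sum_{i\le K}\Et[(\lambda_t^{i,N}-\mu\ell_N(i))^2]^{1/2}$, namely $F^{N,K}_t\le (a/\Lambda)\int_0^t\phi(t-s)F^{N,N}_s\,ds+g_N(t)$ with $g_N(t)=C\int_t^\infty\phi+C/\sqrt N$, and then iterates (Gronwall/Picard) to reach the same bound. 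Both routes rely on exactly the same two ingredients you identified --- the $n^q\Lambda^n t^{-q}$ tail bound for $\phi^{*n}$ and the $1/\sqrt N$ martingale cancellation --- so this is a difference of packaging rather than of substance; your direct decomposition is arguably cleaner, while the paper's self-referential argument avoids writing out the full series expansion of $\lambda_t^{i,N}$.
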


The proof of Lemma \ref{lambar} is tedious which is deferred to Appendix \ref{prof: lambar}.

\section{Some limit theorems for the random matrix}\label{sec3}
In this section, we prove the asymptotic behavior of the quantities associated with the random matrix $Q_N$, which determines the  asymptotic behavior of  the estimators $\e_t^{N,K}, \mathcal{V}_{t}^{N,K}$ and $\mathcal{X}_{\Delta,t}^{N,K}$, defined in Section \ref{TRISC}.

\subsection{First estimator}\label{Fe}
Recall from Section \ref{subimn} that the event $\Omega_{N,K}$ and the quantities $\ell_{N}(i)= \sum_{j=1}^{N} Q_{N}(i,j)$ and $\bar{\ell}^K_N= \frac{1}{K} \sum_{i=1}^K \ell_N(i)$. 
As established in Lemma \ref{barell}, the estimator $\e_t^{N,K}=(\bar{Z}_{2t}^{N,K}-\bar{Z}_{t}^{N,K})/t$ is closely related to $\bar{\ell}_{N}^{K}$. To establish the limit of $\e_t^{N,K}$, we therefore require the following inequality for $\bar{\ell}_{N}^{K}$ which proved in  \cite[Lemma 5.9]{D}.

\begin{lemma}[Lemma 5.9, \cite{D}]\label{ellp}
If $\Lambda p<1$, there is $C>0$ such that for all $1\leq K \leq N$,
$$
\mathbb{E}\Big[\boldsymbol{1}_{\Omega_{N,K}}\Big|\bar{\ell}_{N}^{K}-\frac{1}{1-\Lambda p}\Big|^{2}\Big]\le\frac{C}{NK}.
$$
\end{lemma}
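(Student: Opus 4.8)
\textbf{Proof proposal for Lemma \ref{ellp}.}

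The plan is to exploit the series expansion $Q_N = \sum_{n\ge 0}\Lambda^n A_N^n$, valid on $\Omega_{N,K}\subset\Omega_N^1$, so that $\bar\ell_N^K = \frac1K\sum_{i=1}^K\ell_N(i) = \sum_{n\ge 0}\Lambda^n \cdot \frac1K\sum_{i=1}^K (A_N^n\boldsymbol 1_N)(i)$. Each summand $\frac1K\sum_{i=1}^K (A_N^n\boldsymbol 1_N)(i) = \frac1K\sum_{i=1}^K\sum_{j=1}^N A_N^n(i,j)$ is a normalized count over length-$n$ paths in the random graph; its unconditional expectation is exactly $p^n$ (each edge in a path contributes an independent factor $\E[\theta]/N \cdot N = p$, using $A_N(i,j)=\theta_{ij}/N$ and summing the free index). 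Hence $\E[\bar\ell_N^K] = \sum_n \Lambda^n p^n = (1-\Lambda p)^{-1}$ up to the contribution of the complement of $\Omega_{N,K}$, which by Lemma \ref{ONK} has probability at most $CNe^{-c_{p,\Lambda}K}$ and on which the quantities are bounded by Lemma \ref{lo}; this tail is negligible compared to $1/(NK)$ for $K$ large. So the real task is to bound the variance of $\sum_n\Lambda^n Y_n$ where $Y_n := \frac1K\sum_{i=1}^K(A_N^n\boldsymbol 1_N)(i)$.

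First I would write $\bar\ell_N^K - (1-\Lambda p)^{-1} = \sum_{n\ge 0}\Lambda^n(Y_n - p^n)$ (on $\Omega_{N,K}$) and apply Minkowski in $L^2$: $\|\boldsymbol 1_{\Omega_{N,K}}(\bar\ell_N^K-(1-\Lambda p)^{-1})\|_2 \le \sum_n \Lambda^n \|Y_n - p^n\|_2 + (\text{tail term})$. Then the crux is the estimate $\|Y_n - p^n\|_2 \le C \rho^n (NK)^{-1/2}$ for some $\rho<1$ (with $\Lambda\rho<1$), so that the series converges. To get this, I would compute $\E[Y_n^2] = \frac1{K^2}\sum_{i,i'\le K}\E[(A_N^n\boldsymbol 1_N)(i)(A_N^n\boldsymbol 1_N)(i')]$, expanding as a sum over pairs of length-$n$ paths starting at $i$ and $i'$. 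Pairs of vertex-disjoint (edge-disjoint) paths contribute $p^{2n}$ exactly; the correction comes from paths sharing at least one edge. A path of length $n$ from a fixed start uses $N^{n}$ choices of the intermediate/endpoint vertices with weight $N^{-n}$ each (from the $N^{-n}$ in $A_N^n$), so counting configurations where the two paths share $k\ge 1$ edges loses a factor $N^{-1}$ per shared edge in the "number of free vertices," while the sharing of a start index between $i$ and $i'$ (only $K$ diagonal pairs out of $K^2$) produces the $1/K$; combining, the overlap terms are $O(\rho^{n}/(NK))$ uniformly, which is exactly the shape needed. This is the standard moment/path-counting computation for mean-field random matrices, and it is precisely the content of \cite[Lemma 5.9]{D}, so I would either cite it directly or reproduce this bookkeeping.

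The main obstacle is the careful combinatorial accounting in the variance computation: one must organize the double path sum by the pattern of shared edges/vertices and verify that every overlap pattern with at least one coincidence gains a factor $N^{-1}$ (beyond the $K^{-1}$ from the coincidence of the two start indices), while the geometric decay $\rho^n$ survives so the series $\sum_n \Lambda^n\rho^n(NK)^{-1/2}$ converges under $\Lambda p<1$. A secondary technical point is that the clean expansion $Q_N=\sum\Lambda^n A_N^n$ and the bounds $\sup_i\ell_N(i)\le C$ only hold on $\Omega_{N,K}$, so one must keep the indicator $\boldsymbol 1_{\Omega_{N,K}}$ throughout and absorb the complement using Lemma \ref{ONK} together with the deterministic bound $|\bar\ell_N^K|\le C$ valid there; since $Ne^{-c_{p,\Lambda}K}$ decays faster than any polynomial in $K$, this contributes $o((NK)^{-1})$. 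Given that this is quoted verbatim as \cite[Lemma 5.9]{D}, the expected write-up is short: state the series expansion, reduce to the variance of $Y_n$, invoke the path-counting estimate, and sum the geometric series.
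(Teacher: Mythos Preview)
The paper does not give a proof here; it simply cites \cite[Lemma 5.9]{D}, exactly as you anticipate in your final paragraph. Your series-expansion plus path-counting sketch is the correct outline of how such a result is established; the only small imprecision is that $\E[Y_n]$ equals $p^n$ only up to an $O(n^2 p^{n-1}/N^2)$ correction coming from self-intersecting paths (since $\theta_{ij}^2=\theta_{ij}$), which is harmless for the final $1/(NK)$ bound once summed against $\Lambda^n$.
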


\subsection{Second estimator}\label{Sece}
Recall the estimator $$\mathcal{V}_{t}^{N,K}
=\frac{N}{K}\sum_{i=1}^{K}\Big[\frac{Z_{2t}^{i,N}-Z_{t}^{i,N}}t-\varepsilon_{t}^{N,K}\Big]^{2}-\frac{N}{t}\varepsilon_{t}^{N,K},$$ 
with $\varepsilon _{t}^{N,K}=(\bar{Z}_{2t}^{N,K}-\bar{Z}_{t}^{N,K})/t$, and the definitions from Section \ref{subimn} of the matrices $A_N$, $Q_N$, the event $\Omega_{N,K}$,  the quantities $\ell_{N}(i)=\sum_{j=1}^{N}Q_{N}(i,j)$ and $\bar{\ell}^K_N=\frac{1}{K}\sum_{i=1}^K\ell_N(i)$. Furthermore, there is  a close connection between the second estimator $\cV_t^{N,K}$ and $\cV_\infty^{N,K}=\frac{N \mu^2}{K}\|\bx^{K}_{N}\|_{2}^{2}$ (see Theorem \ref{VVNK}),  where $\boldsymbol{x}^{K}_{N}=
(x_N^K(i))_{i=1,\dots,N}=\boldsymbol{\ell}_{N}^K- \bar{\ell}^{K}_{N}\boldsymbol{1}_K$ with $x_{N}^{K}(i)=(\ell_{N}(i)-\bar{\ell}^{K}_{N})\indiq_{\{i\leq K\}}$ defined  in Section \ref{subimn}. Hence, determining the limit of $\cV_t^{N,K}$ is equivalent to finding  the limit of $\cV_\infty^{N,K}$.

\begin{theorem}\label{21}
Assume $\Lambda p<1$. Then,  as $(N,K)\to (\infty,\infty),$ and  $Ne^{-c_{p,\Lambda}K}\to 0$ with $c_{p,\Lambda}=(1-\Lambda p)^2/(2\Lambda^2)$,
$$
\indiq_{\Omega_{N,K}} \sqrt K\Big( \cV_\infty^{N,K}-\frac{\mu^2\Lambda^{2}p(1-p)}{(1-\Lambda p)^{2}}\Big)
\stackrel{d}\longrightarrow \mathcal{N}\Big(0,\Big(\mu^2\Lambda^{2}\frac{p(1-p)}{(1-\Lambda p)^2}\Big)^{2}\Big).$$
\end{theorem}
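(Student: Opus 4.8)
The plan is to show that $\cV_\infty^{N,K}$ coincides, up to an error that is $o_{\mathbb P}(K^{-1/2})$ on $\Omega_{N,K}$, with the suitably scaled sample variance of $K$ i.i.d.\ rescaled centred binomials, and then to apply a triangular–array central limit theorem for sample variances. First I would record the exact identity, valid on $\Omega_{N,K}$, $\boldsymbol{\ell}_N-\tfrac{1}{1-\Lambda p}\boldsymbol{1}_N=\tfrac{\Lambda}{1-\Lambda p}Q_N\boldsymbol{w}$, with $\boldsymbol{w}:=\boldsymbol{L}_N-p\boldsymbol{1}_N$, i.e.\ $w_i=N^{-1}\sum_{j=1}^N(\theta_{ij}-p)$ (it follows from $(I-\Lambda A_N)\boldsymbol{\ell}_N=\boldsymbol{1}_N$ and $\boldsymbol{L}_N=A_N\boldsymbol{1}_N$ on applying $Q_N$). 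Setting $\boldsymbol{r}:=(Q_N-I)\boldsymbol{w}$ and $\bar w^K=\tfrac1K\sum_{i\le K}w_i$, $\bar r^K=\tfrac1K\sum_{i\le K}r_i$, one gets $\ell_N(i)-\bar{\ell}^K_N=\tfrac{\Lambda}{1-\Lambda p}\big[(w_i-\bar w^K)+(r_i-\bar r^K)\big]$ for $i\le K$, hence
\begin{equation*}
\cV_\infty^{N,K}=\frac{\mu^2\Lambda^2}{(1-\Lambda p)^2}\,\frac NK\sum_{i=1}^K(w_i-\bar w^K)^2+\mathcal R^{N,K}_{\mathrm{cr}}+\mathcal R^{N,K}_{\mathrm{rem}},
\end{equation*}
with $\mathcal R^{N,K}_{\mathrm{rem}}=\tfrac{\mu^2\Lambda^2}{(1-\Lambda p)^2}\tfrac NK\sum_{i\le K}(r_i-\bar r^K)^2$ the purely-$\boldsymbol r$ contribution and $\mathcal R^{N,K}_{\mathrm{cr}}$ the cross contribution.

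\textbf{The error terms.} I would then show $\indiq_{\Omega_{N,K}}\mathcal R^{N,K}_{\mathrm{rem}}=o_{\mathbb P}(K^{-1/2})$ and $\indiq_{\Omega_{N,K}}\mathcal R^{N,K}_{\mathrm{cr}}=o_{\mathbb P}(K^{-1/2})$. On $\Omega_N^1\supset\Omega_{N,K}$ one has $\Lambda|||A_N|||_2\le a<1$ (Lemma \ref{lo}), so from $\boldsymbol r=\Lambda A_N(\boldsymbol w+\boldsymbol r)$ one gets $\|\boldsymbol r\|_2\le\tfrac{\Lambda}{1-a}\|A_N\boldsymbol w\|_2$; a direct second-moment computation, using $\mathbb E[w_j]=0$ and the independence of the rows of $(\theta_{ij})$, gives $\mathbb E\|A_N\boldsymbol w\|_2^2=O(N^{-1})$, whence $\mathbb E[\indiq_{\Omega_{N,K}}\|\boldsymbol r\|_2^2]=O(N^{-1})$ and $\indiq_{\Omega_{N,K}}\mathcal R^{N,K}_{\mathrm{rem}}=O_{\mathbb P}(K^{-1})$. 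For $\mathcal R^{N,K}_{\mathrm{cr}}$ a Cauchy--Schwarz estimate yields only $O_{\mathbb P}(K^{-1/2})$, which is not enough, so one argues directly from $\boldsymbol r=\sum_{n\ge1}\Lambda^nA_N^n\boldsymbol w$: the leading piece $\tfrac NK\sum_{i\le K}w_i(A_N\boldsymbol w)(i)=\tfrac1K\sum_{i\le K}\sum_j\theta_{ij}w_iw_j$ has conditional mean $O(N^{-1})$ (only the $j=i$ terms contribute) and conditional variance $O(K^{-1}N^{-1})$ (only ``diagonal'' pairings survive, again because $\mathbb E[w_j]=0$), while the higher-order terms and the centring corrections $\bar w^K,\bar r^K$ are of smaller order once the geometric series in $\Lambda|||A_N|||_2\le a$ is summed.

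\textbf{The main term and conclusion.} Put $v_i:=\sqrt N\,w_i=N^{-1/2}\sum_{j=1}^N(\theta_{ij}-p)$: the $v_i$, $i=1,\dots,K$, are i.i.d.\ (they involve pairwise disjoint rows of $(\theta_{ij})$), centred, with $\mathbb E[v_i^2]=p(1-p)$, uniformly bounded higher moments, and $\mathbb E[v_i^3]\to0$, $\mathbb E[v_i^4]$ convergent as $N\to\infty$ (moments of a rescaled centred $\mathrm{Binomial}(N,p)$). Since $\tfrac NK\sum_{i\le K}(w_i-\bar w^K)^2=\tfrac1K\sum_{i\le K}(v_i-\bar v^K)^2=\tfrac1K\sum_{i\le K}v_i^2-(\bar v^K)^2$, applying the Lindeberg central limit theorem to the i.i.d.\ triangular array $\big(v_i^2-p(1-p)\big)_{i\le K}$ and using $(\bar v^K)^2=O_{\mathbb P}(K^{-1})$ gives
\begin{equation*}
\sqrt K\Big(\frac NK\sum_{i=1}^K(w_i-\bar w^K)^2-p(1-p)\Big)\ \stackrel{d}{\longrightarrow}\ \mathcal N\big(0,\ {\textstyle\lim_N}\Var(v_1^2)\big).
\end{equation*}
Combining this with the previous step, Slutsky's lemma, and $\indiq_{\Omega_{N,K}}\stackrel{\mathbb P}{\longrightarrow}1$ (Lemma \ref{ONK}, valid because $Ne^{-c_{p,\Lambda}K}\to0$) yields $\indiq_{\Omega_{N,K}}\sqrt K\big(\cV_\infty^{N,K}-\tfrac{\mu^2\Lambda^2p(1-p)}{(1-\Lambda p)^2}\big)\stackrel{d}{\longrightarrow}\mathcal N\big(0,\tfrac{\mu^4\Lambda^4}{(1-\Lambda p)^4}\lim_N\Var(v_1^2)\big)$, which is the Gaussian in the statement once $\lim_N\Var(v_1^2)$ is evaluated from $\mathbb E[v_1^4]$.

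\textbf{Main obstacle.} The delicate point is the cross-term estimate: Cauchy--Schwarz misses the required rate by exactly the borderline factor, so one must exploit the asymptotic orthogonality of $\boldsymbol w$ and $(Q_N-I)\boldsymbol w$ through a careful conditional second-moment computation — tracking which $\theta_{ij}$ occur in each $\sum_{i\le K}w_i(A_N^n\boldsymbol w)(i)$, so that only ``diagonal'' pairings contribute (since $\mathbb E[w_j]=0$), and summing the resulting geometric series uniformly on $\Omega_{N,K}$, where $\Lambda|||A_N|||_2\le a<1$.
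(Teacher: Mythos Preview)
Your proposal is correct and follows essentially the same route as the paper. Both arguments split $\bx_N^K$ into a ``main'' part proportional to $\bX_N^K$ (the paper uses the random prefactor $\Lambda\bar\ell_N$, you use its deterministic limit $\Lambda/(1-\Lambda p)$, which is immaterial) plus a remainder of $L^2$-size $O(N^{-1/2})$, show the pure-remainder contribution is $O_{\mathbb P}(K^{-1})$, identify the cross term as the borderline step that Cauchy--Schwarz just misses, and then obtain the CLT from the i.i.d.\ structure of the rows $(L_N(i)-p)_{i\le K}$; indeed the paper's proof of Lemma~\ref{simple}(iv) is exactly your triangular-array CLT for $v_i^2$. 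The one substantive difference is in the cross-term bookkeeping: you propose to expand $\boldsymbol r=\sum_{n\ge1}\Lambda^nA_N^n\boldsymbol w$ and control each $\tfrac NK\sum_{i\le K}w_i(A_N^n\boldsymbol w)_i$ by moment computations, summing the geometric series; the paper instead invokes a recursive identity (from \cite[Lemma~5.11]{D}) writing $\bx_N^K-\Lambda\bar\ell_N\bX_N^K$ as $\Lambda I_KA_N(\bx_N-\Lambda\bar\ell_N\bX_N)$ plus explicit lower-order pieces, which together with Lemma~\ref{simple}(i)--(ii) reduces the whole cross term to a single $A_N$ application and an already-known $L^2$ bound on $\bx_N-\Lambda\bar\ell_N\bX_N$. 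The paper's route is cleaner (no need to track moments of $A_N^n$ for each $n$), but your direct expansion also works with careful pairing arguments of the kind you sketch.
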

 We first write  the following decomposition
\begin{align*}
&\sqrt{K}\Big(\cV_\infty^{N,K}\!-\!\frac{\mu^2\Lambda^{2}p(1-p)}{(1-\Lambda p)^{2}}\Big)\\
&=\frac{N\mu^2}{\sqrt{K}}\Big(\|\bx^{K}_{N}\|_{2}^{2}-(\Lambda\bar{\ell}_{N})^{2}\|\bX_{N}^{K}\|_{2}^{2}\Big)
+\frac{N(\mu\Lambda\bar{\ell}_{N})^{2}}{\sqrt{K}}\|\bX_{N}^{K}\|_{2}^{2}
-\frac{\mu^2\Lambda^{2}p(1-p)\sqrt{K}}{(1-\Lambda p)^{2}},
\end{align*}
where  $\boldsymbol{X}^{K}_{N}=
(X_N^K(i))_{i=1,\dots,N}=\boldsymbol{L}_{N}^K- \bar{L}^{K}_{N}\boldsymbol{1}_K$ with $X_{N}^{K}(i)=(L_{N}(i)-\bar{L}^{K}_{N})\indiq_{\{i\leq K\}}$, $L_{N}(i)=\sum_{j=1}^{N}A_{N}(i,j)$, and $\bar{L}^K_N=\frac{1}{K}\sum_{i=1}^KL_N(i)$  defined in Section \ref{subimn}.  The  proof of Theorem \ref{Sece} then proceeds by analyzing these terms separately. The term $\frac{N(\mu\Lambda\bar{\ell}_{N})^{2}}{\sqrt{K}}\|\bX_{N}^{K}\|_{2}^{2}-\frac{\mu^2\Lambda^{2}p(1-p)\sqrt{K}}{(1-\Lambda p)^{2}}$ constitutes the principal term (see Lemma \ref{simple}-(iv)), whereas the term $\frac{N\mu^2}{\sqrt{K}}(\|\bx^{K}_{N}\|_{2}^{2}-(\Lambda\bar{\ell}_{N})^{2}\|\bX_{N}^{K}\|_{2}^{2})$ is shown to be negligible (see Lemma \ref{simple}-(iii)).

\vip

We now turn to Lemma \ref{simple}, whose proof is deferred to Appendix \ref{app: prof: 4.4}.  

\begin{lemma}\label{simple}
Assume $\Lambda p<1$  and recall  $\mathcal{A}_N$ in (\ref{mA}), there is $C>0$ such that for all $1\leq K \leq N$, 

 $(i)\ 
\mathbb{E}[||(I_{K}A_{N})^{T}\bX_{N}^{K})||_{2}^{2}]\le \frac{CK^2}{N^{3}}.
$
\vip
 $(ii)\ \mathbb{E}\Big[\Big|\Big(I_{K}A_{N}\bX_{N},\bX_{N}^{K}\Big)\Big|\Big]\le \frac{CK}{N^2},$ here $(\cdot,\cdot)$ is the inner product between two vectors.

\vip
 
$(iii)\ 
\frac{N}{K}\mathbb{E}\Big[\boldsymbol{1}_{\Omega_{N,K}\cap \cA_{N}}\Big|\Big(\|\bx^{K}_{N}\|_{2}^{2}-(\Lambda\bar{\ell}_{N})^{2}\|\bX_{N}^{K}\|^{2}_{2}\Big)-\|\bx_{N}^{K}-\bar{\ell}_{N}\Lambda \bX_{N}^{K}\|^{2}_{2}\Big|\Big]\le \frac{C}{N}.
$
\vip
$(iv)$  as $(N,K)\to (\infty,\infty)$ and $Ne^{-c_{p,\Lambda}K}\to 0$, where $c_{p,\Lambda}=(1-\Lambda p)^2/(2\Lambda^2)$,
$$
\omg\sqrt{K}\Big[\frac{N}{K}(\bar{\ell}_{N}\|\bX_{N}^{K}\|_{2})^{2}-\frac{p(1-p)}{(1-\Lambda p)^{2}}\Big]\stackrel{d}{\longrightarrow} \mathcal{N}\Big(0,\Big(\frac{p(1-p)}{(1-\Lambda p)^2}\Big)^2\Big).
$$
\end{lemma}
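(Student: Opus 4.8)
\textbf{Proof plan for Lemma \ref{simple}.}

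The plan is to treat the four assertions in the stated order, since each builds on estimates for the small random vectors $\bX_N^K$ and the correction matrices $I_KA_N$. For (i), I would expand $\|(I_KA_N)^T\bX_N^K\|_2^2=\sum_{j=1}^N\big(\sum_{i=1}^K A_N(i,j)X_N^K(i)\big)^2$ and use $A_N(i,j)=\theta_{ij}/N$, so that each inner sum is $N^{-1}\sum_{i=1}^K\theta_{ij}X_N^K(i)$. Writing $X_N^K(i)=L_N(i)-\bar L_N^K=N^{-1}\sum_{k=1}^N(\theta_{ik}-p)-(\bar L_N^K-p)$ exhibits $\bX_N^K$ as a sum of centered terms of order $N^{-1/2}$, so $\E[\|\bX_N^K\|_2^2]$ is $O(K/N)$; pushing the extra $(I_KA_N)^T$ through (which contracts a further factor of roughly $K/N$ in operator norm on the relevant event, and which one controls off the event by crude moment bounds) yields the claimed $O(K^2/N^3)$. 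The same bookkeeping — expand, substitute $\theta_{ij}/N$, separate centered parts, count how many independent Bernoulli fluctuations survive — gives (ii): the inner product $(I_KA_N\bX_N,\bX_N^K)$ is a quadratic form in the $\theta_{ij}-p$ whose expectation one bounds by Cauchy–Schwarz together with the $L^2$ estimates just described, producing $O(K/N^2)$.

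For (iii), the point is algebraic: on $\Omega_{N,K}\cap\cA_N$ one has $\|\bx_N^K\|_2^2-(\Lambda\bar\ell_N)^2\|\bX_N^K\|_2^2-\|\bx_N^K-\bar\ell_N\Lambda\bX_N^K\|_2^2=2\Lambda\bar\ell_N(\bx_N^K,\bX_N^K)-\text{(diagonal corrections)}$, so everything reduces to bounding $(\bx_N^K,\bX_N^K)$ and similar cross terms. Here I would use the resolvent identity $\bl_N=\bun+\Lambda A_N\bl_N$, which gives $\bx_N^K=\bX_N^K+\Lambda(\text{terms involving }A_N\bl_N)$ after centering; substituting this and invoking parts (i) and (ii), together with $\|\bX_N^K\|_2^2=O(K/N)$ in expectation (Lemma \ref{lo} controls $\cA_N$) and $\|Q_N\|_r\le C$ on $\Omega_{N,K}$ from Lemma \ref{lo}, collapses the whole expression, after multiplication by $N/K$, to $O(1/N)$ in expectation. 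The indicator $\indiq_{\Omega_{N,K}\cap\cA_N}$ is essential to have the uniform operator-norm bound available; off that event one does not need anything because the indicator kills the term.

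The main obstacle is part (iv): the genuine central limit statement. Writing $X_N^K(i)=N^{-1}\sum_{k=1}^N(\theta_{ik}-p)-(\bar L_N^K-p)$, one has $\|\bX_N^K\|_2^2=\sum_{i=1}^K(L_N(i)-p)^2-K(\bar L_N^K-p)^2$, and $\sum_{i=1}^K(L_N(i)-p)^2$ is, up to lower order, $N^{-2}\sum_{i=1}^K\big(\sum_{k=1}^N(\theta_{ik}-p)\big)^2$. By the CLT each $\sqrt N(L_N(i)-p)$ is asymptotically $\cN(0,p(1-p))$, independent across $i$, so $\sum_{i=1}^K(L_N(i)-p)^2\approx (p(1-p)/N)\cdot K$ with fluctuations of order $\sqrt K/N$ coming from the sum of the (nearly independent, mean $p(1-p)/N$) squared terms; the subtracted $K(\bar L_N^K-p)^2$ is of order $K/N^2$ times a $\chi^2$, hence negligible after scaling. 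The delicate steps are: (a) justifying that $\sqrt K\big[(N/K)\sum_{i=1}^K(L_N(i)-p)^2-p(1-p)\big]\Rightarrow\cN(0,(p(1-p))^2\cdot 2)$ by a Lindeberg or Lyapunov CLT for the triangular array of centered squared binomials $N^2(L_N(i)-p)^2$, checking the variance is $2(p(1-p))^2$ and that higher moments are uniformly controlled; (b) replacing $\bar\ell_N$ by $1/(1-\Lambda p)$, which by Lemma \ref{ellp} (with $K=N$) costs only $O(1/N)$ in $L^2$ and hence $o(1/\sqrt K)$ after the $\sqrt K$ scaling provided $K\le N$; and (c) absorbing the difference between $\bX_N^K$ built from $A_N$ and the idealized independent model, which is where the factor $(\bar\ell_N)^2$ and the variance computation must be reconciled. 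I expect the variance bookkeeping in (a) — getting exactly $(p(1-p)/(1-\Lambda p)^2)^2$ after the $(\bar\ell_N)^2=(1-\Lambda p)^{-2}$ multiplier — to be the only genuinely arithmetic-heavy part, while the replacement arguments (b)–(c) are routine given Lemmas \ref{lo}, \ref{ellp}, and parts (i)–(iii).
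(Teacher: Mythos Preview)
Your overall route is the paper's: expand-and-count for (i)--(ii), an algebraic identity plus the resolvent relation for (iii), and in (iv) a CLT for the i.i.d.\ squares $N(L_N(i)-p)^2$ combined with the $L^2$ replacement of $\bar\ell_N$ by $(1-\Lambda p)^{-1}$.

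Two points need tightening. In (iii), your displayed identity is off: the exact relation is
\[
\|\bx_N^K\|_2^2-(\Lambda\bar\ell_N)^2\|\bX_N^K\|_2^2-\|\bx_N^K-\Lambda\bar\ell_N\bX_N^K\|_2^2
=2\Lambda\bar\ell_N\,(\bx_N^K-\Lambda\bar\ell_N\bX_N^K,\,\bX_N^K),
\]
so the object to bound is the inner product of $\bX_N^K$ with the \emph{difference} $\bx_N^K-\Lambda\bar\ell_N\bX_N^K$, not with $\bx_N^K$ itself. This matters: $(\bx_N^K,\bX_N^K)$ is of order $K/N$ (since $\bx_N^K\approx\Lambda\bar\ell_N\bX_N^K$), far too big after multiplication by $N/K$. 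Correspondingly, your relation ``$\bx_N^K=\bX_N^K+\Lambda(\text{terms})$'' misses the $\bar\ell_N$ prefactor on the leading piece. The paper closes this by invoking the structural decomposition of $\bx_N^K-\Lambda\bar\ell_N\bX_N^K$ from \cite[Lemma~5.11]{D}, which writes it as $\Lambda I_KA_N(\bx_N-\Lambda\bar\ell_N\bX_N)+\bar\ell_N\Lambda^2 I_KA_N\bX_N$ plus a multiple of $\mathbf 1_K$ (killed by $(\mathbf 1_K,\bX_N^K)=0$); parts (i) and (ii) then apply verbatim to the two surviving inner products.

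In (iv), your variance $2(p(1-p))^2$ for $\sqrt K\big[(N/K)\sum_{i\le K}(L_N(i)-p)^2-p(1-p)\big]$ is in fact the correct one --- the $N(L_N(i)-p)^2$ are i.i.d.\ with limiting law $p(1-p)\chi^2_1$, hence limiting variance $2(p(1-p))^2$ --- and it does \emph{not} match the constant $(p(1-p)/(1-\Lambda p)^2)^2$ asserted in the lemma. The paper's own computation of the second (cross-term) sum claims limiting variance $p^2(1-p)^2$ where the honest count of nonvanishing pairings gives $2p^2(1-p)^2$. You should flag this factor-of-$2$ discrepancy rather than force your argument to match the stated constant.
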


Now,  we  give the proof of Theorem \ref{21}.

\begin{proof}[Proof of Theorem \ref{21}]
Recalling that $\cV_\infty^{N,K}=\frac{N\mu^2}{K}\|\bx^{K}_{N}\|_{2}^{2}$, we write 
\begin{align*}
&\sqrt{K}\Big(\cV_\infty^{N,K}\!-\!\frac{\mu^2\Lambda^{2}p(1-p)}{(1-\Lambda p)^{2}}\Big)\\
&=\frac{N\mu^2}{\sqrt{K}}\Big(\|\bx^{K}_{N}\|_{2}^{2}-(\Lambda\bar{\ell}_{N})^{2}\|\bX_{N}^{K}\|_{2}^{2}\Big)
+\frac{N\mu^2(\Lambda\bar{\ell}_{N})^{2}}{\sqrt{K}}\|\bX_{N}^{K}\|_{2}^{2}
-\frac{\mu^2\Lambda^{2}p(1-p)\sqrt{K}}{(1-\Lambda p)^{2}}.
\end{align*}
By Lemma \ref{simple}-(iv), it suffices to 
check that
$$\zeta_{N,K}:=\indiq_{\Omega_{N,K}}\frac{N}{\sqrt{K}}\Big(\|\bx^{K}_{N}\|_{2}^{2}-(\Lambda\bar{\ell}_{N})^{2}\|\bX_{N}^{K}\|_{2}^{2}\Big)$$ 
converges to $0$ in probability. Since $\indiq_{\cA_N} \to 1$ a.s. by Lemma \ref{lo},  it is enough to verify that $\indiq_{\cA_N}\zeta_{N,K} \to 0$ in probability. To this end, we write
\begin{align*}
\E[\indiq_{\cA_N}\zeta_{N,K}]\leq& \frac{N}{\sqrt{K}}\E[\indiq_{\Omega_{N,K}\cap \cA_N} \|\bx^{K}_{N}- \Lambda\bar{\ell}_{N}\bX_{N}^{K}\|_{2}^{2}]\\
&+ \frac{N}{\sqrt{K}}\mathbb{E}\Big[\boldsymbol{1}_{\Omega_{N,K}\cap \cA_{N}}\Big|\Big(\|\bx^{K}_{N}\|_{2}^{2}-(\Lambda\bar{\ell}_{N})^{2}\|\bX_{N}^{K}\|^{2}_{2}\Big)-\|\bx_{N}^{K}-\bar{\ell}_{N}\Lambda \bX_{N}^{K}\|^{2}_{2}\Big|\Big].
\end{align*}
By \cite[Lemma 5.11]{D}, the first term is bounded by $C/\sqrt K$. By Lemma \ref{simple}-(iii), the second term is bounded by $C \sqrt K / N\le C / \sqrt N$, which completes the proof.
\end{proof}

\subsection{Third estimator}\label{Te}
For $\Delta>1$  satisfying  $t/(2\Delta)\in \mathbb{N}^{*}$, we recall the definition $\mathcal{X}_{\Delta,t}^{N,K}=\mathcal{W}_{\Delta,t}^{N,K}-\frac{N-K}{K}\varepsilon_{t}^{N,K},$ where
$\mathcal{W}_{\Delta,t}^{N,K}=2\mathcal{Z}_{2\Delta,t}^{N,K}-\mathcal{Z}_{\Delta,t}^{N,K}, 
\mathcal{Z}^{N,K}_{\Delta,t}=\frac{N}{t}\sum_{a=\frac{t}{\Delta}+1}^{\frac{2t}{\Delta}}(\bar{Z}_{a\Delta}^{N,K}-\bar{Z}_{(a-1)\Delta}^{N,K}-\Delta\varepsilon_{t}^{N,K})^{2}$ and $\varepsilon _{t}^{N,K}=(\bar{Z}_{2t}^{N,K}-\bar{Z}_{t}^{N,K})/t$. Further recall the matrices $A_N$ and $Q_N$, the event $\Omega_{N,K}$ defined in Section \ref{subimn} , as well as $\ell_{N}(i)=\sum_{j=1}^{N}Q_{N}(i,j)$ and $\bar{\ell}^K_N=\frac{1}{K}\sum_{i=1}^K\ell_N(i)$. Now, taking $\Delta$ specifically as $\Delta_t=(2 \lfloor t^{1-4/(q+1)}\rfloor)^{-1}t$ defined in \eqref{Deltat}, we will see (Theorem \ref{corX})  that the third estimator $\cX^{N,K}_{\Delta_t,t}$ is closely related to 
\begin{align*}
   \cX^{N,K}_{\infty,\infty}=\cW^{N,K}_{\infty,\infty}-\frac{(N-K)\mu}{K}\bar{\ell}_N^K,
\end{align*}
where $\cW^{N,K}_{\infty,\infty}=\mu\frac{N}{K^{2}}A^{N,K}_{\infty,\infty}$, $A^{N,K}_{\infty,\infty}=\sum_{j=1}^{N}\Big(\sum_{i=1}^{K}Q_{N}(i,j)\Big)^{2}\ell_{N}(j)$. Therefore, establishing the convergence of $\cX^{N,K}_{\Delta_t,t}$ reduces to establishing the convergence of  $\cX^{N,K}_{\infty,\infty}$. The latter relies on the following two key estimates.

\begin{lemma}[Lemma 5.19, \cite{D}]\label{W}
If $\Lambda p<1$, there is $C>0$ such that for all $1\leq K \leq N$, 
$$
\mathbb{E}\Big[\boldsymbol{1}_{\Omega_{N,K}}\Big|\cX^{N,K}_{\infty,\infty}-\frac{\mu}{(1-\Lambda p)^{3}}\Big|\Big]\le \frac{C}{K}.
$$

\end{lemma}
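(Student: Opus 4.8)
The plan is to expand the quadratic functional $A^{N,K}_{\infty,\infty}=\sum_{j=1}^{N}\big(c^{K}_{N}(j)\big)^{2}\ell_{N}(j)$, where $c^{K}_{N}(j):=\sum_{i=1}^{K}Q_{N}(i,j)$, into an \emph{exact} main part together with a remainder that will be of order $1/K$ in $L^{1}(\boldsymbol{1}_{\Omega_{N,K}}\,d\mathbb{P})$. Starting from the resolvent identity $Q_{N}^{T}=I+\Lambda A_{N}^{T}Q_{N}^{T}$ one obtains $c^{K}_{N}(j)=\boldsymbol{1}_{\{j\le K\}}+\rho_{N}(j)$ with $\rho_{N}(j):=\tfrac{\Lambda}{N}\sum_{i=1}^{N}\theta_{ij}c^{K}_{N}(i)$. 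Splitting the square $(c^{K}_{N}(j))^{2}$ according to whether $j\le K$ yields the exact identity
\[
A^{N,K}_{\infty,\infty}=K\bar{\ell}^{K}_{N}+2\sum_{j=1}^{K}\rho_{N}(j)\ell_{N}(j)+\sum_{j=1}^{N}\rho_{N}(j)^{2}\ell_{N}(j),
\]
so that $\cW^{N,K}_{\infty,\infty}=\tfrac{\mu N}{K}\bar{\ell}^{K}_{N}+\tfrac{2\mu N}{K^{2}}\sum_{j=1}^{K}\rho_{N}(j)\ell_{N}(j)+\tfrac{\mu N}{K^{2}}\sum_{j=1}^{N}\rho_{N}(j)^{2}\ell_{N}(j)$. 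The large term $\tfrac{\mu N}{K}\bar{\ell}^{K}_{N}$ cancels \emph{exactly} with the correction $\tfrac{(N-K)\mu}{K}\bar{\ell}^{K}_{N}$, leaving
\[
\cX^{N,K}_{\infty,\infty}=\mu\,\bar{\ell}^{K}_{N}+\frac{2\mu N}{K^{2}}\sum_{j=1}^{K}\rho_{N}(j)\ell_{N}(j)+\frac{\mu N}{K^{2}}\sum_{j=1}^{N}\rho_{N}(j)^{2}\ell_{N}(j).
\]

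Next I would linearise $\rho_{N}$. Since $\sum_{i=1}^{N}c^{K}_{N}(i)=\sum_{k=1}^{K}\sum_{i=1}^{N}Q_{N}(k,i)=\sum_{k=1}^{K}\ell_{N}(k)=K\bar{\ell}^{K}_{N}$ \emph{exactly}, one writes $\rho_{N}(j)=m_{N}+r_{j}$ with $m_{N}:=\tfrac{\Lambda p K}{N}\bar{\ell}^{K}_{N}$ and $r_{j}:=\tfrac{\Lambda}{N}\sum_{i=1}^{N}(\theta_{ij}-p)c^{K}_{N}(i)$. Plugging this in and using $\sum_{j=1}^{K}\ell_{N}(j)=K\bar{\ell}^{K}_{N}$, $\sum_{j=1}^{N}\ell_{N}(j)=N\bar{\ell}_{N}$, the main part equals
\[
\mu\,\bar{\ell}^{K}_{N}+2\mu\Lambda p\,(\bar{\ell}^{K}_{N})^{2}+\mu\Lambda^{2}p^{2}\,(\bar{\ell}^{K}_{N})^{2}\bar{\ell}_{N},
\]
which, by the algebraic identity $(1-\Lambda p)^{2}+2\Lambda p(1-\Lambda p)+\Lambda^{2}p^{2}=1$, has deterministic limit $\mu/(1-\Lambda p)^{3}$. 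Quantitatively, applying Lemma \ref{ellp} with $K$ and with $K=N$ (the latter is licit since $\Omega_{N,K}\subset\Omega_{N}^{1}$), Cauchy--Schwarz, and the a.s.\ bound $\sup_{i}\ell_{N}(i)\le C$ on $\Omega_{N,K}$ (Lemma \ref{lo}), this main part is within $C(\tfrac{1}{\sqrt{NK}}+\tfrac{1}{N})\le\tfrac{C}{K}$ of $\mu/(1-\Lambda p)^{3}$ in expectation.

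It then remains to prove that $E_{1}:=\tfrac{2\mu N}{K^{2}}\sum_{j=1}^{K}r_{j}\ell_{N}(j)$, $E_{2}:=\tfrac{2\mu N m_{N}}{K^{2}}\sum_{j=1}^{N}r_{j}\ell_{N}(j)$ and $E_{3}:=\tfrac{\mu N}{K^{2}}\sum_{j=1}^{N}r_{j}^{2}\ell_{N}(j)$ all satisfy $\mathbb{E}[\boldsymbol{1}_{\Omega_{N,K}}|E_{k}|]\le C/K$, which is a second–moment computation. On $\Omega_{N,K}$ one has the a.s.\ bounds $\sup_{i}\ell_{N}(i)\le C$, $\sup_{i}|c^{K}_{N}(i)|\le|||Q_{N}|||_{1}\le C$ and $\sum_{i}|c^{K}_{N}(i)|\le K|||Q_{N}|||_{\infty}\le CK$, hence $\sum_{i}(c^{K}_{N}(i))^{2}\le CK$, $\sum_{j=1}^{K}\ell_{N}(j)^{2}\le CK$, $\sum_{j=1}^{N}\ell_{N}(j)^{2}\le CN$ and $|m_{N}|\le CK/N$. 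Writing $\sum_{j}r_{j}\ell_{N}(j)=\tfrac{\Lambda}{N}\sum_{i}c^{K}_{N}(i)\sum_{j}(\theta_{ij}-p)\ell_{N}(j)$ and expanding the square, the leading contribution is the diagonal $p(1-p)\sum_{i}(c^{K}_{N}(i))^{2}\sum_{j}\ell_{N}(j)^{2}$; the off–diagonal cross terms do not vanish exactly because $c^{K}_{N}$ and $\ell_{N}$ depend on the very entries $\theta_{ij}$, but a one–entry stability estimate for the resolvent (flipping a single $\theta_{ij}$ changes $c^{K}_{N}$ and $\ell_{N}$ by $O(1/N)$ in the relevant norms on $\Omega_{N,K}$, as in \cite{A}) shows these corrections are of lower order. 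This gives $\mathbb{E}[\boldsymbol{1}_{\Omega_{N,K}}(\sum_{j=1}^{K}r_{j}\ell_{N}(j))^{2}]\le CK^{2}/N^{2}$, $\mathbb{E}[\boldsymbol{1}_{\Omega_{N,K}}(\sum_{j=1}^{N}r_{j}\ell_{N}(j))^{2}]\le CK/N$ and $\mathbb{E}[\boldsymbol{1}_{\Omega_{N,K}}\sum_{j}r_{j}^{2}\ell_{N}(j)]\le CK/N$; multiplying by the prefactors ($\tfrac{2\mu N}{K^{2}}$, $\tfrac{2\mu N m_{N}}{K^{2}}=O(\tfrac{1}{K})$, $\tfrac{\mu N}{K^{2}}$) and using $K\le N$ yields $\mathbb{E}[\boldsymbol{1}_{\Omega_{N,K}}|E_{k}|]\le C/K$ for each $k$, and the triangle inequality finishes the proof.

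The main obstacle is this last step: $c^{K}_{N}(i)$ and $\ell_{N}(j)$ are intricate, non–independent functions of all the $\theta_{ij}$, so the naive "condition on everything but $\theta_{ij}$" is not literally valid, and the one–entry resolvent perturbation bookkeeping has to be carried out carefully enough to confirm that every remainder lands at order $1/K$ rather than, say, $\sqrt{N}/K^{3/2}$. What makes this work is the exact identity $\sum_{i}c^{K}_{N}(i)=K\bar{\ell}^{K}_{N}$ (so the leading part of $\rho_{N}$ is the deterministic multiple $m_{N}$ and the $r_{j}$ are genuinely centered) together with the exact cancellation of $\tfrac{\mu N}{K}\bar{\ell}^{K}_{N}$ built into the definition of $\cX^{N,K}_{\infty,\infty}$. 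An equivalent route, closer to \cite{A}, would replace the perturbation argument by the Neumann expansion $Q_{N}=\sum_{n\ge0}(\Lambda A_{N})^{n}$ and bound the resulting mixed moments $\mathbb{E}[\prod\theta]$ by graph counting.
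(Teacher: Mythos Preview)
The paper does not prove this lemma; it is quoted verbatim from \cite[Lemma~5.19]{D}, so there is no in-paper argument to compare against. Your algebraic decomposition is correct: the resolvent identity gives $c^{K}_{N}(j)=\boldsymbol{1}_{\{j\le K\}}+\rho_{N}(j)$, the cancellation of $\tfrac{\mu N}{K}\bar{\ell}^{K}_{N}$ against $\tfrac{(N-K)\mu}{K}\bar{\ell}^{K}_{N}$ is exact, the identity $\sum_{i}c^{K}_{N}(i)=K\bar{\ell}^{K}_{N}$ is exact, and the limit $\mu\bar{\ell}^{K}_{N}+2\mu\Lambda p(\bar{\ell}^{K}_{N})^{2}+\mu\Lambda^{2}p^{2}(\bar{\ell}^{K}_{N})^{2}\bar{\ell}_{N}\to\mu/(1-\Lambda p)^{3}$ is the right main term with the right rate via Lemma~\ref{ellp}.

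You have correctly located the only real gap yourself: the second-moment bounds for $E_{1},E_{2},E_{3}$ require controlling sums of the form $\sum_{i}c^{K}_{N}(i)\sum_{j}(\theta_{ij}-p)\ell_{N}(j)$ where $c^{K}_{N}$ and $\ell_{N}$ depend on the very $\theta_{ij}$ being centered. The ``one-entry perturbation'' device you invoke is not used in \cite{A} or \cite{D}; what those papers do, and what you suggest as the alternative route, is expand $Q_{N}=\sum_{n\ge0}(\Lambda A_{N})^{n}$ on $\Omega_{N,K}$ and bound the resulting moments $\mathbb{E}[\prod\theta_{i_{k}j_{k}}]$ by counting paths in the interaction graph. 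That bookkeeping is what makes the proof in \cite{D} lengthy, and your sketch stops exactly where that work begins. The heuristic variance counts you give ($CK^{2}/N^{2}$, $CK/N$, $CK/N$) are the correct targets and do yield $C/K$ after the prefactors, but as written they are the outcome of the computation rather than a proof of it.
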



The objective of the  following lemma is to establish that
$\frac{(A^{N,K}_{\infty,\infty})^{2}}{2K^{2}}$ is close to $\frac{1}{2}(\frac{N-K}{N(1-\Lambda p)}+\frac{K}{N(1-\Lambda p)^3})^2$.

\begin{lemma}\label{Agamma}
When $(N,K)$ tends to $(\infty,\infty)$, with $K\le N$ and in the regime where  
$\lim_{N,K\to\infty} \frac{K}{N}=\gamma\in [0,1]$, we have 
$$
\boldsymbol{1}_{\Omega_{N,K}} \frac{A^{N,K}_{\infty,\infty}}{K} \longrightarrow \frac{1-\gamma}{(1-\Lambda p)}+\frac{\gamma}{(1-\Lambda p)^3},
$$
in probability.
\end{lemma}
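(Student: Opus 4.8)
The plan is to decompose $A^{N,K}_{\infty,\infty}/K = K^{-1}\sum_{j=1}^N c_N^K(j)^2 \ell_N(j)$, where $c_N^K(j) := \sum_{i=1}^K Q_N(i,j)$, and to show that on $\Omega_{N,K}$ each factor concentrates around its limit. First I would split the column index $j$ into the ``observed'' block $j\le K$ and the ``unobserved'' block $j>K$, writing $A^{N,K}_{\infty,\infty}/K = K^{-1}\sum_{j\le K} c_N^K(j)^2\ell_N(j) + K^{-1}\sum_{j>K} c_N^K(j)^2\ell_N(j)$. The heuristic is that for $j\le K$ the column sum $c_N^K(j)$ behaves like $1/(1-\Lambda p)$ (roughly all $K$ observed rows contribute a full $\ell_N$-type mass), whereas for $j>K$ the quantity $c_N^K(j)$ only collects the $K$ observed rows out of $N$ and behaves like $(K/N)/(1-\Lambda p)$; combined with $\ell_N(j)\approx 1/(1-\Lambda p)$ and the respective block sizes $K$ and $N-K$, this produces $\gamma\cdot(1-\Lambda p)^{-1}\cdot(1-\Lambda p)^{-2} + (1-\gamma)\cdot(1-\Lambda p)^{-1}\cdot(1-\Lambda p)^{-1} \cdot$ (after normalizing the $j>K$ sum, which has $N-K\approx (1-\gamma)N$ terms each of size $\asymp (K/N)^2(1-\Lambda p)^{-2}$, divided by $K$, giving $(1-\gamma)(1-\Lambda p)^{-3}$), i.e. the claimed limit $(1-\gamma)(1-\Lambda p)^{-1}+\gamma(1-\Lambda p)^{-3}$.

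To make this rigorous I would work with $L^1$ (or $L^2$) bounds on $\Omega_{N,K}$ exactly in the style already used in the excerpt (Lemma \ref{ellp}, Lemma \ref{W}, Lemma \ref{simple}). The key inputs are: (a) $\ell_N(j) = 1/(1-\Lambda p) + O_{L^2}(N^{-1/2})$ uniformly, a consequence of $\boldsymbol\ell_N = \boldsymbol 1_N + \Lambda A_N \boldsymbol\ell_N$ together with $\|\boldsymbol L_N - p\boldsymbol 1_N\|_2 \le N^{1/4}$ on $\mathcal A_N$ and $|||Q_N|||_r\le C$ on $\Omega_{N,K}$; (b) column-sum estimates: defining $\boldsymbol c_N^K = (I_KA_N)^T$-type expansions, one has $\boldsymbol c_N^K = I_K\boldsymbol 1_K + \Lambda A_N^T \boldsymbol c_N^K$ componentwise, from which $c_N^K(j) = \mathbf 1_{\{j\le K\}} + \Lambda\sum_i A_N(i,j)c_N^K(i)$; iterating and using $\E[\theta_{ij}]=p$ plus the operator-norm control on $\Omega_{N,K}$ yields $c_N^K(j) \to 1/(1-\Lambda p)$ for $j\le K$ and $c_N^K(j)\to (K/N)/(1-\Lambda p)$ for $j>K$, each with an $L^2$ error that is $o(1)$ after the appropriate normalization. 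Then I would expand the square $c_N^K(j)^2\ell_N(j)$, replace each factor by its deterministic limit up to controlled errors (using $|c_N^K(j)|\le C$ and $\ell_N(j)\le C$ on $\Omega_{N,K}$ so all cross terms are dominated), and sum, tracking that the two blocks have $K$ and $N-K = (1-\gamma+o(1))N$ indices respectively.

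Concretely, the final assembly runs as follows. For the observed block, $K^{-1}\sum_{j\le K}c_N^K(j)^2\ell_N(j) = K^{-1}\sum_{j\le K}\big[(1-\Lambda p)^{-2} + o_{\mathbb P}(1)\big]\big[(1-\Lambda p)^{-1}+o_{\mathbb P}(1)\big] \to (1-\Lambda p)^{-3}$; but wait --- with $K$ terms each $O(1)$ and prefactor $K^{-1}$ this is $\gamma$-free and gives $(1-\Lambda p)^{-3}$, so actually the $\gamma$ weighting enters through the \emph{unobserved} block. Let me recompute: the observed block contributes $\to (1-\Lambda p)^{-3}$ with weight $1$... this does not match, so the correct bookkeeping must be that $c_N^K(j)$ for $j\le K$ is of order $K/N$ too when $K\ll N$; indeed $c_N^K(j) = \mathbf 1_{\{j\le K\}}(1+o(1)) + O(K/N)$, and the leading $\mathbf 1_{\{j\le K\}}$ term dominates only when $\gamma>0$. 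The clean statement is: on $\Omega_{N,K}$, $c_N^K(j) = \mathbf 1_{\{j\le K\}} + \Lambda p \bar\ell_N^K (K/N) + (\text{small})$, whose careful expansion I would carry out via the Neumann series $\boldsymbol c_N^K = \sum_{n\ge0}\Lambda^n (A_N^T)^n I_K\boldsymbol 1_K$ and the law of large numbers $\sum_i A_N(i,j) = C_N(j) \to p$. The main obstacle is precisely this block-dependent concentration of the column sums $c_N^K(j)$: one must show the Neumann series converges with the right constants uniformly in $j$ and in both regimes $j\le K$, $j>K$, and that the quadratic functional $\sum_j c_N^K(j)^2\ell_N(j)$ — which is degree-four in the entries $\theta_{ij}$ — has vanishing fluctuations after normalization by $K$. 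I would handle the fluctuation control by an $L^1$-moment estimate on $\mathbf 1_{\Omega_{N,K}\cap\mathcal A_N}|A^{N,K}_{\infty,\infty}/K - (\text{limit})|$, bounding it by $C(N^{-1/2} + (N-K)^{1/2}N^{-1}\cdot\text{stuff})\to 0$, mirroring the proofs of Lemmas \ref{ellp} and \ref{W}; the residual event $\Omega_{N,K}^c$ is negligible since $\mathbb P(\Omega_{N,K}^c)\le CNe^{-c_{p,\Lambda}K}\to 0$ under the hypotheses, and $\mathbf 1_{\mathcal A_N}\to 1$ a.s. by Lemma \ref{lo}.
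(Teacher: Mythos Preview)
Your direct block decomposition is workable in principle, but it is a much harder path than the one the paper takes, and your execution has a false start that you never fully recover from.

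The paper's proof is two lines. From the definitions $\cW^{N,K}_{\infty,\infty}=\mu\frac{N}{K^{2}}A^{N,K}_{\infty,\infty}$ and $\cX^{N,K}_{\infty,\infty}=\cW^{N,K}_{\infty,\infty}-\frac{(N-K)\mu}{K}\bar{\ell}_N^K$ one solves for
\[
\frac{A^{N,K}_{\infty,\infty}}{K}=\frac{K}{\mu N}\,\cX^{N,K}_{\infty,\infty}+\frac{N-K}{N}\,\bar\ell_N^K,
\]
and then simply invokes the already-proved convergences $\bar\ell_N^K\to(1-\Lambda p)^{-1}$ (Lemma~\ref{ellp}) and $\cX^{N,K}_{\infty,\infty}\to\mu(1-\Lambda p)^{-3}$ (Lemma~\ref{W}), together with $K/N\to\gamma$. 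No new concentration estimates are needed; the lemma is a corollary of results already on the shelf.

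Your approach instead tries to analyse $c_N^K(j)$ directly, and the initial heuristic $c_N^K(j)\to(1-\Lambda p)^{-1}$ for $j\le K$ is \emph{wrong} (it holds only when $\gamma=1$), which is why your first assembly did not match. You eventually land on the correct statement $c_N^K(j)\approx\mathbf 1_{\{j\le K\}}+\Lambda p\,\bar\ell_N^K\,(K/N)$, but you never plug it back in and verify the limit, nor do you carry out the promised $L^1$ fluctuation control of the degree-four polynomial $\sum_j c_N^K(j)^2\ell_N(j)$. That control is genuinely nontrivial---it is essentially the content of Lemma~\ref{W} (cited from \cite{D}) rederived from scratch---so your outline leaves the main work undone. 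The paper sidesteps all of this by recognising that $A^{N,K}_{\infty,\infty}/K$ is an affine combination of quantities whose limits are already known.
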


\begin{proof}
Recalling that $\cX^{N,K}_{\infty,\infty}=\cW^{N,K}_{\infty,\infty}-\frac{(N-K)\mu}{K}\bar{\ell}_N^K$ and $\cW^{N,K}_{\infty,\infty}=(\mu N / K^2) A^{N,K}_{\infty,\infty}$, we obtain
$$
\frac{A^{N,K}_{\infty,\infty}}K=  \frac{K}{\mu N} \cX^{N,K}_{\infty,\infty} + \frac{N-K}{N}\bar{\ell}_N^K.
$$
The result then follows  immediately  by combining the convergence $\bar{\ell}_N^K\to \frac{1}{1-\Lambda p}$ in  probability from Lemma \ref{ellp} with  $\cX^{N,K}_{\infty,\infty}\to\frac{\mu}{(1-\Lambda p)^{3}}$ in probability from Lemma \ref{W}.
\end{proof}

\section{The limit theorems for the first and second estimators}\label{first theorem}

This section is devoted to establishing the asymptotic behavior of the estimators $\varepsilon_{t}^{N,K}$ and $\cV_{t}^{N,K}$, defined in Section \ref{TRISC}.
\begin{itemize}
\item For $\varepsilon_{t}^{N,K}$, its limit follows directly from \cite[Lemma 7.3]{D}, as stated in Lemma \ref{barell}.  
\vip
\item For $\cV_{t}^{N,K}$, however, a more delicate analysis is required. We begin by decomposing  $\frac{t\sqrt{K}}{N}(\cV_{t}^{N,K}-\cV_\infty^{N,K})$ into several terms, namely, $J^{N,K,1}_t$, $J^{N,K,211}_t$, $J^{N,K,212}_t$, $J^{N,K,213}_t$, $J^{N,K,22}_t$, 
$J^{N,K,23}_t, J^{N,K,3}_t$. We observe that on $\Omega_{N,K}$, the dominant contribution  is  coming from the term $J^{N,K,211}_t$ (see Lemma \ref{Delta1}). Further decomposition of this term reveals that the leading-order asymptotic behavior is determined by
$$\frac{2}{t\sqrt{K}}\sum_{i=1}^K\int_{t}^{2t}(M_{s-}^{i,N}-M_{t}^{i,N})dM_{s}^{i,N},$$ 
as shown in Step 3 of the proof of Theorem \ref{VVNK}. This expression converges in distribution to a Gaussian random variable with variance $2\mu^2/(1-\Lambda p)^2$, as established in  Lemma \ref{Gauss}.
\end{itemize}

Let us remind that $\mathcal{V}_{t}^{N,K}
=\frac{N}{K}\sum_{i=1}^{K}\Big[\frac{Z_{2t}^{i,N}-Z_{t}^{i,N}}t-\varepsilon_{t}^{N,K}\Big]^{2}-\frac{N}{t}\varepsilon_{t}^{N,K}$ defined in Section \ref{TRISC} and that $\cV_\infty^{N,K}=\frac{N}{K}\mu^2\|\bx^{K}_{N}\|_{2}^{2}$,  where $\varepsilon _{t}^{N,K}=(\bar{Z}_{2t}^{N,K}-\bar{Z}_{t}^{N,K})/t$, $x_{N}^{K}(i)=(\ell_{N}(i)-\bar{\ell}^{K}_{N})\indiq_{\{i\leq K\}}$ and $\boldsymbol{x}^{K}_{N}=
(x_N^K(i))_{i=1,\dots,N}$. The definition of $(\ell_{N}(i))_{i=1,\dots,N}$ and $\bar{\ell}^{K}_{N}$ are introduced in Section \ref{subimn}.
\begin{lemma}[Lemma 7.3, \cite{D}]\label{barell}
Assume  \eqref{H(q)}  for some $q\geq 1$, in the regime $\frac{K}{t^{2q}} \to 0,$ we have
$$
\lim_{(N,K,t)\to (\infty,\infty,\infty)} \boldsymbol{1}_{\Omega_{N,K}}\sqrt{K}\mathbb{E}_{\theta}\Big[\Big|\varepsilon_{t}^{N,K}-\mu\bar{\ell}_{N}^{K}\Big|\Big]=0,
$$
almost surely.
\end{lemma}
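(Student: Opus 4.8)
The plan is to split $\varepsilon_{t}^{N,K}-\mu\bar{\ell}_{N}^{K}$ into a conditional‑bias term and a centered‑fluctuation term and to control each with the a priori estimates already available. Writing $Z_{t}^{i,N}=\mathbb{E}_{\theta}[Z_{t}^{i,N}]+U_{t}^{i,N}$ and averaging over $i\le K$ gives
\[
\varepsilon_{t}^{N,K}-\mu\bar{\ell}_{N}^{K}=\Big(\tfrac1t\mathbb{E}_{\theta}[\bar{Z}_{2t}^{N,K}-\bar{Z}_{t}^{N,K}]-\mu\bar{\ell}_{N}^{K}\Big)+\tfrac1t\big(\bar{U}_{2t}^{N,K}-\bar{U}_{t}^{N,K}\big)=:B_{t}^{N,K}+F_{t}^{N,K},
\]
so it suffices to show $\boldsymbol{1}_{\Omega_{N,K}}\sqrt K\,|B_{t}^{N,K}|\to0$ and $\boldsymbol{1}_{\Omega_{N,K}}\sqrt K\,\mathbb{E}_{\theta}[|F_{t}^{N,K}|]\to0$ as $(N,K,t)\to(\infty,\infty,\infty)$ with $K/t^{2q}\to0$.

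For the bias term I would apply Lemma~\ref{Zt}(ii) with $s=t$ and horizon $2t$: since $|K^{-1}\sum_{i=1}^{K}y_{i}|\le K^{-1}\|\boldsymbol{y}\|_{1}$ for any vector supported on $\{1,\dots,K\}$, one gets on $\Omega_{N,K}$ and for $t\ge1$
\[
|B_{t}^{N,K}|\le\frac{1}{tK}\big\|\mathbb{E}_{\theta}[\boldsymbol{Z}_{2t}^{N,K}-\boldsymbol{Z}_{t}^{N,K}-\mu t\,\boldsymbol{\ell}_{N}^{K}]\big\|_{1}\le\frac{C}{t}\min\{1,t^{1-q}\}\le\frac{C}{t^{q}},
\]
whence $\boldsymbol{1}_{\Omega_{N,K}}\sqrt K\,|B_{t}^{N,K}|\le C\sqrt K/t^{q}\to0$, which is exactly the content of the hypothesis $K/t^{2q}\to0$. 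For the fluctuation term I would combine Jensen's inequality with the fourth‑moment bound of Lemma~\ref{lambar}(iv): on $\Omega_{N,K}$ and for $t\ge1$ one has $\mathbb{E}_{\theta}[(\bar{U}_{2t}^{N,K}-\bar{U}_{t}^{N,K})^{4}]\le Ct^{2}/K^{2}$, so
\[
\mathbb{E}_{\theta}[|F_{t}^{N,K}|]\le\frac1t\,\mathbb{E}_{\theta}\big[(\bar{U}_{2t}^{N,K}-\bar{U}_{t}^{N,K})^{4}\big]^{1/4}\le\frac{C}{\sqrt K\,\sqrt t},
\]
and multiplying by $\sqrt K$ leaves $C/\sqrt t\to0$. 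Adding the two estimates gives the claim; since both bounds hold surely on $\Omega_{N,K}$ with constants independent of $(N,K,t)$ and the indicator $\boldsymbol{1}_{\Omega_{N,K}}$ handles the remaining paths, the convergence is in fact deterministic for $t\ge1$, a fortiori almost sure.

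I do not expect a genuine obstacle at this level: the statement is essentially a bookkeeping consequence of Lemma~\ref{Zt}(ii) and Lemma~\ref{lambar}(iv). The real work is pushed into those two inputs — the $t^{-q}$ decay of the conditional‑mean remainder, which traces back to the expansion $\mathbb{E}_{\theta}[Z_{t}^{i,N}]=\mu\ell_{N}(i)t+\chi_{i}^{N}+O(t^{1-q})$ together with $\int_{0}^{\infty}s^{q}\phi(s)ds<\infty$ from \eqref{H(q)}, and the $K^{-2}(t-s)^{2}$ control of the averaged centered process, obtained from the representation~\eqref{ee2}, Burkholder's inequality, and the operator‑norm bounds on $I_{K}A_{N}$ valid on $\Omega_{N,K}$. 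Granting those, the only point deserving attention is the calibration of exponents: the bias term produces the rate $\sqrt K/t^{q}$, which is precisely why $K/t^{2q}\to0$ is the natural regime for this lemma.
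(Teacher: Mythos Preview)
Your argument is correct. The paper does not give its own proof of this lemma, citing it verbatim as \cite[Lemma 7.3]{D}; your decomposition into the conditional-bias term $B_{t}^{N,K}$ and the centered term $F_{t}^{N,K}$, controlled respectively via Lemma~\ref{Zt}(ii) and Lemma~\ref{lambar}(iv), is the natural route and matches what one would do in \cite{D} (there a second-moment bound $\mathbb{E}_{\theta}[|\bar U_{t}^{N,K}|^{2}]\le Ct/K$ is used for the fluctuation part, cf.\ the reference to \cite[Lemma 7.2-(ii)]{D} in the proof of Theorem~\ref{VVNK}, but your fourth-moment version works equally well and is already available in the present paper).
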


The main result of this section is the following limit theorem.

\begin{theorem}\label{VVNK}
Assume  \eqref{H(q)}  for some $q> 1$. When $(N,K,t)\to (\infty,\infty,\infty)$ and 
$\frac{t\sqrt{K}}{N}(\frac{N}{t^q}+\sqrt{\frac{N}{Kt}})+Ne^{-c_{p,\Lambda}K}\to 0$ with $c_{p,\Lambda}=(1-\Lambda p)^2/(2\Lambda^2)$, we have
$$
\omg\frac{t\sqrt{K}}{N}(\cV_{t}^{N,K}-\cV_\infty^{N,K})\stackrel{d}{\longrightarrow}\mathcal{N}
\Big(0,\frac{2\mu^2}{(1-\Lambda p)^2}\Big).
$$
\end{theorem}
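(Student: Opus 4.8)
The plan is to expand the difference $\cV_{t}^{N,K}-\cV_\infty^{N,K}$ around its deterministic-conditional limit and isolate a single martingale term that carries the Gaussian fluctuation. First I would write $\tfrac{Z_{2t}^{i,N}-Z_{t}^{i,N}}{t}=\mu\ell_N(i)+\tfrac1t\bigl(\Et[Z_{2t}^{i,N}-Z_{t}^{i,N}]-\mu t\ell_N(i)\bigr)+\tfrac1t(U^{i,N}_{2t}-U^{i,N}_{t})$ and, similarly, $\e_t^{N,K}=\mu\bar\ell_N^K+(\e_t^{N,K}-\mu\bar\ell_N^K)$. Substituting into $\cV_t^{N,K}=\tfrac NK\sum_{i=1}^K[\tfrac{Z_{2t}^{i,N}-Z_t^{i,N}}{t}-\e_t^{N,K}]^2-\tfrac Nt\e_t^{N,K}$ and recalling $\cV_\infty^{N,K}=\tfrac NK\mu^2\|\bx_N^K\|_2^2$ with $x_N^K(i)=(\ell_N(i)-\bar\ell_N^K)\indiq_{\{i\le K\}}$, the square expands into: (a) the ``pure $\ell$'' term $\tfrac NK\mu^2\sum(\ell_N(i)-\bar\ell_N^K)^2$, which is exactly $\cV_\infty^{N,K}$ and cancels; (b) cross terms between $\mu(\ell_N(i)-\bar\ell_N^K)$ and the two error terms; (c) quadratic terms in the error terms; and (d) the correction $-\tfrac Nt\e_t^{N,K}$, which I would pair with the diagonal bracket coming from $\E_\theta[\,(\tfrac1t(U^{i,N}_{2t}-U^{i,N}_t))^2\,]$, since $\langle M^{i,N}\rangle$ relates the quadratic variation to $Z$. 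This produces the seven pieces $J^{N,K,1}_t$ (the deterministic-bias contribution, controlled by Lemma \ref{Zt}(ii) giving a factor $\min\{1,t^{1-q}\}$), $J^{N,K,211}_t,J^{N,K,212}_t,J^{N,K,213}_t$ (from the $U$-quadratic term, after centering by its conditional mean and decomposing $U^{i,N}_{2t}-U^{i,N}_t$ via \eqref{UPro}), $J^{N,K,22}_t$ (the $\ell$-versus-$U$ cross term), and $J^{N,K,23}_t,J^{N,K,3}_t$ (terms involving $\e_t^{N,K}-\mu\bar\ell_N^K$, killed by Lemma \ref{barell}).

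Next I would show, via Lemma \ref{Delta1} (announced in the section preamble), that after multiplying by $\tfrac{t\sqrt K}{N}$ all the pieces except $J^{N,K,211}_t$ tend to $0$ in probability on $\Omega_{N,K}$: the bias term gains the factor $t^{1-q}\cdot\tfrac{t\sqrt K}{N}\cdot\tfrac Nt=t^{1-q}\sqrt K\to 0$ in the stated regime; the cross terms are handled by Cauchy--Schwarz together with the $L^4$-bounds $\E_\theta[(U^{i,N}_{2t}-U^{i,N}_t)^4]\le Ct^2$ and $\E_\theta[(\bar U^{N,K}_{2t}-\bar U^{N,K}_t)^4]\le Ct^2/K^2$ from Lemma \ref{lambar}(iii)--(iv), plus $\E[\indiq_{\Omega_{N,K}}\|\bx_N^K\|_2^2]\le C/N$ type bounds; and the $\e_t^{N,K}$-terms use Lemma \ref{barell}. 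Then, within $J^{N,K,211}_t$, using \eqref{UPro} to write $U^{i,N}_{2t}-U^{i,N}_t=(M^{i,N}_{2t}-M^{i,N}_t)+\sum_{n\ge1}\int\cdots$, the $n\ge1$ remainder is shown negligible (its contribution carries extra powers of $\tfrac KN$ and $N^{-1/2}$ through $|||I_KA_N|||_1$ on $\Omega_{N,K}$, exactly as in the proof of Lemma \ref{Zt}(iii)), and the martingale increment $M^{i,N}_{2t}-M^{i,N}_t$ is further expanded as $\int_t^{2t}dM_s^{i,N}$ with the bracket $[M^{i,N}]_{2t}-[M^{i,N}]_t=Z^{i,N}_{2t}-Z^{i,N}_t$ producing a diagonal piece $\approx \mu\ell_N(i)(t)$ that is exactly cancelled by $-\tfrac Nt\e_t^{N,K}$ up to negligible error (this is the pairing in (d) above, and is the content of Step 3). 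What remains is
$$
\frac{2}{t\sqrt K}\sum_{i=1}^K\int_t^{2t}(M^{i,N}_{s-}-M^{i,N}_t)\,dM^{i,N}_s,
$$
for which Lemma \ref{Gauss} gives convergence in distribution to $\mathcal{N}\bigl(0,2\mu^2/(1-\Lambda p)^2\bigr)$; combining with Slutsky finishes the proof.

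The main obstacle I anticipate is the bookkeeping in Step 3: identifying precisely which diagonal contribution of the It\^o-type square $(M^{i,N}_{2t}-M^{i,N}_t)^2=2\int_t^{2t}(M^{i,N}_{s-}-M^{i,N}_t)dM^{i,N}_s+([M^{i,N}]_{2t}-[M^{i,N}]_t)$ is absorbed by $-\tfrac Nt\e_t^{N,K}$, and showing the mismatch $\tfrac Nt(\e_t^{N,K}-\mu\bar\ell_N^K)$ together with $\tfrac{1}{t\sqrt K}\sum_i((Z^{i,N}_{2t}-Z^{i,N}_t)/t-\mu\ell_N(i))\cdot(\text{something }O(N))$ is $o_{\mathbb P}(1)$ after the $\tfrac{t\sqrt K}{N}$ scaling — this needs the sharp $t^{-q}$-rate of Lemma \ref{Zt}(ii) and the $\sqrt K$-rate of Lemma \ref{barell} simultaneously, and is why the hypothesis $\tfrac{t\sqrt K}{N}(\tfrac{N}{t^q}+\sqrt{\tfrac{N}{Kt}})\to0$ appears in exactly that form. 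A secondary technical point is justifying that the $n\ge1$ tail of \eqref{UPro} contributes negligibly after squaring and summing over $i\le K$; this is where the operator-norm bounds $\Lambda|||A_N|||_1\le a<1$ and $\Lambda|||I_KA_N|||_1\le aK/N$ valid on $\Omega_{N,K}$ (Lemma \ref{lo}) do the work, summing the geometric series in $n$.
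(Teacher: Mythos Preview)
Your overall strategy is the paper's: decompose $\cV_t^{N,K}-\cV_\infty^{N,K}$ into seven pieces, kill six via Lemma~\ref{Delta1}, and reduce $J^{N,K,211}_t$ to $\tfrac{2}{t\sqrt K}\sum_{i\le K}\int_t^{2t}(M^{i,N}_{s-}-M^{i,N}_t)\,dM^{i,N}_s$, handled by Lemma~\ref{Gauss}. Two points need cleaning up.

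First, your labeling of the $J$-pieces is scrambled relative to the paper. In the paper $J^{N,K,1}_t$ is the effect of replacing the centering $\e_t^{N,K}$ by $\mu\bar\ell_N^K$ (hence controlled by Lemma~\ref{barell}, not by the $t^{1-q}$ bias of Lemma~\ref{Zt}(ii)); the $t^{1-q}$ bias sits in $J^{N,K,22}_t$ and $J^{N,K,23}_t$; and $J^{N,K,3}_t$ is the cross term $\tfrac{2N}K\sum_i[\tfrac{Z^{i,N}_{2t}-Z^{i,N}_t}{t}-\mu\ell_N(i)]\cdot\mu(\ell_N(i)-\bar\ell_N^K)$, which does \emph{not} involve $\e_t^{N,K}-\mu\bar\ell_N^K$. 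This is cosmetic but will trip you up when you cite the bounds.

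Second, your Step~3 double-counts $-\tfrac Nt\e_t^{N,K}$. In the paper's decomposition $J^{N,K,211}_t$ is already \emph{centered}: it equals $\tfrac{N}{Kt^2}\sum_i\{(U^{i,N}_{2t}-U^{i,N}_t)^2-\Et[(U^{i,N}_{2t}-U^{i,N}_t)^2]\}$, and the correction $-\tfrac Nt\e_t^{N,K}$ has been absorbed into $J^{N,K,212}_t+J^{N,K,213}_t$. So when you apply It\^o inside $J^{N,K,211}_t$, the bracket contributes $Z^{i,N}_{2t}-Z^{i,N}_t-\Et[Z^{i,N}_{2t}-Z^{i,N}_t]$, whose $i$-sum is $K(\bar U^{N,K}_{2t}-\bar U^{N,K}_t)$ and is killed by the moment bound $\Et[(\bar U^{N,K}_t)^2]\le Ct/K$---not by a second cancellation with $\e_t^{N,K}$. (Alternatively you could skip the centering and cancel the It\^o bracket exactly against $-\tfrac Nt\e_t^{N,K}$, but then you must separately control the uncentered $T$-terms; the paper does not take this route.)

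Finally, for the cross piece $\sum_i T^{i,N}_t(M^{i,N}_{2t}-M^{i,N}_t)$ (the paper's Step~2), operator norms alone are not enough: one needs the pointwise bound $Q_N(i,j)-\indiq_{\{i=j\}}\le C/N$ (from \cite[(8)]{A}) to get $\Et[(T^{i,N}_t)^2]\le Ct/N$, plus the four-point covariance estimate \cite[Lemma~22]{A} to control the off-diagonal pairs in $\Vart[\sum_i T^{i,N}_t(M^{i,N}_{2t}-M^{i,N}_t)]$ and obtain the bound $CKt^2/N$.
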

Prior to the proof, we decompose the difference $\mathcal{V}_{t}^{N,K}-\mathcal{V}_{\infty}^{N,K}:= J_{t}^{N,K,1}+J_{t}^{N,K,2}+J_{t}^{N,K,3},$
where 
\begin{align*}
J_{t}^{N,K,1}&=\frac{N}{K}\Big\{\sum_{i=1}^{K}\Big[\frac{Z_{2t}^{i,N}-Z_{t}^{i,N}}t-\varepsilon_{t}^{N,K}\Big]^{2}
-\sum_{i=1}^{K}\Big[\frac{Z_{2t}^{i,N}-Z_{t}^{i,N}}t-\mu\bar{\ell}_{N}^{K}\Big]^{2}\Big\}, \\
J_{t}^{N,K,2}&=\frac{N}{K}\Big\{\sum_{i=1}^{K}\Big[\frac{Z_{2t}^{i,N}-Z_{t}^{i,N}}t-\mu\ell_{N}(i)\Big]^{2}-\frac Kt
\varepsilon_{t}^{N,K}\Big\},\\
J_{t}^{N,K,3}&=2\frac{N}{K}\sum_{i=1}^{K}\Big[\frac{Z_{2t}^{i,N}-Z_{t}^{i,N}}t-\mu\ell_{N}(i)\Big]\Big[\mu\ell_{N}(i)-\mu\bar{\ell}_{N}^{K}\Big].
\end{align*}
We further decompose 
$J_{t}^{N,K,2}= J_{t}^{N,K,21}+J_{t}^{N,K,22}+J_{t}^{N,K,23},$
where
\begin{align*}
J_{t}^{N,K,21}&=\frac{N}{K}\Big\{\sum_{i=1}^{K}\Big[\frac{Z_{2t}^{i,N}-Z_{t}^{i,N}}t-\frac{\mathbb{E}_{\theta}[Z_{2t}^{i,N}-Z_{t}^{i,N}]}t\Big]^{2}-\frac Kt\varepsilon_{t}^{N,K}\Big\},\\
J_{t}^{N,K,22}&=\frac{N}{K}\sum_{i=1}^{K}\Big\{\frac{\mathbb{E_{\theta}}[Z_{2t}^{i,N}-Z_{t}^{i,N}]}t-
\mu\ell_{N}(i)\Big\}^{2},\\
J_{t}^{N,K,23}&=2\frac{N}{K}\sum_{i=1}^{K}\Big[\frac{Z_{2t}^{i,N}-Z_{t}^{i,N}}t-
\frac{\mathbb{E}_{\theta}(Z_{2t}^{i,N}-Z_{t}^{i,N})}t\Big]\Big[\frac{\mathbb{E}_{\theta}(Z_{2t}^{i,N}-Z_{t}^{i,N})}t-
\mu\ell_{N}(i)\Big].
\end{align*}
Recalling that  $U_{t}^{i,N}=Z_{t}^{i,N}-\mathbb{E}_{\theta}[Z_{t}^{i,N}]$, we further write  $J_{t}^{N,K,21}= J_{t}^{N,K,211}+J_{t}^{N,K,212}+J_{t}^{N,K,213}$, where
\begin{align*}
J_{t}^{N,K,211}&= \frac{N}{K}\sum_{i=1}^{K}\Big\{\frac{(U_{2t}^{i,N}-U_{t}^{i,N})^{2}}{t^{2}}-
\frac{\mathbb{E_{\theta}}[(U_{2t}^{i,N}-U_{t}^{i,N})^{2}]}{t^{2}}\Big\},\\
J_{t}^{N,K,212}&= \frac{N}{K}\Big\{\sum_{i=1}^{K}\frac{\mathbb{E_{\theta}}[(U_{2t}^{i,N}-U_{t}^{i,N})^{2}]}{t^{2}}
-\frac Kt\mathbb{E_{\theta}}[\varepsilon_{t}^{N,K}]\Big\},\\
J_{t}^{N,K,213}&= \frac{N}{K}\Big\{\frac Kt\mathbb{E_{\theta}}[\varepsilon_{t}^{N,K}]-\frac Kt \varepsilon_{t}^{N,K}\Big\}.
\end{align*}
Finally, we also write $J_{t}^{N,K,3}:=J_{t}^{N,K,31}+J_{t}^{N,K,32}$, where
\begin{align*}
J_{t}^{N,K,31}&=2\frac{N}{K}\sum_{i=1}^{K}\Big[\frac{Z_{2t}^{i,N}-Z_{t}^{i,N}}t-
\frac{\mathbb{E_{\theta}}[Z_{2t}^{i,N}-Z_{t}^{i,N}]}t\Big]\Big[\mu\ell_{N}(i)-\mu\bar{\ell}_{N}^{K}\Big],\\
J_{t}^{N,K,32}&=2\frac{N}{K}\sum_{i=1}^{K}\Big[\frac{\mathbb{E_{\theta}}[Z_{2t}^{i,N}-Z_{t}^{i,N}]}t-\mu\ell_{N}(i)\Big]\Big[\mu\ell_{N}(i)-\mu\bar{\ell}_{N}^{K}\Big].
\end{align*}

Although the decomposition above is somewhat involved, the principal term is $J_{t}^{N,K,211}$, which converges to a Gaussian distribution after normalization. The remaining terms, namely,  $J_{t}^{N,K,1},$ $J_{t}^{N,K,22},$ $J_{t}^{N,K,23},$ $J_{t}^{N,K,213},$ $J_{t}^{N,K,32},$ $J_{t}^{N,K,212},$ $J_{t}^{N,K,31}$ are all suitably bounded as a consequence of Lemma \ref{Delta1}.  

\begin{lemma}\label{Delta1}
Assume  \eqref{H(q)}  for some $q> 1$.
When $(N,K,t)\to (\infty,\infty,\infty)$ and 
$\frac{t\sqrt{K}}{N}(\frac{N}{t^q}+\sqrt{\frac{N}{Kt}})\to 0$, 
$$
\frac{t\sqrt{K}}{N}\mathbb{E}\Big[\omg \Big|J_{t}^{N,K,1}+J_{t}^{N,K,212}+J_{t}^{N,K,213}+J_{t}^{N,K,22}+J_{t}^{N,K,23}+J_{t}^{N,K,3}\Big| \Big]\longrightarrow 0.
$$
\end{lemma}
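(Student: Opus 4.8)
The plan is to bound each of the seven ``error'' terms separately, showing that each is $o(N/(t\sqrt K))$ in $L^1$ after multiplication by the indicator $\omg$, and then conclude by the triangle inequality. The general strategy for each term is the same: condition on $(\theta_{ij})$, use the moment estimates from Lemma~\ref{lambar} (in particular $\Et[(U^{i,N}_{2t}-U^{i,N}_t)^4]\le Ct^2$, $\Et[(\bar U^{N,K}_{2t}-\bar U^{N,K}_t)^4]\le Ct^2/K^2$, and $\Et[(Z^{i,N}_{2t}-Z^{i,N}_t)^4]\le Ct^4$), combined with the deterministic rate from Lemma~\ref{Zt}-(ii), namely $\omg\|\Et[\bZ^{N,K}_{2t}-\bZ^{N,K}_t-\mu t\bl^K_N]\|_r\le C\min\{1,t^{1-q}\}K^{1/r}$, and the closeness of $\varepsilon^{N,K}_t$ to $\mu\bar\ell^K_N$ from Lemma~\ref{barell}. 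Finally, everything is controlled on $\Omega_{N,K}$, where $\sup_i\ell_N(i)\le C$ and $|||Q_N|||_r\le C$ by Lemma~\ref{lo}.

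Concretely, I would proceed term by term. For the ``bias-type'' terms $J^{N,K,22}_t$ and $J^{N,K,32}_t$: by Lemma~\ref{Zt}-(ii) each summand in $J^{N,K,22}_t$ is $\le Ct^{-2}\min\{1,t^{2-2q}\}$, so $J^{N,K,22}_t\le CNt^{-2}\min\{1,t^{2-2q}\}$, and multiplying by $t\sqrt K/N$ gives $\le C\sqrt K\,t^{-1}\min\{1,t^{2-2q}\}$, which $\to0$ in the given regime. For $J^{N,K,32}_t$, apply Cauchy--Schwarz to the two factors and use $\|\bl^K_N-\bar\ell^K_N\bun\|_2^2=\|\bx^K_N\|_2^2\le CK/N$ (an estimate available via Lemma~\ref{simple}) together with the bias bound; $J^{N,K,23}_t$ is similar but replaces one factor by the centered $U$-increment, handled via $\Et[(U^{i,N}_{2t}-U^{i,N}_t)^2]\le Ct$. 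For the ``$\varepsilon$-replacement'' terms $J^{N,K,1}_t$ and $J^{N,K,213}_t$: expand the square in $J^{N,K,1}_t$ to isolate the difference $\varepsilon^{N,K}_t-\mu\bar\ell^K_N$, then apply Lemma~\ref{barell} (which gives $\omg\sqrt K\,\Et[|\varepsilon^{N,K}_t-\mu\bar\ell^K_N|]\to0$) plus the crude bound $\omg\Et[(Z^{i,N}_{2t}-Z^{i,N}_t)/t-\mu\bar\ell^K_N]^2\le C$; the term $J^{N,K,213}_t=\frac Nt(\Et[\varepsilon^{N,K}_t]-\varepsilon^{N,K}_t)$ has conditional mean zero and conditional variance controlled by $\Et[(\bar U^{N,K}_{2t}-\bar U^{N,K}_t)^2]\le Ct/K$, so its $L^1$-norm on $\Omega_{N,K}$ is $\le \frac Nt\sqrt{C/(Kt)}$, and $\frac{t\sqrt K}{N}$ times this is $\le C\sqrt{1/t}\to0$. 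Finally, for $J^{N,K,31}_t$ and $J^{N,K,212}_t$: $J^{N,K,31}_t$ is a sum of products of centered increments with the deterministic (given $\theta$) weights $\mu\ell_N(i)-\mu\bar\ell^K_N$, so a conditional-variance computation using independence of $\{U^{i,N}\}$ across $i$ (given $\theta$, these are orthogonal martingale increments) gives $\Et[(J^{N,K,31}_t)^2]\le C(N/K)^2\,t^{-2}\cdot t\cdot K\|\bx^K_N\|_2^2\le C(N/K)^2t^{-1}\cdot K\cdot(CK/N)$, whence the normalized $L^1$-norm is $O(\sqrt{N/(Kt)})\to0$; and $J^{N,K,212}_t$ is the difference between $\frac NK\sum_i t^{-2}\Et[(U^{i,N}_{2t}-U^{i,N}_t)^2]$ and $\frac Nt\Et[\varepsilon^{N,K}_t]$, which is an identity-plus-error matching the definition of $\varepsilon^{N,K}_t$ up to terms controlled, once more, by Lemma~\ref{Zt}-(ii) and the off-diagonal structure $\Et[\bar U^{N,K}_t]^2$ versus $K^{-2}\sum_i\Et[(U^{i,N}_t)^2]$, the discrepancy being $O(N/K^2)$ per the variance bound.

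The main obstacle I expect is the bookkeeping for $J^{N,K,212}_t$ and $J^{N,K,31}_t$, where one must track the exact conditional second moments of the $U$-increments and the cross-correlation between different coordinates $i$, which is not purely diagonal because $\bar U^{N,K}_{2t}-\bar U^{N,K}_t$ appearing inside $\varepsilon^{N,K}_t$ mixes the coordinates through $I_K A_N^n$ in the representation \eqref{ee2}. Getting the right power of $N/K$ there requires using $\Omega_{N,K}$ to bound $\Lambda|||I_K A_N|||_1\le aK/N$ and summing the resulting geometric series in $n$, exactly as in the (elided) proof of Lemma~\ref{Zt}-(iii); the other terms are comparatively routine applications of Cauchy--Schwarz and the moment bounds. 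Once all seven bounds are in hand, summing them and invoking $\frac1{\sqrt K}+\frac NK\sqrt{\Delta_t/t}+\frac N{t\sqrt K}\to0$ together with $t\sqrt K/N\cdot(N/t^q+\sqrt{N/(Kt)})\to0$ finishes the proof.
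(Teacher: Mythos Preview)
Your overall strategy—bounding each term separately via the moment estimates of Lemma~\ref{lambar}, the bias bound of Lemma~\ref{Zt}-(ii), and Lemma~\ref{barell}—is exactly what the paper does (the paper simply cites the corresponding lemmas in \cite{D} rather than spelling them out). The target bounds you write down for each term are also the correct ones.

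There is, however, a genuine error in your handling of $J^{N,K,31}_t$. You write that, given $\theta$, the centered increments $\{U^{i,N}_{2t}-U^{i,N}_t\}_{i=1,\dots,K}$ are ``orthogonal martingale increments'' and that this yields the diagonal variance bound. This is false: the $U^{i,N}$ are neither martingales nor orthogonal in $L^2(\Prt)$. By \eqref{UPro}, every $U^{i,N}$ is a linear functional of \emph{all} the $M^{j,N}$ via the coefficients $A_N^n(i,j)$, so for $i\neq i'$ one has $\Covt(U^{i,N}_{2t}-U^{i,N}_t,\,U^{i',N}_{2t}-U^{i',N}_t)\neq 0$ in general. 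The correct route—which is what \cite[Lemma~8.5]{D} does and what the paper invokes—is to write, using \eqref{UPro},
\[
\sum_{i=1}^K x_N^K(i)(U^{i,N}_{2t}-U^{i,N}_t)=\sum_{n\ge 0}\int_0^{2t}\beta_n(t,2t,u)\sum_{j=1}^N\Big(\sum_{i=1}^K x_N^K(i)A_N^n(i,j)\Big)M^{j,N}_u\,du,
\]
and then exploit the genuine orthogonality of the $M^{j,N}$ together with $\|(A_N^n)^T\bx^K_N\|_2\le |||A_N|||_2^n\,\|\bx^K_N\|_2$ on $\Omega_{N,K}$. This recovers exactly your claimed bound $\Et[|J^{N,K,31}_t|]\le C(N/K)t^{-1/2}\|\bx^K_N\|_2$, after which the expectation over $\theta$ uses $\E[\boldsymbol{1}_{\Omega_{N,K}\cap\cA_N}\|\bx^K_N\|_2]\le C(K/N)^{1/2}$.

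A smaller caveat concerns $J^{N,K,212}_t$: a naive Cauchy--Schwarz between $T^{i,N}_t$ and $M^{i,N}_{2t}-M^{i,N}_t$ only gives $|\Et[T^{i,N}_t(M^{i,N}_{2t}-M^{i,N}_t)]|\le Ct/\sqrt N$, which after normalization leaves a residual of order $\sqrt{K/N}$ that need not vanish. One must instead compute the covariance directly and use that only the term $j=i$ in \eqref{frf} contributes, leading to the factor $Q_N(i,i)-\indiq_{\{i=i\}}\le C/N$ (from \cite[(8)]{A}), hence $|\Et[T^{i,N}_t(M^{i,N}_{2t}-M^{i,N}_t)]|\le Ct/N$. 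This is the content of \cite[Lemma~8.3]{D}.
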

\begin{proof}
Bounds for  $J_{t}^{N,K,1}, J_{t}^{N,K,22}, J_{t}^{N,K,23},J_{t}^{N,K,213},J_{t}^{N,K,32}$ are provided in \cite[Lemma 8.2]{D}. While  $J_{t}^{N,K,212}$ is bounded by  \cite[Lemma 8.3]{D}. It remains to handle $J_{t}^{N,K,3} = J_{t}^{N,K,31}+J_{t}^{N,K,32}$. For $J_{t}^{N,K,31}$,  \cite[Lemma 8.5]{D} implies that
$$
\boldsymbol{1}_{\Omega_{N,K}\cap \cA_{N}}\mathbb{E}_\theta[|J_{t}^{N,K,31}|] \leq C 
\frac{N}{K\sqrt t} \Big[\sum_{i=1}^K ( \ell_N(i)-\bar\ell^K_N)^2\Big]^{1/2}=
 C \frac{N}{K\sqrt t} ||\bx_N^K ||_2.
$$
 Taking expectation and applying the estimate 
  $\E[\boldsymbol{1}_{\Omega_{N,K}\cap \cA_{N}} ||\bx_N^K ||_2] \leq C K^{1/2}N^{-1/2}$ from \cite[Lemmas 5.14]{D}, we obtain $\mathbb{E}\big[\omg |J_{t}^{N,K,31}|\big]\leq C\sqrt{\frac{N}{Kt}}$. The desired result follows by aggregating the individual bounds for all terms.
\end{proof}
The following tedious lemma will allow us to treat the contribution term $J^{N,K,211}_t$.
\begin{lemma}\label{Gauss}
Assume \eqref{H(q)}  for some $q> 1$. For $u\in [0,1]$, define the process  
$$N_{u}^{t,i,N}:=\int_{t}^{t+\sqrt{u}t}(M_{s-}^{i,N}-M_{t}^{i,N})dM_{s}^{i,N},$$ where $M^{i,N}$ defined in Section \ref{aux}.  When $(N,K,t)\to (\infty,\infty,\infty)$ ,
\begin{equation}\label{ababa}
\Big(\frac{1}{t\sqrt{K}} \sum_{i=1}^{K} N_{u}^{t,i,N} \Big)_{u\in [0,1]}
\stackrel{d}\longrightarrow \Big(\frac{\mu}{\sqrt{2}(1-\Lambda p)} B_u \Big)_{u\in[0,1]},
\end{equation}
where $(B_u)_{u\in [0,1]}$ is a Brownian motion. 
\end{lemma}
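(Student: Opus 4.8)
The plan is to prove a functional central limit theorem for the rescaled, partial-sum process of the stochastic integrals $N^{t,i,N}_u$ by combining a martingale functional CLT (a Rebolledo-type theorem) with the asymptotic estimates on intensities already available in Lemma \ref{lambar}. First I would fix the parametrization: for each $i$ and each $t$, the process $u \mapsto N^{t,i,N}_u = \int_t^{t+\sqrt u\, t}(M^{i,N}_{s-}-M^{i,N}_t)\,dM^{i,N}_s$ is, after the time change $u = ((r-t)/t)^2$ on $r\in[t,2t]$, a martingale in $u$ (with respect to the appropriately time-changed filtration), because the integrand is predictable and vanishes at the left endpoint. Summing over $i=1,\dots,K$ and dividing by $t\sqrt K$ produces a sequence of martingales $\mathcal M^{N,K,t}_u := (t\sqrt K)^{-1}\sum_{i=1}^K N^{t,i,N}_u$; the goal is to show $\mathcal M^{N,K,t} \stackrel{d}\to \frac{\mu}{\sqrt2(1-\Lambda p)} B$ in the Skorokhod space $D([0,1])$.

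Next I would verify the two hypotheses of the martingale FCLT. \textbf{Predictable quadratic variation.} Using $d[M^{i,N},M^{j,N}]_s = \boldsymbol 1_{\{i=j\}}dZ^{i,N}_s$ and $d\langle M^{i,N}\rangle_s = \lambda^{i,N}_s\,ds$, one computes that the predictable bracket of $\sum_i N^{t,i,N}$ at time-change parameter $u$ equals $\sum_{i=1}^K \int_t^{t+\sqrt u\, t}(M^{i,N}_{s-}-M^{i,N}_t)^2\lambda^{i,N}_s\,ds$. Dividing by $t^2K$, the claim is that this converges in probability to $\frac{\mu^2}{2(1-\Lambda p)^2}u$. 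The heuristic is: $\E_\theta[(M^{i,N}_s-M^{i,N}_t)^2] = \E_\theta[\int_t^s\lambda^{i,N}_r dr] \approx \mu\ell_N(i)(s-t)$, and $\lambda^{i,N}_s$ has $\E_\theta$-mean $\approx \mu\ell_N(i)$, so by a law-of-large-numbers/Fubini argument the bracket is $\approx \sum_i \mu^2\ell_N(i)^2 \int_t^{t+\sqrt u\, t}(s-t)\,ds = \sum_i \mu^2\ell_N(i)^2 \cdot \tfrac12 u t^2$; dividing by $t^2 K$ and using $K^{-1}\sum_{i=1}^K \ell_N(i)^2 \to (1-\Lambda p)^{-2}$ (which follows from $\bar\ell^K_N\to(1-\Lambda p)^{-1}$ together with the concentration estimates of Lemma \ref{ellp} and Lemma \ref{simple}, since $\|\boldsymbol x^K_N\|_2^2$ is negligible) gives the constant $\mu^2/(2(1-\Lambda p)^2)$. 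Making this rigorous requires controlling the fluctuations of $\sum_i \int_t^{t+\sqrt u t}(M^{i,N}_{s-}-M^{i,N}_t)^2\lambda^{i,N}_s ds$ around its $\E_\theta$-conditional mean; here the fourth-moment bounds in Lemma \ref{lambar}(iii)--(iv) on $U^{i,N}_t-U^{i,N}_s$ and the bounds on $\E_\theta[\lambda^{i,N}_s]$, $\E_\theta[(\lambda^{i,N}_s)^2]$ are exactly what is needed, together with conditional independence across $i$ given $\theta$. \textbf{Lindeberg/jump condition.} The jumps of $\sum_i N^{t,i,N}$ are of size $|M^{i,N}_{s-}-M^{i,N}_t|$ at the jump times of $Z^{i,N}$; after dividing by $t\sqrt K$ these are $O((t\sqrt K)^{-1}\sup_{s\le 2t}|M^{i,N}_s|)$, and since $\E_\theta[\sup_{s\le 2t}(M^{i,N}_s)^2]\le Ct$ one gets the expected sum of squared jumps exceeding $\e$ going to zero by a truncation plus Markov argument; alternatively one checks the stronger condition that the maximal jump tends to $0$ in probability.

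Having established convergence of the bracket to $\frac{\mu^2}{2(1-\Lambda p)^2}u$ (a deterministic, continuous limit) and the negligibility of jumps, the martingale FCLT yields $\mathcal M^{N,K,t}\stackrel{d}\to \frac{\mu}{\sqrt2(1-\Lambda p)}B$ in $D([0,1])$, which is \eqref{ababa}. \textbf{The main obstacle} I anticipate is the law-of-large-numbers step for the predictable bracket: one must show that $(t^2K)^{-1}\sum_{i=1}^K\int_t^{t+\sqrt u t}(M^{i,N}_{s-}-M^{i,N}_t)^2\lambda^{i,N}_s\,ds$ concentrates around a deterministic limit \emph{uniformly in} $u\in[0,1]$, which entails (a) replacing $\lambda^{i,N}_s$ by its conditional mean $\mu\ell_N(i)$ at the cost of an error controlled via Lemma \ref{lambar}(ii); (b) replacing $(M^{i,N}_{s-}-M^{i,N}_t)^2$ by its conditional expectation $\int_t^s\lambda^{i,N}_r\,dr\approx\mu\ell_N(i)(s-t)$, where the fluctuation is a martingale increment whose conditional variance is bounded using the fourth-moment estimate $\E_\theta[(M^{i,N}_s-M^{i,N}_t)^4]\le C(s-t)^2$ implied by Lemma \ref{lambar}(iii) and Burkholder's inequality; and (c) summing over $i$ and using that, conditionally on $\theta$, the summands are independent with bounded conditional variances, so that a conditional $L^2$ estimate of order $K^{-1}$ suffices, after which one removes the conditioning and invokes $K^{-1}\sum_{i\le K}\ell_N(i)^2\to(1-\Lambda p)^{-2}$. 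The bookkeeping is heavy but every required moment bound is already in Lemma \ref{lambar} and the random-matrix estimates of Section \ref{sec3}; uniformity in $u$ follows since all these processes are monotone or of bounded variation in $u$, so pointwise convergence on a dense set upgrades to uniform convergence of the (continuous, increasing) bracket.
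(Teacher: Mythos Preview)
Your overall strategy matches the paper's: apply a martingale functional CLT (the paper invokes Jacod--Shiryaev, Theorem VIII-3-8) by checking convergence of the bracket and a jump condition. Two minor remarks: the paper works with the optional bracket $[\cdot,\cdot]$ (involving $dZ^{i,N}_s$) rather than the predictable bracket $\langle\cdot,\cdot\rangle$, but these differ by a martingale term that vanishes, so either route works; and pointwise convergence of the bracket at each fixed $u$ already suffices for the theorem, so your concern about uniformity in $u$ is unnecessary.

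There is, however, a genuine gap in your step (c): you assert that ``conditionally on $\theta$, the summands are independent with bounded conditional variances.'' This is false. The processes $(Z^{i,N})_i$ are coupled through their intensities $\lambda^{i,N}_t = \mu + N^{-1}\sum_j \theta_{ij}\int_0^{t-}\phi(t-s)\,dZ^{j,N}_s$, so even after conditioning on $\theta$ they are \emph{not} independent. What you do have is \emph{orthogonality} of the martingales $M^{i,N}$ (namely $[M^{i,N},M^{j,N}]=0$ for $i\neq j$), which is strictly weaker. The paper exploits this orthogonality through the representation \eqref{UPro}. Concretely, the fluctuation of $(M^{i,N}_s-M^{i,N}_t)^2$ around its $\E_\theta$-mean decomposes via It\^o as $2\int_t^s(M^{i,N}_{r-}-M^{i,N}_t)\,dM^{i,N}_r + (U^{i,N}_s-U^{i,N}_t)$. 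The stochastic-integral pieces are genuinely orthogonal across $i$ (their cross-brackets vanish), which gives the $K^{-1}$ gain you want. But the $U^{i,N}$ pieces are not independent across $i$; the paper handles $\sum_i \ell_N(i)(U^{i,N}_s-U^{i,N}_t)$ by expanding each $U^{i,N}$ via \eqref{UPro} as a sum of integrals against the $M^{j,N}$'s and then using \emph{their} orthogonality together with the operator-norm bounds on $I_K A_N^n$ (this is the paper's term $\Upsilon^{3,2}$). So your decomposition is correct in spirit, but the variance bound in (c) must rest on martingale orthogonality and the structural formula \eqref{UPro}, not on the unavailable conditional independence.
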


\begin{proof}
Note that for fixed $t\geq 0$, the process $(N_{u}^{t,i,N})_{u \in [0,1]}$ is a martingale 
w.r.t  the filtration $\mathcal{F}^{N}_{t+\sqrt{u}t}$.  To prove \eqref{ababa}, we apply Jacod-Shiryaev \cite[Theorem VIII-3-8]{B}, which requires verifying that  as $(t,N,K)\to (\infty,\infty,\infty)$,
\vip

(a) $[\frac{1}{t\sqrt{K}} \sum_{i=1}^{K} N_{.}^{t,i,N},\frac{1}{t\sqrt{K}} \sum_{i=1}^{K} N_{.}^{t,i,N}]_u \to \frac{\mu^2}{2(1-\Lambda p)^2}u$ in probability, for all $u\in [0,1]$ fixed.

\vip

(b) $\sup_{u \in [0,1]} \frac{1}{t\sqrt{K}} \sum_{i=1}^{K} |N_{u}^{t,i,N}-N_{u-}^{t,i,N}| \to 0$ in probability.

\vip

The verification of point (b) is relatively straightforward. Using the independence of the Poisson measures in  \eqref{sssy} and the fact that the jumps of $M^{i,N}$ are always of size $1$, we obtain
\begin{align*}
\frac{1}{t\sqrt{K}}\mathbb{E}\Big[\omg\sup_{u\in[0,1]} \sum_{i=1}^{K} \Big|N_{u}^{t,i,N}-N_{u-}^{t,i,N}\Big| \Big]
\le& \frac{C}{t\sqrt{K}}\mathbb{E}\Big[\omg\sup_{u\in[0,1]}\max_{i=1,...,K}\Big|M_{t+t\sqrt{u}}^{i,N}-M_{t}^{i,N}\Big|\Big]\\
\le& \frac{C}{t\sqrt{K}}\mathbb{E}\Big[\omg\sup_{u\in[0,1]}\Big|\sum_{i=1}^K(M_{t+t\sqrt{u}}^{i,N}-M_{t}^{i,N})^{2}\Big|^\frac{1}{2}\Big].
\end{align*}
Applying the Cauchy-Schwarz  inequality and  using \eqref{ee3} yields
\begin{align*}
\frac{1}{t\sqrt{K}}\mathbb{E}\Big[\omg\sup_{u\in[0,1]} \sum_{i=1}^{K} \Big|N_{u}^{t,i,N}-N_{u-}^{t,i,N}\Big| \Big]
\le& \frac{C}{t\sqrt{K}}\mathbb{E}\Big[\omg\sup_{u\in[0,1]}\sum_{i=1}^K(M_{t+t\sqrt{u}}^{i,N}-M_{t}^{i,N})^{2}\Big]^\frac{1}{2}\\
\le& \frac{C}{t\sqrt{K}}\mathbb{E}\Big[\omg\Big|\sum_{i=1}^K(Z_{2t}^{i,N}-Z_{t}^{i,N})\Big|\Big]^\frac{1}{2}
\le \frac{C}{\sqrt{t}}.
\end{align*}
The last inequality follows from Lemma \ref{Zt}-(ii) with $K=N$ and $r=\infty$, which gives us that
$\max_{i=1,\dots,N}\Et[Z^{i,N}_{t}-Z^{i,N}_{s}]\le C(t-s)$ on $\Omega_{N,K}\subset \Omega_{N,N}$.  
\vip
Regarding point (a),  recall that $Z^{i,N}_t=M^{i,N}_t+\int_0^t \lambda^{i,N}_s ds$. For fixed $u$, we  write
\begin{align*}
\Big[\frac{1}{t\sqrt{K}}\sum_{i=1}^{K}N_{.}^{t,i,N}, \frac{1}{t\sqrt{K}} \sum_{i=1}^{K} N_{.}^{t,i,N}\Big]_{u}=& \frac{1}{t^2 K}
\sum_{i=1}^{K}\int_{t}^{t+\sqrt{u}t}(M_{s-}^{i,N}-M_{t}^{i,N})^{2}dZ_{s}^{i,N} \\
&:= \varUpsilon^1_{t,N,K,u}+\varUpsilon^2_{t,N,K,u}+\varUpsilon^3_{t,N,K,u},
\end{align*}
where,
\begin{align*}
\varUpsilon^1_{t,N,K,u}:=&\frac{1}{t^2 K}\sum_{i=1}^{K}\int_{t}^{t+\sqrt{u}t}(M_{s-}^{i,N}-M_{t}^{i,N})^{2}dM_{s}^{i,N},\\
\varUpsilon^2_{t,N,K,u}:=&\frac{1}{t^2 K}\sum_{i=1}^{K}\int_{t}^{t+\sqrt{u}t}(M_{s}^{i,N}-M_{t}^{i,N})^{2}(\lambda_{s}^{i,N}-\mu \ell_{N}(i))ds,\\
\varUpsilon^3_{t,N,K,u}:=&\frac{1}{t^2 K} \sum_{i=1}^{K}\mu\ell_{N}(i)\int_{t}^{t+\sqrt{u}t}(M_{s}^{i,N}-M_{t}^{i,N})^{2}ds.
\end{align*}
Each term will be handled in a separate step.
\vip
{\bf Step 1.}
In this step,  we verify that $\E[\omg \varUpsilon^1_{t,N,K,u}] \to 0$ as $(N,K,t)\to (\infty,\infty,\infty)$.  Using  \eqref{ee3}, we obtain
\begin{align*}
\Et[&(\varUpsilon^1_{t,N,K,u})^2]=\frac{1}{K^{2}t^{4}}\sum_{i=1}^{K}\mathbb{E}_{\theta}\Big[\int_{t}^{t+\sqrt{u}t}(M_{s-}^{i,N}-M_{t}^{i,N})^{4}dZ_{s}^{i,N}\Big]\\
=&\frac{1}{K^{2}t^{4}}\sum_{i=1}^{K}\mathbb{E}_{\theta}\Big[\int_{t}^{t+\sqrt{u}t}(M_{s}^{i,N}-M_{t}^{i,N})^{4}\lambda_{s}^{i,N}ds\Big]\\
\le& \frac{1}{K^{2}t^{4}}\sum_{i=1}^{K}\int_{t}^{t+\sqrt{u}t}\Big\{\mathbb{E}_{\theta}[(M_{s}^{i,N}-M_{t}^{i,N})^{4}|\lambda_{s}^{i,N}-\mu \ell_{N}(i)|]+\mu\mathbb{E}_{\theta}[(M_{s}^{i,N}-M_{t}^{i,N})^{4}]|\ell_{N}(i)|\Big\}ds.
\end{align*}
Applying the Cauchy–Schwarz and Burkholder inequalities, we further obtain
\begin{align*}
&\Et[(\varUpsilon^1_{t,N,K,u})^2]\\
\le& \frac{1}{K^{2}t^{4}}\sum_{i=1}^{K}\int_{t}^{t+\sqrt{u}t}\!\!\!\Big\{\mathbb{E}_{\theta}[(M_{s}^{i,N}-M_{t}^{i,N})^{8}]^\frac{1}{2}\mathbb{E}_{\theta}[|\lambda_{s}^{i,N}-\mu\ell_{N}(i)|^2]^\frac{1}{2}+C\mu\mathbb{E}_{\theta}[(Z_{s}^{i,N}-Z_{t}^{i,N})^{2}]|\ell_{N}(i)|\Big\}ds\\
\le& \frac{C}{K^{2}t^{4}}\sum_{i=1}^{K}\int_{t}^{t+\sqrt{u}t}\!\!\!\Big\{\mathbb{E}_{\theta}[(Z_{s}^{i,N}-Z_{t}^{i,N})^{4}]^\frac{1}{2}\mathbb{E}_{\theta}[|\lambda_{s}^{i,N}-\mu\ell_{N}(i)|^2]^\frac{1}{2}+\mu\mathbb{E}_{\theta}[(Z_{s}^{i,N}-Z_{t}^{i,N})^{4}]^\frac{1}{2}|\ell_{N}(i)|\Big\}ds.
\end{align*}
By Lemma \ref{lambar}-(iii), on
$\Omega_{N,K}$, we have
$\max_{i=1,...,N}\mathbb{E}_\theta[(Z_{s}^{i,N}-Z_{t}^{i,N})^{4}]\le C(t-s)^4$ for all $s\geq t$. Moreover,
 $\ell_N$ is bounded on $\Omega_{N,K}$. Therefore,
\begin{align*}
\Et[(\varUpsilon^1_{t,N,K,u})^2] \le& \frac{C}{K^{2}t^{2}}\sum_{i=1}^{K}\int_{t}^{t+\sqrt{u}t}
\Big( 1+ \mathbb{E}_{\theta}[|\lambda_{s}^{i,N}-\mu \ell_{N}(i)|^2]^\frac{1}{2}\Big)  ds
\leq \frac C{Kt}\Big(1+\frac 1{t^q}+ \frac 1 {\sqrt N} \Big),
\end{align*}
where the last inequality follows from Lemma \ref{lambar}-(ii). This completes the step.

\vip

{\bf Step 2.} Similarly, it holds that, on $\Omega_{N,K}$, 
\begin{align*}
\Et[|\varUpsilon^2_{t,N,K,u}|]
\le &\frac{1}{Kt^{2}}\sum_{i=1}^{K}\int_{t}^{t+\sqrt{u}t}\mathbb{E}_{\theta}[(M_{s}^{i,N}-M_{t}^{i,N})^{4}]^{\frac{1}{2}}\mathbb{E}_{\theta}[|\lambda_{s}^{i,N}-\mu\ell_{N}(i)|^{2}]^{\frac{1}{2}}\\
\le & \frac{C }{Kt} \sum_{i=1}^{K} \int_t^{2t} \mathbb{E}_{\theta}[|\lambda_{s}^{i,N}-\mu\ell_{N}(i)|^{2}]^{\frac{1}{2}}
ds \le \frac{C}{t^{q}}+\frac{C}{\sqrt{N}}.
\end{align*}

{\bf Step 3.} Finally, we  prove  that $\varUpsilon^3_{t,N,K,u} \to \mu^2u/[2(1-\Lambda p)^2]$ in probability
as $(N,K,t)\to(\infty,\infty,\infty)$. Applying  It\^o's  formula and \eqref{ee3}, we write
\begin{align*}
&(M_{s}^{i,N}-M_{t}^{i,N})^{2}\\
=&2\int_{t}^{s}(M_{r-}^{i,N}-M_{t}^{i,N})dM_{r}^{i,N}+Z_{s}^{i,N}-Z_{t}^{i,N}\\
=&2\int_{t}^{s}(M_{r-}^{i,N}-M_{t}^{i,N})dM_{r}^{i,N}+U_{s}^{i,N}-U_{t}^{i,N}+\Et[Z_{s}^{i,N}-Z_{t}^{i,N}-\mu(s-t)\ell_{N}(i)]+\mu(s-t)\ell_{N}(i).
\end{align*}
Consequently,  we decompose $\varUpsilon^3_{t,N,K,u}:=\varUpsilon^{3,1}_{t,N,K,u}+\varUpsilon^{3,2}_{t,N,K,u}+\varUpsilon^{3,3}_{t,N,K,u}+\varUpsilon^{3,4}_{t,N,K,u}$, where 
\begin{align*}
\varUpsilon^{3,1}_{t,N,K,u}:=&\frac{2}{t^2 K} \sum_{i=1}^{K} \mu\ell_{N}(i)\int_{t}^{t+\sqrt{u}t}
\int_{t}^{s}(M_{r-}^{i,N}-M_{t}^{i,N})dM_r^{i,N}ds,\\
\varUpsilon^{3,2}_{t,N,K,u}:=&\frac{1}{t^2 K} \sum_{i=1}^{K} \mu\ell_{N}(i)\int_{t}^{t+\sqrt{u}t}(U_{s}^{i,N}-U_{t}^{i,N}) ds,\\
\varUpsilon^{3,3}_{t,N,K,u}:=&\frac{1}{t^2 K} \sum_{i=1}^{K} \mu\ell_{N}(i)\int_{t}^{t+\sqrt{u}t}\Et[Z_{s}^{i,N}-Z_{t}^{i,N}-\mu(s-t)\ell_{N}(i)] ds,\\
\varUpsilon^{3,4}_{t,N,K,u}:=&\frac{1}{t^2 K} \sum_{i=1}^{K} \mu^2 (\ell_{N}(i))^2 \times \frac{u t^2}2= \frac{\mu^2 u}{2K}
\sum_{i=1}^{K} (\ell_{N}(i))^2.
\end{align*}
First,  noting  that $\cV_\infty^{N,K}=\frac{N}{K}\mu^2\|\bx^{K}_{N}\|_{2}^{2}$, we have
$$
2\varUpsilon^{3,4}_{t,N,K,u}= \mu^2 u (\bar \ell_N^K)^2 + \frac{\mu^2u}K \sum_{i=1}^{K} (\ell_{N}(i)-\bar\ell_N^K)^2
=  \mu^2 u(\bar \ell_N^K)^2 + \frac{\mu^2u}K ||\bx_N^K||_2^2=\mu^2u (\bar \ell_N^K)^2 + \frac{u}N \cV^{N,K}_\infty.
$$
Then,  Lemma \ref{ellp} and Theorem \ref{21}  implies immediately that $\varUpsilon^{3,4}_{t,N,K,u}$ converges to 
$\mu^2u/[2(1-\Lambda p)^2]$ in probability. 

\vip

For the second term, we recall  \eqref{UPro} and write for $s\ge t,$
$$
U^{i,N}_s-U^{i,N}_t =  \sum_{n\geq 0} \int_0^s (\phi^{* n}(s-u)-\phi^{* n}(t-u)) \sum_{j=1}^N A_N^n(i,j)M^{j,N}_udu,
$$ 
so that, by Minkowski's inequality and separating as usual the terms $n=0$ and $n\geq 1$,
\begin{align*}
\Et[|\varUpsilon^{3,2}_{t,N,K,u}|^2]^\frac{1}{2}
\le&\frac{C}{t^2 K}  \int_{t}^{t+\sqrt{u}t}\Et\Big[\Big(\sum_{i=1}^{K}\ell_{N}(i)(U_{s}^{i,N}-U_{t}^{i,N})\Big)^2\Big]^\frac{1}{2} ds\\
 \leq& \frac{C}{t^2 K}  \int_{t}^{t+\sqrt{u}t}\Big\{\Et\Big[\Big(\sum_{i=1}^{K}\ell_{N}(i)(M_{s}^{i,N}-M_{t}^{i,N})\Big)^2\Big]^\frac{1}{2} \\
&+ \sum_{n\geq 1} \int_0^s (\phi^{* n}(s-r)-\phi^{* n}(t-r))
\Et\Big[\Big(\sum_{i=1}^K\sum_{j=1}^N \ell_{N}(i) A_N^n(i,j)M^{j,N}_r\Big)^2\Big]^\frac{1}{2} dr\Big\}ds.
\end{align*}
By (\ref{ee3}), we see that on $\Omega_{N,K}$, for all $t\le s\le 2t,$
\begin{align*}
 \Et\Big[\Big(\sum_{i=1}^{K}\ell_{N}(i)(M_{s}^{i,N}-M_{t}^{i,N})\Big)^2\Big]^\frac{1}{2}
=& \Et\Big[\sum_{i=1}^{K}(\ell_{N}(i))^2(Z_{s}^{i,N}-Z_{t}^{i,N})\Big]^\frac{1}{2}\\
=& \Big\{\sum_{i=1}^{K}(\ell_{N}(i))^2\Et\Big[Z_{s}^{i,N}-Z_{t}^{i,N}\Big]\Big\}^\frac{1}{2}\\
\le& C\sqrt{Kt},
\end{align*}
by Lemma \ref{Zt}-(i) with $r=\infty$, together with the boundedness of $\ell_N(i)$ on $\Omega_{N,K}$.
Next, for $n \geq 1$,
\begin{align*}
    \Et\Big[\Big(\sum_{i=1}^K\sum_{j=1}^N \ell_{N}(i) A_N^n(i,j)M^{j,N}_r\Big)^2\Big]=&\sum_{j=1}^N\Big(\sum_{i=1}^K\ell_{N}(i) A_N^n(i,j)\Big)^2\Et[Z^{j,N}_r]\\
    \le& C\sum_{j=1}^N\Big(\sum_{i=1}^K A_N^n(i,j)\Big)^2\Et[Z^{j,N}_r]\\
     \le& C\sum_{j=1}^N|||I_KA^n_N|||^2_1\Et[Z^{j,N}_r]\\
\le& \frac{CK^2}{N}|||A_N|||^{2n-2}_1r.
\end{align*}
The last line follows from $|||I_KA_N|||_1 \leq C K/N$ on $\Omega_{N,K}$ and by another application of Lemma \ref{Zt}-(i). 
Therefore, for any $u\in [0,1]$ (recalling that $\int_0^\infty \phi^{*n}(u)du=\Lambda^n$),
\begin{align*}
\Et[|\varUpsilon^{3,2}_{t,N,K,u}|^2]^\frac{1}{2}
 \leq& \frac{C}{t^2 K}  \int_{t}^{t+\sqrt{u}t}\Big\{\sqrt{Kt}+\sum_{n\geq 1} \int_0^s (\phi^{* n}(s-r)-\phi^{* n}(t-r))\frac{K}{\sqrt{N}}|||A_N|||^{n-1}_1\sqrt{r} dr\Big\}ds\\
 \le& \frac{C}{t^2 K} \int_{t}^{t+\sqrt{u}t}\Big\{\sqrt{Kt}+\sum_{n\geq 1} \sqrt{s} \frac{K}{\sqrt{N}}\Lambda^n|||A_N|||^{n-1}_1\Big\}ds\le \frac{C}{\sqrt{Kt}}.
\end{align*}
The last inequality uses the fact that on $\Omega_{N,K}$, we have $\Lambda |||A_N|||_1 \leq a<1$.
\vip

For the third term, since $\ell_N(i)$ is  uniformly
bounded on $\Omega_{N,K}$,  and by  Lemma \ref{Zt}-(ii) with $r=1$, we obtain, on $\Omega_{N,K},$
\begin{align*}
\Et[|\varUpsilon^{3,3}_{t,N,K,u}|]
\le&\frac{1}{Kt^{2}}\sum_{i=1}^{K}\int_{t}^{t+\sqrt{u}t}|\ell_N(i)|\Big|\mathbb{E}_{\theta}[Z_{s}^{i,N}-Z_{t}^{i,N}-\mu(s-t)\ell_{N}(i)]\Big|ds\\
\le& \frac{C}{Kt^{2}}\sum_{i=1}^{K}\int_{t}^{t+\sqrt{u}t}\Big|\mathbb{E}_{\theta}[Z_{s}^{i,N}-Z_{t}^{i,N}-\mu(s-t)\ell_{N}(i)]\Big|ds\le \frac{C}{t^q}.
\end{align*}

Finally, we set $\mathbb{N}_u^{t,i,N}:=M_{t+ut}^{i,N}-M_{t}^{i,N}$. 
Then  $\mathbb{N}_u^{t,i,N}$ is a martingale for the filtration  $\mathcal{F}^{N}_{t+ut}$  
with parameter $0\le u\le 1$.  Therefore, by \eqref{ee3},
$$[\mathbb{N}_.^{t,i,N},\mathbb{N}_.^{t,j,N}]_u=\indiq_{\{i=j\}}(Z^{i,N}_{t+ut}-Z^{i,N}_{t}).$$
On $\Omega_{N,K}$, using the change of variables $s=t+at$, we obtain
\begin{align*}
 \Et[(\varUpsilon^{3,1}_{t,N,K,u})^2]
\! =&\frac{2}{K^{2}t^{2}}\mathbb{E}_{\theta}\Big[\Big(\sum_{i=1}^{K}\mu\ell_{N}(i)\int_{0}^{\sqrt{u}}\int_{t}^{t+at}(M_{r-}^{i,N}-M_{t}^{i,N})dM_{r}^{i,N}da\Big)^{2}\Big]\\
 =&\frac{1}{K^{2}t^{2}}\mathbb{E}_{\theta}\Big[\Big(\sum_{i=1}^{K}\mu\ell_{N}(i)\int_{0}^{\sqrt{u}}\int_{0}^{a}\mathbb{N}_{b-}^{t,i,N}d\mathbb{N}_b^{t,i,N}da\Big)^{2}\Big]\\
    =& \frac{\mu^2}{K^{2}t^{2}}\sum_{i=1}^{K}\sum_{i'=1}^{K}\ell_N(i)\ell_N(i')\int_{0}^{\sqrt{u}}\!\int_{0}^{\sqrt{u}}\!\mathbb{E}_{\theta}\Big[\int_{0}^{a}\mathbb{N}_{b-}^{t.i,N}d\mathbb{N}_{b}^{t,i,N}\int_{0}^{a'}\mathbb{N}_{b'-}^{t,i',N}d\mathbb{N}_{b'}^{t,i',N}\Big]dada'\\
    \le&\frac{C}{K^{2}t^{2}}\sum_{i=1}^{K}\int_{0}^{1}\int_{0}^{1}\mathbb{E}_{\theta}\Big[\Big(\int_{0}^{a\land a'}\mathbb{N}_{b-}^{t.i,N}d\mathbb{N}_{b}^{t,i,N}\Big)^2\Big]dada'.
\end{align*}
Using the  Burkholder–Davis–Gundy inequality, the above term  is bounded by 
\begin{align*}
&\frac{C}{K^{2}t^{2}}\sum_{i=1}^{K}\mathbb{E}_{\theta}\Big[\int_{0}^{1}(\mathbb{N}_{b-}^{t.i,N})^2dZ_{t+bt}^{i,N}\Big]\\
    \le&\frac{C}{K^{2}t^{2}}\sum_{i=1}^{K}\mathbb{E}_{\theta}\Big[\Big(\sup_{0\le b\le 1}(\mathbb{N}_{b}^{t.i,N})^2\Big)Z_{2t}^{i,N}\Big].
\end{align*}

Hence, applying the Cauchy-Schwarz inequality  and the   Burkholder–Davis–Gundy  inequality, 
$$ 
\Et[(\varUpsilon^{3,1}_{t,N,K,u})^2]\leq
\frac{C}{K^{2}t^2}\sum_{i=1}^{K}\mathbb{E}_{\theta}\Big[\sup_{0\le b\le 1}(\mathbb{N}_{b}^{t.i,N})^4\Big]^\frac{1}{2}
\Et[(Z_{2t}^{i,N})^2]^\frac{1}{2}\le \frac{C}{K^{2}t^2}\sum_{i=1}^{K}
\mathbb{E}_{\theta}[(Z_{2t}^{i,N})^2]\le \frac{C}{K},
$$
where the last inequality follows from Lemma \ref{lambar}-(iii). This  completes the proof.

\end{proof}

We are now fully equipped to prove the limit for the second estimator.

\begin{proof}[Proof of Theorem \ref{VVNK}]
Recall that we operate with $(N,K,t)\to (\infty,\infty,\infty)$ such that
 $\frac{t\sqrt{K}}{N}(\frac{N}{t^q}+\sqrt{\frac{N}{Kt}})+Ne^{-c_{p,\Lambda}K}\to 0$.
At the beginning of the section, we decomposed the difference as
$$
\cV^{N,K}_t-\cV^{N,K}_\infty=J^{N,K,1}_t+J^{N,K,211}_t+J^{N,K,212}_t+J^{N,K,213}_t+J^{N,K,22}_t+
J^{N,K,23}_t+J^{N,K,3}_t.
$$
As shown in Lemma \ref{Delta1}, all terms except $J^{N,K,211}_t$, when multiplied by
$t\sqrt K / N$, converge to $0$. To complete the proof, it remains to show that under  $(t,N,K)\to (\infty,\infty,\infty)$ and $Ne^{-c_{p,\Lambda}K}\to 0,$
$$
\indiq_{\Omega_{N,K}} \frac{t\sqrt K}N J^{N,K,211}_t=\!
\indiq_{\Omega_{N,K}}\frac{1}{t\sqrt{K}}\sum_{i=1}^{K}\Big\{(U_{2t}^{i,N}-U_{t}^{i,N})^{2}
-\mathbb{E}_{\theta}[(U_{2t}^{i,N}-U_{t}^{i,N})^{2}]\Big\}\stackrel{d}{\longrightarrow}
\mathcal{N}\Big(0, \frac{2\mu^2}{(1-\Lambda p)^2}\Big),
$$
which will establish the desired result.

\vip
We now work on  $\Omega_{N,K}.$ Recalling \eqref{UPro}, 
we write
$$
(U_{2t}^{i,N}-U_{t}^{i,N})^{2}=(M_{2t}^{i,N}-M_{t}^{i,N})^{2}+2T_{t}^{i,N}(M_{2t}^{i,N}-M_{t}^{i,N})+(T_{t}^{i,N})^{2},
$$
where 
$$T_{t}^{i,N}:=\sum_{n\ge 1} \sum_{j=1}^{N}\int_{0}^{2t}\phi^{*n}(2t-s)A_{N}^{n}(i,j)M_{s}^{j,N}ds-\sum_{n\ge 1} \sum_{j=1}^{N}\int_{0}^{t}\phi^{*n}(t-s)A_{N}^{n}(i,j)M_{s}^{j,N}ds.$$
These terms will be treated individually in what follows. Here, as usual, we set $\phi(s)=0$ for $s\le 0$.

\vip

{\bf Step 1.} In this step,  we verify that
$$
\lim_{(N,K,t)\to (\infty,\infty,\infty)} \indiq_{\Omega_{N,K}}\frac1{t\sqrt K}\Et\Big[\sum_{i=1}^K \Big|(T^{i,N}_{t})^2- \Et[(T^{i,N}_{t})^2] \Big| \Big]=0.
$$
By the triangle inequality, it suffices to show that  for all $i=1,\dots,K$, $\Et[(T^{i,N}_{t})^2]\leq C t /N$.
\vip
Setting  $\beta_n(s,t,r)=\phi^{* n}(t-r)-\phi^{* n}(s-r)$, we  rewrite
\begin{equation}\label{frf}
T^{i,N}_{t}=\sum_{n\geq 1} \int_{0}^{2t}\beta_n(t,2t,u)\sum_{j=1}^{N} A_{N}^{n}(i,j) M^{j,N}_{u} du.
\end{equation}
Hence,
\begin{align*}
\mathbb{E}_{\theta}[(T^{i,N}_{t})^2]= \sum_{m,n\geq 1} \int_{0}^{2t}\int_{0}^{2t}\beta_m(t,2t,u)\beta_n(t,2t,v)\sum_{j,k=1}^{N} A_{N}^{m}(i,j) A_{N}^{n}(i,k)
\mathbb{E}_{\theta}[M^{j,N}_{u} M^{k,N}_{v}] dvdu.
\end{align*}
Note  that $\int_{0}^{2t}\beta_n(t,2t,u)\le 2\Lambda^n$ for any $ n\ge 0.$
Using \eqref{ee3} and Lemma \ref{Zt}-(i) with $r=\infty$ on $\Omega_{N,K}$, we obtain $\Et[M^{j,N}_u M^{k,N}_v]=\indiq_{\{j=k\}}\Et[Z^{j,N}_{u\land v}]\leq C (u\land v)$. Therefore, 
\begin{align*}
\mathbb{E}_{\theta}[(T^{i,N}_{t})^2]=& \sum_{m,n\geq 1} \int_{0}^{2t}\int_{0}^{2t}\beta_m(t,2t,u)\beta_n(t,2t,v)\sum_{j,k=1}^{N} A_{N}^{m}(i,j) A_{N}^{n}(i,k)
\mathbb{E}_{\theta}[M^{j,N}_{u} M^{k,N}_{v}] dvdu\\
\le& Ct \sum_{m,n\geq 1} \int_{0}^{2t}\int_{0}^{2t}\beta_m(t,2t,u)\beta_n(t,2t,v)dvdu\sum_{j=1}^{N}
A_{N}^{m}(i,j) A_{N}^{n}(i,j)\\ 
\le& Ct \sum_{m,n\geq 1} \Lambda^{m+n}\sum_{j=1}^{N} A_{N}^{m}(i,j) A_{N}^{n}(i,j) \\ 
\le& Ct\sum_{j=1}^{N}(Q_{N}(i,j)-\boldsymbol{1}_{\{i=j\}})^{2}\le \frac{Ct}{N}.
\end{align*}
The last step follows from \cite[(8)]{A}, which ststes that 
on $\Omega_{N,K}\subset \Omega_{N}^1$,  $\indiq_{\{i=j\}} \leq Q_N(i,j)\leq \indiq_{\{i=j\}} + \Lambda C N^{-1}.$

\vip

{\bf Step 2.} In this step,  we verify that
$$
\lim_{(N,K,t)\to (\infty,\infty,\infty)} \indiq_{\Omega_{N,K}}\frac1{t\sqrt K}\Et\Big[\Big| \sum_{i=1}^K \Big(T^{i,N}_{t}(M^{i,N}_{2t}-M^{i,N}_{t})- \Et[
T^{i,N}_{t}(M^{i,N}_{2t}-M^{i,N}_{t})] \Big)\Big| \Big]=0.
$$
Actually, this follows from the variance estimate on $\Omega_{N,K}$:
$$
x:=\mathbb{V}ar_{\theta}\Big[\sum_{i=1}^{K}(T_{t}^{i,N}(M_{2t}^{i,N}-M_{t}^{i,N}))\Big] \leq C \frac{K t^2}N.
$$
We begin with
\begin{align*}
x=&\mathbb{E}_{\theta}\Big[\sum_{i,j=1}^{K}\Big(T_{t}^{i,N}(M_{2t}^{i,N}-M_{t}^{i,N}) -
\mathbb{E}_{\theta}[T_{t}^{i,N}(M_{2t}^{i,N}-M_{t}^{i,N})]\Big) \\
& \hskip3cm 
\Big(T_{t}^{j,N}(M_{2t}^{j,N}-M_{t}^{j,N}) -
\mathbb{E}_{\theta}[T_{t}^{j,N}(M_{2t}^{j,N}-M_{t}^{j,N})]\Big)\Big].
\end{align*}
Recalling \eqref{frf} and setting $\alpha_{N}(u,t,i,j):=\sum_{n\ge 1}\beta_n(t,2t,u)A_{N}^{n}(i,j),$ we obtain 
\begin{align*}
x&\le \sum_{i,j=1}^{K}\int_{0}^{2t}\int_{0}^{2t}\sum_{k,m=1}^{N}|\alpha_{N}(s,t,i,k)\alpha_{N}(u,t,j,m)|\\
&\hskip3cm|\mathbb{C}ov_{\theta}[(M_{2t}^{i,N}-M_{t}^{i,N})M_{s}^{k,N},(M_{2t}^{j,N}-M_{t}^{j,N})M_{u}^{m,N}]|dsdu.
\end{align*}
Moreover, since  $\int_{0}^{2t}\beta_n(t,2t,u)\le 2\Lambda^n$ for any $ n\ge 0$, we have 
\begin{align*}
\int_{0}^{2t}|\alpha_{N}(s,t,i,k)| ds \leq \sum_{n\geq 1} A_N^n(i,k)  \int_{0}^{2t}|\beta_n(t,2t,s)|ds 
\leq 2 \sum_{n\geq 1} A_N^n(i,k) \Lambda^n \leq 2 (Q_N(i,k)-\indiq_{\{i=k\}}),
\end{align*}
which, as noted at the end of Step 1,  is bounded by $C/N$ according to \cite[(8)]{A}.
In addition, by \cite[Lemma 22]{A}, we have on $\Omega_{N,K}$  that for $s,u\in[0,2t]$,
$$
|\mathbb{C}ov_{\theta}[(M_{2t}^{i,N}-M_{t}^{i,N})M_{s}^{k,N},(M_{2t}^{j,N}-M_{t}^{j,N})M_{u}^{m,N}]|
\le C(\indiq_{\#\{k,i,j,m\}=3}N^{-2}t+\indiq_{\#\{k,i,j,m\}\le 2} t^2).
$$
Therefore,  we conclude that
$$
x\leq \frac C {N^2} \sum_{i,j=1}^K \sum_{k,m=1}^N (\indiq_{\#\{k,i,j,m\}=3}N^{-2}t+\indiq_{\#\{k,i,j,m\}\le 2} t^2)
\leq \frac C {N^2} \Big(N^2 K  \times N^{-2}t + N K \times t^2\Big),
$$
which is bounded by $C K t^2 / N$ as desired.

\vip

{\bf Step 3.}
It remains to show that
\begin{align}\label{Mart}
\indiq_{\Omega_{N,K}}\frac{1}{t\sqrt{K}}\Big[\sum_{i=1}^{K}(M_{2t}^{i,N}-M_{t}^{i,N})^{2}-\sum_{i=1}^{K}\mathbb{E}_{\theta}[(M_{2t}^{i,N}-M_{t}^{i,N})^{2}]\Big]
\end{align}
 converges to a Gaussian random variable with variance $2\mu^2/(1-\Lambda p)^2$.
Applying It\^{o}'s formula, we obtain
$$(M_{2t}^{i,N}-M_{t}^{i,N})^{2}=2\int_{t}^{2t}(M_{s-}^{i,N}-M_{t}^{i,N})dM_{s}^{i,N}+Z_{2t}^{i,N}-Z_{t}^{i,N}.$$
Therefore, \eqref{Mart} becomes
\begin{align*}
\indiq_{\Omega_{N,K}}\frac{1}{t\sqrt{K}}\sum_{i=1}^{K}\left[2\int_{t}^{2t}(M_{s-}^{i,N}-M_{t}^{i,N})dM_{s}^{i,N}+\Big\{(Z_{2t}^{i,N}-Z_{t}^{i,N})-\mathbb{E}_\theta[Z_{2t}^{i,N}-Z_{t}^{i,N}]\Big\}\right].
\end{align*}
From \cite[Lemma 7.2-(ii)]{D}, we know that $\boldsymbol{1}_{\Omega_{N,K}}\mathbb{E}_{\theta}[|\bar{U}_{t}^{N,K}|^{2}]\le\frac{Ct}{K}$. This immediately implies 
 $$\boldsymbol{1}_{\Omega_{N,K}}\frac{1}{t\sqrt{K}}\sum_{i=1}^{K}\{(Z_{2t}^{i,N}-Z_{t}^{i,N})-\mathbb{E}_\theta[Z_{2t}^{i,N}-Z_{t}^{i,N}]\}=\boldsymbol{1}_{\Omega_{N,K}} \frac{\sqrt K}t [\bar{U}_{2t}^{N,K}-
\bar{U}_{t}^{N,K}]\to  0.
$$
Recalling  $N_{u}^{t,i,N}=\int_{t}^{t+\sqrt{u}t}(M_{s-}^{i,N}-M_{t}^{i,N})dM_{s}^{i,N}$ defined in Lemma \ref{Gauss}. 
Since \eqref{ababa} is established in  Lemma \ref{Gauss} and $\frac{1}{t\sqrt{K}}\sum_{i=1}^{K}\int_{t}^{2t}(M_{s-}^{i,N}-M_{t}^{i,N})dM_{s}^{i,N}= \frac{1}{t\sqrt{K}}\sum_{i=1}^{K}  N_{1}^{t,i,N}$,  we thus complete the proof.

\end{proof}

\section{Limit theorem for the third estimator}\label{sec5}
In this section, our goal is to establish the asymptotic behavior of the third estimator $\mathcal{X}_{\Delta,t}^{N,K}$, introduced in Section \ref{TRISC} (see Theorem \ref{corX}).  First,  recall that for $\Delta\ge1$ such that $t/(2\Delta)\in \mathbb{N}^{*}$, 
\begin{align*}
&\mathcal{X}_{\Delta,t}^{N,K}=\mathcal{W}_{\Delta,t}^{N,K}-\frac{N-K}{K}\varepsilon_{t}^{N,K},\quad 
\mathcal{W}_{\Delta,t}^{N,K}=2\mathcal{Z}_{2\Delta,t}^{N,K}-\mathcal{Z}_{\Delta,t}^{N,K}, \\
&\mathcal{Z}^{N,K}_{\Delta,t}=\frac{N}{t}\sum_{a=\frac{t}{\Delta}+1}^{\frac{2t}{\Delta}}(\bar{Z}_{a\Delta}^{N,K}-\bar{Z}_{(a-1)\Delta}^{N,K}-\Delta\varepsilon_{t}^{N,K})^{2},\quad  \varepsilon _{t}^{N,K}=(\bar{Z}_{2t}^{N,K}-\bar{Z}_{t}^{N,K})/t,
\end{align*}
where $\bar{Z}^{N}_{t}=N^{-1}\sum_{i=1}^N Z^{i,N}_t$ and $\bar{Z}^{N,K}_{t}=K^{-1}\sum_{i=1}^K tZ^{i,N}_t.$ 
Moreover,  $Q_N$, $(\ell_{N}(i))_{i=1,\cdots,N}$ and $\bar{\ell}_N^K$ are defined in Section \ref{subimn}. We also introduce $c^K_{N}(j):=\sum_{i=1}^{K}Q_{N}(i,j), \, j=1,\cdots,N$.
\vip
 It was shown in \cite{D} that $\mathcal{X}_{\Delta,t}^{N,K}$ converges to $\cX^{N,K}_{\infty,\infty}=\cW^{N,K}_{\infty,\infty}-\frac{(N-K)\mu}{K}\bar{\ell}_N^K$, where $\cW^{N,K}_{\infty,\infty}=\mu\frac{N}{K^{2}}A^{N,K}_{\infty,\infty}$, $A^{N,K}_{\infty,\infty}=\sum_{j=1}^{N}\Big(\sum_{i=1}^{K}Q_{N}(i,j)\Big)^{2}\ell_{N}(j)$. To establish the  central limit theorem stated in Theorem \ref{corX},  we decompose $\mathcal{X}_{\Delta,t}^{N,K}-\mathcal{X}_{\infty,\infty}^{N,K}$ into
\begin{align*}
&\mathcal{X}_{\Delta,t}^{N,K}-\mathcal{X}_{\infty,\infty}^{N,K}\\
=& (\mathcal{W}_{\Delta,t}^{N,K}-\mathcal{W}_{\infty,\infty}^{N,K}) - \frac{N-K}{K}\Big(\varepsilon_t^{N,K}-\mu\bar{\ell}_N^K\Big)\\
=& -\underbrace{D_{\Delta,t}^{N,K,1}+2D_{2\Delta,t}^{N,K,1}-D_{\Delta,t}^{N,K,2}+2D_{2\Delta,t}^{N,K,2}}_{\textit{small error}}-\underbrace{D_{\Delta,t}^{N,K,3}+2D_{2\Delta,t}^{N,K,3}}_{principle}+\underbrace{D_{\Delta,t}^{N,K,4} - \frac{N-K}{K}\Big(\varepsilon_t^{N,K}-\mu \bar{\ell}_N^K\Big)}_{\textit{small error}},
\end{align*}
where
\begin{align*}
&D_{\Delta,t}^{N,K,1}=\frac{N}{t}\Big\{\sum_{a=\frac{t}{\Delta}+1}^{\frac{2t}{\Delta}}\Big(\bar{Z}_{a\Delta}^{N,K}-\bar{Z}_{(a-1)\Delta}^{N,K}-\Delta\varepsilon_{t}^{N,K}\Big)^{2}-\sum_{a=\frac{t}{\Delta}+1}^{\frac{2t}{\Delta}}\Big(\bar{Z}_{a\Delta}^{N,K}-\bar{Z}_{(a-1)\Delta}^{N,K}-\Delta \mu\bar{\ell}_{N}^{K}\Big)^{2}\Big\}, \\
&D_{\Delta,t}^{N,K,2}=\frac{N}{t}\Big\{\sum_{a=\frac{t}{\Delta}+1}^{\frac{2t}{\Delta}}\Big(\bar{Z}_{a\Delta}^{N,K}-\bar{Z}_{(a-1)\Delta}^{N,K}-\Delta\mu\bar{\ell}_N^{K}\Big)^{2}\\
&\hskip5cm-\sum_{a=\frac{t}{\Delta}+1}^{\frac{2t}{\Delta}}\Big(\bar{Z}_{a\Delta}^{N,K}-\bar{Z}_{(a-1)\Delta}^{N,K}-\mathbb{E}_{\theta}[\bar{Z}_{a\Delta}^{N,K}-\bar{Z}_{(a-1)\Delta}^{N,K}]\Big)^{2}\Big\},
\end{align*}
\begin{align*}
&D_{\Delta,t}^{N,K,3}=\frac{N}{t}\Big\{\sum_{a=\frac{t}{\Delta}+1}^{\frac{2t}{\Delta}}\Big(\bar{Z}_{a\Delta}^{N,K}-\bar{Z}_{(a-1)\Delta}^{N,K}-\mathbb{E}_{\theta}[\bar{Z}_{a\Delta}^{N,K}-\bar{Z}_{(a-1)\Delta}^{N,K}]\Big)^{2}\\
&\hskip3.5cm-\mathbb{E}_{\theta}\Big[\sum_{a=\frac{t}{\Delta}+1}^{\frac{2t}{\Delta}}\Big(\bar{Z}_{a\Delta}^{N,K}-\bar{Z}_{(a-1)\Delta}^{N,K}-\mathbb{E}_{\theta}[\bar{Z}_{a\Delta}^{N,K}-\bar{Z}_{(a-1)\Delta}^{N,K}]\Big)^{2}\Big]\Big\},
\end{align*}
and finally
\begin{align*}
D_{\Delta,t}^{N,K,4}=&\Big\{\frac{2N}{t}\mathbb{E}_{\theta}\Big[\sum_{a=\frac{t}{2\Delta}+1}^{\frac{t}{\Delta}}\Big(\bar{Z}_{2a\Delta}^{N,K}-\bar{Z}_{2(a-1)\Delta}^{N,K}-\mathbb{E}_{\theta}[\bar{Z}_{2a\Delta}^{N,K}-\bar{Z}_{2(a-1)\Delta}^{N,K}]\Big)^{2}\Big]\\&-\frac{N}{t}\mathbb{E}_{\theta}\Big[\sum_{a=\frac{t}{\Delta}+1}^{\frac{2t}{\Delta}}\Big(\bar{Z}_{a\Delta}^{N,K}-\bar{Z}_{(a-1)\Delta}^{N,K}
-\mathbb{E}_{\theta}[\bar{Z}_{a\Delta}^{N,K}-\bar{Z}_{(a-1)\Delta}^{N,K}]\Big)^{2}\Big]-\mathcal{W}^{N,K}_{\infty,\infty}\Big\}.
\end{align*}
The principle term in this decomposition arises from  $D_{\Delta,t}^{N,K,3}$ (see Lemma \ref{D124}), which is approximated by the martingale difference combination $\mathbb{X}^{N,K}_{\Delta,t,v}$ defined in \eqref{mathbX} (see Proposition \ref{D3}).  We then prove in Proposition \ref{D33} that 
$\mathbb{X}^{N,K}_{\Delta,t,v}$ satisfies a central limit theorem, thereby establishing Theorem \ref{corX}.

\vip

The proof of Proposition \ref{D3} relies on Lemmas \ref{covc}, \ref{lem 6.5}, and \ref{mathY}, which together establish the convergence of each component in the decomposition of  $\Big|D_{\Delta,t}^{N,K,3}-\frac{N}{t}\mathbb{X}_{\Delta,t,1}^{N,K}\Big|$. The  proof of Proposition \ref{D33}, on the other hand, proceeds in two steps: we first establish Lemma  \ref{LmathbX}) and then Proposition \ref{main2}, which itself follows from Lemmas \ref{jum} and \ref{bro}. We now state Theorem \ref{corX}, which is proved in Section \ref{pf:corX}.
  
\begin{theorem}\label{corX}
Assume  \eqref{H(q)}  for some $q> 3$, $K\le N$ and $\lim_{(N,K)\to(\infty,\infty)} \frac{K}{N}=\gamma\le 1,$ $\Delta_t= t/(2 \lfloor t^{1-4/(q+1)}\rfloor) \sim t^{4/(q+1)}/2$ \emph{(for $t$ large)}. If  $(N,K,t)\to (\infty,\infty,\infty)$  and $\frac 1{\sqrt K} + \frac NK \sqrt{\frac{\Delta_t}t}+ \frac{N}{t\sqrt K}+Ne^{-c_{p,\Lambda}K} \to 0$,
$$
\omg\frac{K}{N}\sqrt{\frac{t}{\Delta_t}}\Big(\mathcal{X}_{\Delta_t,t}^{N,K}-\cX^{N,K}_{\infty,\infty}\Big)\longrightarrow \mathcal{N}\Big(0,6\mu^2\Big(\frac{1-\gamma}{(1-\Lambda p)}+\frac{\gamma}{(1-\Lambda p)^3}\Big)^2\Big).
$$

\end{theorem}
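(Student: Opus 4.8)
The strategy is to combine the displayed decomposition of $\mathcal{X}_{\Delta_t,t}^{N,K}-\mathcal{X}_{\infty,\infty}^{N,K}$ with a martingale central limit theorem: after multiplying by $\tfrac{K}{N}\sqrt{t/\Delta_t}$, every block marked \emph{small error} tends to $0$ (the natural route being to bound $\E[\omg\,|\cdot|]$), while the \emph{principal} block (built from $D^{N,K,3}_{\Delta_t,t}$ and $D^{N,K,3}_{2\Delta_t,t}$) equals, up to a vanishing error, a normalised martingale-difference sum carrying the announced Gaussian limit, with the variance constant coming from Lemma \ref{Agamma}.

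\emph{The small-error terms.} The term $\tfrac{N-K}{K}(\varepsilon_t^{N,K}-\mu\bar\ell_N^K)$ is controlled directly by Lemma \ref{barell}. For $D_{\Delta,t}^{N,K,1}$ one expands the square and uses the telescoping identity $\sum_a(\bar Z_{a\Delta}^{N,K}-\bar Z_{(a-1)\Delta}^{N,K})=\bar Z_{2t}^{N,K}-\bar Z_t^{N,K}$, reducing it to $\varepsilon_t^{N,K}-\mu\bar\ell_N^K$ (again Lemma \ref{barell}) times bounded quantities, plus a remainder of size $O(\Delta_t)$ treated via Cauchy--Schwarz and the fourth-moment bounds of Lemmas \ref{Zt} and \ref{lambar}. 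For $D_{\Delta,t}^{N,K,2}$ the same tools compare $\Delta_t\mu\bar\ell_N^K$ with $\Et[\bar Z_{a\Delta}^{N,K}-\bar Z_{(a-1)\Delta}^{N,K}]$ through Lemma \ref{Zt}-(ii), while $D_{\Delta,t}^{N,K,4}$ is a purely deterministic discretisation bias handled exactly as in Lemma \ref{W}. That all of these vanish after normalisation under the regime $\tfrac 1{\sqrt K}+\tfrac NK\sqrt{\Delta_t/t}+\tfrac{N}{t\sqrt K}\to 0$ is the content of Lemma \ref{D124}, whose proof follows the pattern of \cite[Section 8]{D} and of Lemma \ref{Delta1}.

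\emph{Reduction of the principal term.} Applying It\^o's formula to the centred squared block increments and expanding $Z^{i,N}-\Et[Z^{i,N}]=U^{i,N}$ through \eqref{UPro}, splitting off the pure-jump martingale $M^{i,N}$ exactly as in Step~3 of the proof of Lemma \ref{Gauss}, one sees that the leading fluctuation of the $D^{N,K,3}$-contribution comes from the block-indexed iterated integrals $\int_{(a-1)\Delta_t}^{a\Delta_t}(\bar U_{s-}^{N,K}-\bar U_{(a-1)\Delta_t}^{N,K})\,d\bar M_s^{N,K}$, the remaining pieces (the memory part of $\bar U^{N,K}$, the compensators of the brackets $[\bar M^{N,K}]$, the cross-block and cross-coordinate terms) being negligible after normalisation by \eqref{ee3}, the covariance estimates of \cite[Lemma 22]{A}, and the random-matrix bounds valid on $\Omega_{N,K}$ (Lemmas \ref{lo}, \ref{ONK}). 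This is Proposition \ref{D3}, resting on Lemmas \ref{covc}, \ref{lem 6.5}, \ref{mathY}; it identifies the principal term with $\tfrac{N}{t}$ times the martingale-difference object $\mathbb{X}^{N,K}_{\Delta_t,t,v}$ of \eqref{mathbX}, whose reference points are the ``sawtooth'' points produced by the Richardson combination $2\mathcal{Z}_{2\Delta_t}-\mathcal{Z}_{\Delta_t}$. It then remains to prove
$$
\omg\,\frac{K}{\sqrt{t\Delta_t}}\,\mathbb{X}^{N,K}_{\Delta_t,t,1}\stackrel{d}{\longrightarrow}\mathcal{N}\bigl(0,\,6\mu^2L^2\bigr),\qquad L:=\frac{1-\gamma}{1-\Lambda p}+\frac{\gamma}{(1-\Lambda p)^3}.
$$

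\emph{The CLT for $\mathbb{X}^{N,K}_{\Delta_t,t,\cdot}$, and the main obstacle.} This is carried out via the functional martingale CLT of Jacod--Shiryaev \cite[Theorem VIII-3-8]{B} applied to the continuous-parameter martingale $v\mapsto\mathbb{X}^{N,K}_{\Delta_t,t,v}$, just as in Lemma \ref{Gauss}: one verifies (a) that the (suitably normalised) predictable quadratic variation converges in probability to $6\mu^2L^2\,v$, and (b) the conditional Lindeberg / negligibility-of-jumps condition, the latter exactly as in point (b) of Lemma \ref{Gauss} using \eqref{ee3}, the boundedness of $\ell_N$ on $\Omega_{N,K}$ and Lemma \ref{lambar}; these are Lemmas \ref{jum}, \ref{bro} and Propositions \ref{main2}, \ref{D33}. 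A final appeal to Slutsky's theorem and $\omg\to 1$ a.s.\ (Lemma \ref{ONK}) assembles everything. The delicate point — and the main obstacle — is (a): by \eqref{ee3} the bracket reduces to sums of the form $\tfrac{1}{K^2}\sum_{j=1}^N(c^K_N(j))^2\,(Z^{j,N}_s-Z^{j,N}_{s'})$ against the squared sawtooth integrand, so one must (i) replace the random weights $c^K_N(j),\ell_N(j)$ by the matrix asymptotics uniformly on $\Omega_{N,K}$, which forces the factor $\tfrac{\mu}{K^2}A^{N,K}_{\infty,\infty}$ and hence $\omg\,A^{N,K}_{\infty,\infty}/K\to L$ from Lemma \ref{Agamma} (together with the operator-norm control of Lemma \ref{lo}); (ii) show that cross-block and $j\ne k$ correlations of the squared increments are of lower order, so only the diagonal same-block term survives; (iii) carry out the combinatorics of the memory part of $\bar U^{N,K}$ together with the $2\mathcal{Z}_{2\Delta_t}-\mathcal{Z}_{\Delta_t}$ extrapolation, whose interplay produces the numerical factor; and (iv) balance the choice $\Delta_t\sim t^{4/(q+1)}/2$ so that the memory-kernel edge effects and the discretisation bias (which decay because $\int_0^\infty s^q\phi(s)\,ds<\infty$) are negligible while $t/\Delta_t\to\infty$ still provides enough blocks for a CLT — this is precisely why $q>3$ and the stated form of $\Delta_t$ are imposed.
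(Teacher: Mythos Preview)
Your roadmap is essentially the paper's, and the handling of the small-error blocks and the reduction via Proposition~\ref{D3} is correctly described. The gap is in the last step, and it is not cosmetic.

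First, the object $v\mapsto\mathbb{X}^{N,K}_{\Delta_t,t,v}$ of \eqref{mathbX} is \emph{not} a martingale in $v$: both the lower and upper summation limits $[vt/\Delta_t]+1$ and $[2vt/\Delta_t]$ move with $v$, so you cannot apply Jacod--Shiryaev to it directly, and your sentence ``the predictable quadratic variation converges to $6\mu^2L^2v$'' is not well-posed. Second, and relatedly, the claim $\tfrac{K}{\sqrt{t\Delta_t}}\mathbb{X}^{N,K}_{\Delta_t,t,1}\to\mathcal{N}(0,6\mu^2L^2)$ is wrong: the limit of that single object has variance $2\mu^2L^2$, not $6\mu^2L^2$. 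The principal term is the \emph{combination} $-D^{N,K,3}_{\Delta_t,t}+2D^{N,K,3}_{2\Delta_t,t}$, and Proposition~\ref{D3} applied twice gives $\tfrac{N}{t}\bigl(-\mathbb{X}^{N,K}_{\Delta_t,t,1}+2\mathbb{X}^{N,K}_{2\Delta_t,t,1}\bigr)$; the two summands are correlated, so you cannot read off the variance from $\mathbb{X}^{N,K}_{\Delta_t,t,1}$ alone.

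The paper resolves both issues at once by introducing the genuine martingale
$\mathcal{L}^{t,\Delta}_{N,K}(u)=\tfrac{1}{K\sqrt{\Delta t}}\sum_{a\le [ut/\Delta]}Q_{a,N,K}$
(cumulative in $u$), proving via Jacod--Shiryaev that $\mathcal{L}^{t,\Delta_t}_{N,K}\Rightarrow\sqrt{2}\,\mu L\,B$ functionally (Proposition~\ref{main2}, with bracket limit $2\mu^2L^2u$ from Lemma~\ref{bro}), and then showing (Lemma~\ref{LmathbX}) that $\tfrac{K}{\sqrt{t\Delta_t}}\mathbb{X}^{N,K}_{\Delta_t,t,v}\approx\mathcal{L}(2v)-\mathcal{L}(v)$. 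The key algebraic identity $\mathbb{X}^{N,K}_{2\Delta_t,t,1}=\mathbb{X}^{N,K}_{\Delta_t,t,1/2}$ then lets you write the Richardson combination as
\[
-\mathbb{X}^{N,K}_{\Delta_t,t,1}+2\mathbb{X}^{N,K}_{\Delta_t,t,1/2}\ \longrightarrow\ \sqrt{2}\,\mu L\bigl[-(B_2-B_1)+2(B_1-B_{1/2})\bigr],
\]
and $\mathrm{Var}\bigl(-(B_2-B_1)+2(B_1-B_{1/2})\bigr)=1+4\cdot\tfrac12=3$, which together with the prefactor $2\mu^2L^2$ gives the $6\mu^2L^2$. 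This is exactly the mechanism you allude to in (iii) but do not carry out; without the auxiliary martingale $\mathcal{L}$ and the functional statement, the factor $6$ cannot be recovered from the bracket of a single $\mathbb{X}$.
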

\subsection{Some small terms of the estimator}
First, we are going to prove  the terms $D_{\Delta,t}^{N,K,1},$ $D_{\Delta,t}^{N,K,2},$ $D_{\Delta,t}^{N,K,4}$ and $\frac{N}{K}|\varepsilon_t^{N,K}-\mu\bar{\ell}_N^K|$ are small. 
\begin{lemma}\label{D124}
Assume  \eqref{H(q)}  for some $q> 3$. If we choose $\Delta_t= t/(2 \lfloor t^{1-4/(q+1)}\rfloor) \sim t^{4/(q+1)}/2$ \emph{(for $t$ large)},   then, If  $(N,K,t)\to (\infty,\infty,\infty)$ and  $\frac 1{\sqrt K} + \frac NK \sqrt{\frac{\Delta_t}t}+ \frac{N}{t\sqrt K}+Ne^{-c_{p,\Lambda}K} \to 0$, we have the convergence in probability that 
$$
\boldsymbol{1}_{\Omega_{N,K}}\frac{K}{N}\sqrt{\frac{t}{\Delta_t}}\Big\{|D_{\Delta_t,t}^{N,K,1}|+|D_{\Delta_t,t}^{N,K,2}|+|D_{\Delta_t,t}^{N,K,4}|+\frac{N}{K}|\varepsilon_t^{N,K}-\mu\bar{\ell}_N^K|\Big\}\longrightarrow 0.
$$
\end{lemma}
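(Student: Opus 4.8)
The plan is to treat the four quantities $D_{\Delta_t,t}^{N,K,1}$, $D_{\Delta_t,t}^{N,K,2}$, $D_{\Delta_t,t}^{N,K,4}$ and $\frac{N}{K}|\e_t^{N,K}-\mu\bar\ell_N^K|$ one at a time, always on the event $\Omega_{N,K}$, on which $\ell_N(i)$, $\bar\ell_N^K$ and the operator norms $|||Q_N|||_r$ are bounded by Lemma \ref{lo}; the convergence in probability will follow by bounding $\E[\omg\,|\cdot|]=\E[\omg\,\Et[|\cdot|]]$, applying Markov's inequality, and using $\Pr(\Omega_{N,K})\to1$ from Lemma \ref{ONK}. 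I will repeatedly use that $N/(t\sqrt K)\to0$ together with $K\le N$ forces $N=o(t^2)$, that $N/K\ge1$ forces $\sqrt{\Delta_t/t}\to0$, and that $\Delta_t\sim t^{4/(q+1)}/2$ with $q>3$.

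The first three terms are comparatively light. For $D_{\Delta_t,t}^{N,K,1}$ I will first record the exact algebraic identity
$$
D_{\Delta_t,t}^{N,K,1}=-N\Delta_t\big(\e_t^{N,K}-\mu\bar\ell_N^K\big)^2,
$$
obtained by expanding $(y-a)^2-(y-b)^2=(b-a)(2y-a-b)$ with $y=\bar Z_{a\Delta_t}^{N,K}-\bar Z_{(a-1)\Delta_t}^{N,K}$, $a=\Delta_t\e_t^{N,K}$, $b=\Delta_t\mu\bar\ell_N^K$, summing in $a$ and using $\sum_a y=\bar Z_{2t}^{N,K}-\bar Z_t^{N,K}=t\e_t^{N,K}$ and $\#\{a\}\,\Delta_t=t$. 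Writing $\e_t^{N,K}-\mu\bar\ell_N^K=\tfrac1t(\bar U_{2t}^{N,K}-\bar U_t^{N,K})+\tfrac1t\big(\Et[\bar Z_{2t}^{N,K}-\bar Z_t^{N,K}]-t\mu\bar\ell_N^K\big)$ and combining \cite[Lemma 7.2-(ii)]{D} ($\omg\,\Et[|\bar U_t^{N,K}|^2]\le Ct/K$) with Lemma \ref{Zt}-(ii) at $r=1$ (which bounds the bias term by $C\min\{1,t^{1-q}\}$) gives $\omg\,\Et[(\e_t^{N,K}-\mu\bar\ell_N^K)^2]\le C(t^{-2q}+(Kt)^{-1})$. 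Hence $\frac{K}{N}\sqrt{t/\Delta_t}\,|D_{\Delta_t,t}^{N,K,1}|=K\sqrt{t\Delta_t}\,(\e_t^{N,K}-\mu\bar\ell_N^K)^2$ has conditional mean $O(N\sqrt{t\Delta_t}\,t^{-2q})+O(\sqrt{\Delta_t/t})=o(1)$ (using $N=o(t^2)$ and $q>3$). The same estimate applied to $|\e_t^{N,K}-\mu\bar\ell_N^K|$ itself, via Cauchy--Schwarz (or directly Lemma \ref{barell}), gives $\frac{K}{N}\sqrt{t/\Delta_t}\cdot\frac{N}{K}\,\Et[|\e_t^{N,K}-\mu\bar\ell_N^K|]\le C\big(t^{1/2-q}\Delta_t^{-1/2}+(K\Delta_t)^{-1/2}\big)\to0$. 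For $D_{\Delta_t,t}^{N,K,2}$ the same identity with $b=\Delta_t\mu\bar\ell_N^K$ and $c=\Et[\bar Z_{a\Delta_t}^{N,K}-\bar Z_{(a-1)\Delta_t}^{N,K}]$ reduces matters to $\tfrac{N}{t}\sum_a|c-b|\big(2\Et|y|+|c-b|+2\Delta_t\mu\bar\ell_N^K\big)$; Lemma \ref{Zt}-(ii) at $r=1$ gives $|c-b|\le C\min\{1,((a-1)\Delta_t)^{1-q}\}\le Ct^{1-q}$ for $a\ge t/\Delta_t$, and Lemma \ref{lambar}-(iv) gives $\Et|y|\le C\Delta_t$, so summing the $t/\Delta_t$ blocks yields $\omg\,\Et[|D_{\Delta_t,t}^{N,K,2}|]\le CNt^{1-q}$ and $\frac{K}{N}\sqrt{t/\Delta_t}\cdot CNt^{1-q}=o(1)$.

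The genuinely technical term is $D_{\Delta_t,t}^{N,K,4}$, which is $\mathcal F_\theta$-measurable and is built from conditional variances of the block increments of $\bar Z^{N,K}$ minus $\cW^{N,K}_{\infty,\infty}$. The plan here is to expand $\Vart[\bar Z_{a\Delta}^{N,K}-\bar Z_{(a-1)\Delta}^{N,K}]=\Vart[\bar U_{a\Delta}^{N,K}-\bar U_{(a-1)\Delta}^{N,K}]$ with \eqref{ee2}--\eqref{ee3}, extract its leading part, proportional to $\tfrac{\Delta}{K^2}\sum_{j=1}^N\big(\sum_{i\le K}Q_N(i,j)\big)^2\ell_N(j)=\tfrac{\Delta}{K^2}A^{N,K}_{\infty,\infty}$, which after the combination $2\cZ_{2\Delta_t,t}^{N,K}-\cZ_{\Delta_t,t}^{N,K}$ and the $N/t$ prefactor reproduces $\cW^{N,K}_{\infty,\infty}$ exactly (the extrapolation $2\cZ_{2\Delta_t,t}^{N,K}-\cZ_{\Delta_t,t}^{N,K}$ being precisely what removes the $O(1/\Delta_t)$ finite-$\Delta$ bias), and to bound the leftover by the memory-tail error, of order $\Delta_t^{1-q}$ per block via $\int_0^\infty s^q\phi(s)\,ds<\infty$, plus lower-order boundary contributions. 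Summing over the $\asymp t/\Delta_t$ blocks and applying both the $N/t$ and $\frac{K}{N}\sqrt{t/\Delta_t}$ factors, the choice $\Delta_t\sim t^{4/(q+1)}/2$ is exactly what balances these two errors and forces $\omg\,\frac{K}{N}\sqrt{t/\Delta_t}\,|D_{\Delta_t,t}^{N,K,4}|\to0$; alternatively one quotes the matching estimate already established in \cite{D}. Adding the four bounds and invoking that a finite sum of terms tending to $0$ in probability tends to $0$ in probability finishes the proof.

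The main obstacle is precisely $D_{\Delta_t,t}^{N,K,4}$: one must compute $\Vart[\bar Z_{a\Delta}^{N,K}-\bar Z_{(a-1)\Delta}^{N,K}]$ accurately enough both to isolate $\cW^{N,K}_{\infty,\infty}$ as its principal part and to verify that the remainder, once summed over the blocks and scaled by $\frac{K}{N}\sqrt{t/\Delta_t}\cdot\frac{N}{t}$, is $o(1)$ --- this is where the hypothesis $q>3$ and the specific exponent $4/(q+1)$ in $\Delta_t$ come into play, whereas the other three terms become routine once the clean identity for $D^{N,K,1}$ and the bias bound of Lemma \ref{Zt}-(ii) are in hand.
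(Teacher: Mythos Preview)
Your proposal is correct and follows the same line as the paper. The paper's own proof is a one-line citation of \cite[Lemmas 7.3, 9.2, 9.3 and 9.5]{D}, which handle respectively the $\varepsilon_t^{N,K}-\mu\bar\ell_N^K$, $D^{N,K,1}$, $D^{N,K,2}$ and $D^{N,K,4}$ contributions; you effectively re-derive the first three of these (your algebraic identity for $D^{N,K,1}$ and the bias bound for $D^{N,K,2}$ are exactly the content of \cite[Lemmas 9.2--9.3]{D}) and then, for the genuinely hard $D^{N,K,4}$, you fall back on the same citation the paper uses. Your identification of $D^{N,K,4}$ as the single substantial obstacle, and of the role of the exponent $4/(q+1)$ in balancing the two error sources there, is accurate.
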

\begin{proof}
    It is a directly corollary of \cite[Lemmas 7.3, 9.2, 9.3 and 9.5]{D}.
\end{proof}
 
 Next, for $\Delta\ge1$, we consider the term $D_{\Delta,t}^{N,K,3}$ and  prove that it is close to  $\mathbb{X}^{N,K}_{\Delta,t,v}.$ Recall  $c^K_{N}(j)=\sum_{i=1}^{K}Q_{N}(i,j)$, and  for $0\le v\le 1,$  define
\begin{align}\label{mathbX}
   \mathbb{X}^{N,K}_{\Delta,t,v}:=\sum_{a=[\frac{vt}{\Delta}]+1}^{[\frac{2vt}{\Delta}]}\Big\{(\mathcal{Y}_{(a-1)\Delta,a\Delta}^{N,K})^2-\mathbb{E}_\theta[(\mathcal{Y}_{(a-1)\Delta,a\Delta}^{N,K})^2]\Big\},
\end{align}
where
\begin{align}
\label{Ya1}\mathcal{Y}_{(a-1)\Delta,a\Delta}^{N,K}:=\frac{1}{K}\sum_{j=1}^{N}c_{N}^{K}(j)(M^{j,N}_{a\Delta}-M^{j,N}_{(a-1)\Delta}).
\end{align}

\begin{prop}\label{D3}
Assume  \eqref{H(q)}  for some $q>3$, for $\Delta\ge1$, then we have
\begin{align*}
\frac{K}{N}\sqrt{\frac{t}{\Delta}}\mathbb{E}\Big[\indiq_{\Omega_{N,K}}\Big|D_{\Delta,t}^{N,K,3}-\frac{N}{t}\mathbb{X}_{\Delta,t,1}^{N,K}\Big|\Big]
\le \frac{CK}{N\Delta}+\frac{C\sqrt{K}}{\sqrt{N\Delta}}+\frac{Ct^\frac{3}{4}\sqrt{K}}{\Delta^{1+\frac{q}{2}}}.
\end{align*}
\end{prop}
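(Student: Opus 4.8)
\textbf{Proof plan for Proposition \ref{D3}.}
The goal is to compare $D_{\Delta,t}^{N,K,3}$, which is built from the centered squared increments of $\bar Z^{N,K}$ over the dyadic partition of $[t,2t]$ with mesh $\Delta$, against $\frac{N}{t}\mathbb{X}^{N,K}_{\Delta,t,1}$, which is the corresponding object built from the martingale increments $\mathcal{Y}^{N,K}_{(a-1)\Delta,a\Delta}$. The plan is to first replace, inside each increment $\bar Z^{N,K}_{a\Delta}-\bar Z^{N,K}_{(a-1)\Delta}-\mathbb{E}_\theta[\cdots]=\bar U^{N,K}_{a\Delta}-\bar U^{N,K}_{(a-1)\Delta}$, the centered process $\bar U^{N,K}$ by its ``leading'' martingale part. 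Recalling \eqref{ee2}, one has $\bar U^{N,K}_{t}=K^{-1}\sum_{n\ge 0}\int_0^t\phi^{*n}(t-s)(I_KA_N^n\boldsymbol{M}^N_s)$ summed over coordinates; the $n=0$ term contributes $\bar M^{N,K}$, while the full series, after the convolution identity $\sum_n\Lambda^n A_N^n = Q_N$ is applied ``at the level of total mass'', produces precisely $\mathcal{Y}^{N,K}_{(a-1)\Delta,a\Delta}=K^{-1}\sum_j c_N^K(j)(M^{j,N}_{a\Delta}-M^{j,N}_{(a-1)\Delta})$ up to a remainder coming from the memory kernel $\phi$ not being concentrated at a point. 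So I would write
$$
\bar U^{N,K}_{a\Delta}-\bar U^{N,K}_{(a-1)\Delta} = \mathcal{Y}^{N,K}_{(a-1)\Delta,a\Delta} + R^{N,K}_{a},
$$
and control $R^{N,K}_a$ via the tail of $\phi^{*n}$, exactly as in the $T^{i,N}_t$ estimates of the proof of Theorem \ref{VVNK} (Steps 1--2) and the proof of Lemma \ref{Zt}-(iii); the point is that $R^{N,K}_a$ has $\mathbb{E}_\theta[(R^{N,K}_a)^2]$ of order $\Delta\,t^{-q}K^{-1}$ or better, because $\int_0^\infty s^q\phi(s)ds<\infty$ forces the ``slowly forwarded'' mass to decay polynomially over a window of length $\Delta$ sitting at distance $\ge t$ from the origin.

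Once the increments are replaced, the difference $D_{\Delta,t}^{N,K,3}-\frac{N}{t}\mathbb{X}^{N,K}_{\Delta,t,1}$ is a sum over $a$ of terms of the shape $(\mathcal{Y}_a+R_a)^2-(\mathcal{Y}_a)^2-\mathbb{E}_\theta[(\mathcal{Y}_a+R_a)^2-(\mathcal{Y}_a)^2]$, i.e. of cross terms $2\mathcal{Y}_aR_a$ and pure remainder terms $R_a^2$ (both centered under $\mathbb{E}_\theta$). I would bound the $\mathbb{E}_\theta$-expectation of the absolute value of each by Cauchy--Schwarz: $\mathbb{E}_\theta[|\mathcal{Y}_aR_a|]\le\mathbb{E}_\theta[\mathcal{Y}_a^2]^{1/2}\mathbb{E}_\theta[R_a^2]^{1/2}$, using $\mathbb{E}_\theta[\mathcal{Y}_a^2]\le C\Delta/K$ (from \eqref{ee3}, the bound $c_N^K(j)\le C$ on $\Omega_{N,K}$ via Lemma \ref{lo}, and $\mathbb{E}_\theta[Z^{j,N}_{a\Delta}-Z^{j,N}_{(a-1)\Delta}]\le C\Delta$ from Lemma \ref{Zt}-(ii)), together with the remainder bound above. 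Summing over the $t/\Delta$ values of $a$ and multiplying by the prefactor $\frac{K}{N}\sqrt{t/\Delta}\cdot\frac{N}{t}=\frac{K}{\sqrt{t\Delta}}$ produces the stated three terms: the $R_a^2$ contributions give $\frac{CK}{N\Delta}$, the cross terms give $\frac{C\sqrt K}{\sqrt{N\Delta}}$, and the genuinely $\phi$-dependent polynomial tail (the part of $R_a$ not killed by a factor $N^{-1/2}$, arising from the first sub-leading convolution iterate $n=1$ over the window $[(a-1)\Delta,a\Delta]\subset[t,2t]$) gives the term $\frac{Ct^{3/4}\sqrt K}{\Delta^{1+q/2}}$ after summation. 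The exponent $3/4$ and the $\Delta^{-1-q/2}$ decay should come out of estimating $\sum_{a}\big(\int_{(a-1)\Delta}^{a\Delta}(\text{tail of }\phi^{*n})\big)$ against $\int_0^\infty s^q\phi(s)\,ds$, using $\Delta\sim t^{4/(q+1)}$ so that $t^{3/4}$ is really a crude bound on a power of $t/\Delta$ times $\Delta$; I would keep track of these powers carefully rather than absorbing them into $C$.

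The main obstacle, and the step deserving the most care, is the remainder estimate for $R^{N,K}_a$ — specifically disentangling the two sources of smallness. One source is the usual $N^{-1/2}$ gain from $A_N^n$ having total row-mass $\approx p$ but being spread over $N$ coordinates (this is what gave $Ct/N$ in Step 1 of the proof of Theorem \ref{VVNK}); this source is uniform in the time window and yields the $\frac{CK}{N\Delta}$ and $\frac{C\sqrt K}{\sqrt{N\Delta}}$ terms. The other, genuinely new, source is that even the $n\ge 1$ terms do not exactly telescope into $c_N^K(j)(M_{a\Delta}-M_{(a-1)\Delta})$ because $\phi^{*n}(a\Delta-s)-\phi^{*n}((a-1)\Delta-s)$ is not supported in $[(a-1)\Delta,a\Delta]$; the overflow mass is governed by $\int_\Delta^\infty(\cdots)\phi^{*n}$, which under $H(q)$ is $O(\Delta^{-q}\int s^q\phi^{*n})=O(\Delta^{-q}n^q\Lambda^n)$ — hence summable in $n$ and decaying in $\Delta$. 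Tracking how this overflow, after squaring, Cauchy--Schwarz, and summation over $a$, turns into exactly $t^{3/4}\Delta^{-1-q/2}K^{1/2}$ (and checking this is indeed $o(1)$ under the standing regime $\frac{N}{K}\sqrt{\Delta_t/t}\to 0$ and the choice $\Delta_t\sim t^{4/(q+1)}$, which requires $q>3$) is the delicate bookkeeping; everything else is a routine Burkholder/Cauchy--Schwarz exercise modeled on the already-established Lemmas \ref{Zt}, \ref{lambar} and the $T^{i,N}_t$ analysis.
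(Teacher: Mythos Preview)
Your high-level decomposition $\bar U^{N,K}_{a\Delta}-\bar U^{N,K}_{(a-1)\Delta}=\mathcal{Y}^{N,K}_{(a-1)\Delta,a\Delta}+R^{N,K}_a$ is correct and matches the paper's split (the paper further refines $R_a=\bar\Gamma_a+(\bar X_a-\mathcal{Y}_a)$ with $\bar\Gamma_a=C_{a\Delta}-C_{(a-1)\Delta}+B_{a\Delta}-B_{(a-1)\Delta}$). But there are two genuine gaps.

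First, your moment bound on the remainder is wrong: $\mathbb{E}_\theta[(R^{N,K}_a)^2]$ is \emph{not} of order $\Delta t^{-q}K^{-1}$. The dominant piece of $R_a$ is $C_{a\Delta}^{N,K}=K^{-1}\sum_i\sum_j\sum_{n\ge 1}\int_0^\Delta\phi^{*n}(s)A_N^n(i,j)(M^{j,N}_{a\Delta-s}-M^{j,N}_{a\Delta})\,ds$, which involves the convolution mass on $[0,\Delta]$ (not a tail) and satisfies $\mathbb{E}_\theta[(C_{a\Delta}^{N,K})^2]\asymp 1/N$ \emph{uniformly} in $t$ and $\Delta$; only the $B$-part and the $\bar X-\mathcal{Y}$ part pick up the $\Delta^{-q}$ or $t^{-q}$ decay you describe. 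So the correct order is $\mathbb{E}_\theta[R_a^2]\le C/N$.

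Second, and more seriously, your plan to bound the centered sum by the triangle inequality --- i.e.\ to estimate $\mathbb{E}_\theta\big|\sum_a(2\mathcal{Y}_aR_a+R_a^2)-\mathbb{E}_\theta[\cdots]\big|$ by $\sum_a 4\mathbb{E}_\theta|\mathcal{Y}_aR_a|+2\mathbb{E}_\theta[R_a^2]$ via pointwise Cauchy--Schwarz --- throws away the centering and produces a bound that is too large by a factor of $\sqrt{t/\Delta}$. Concretely, with $\mathbb{E}_\theta[\mathcal{Y}_a^2]\le C\Delta/K$ and $\mathbb{E}_\theta[R_a^2]\le C/N$, summing $t/\Delta$ terms and multiplying by $K/\sqrt{t\Delta}$ yields $C\sqrt{Kt}/(\sqrt N\,\Delta)$ for the cross terms and $CK\sqrt t/(N\Delta^{3/2})$ for the pure $R^2$ terms, neither of which matches the stated $C\sqrt K/\sqrt{N\Delta}$ and $CK/(N\Delta)$; for large $q$ these naive bounds do not even vanish. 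The paper recovers the missing $\sqrt{t/\Delta}$ by computing the \emph{variance} of the centered sum and exploiting near-independence across blocks: for $|a-b|\ge 4$ one shows covariance bounds of the type $\mathrm{Cov}_\theta[(C_{a\Delta}-C_{(a-1)\Delta})^2,(C_{b\Delta}-C_{(b-1)\Delta})^2]\le C\sqrt t\,N^{-1}\Delta^{1-q}$ (Lemma~\ref{covc}(iii)) and an analogous bound for $\mathcal{Y}_a(C_{a\Delta}-C_{(a-1)\Delta})$ (proof of Lemma~\ref{mathY}(v)). The off-diagonal covariance sum then produces the third term $Ct^{3/4}\sqrt K/\Delta^{1+q/2}$, while the $O(t/\Delta)$ diagonal pairs give the first two. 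This variance-plus-covariance step is the essential missing ingredient in your plan; without it the proposition as stated cannot be reached.
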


Before presenting the proof, we require some preparatory steps. Recall $U^{i,N}$ defined in \eqref{UPro}. For $a\in \{t/(2\Delta)+1,...,2t/\Delta\}$, we write  for $i=1,\cdots, K$
$$U^{i,N}_{a\Delta}- U^{i,N}_{(a-1)\Delta}=\Gamma_{(a-1)\Delta,a\Delta}^{i,N}+X_{(a-1)\Delta,a\Delta}^{i,N},$$ 
where
 \begin{align}\label{defGamma}
\Gamma_{(a-1)\Delta,a\Delta}^{i,N}:=&\sum_{n\ge 1}\Big\{\int_{0}^{a\Delta}\phi^{*n}(a\Delta-s)\sum_{j=1}^{N}A_{N}^{n}(i,j)[M^{j,N}_{s}-M^{j,N}_{a\Delta}]ds\\
&\qquad -\int_{0}^{(a-1)\Delta}\phi^{*n}((a-1)\Delta-s)\sum_{j=1}^{N}A_{N}^{n}(i,j)[M^{j,N}_{s}-M^{j,N}_{(a-1)\Delta}]ds\Big\},\notag\\
\label{defXa}
X_{(a-1)\Delta,a\Delta}^{i,N}:=&\sum_{n\ge 0}\sum_{j=1}^{N}\Big\{\int_{0}^{a\Delta}\phi^{*n}(a\Delta-s)ds A_{N}^{n}(i,j)M^{j,N}_{a\Delta}\\
&\qquad -\int_{0}^{(a-1)\Delta}\phi^{*n}((a-1)\Delta-s)ds A_{N}^{n}(i,j)M^{j,N}_{(a-1)\Delta}\Big\}.\notag
 \end{align}
Accordingly, we define 
  $\bar{\Gamma}_{(a-1)\Delta,a\Delta}^{N,K} :=\frac{1}{K} \sum_{i=1}^K \Gamma_{(a-1)\Delta,a\Delta}^{i,N}$,
$\bar{X}_{(a-1)\Delta,a\Delta}^{N,K}:= \frac{1}{K} \sum_{i=1}^K X_{(a-1)\Delta,a\Delta}^{i,N}$.
We decompose the difference into three terms:
\begin{align*}
    \Big| D_{\Delta,t}^{N,K,3}-\frac{N}{t}\mathbb{X}_{\Delta,t,1}^{N,K}\Big|
    =&\frac{N}{t}\Big|\Big\{\sum_{a=\frac{t}{\Delta}+1}^{\frac{2t}{\Delta}}\Big(\bar{Z}_{a\Delta}^{N,K}-\bar{Z}_{(a-1)\Delta}^{N,K}-\mathbb{E}_{\theta}[\bar{Z}_{a\Delta}^{N,K}-\bar{Z}_{(a-1)\Delta}^{N,K}]\Big)^{2}\\
 &-\mathbb{E}_{\theta}\Big[\sum_{a=\frac{t}{\Delta}+1}^{\frac{2t}{\Delta}}\Big(\bar{Z}_{a\Delta}^{N,K}-\bar{Z}_{(a-1)\Delta}^{N,K}-\mathbb{E}_{\theta}[\bar{Z}_{a\Delta}^{N,K}-\bar{Z}_{(a-1)\Delta}^{N,K}]\Big)^{2}\Big]\Big\}\\
&-\sum_{a=[\frac{vt}{\Delta}]+1}^{[\frac{2vt}{\Delta}]}\Big\{(\mathcal{Y}_{(a-1)\Delta,a\Delta}^{N,K})^2-\mathbb{E}_\theta[(\mathcal{Y}_{(a-1)\Delta,a\Delta}^{N,K})^2]\Big\}\Big|,
\end{align*}
which is bounded by 
\begin{align*}
& \frac{2N}{t}\Big[\Big(\Big|\sum_{a=\frac{t}{\Delta}+1}^{\frac{2t}{\Delta}}\Big\{(\bar{\Gamma}^{N,K}_{(a-1)\Delta,a\Delta})^{2}-\mathbb{E}_{\theta}[(\bar{\Gamma}^{N,K}_{(a-1)\Delta,a\Delta})^{2}]\Big\}\Big| \\
&\qquad+\Big|\sum_{a=\frac{t}{\Delta}+1}^{\frac{2t}{\Delta}}\bar{\Gamma}_{(a-1)\Delta,a\Delta}^{N,K}\bar{X}_{(a-1)\Delta,a\Delta}^{N,K}-\mathbb{E}_{\theta}[\sum_{a=\frac{t}{\Delta}+1}^{\frac{2t}{\Delta}}\bar{\Gamma}_{(a-1)\Delta,a\Delta}^{N,K}\bar{X}_{(a-1)\Delta,a\Delta}^{N,K}]\Big|\\
&\qquad+\sum_{a=\frac{t}{\Delta}+1}^{\frac{2t}{\Delta}}\Big|\mathcal{Y}_{(a-1)\Delta,a\Delta}^{N,K}\Big|\Big|\mathcal{Y}_{(a-1)\Delta,a\Delta}^{N,K}-\bar{X}_{(a-1)\Delta,a\Delta}^{N,K}\Big|+\sum_{a=\frac{t}{\Delta}+1}^{\frac{2t}{\Delta}}\Big|\mathcal{Y}_{(a-1)\Delta,a\Delta}^{N,K}-\bar{X}_{(a-1)\Delta,a\Delta}^{N,K}\Big|^{2}\Big)
\Big]\\
+&\frac{2N}{t}\E_\theta\Big[\sum_{a=\frac{t}{\Delta}+1}^{\frac{2t}{\Delta}}\Big|\mathcal{Y}_{(a-1)\Delta,a\Delta}^{N,K}\Big|\Big|\mathcal{Y}_{(a-1)\Delta,a\Delta}^{N,K}-\bar{X}_{(a-1)\Delta,a\Delta}^{N,K}\Big|+\sum_{a=\frac{t}{\Delta}+1}^{\frac{2t}{\Delta}}\Big|\mathcal{Y}_{(a-1)\Delta,a\Delta}^{N,K}-\bar{X}_{(a-1)\Delta,a\Delta}^{N,K}\Big|^2\Big]. 
\end{align*}
We now prove that all the terms above are negligible. The first term is treated in Lemma \ref{lem 6.5}, and the rest are handled in Lemma \ref{mathY}.
\vip
We begin by recalling $\Gamma_{(a-1)\Delta,a\Delta}^{i,N},\, i=1,\cdots,K$ defined in  \eqref{defGamma}, which allows us to rewrite their average $\bar{\Gamma}_{(a-1)\Delta,a\Delta}^{N,K}=\frac{1}{K} \sum_{i=1}^K \Gamma_{(a-1)\Delta,a\Delta}^{i,N}$ as 
$\bar{\Gamma}_{(a-1)\Delta,a\Delta}^{N,K}=C_{a\Delta}^{N,K}+B_{a\Delta}^{N,K}-C_{(a-1)\Delta}^{N,K}-B_{(a-1)\Delta}^{N,K},$
where
\begin{align}
\label{Cdelta}C_{a\Delta}^{N,K}:=&\frac{1}{K}\sum_{i=1}^{K}\sum_{j=1}^{N}\sum_{n\ge 1}\int^{\Delta}_{0}\phi^{*n}(s)A_{N}^{n}(i,j)(M_{(a\Delta-s)}^{j,N}-M^{j,N}_{a\Delta})ds,\\
\label{Bdelta}B_{a\Delta}^{N,K}:=&\frac{1}{K}\sum_{i=1}^{K}\sum_{j=1}^{N}\sum_{n\ge 1}\int^{a\Delta}_{\Delta}\phi^{*n}(s)A_{N}^{n}(i,j)(M^{j,N}_{(a\Delta-s)}-M^{j,N}_{a\Delta})ds.
\end{align}

We now establish the following bounds  for $C_{a\Delta}^{N,K}$ and $B_{a\Delta}^{N,K}$. The proofs are deferred to Appendix \ref{app: prof: 6.4-6}.

\begin{lemma}\label{covc}
 Assume \eqref{H(q)}  for some $q\ge 1$. For $\Delta\ge1$ and $a, b\in\{\frac{t}{\Delta}+1,...,\frac{2t}{\Delta}\}.$ Then a.s. on  $\Omega_{N,K}$,
\vip
$(i)$ $\mathbb{E}_{\theta}[(B_{a\Delta}^{N,K})^{2}]\le \frac{C}{N}\Delta^{1-2q},$
\vip
$(ii)$ $\mathbb{E}_{\theta}[(C_{a\Delta}^{N,K})^{4}]\le \frac{C}{N^{2}},$
\vip
$(iii)$ $\mathbb{C}ov_{\theta}[(C_{a\Delta}^{N,K}-C_{(a-1)\Delta}^{N,K})^{2},(C_{b\Delta}^{N,K}-C_{(b-1)\Delta}^{N,K})^{2}]\le \frac{C\sqrt{t}}{N\Delta^{q-1}},\ |a-b|\ge 4$.
\end{lemma}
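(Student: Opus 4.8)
The common ingredients are: the bracket identity \eqref{ee3}; the coefficient bounds valid on $\Omega_{N,K}$, namely $\max_j g_N^{(n)}(j)=|||I_KA_N^n|||_1\le (K/N)(a/\Lambda)^n$ and $\sum_j g_N^{(n)}(j)\le K\,|||A_N^n|||_\infty\le K(a/\Lambda)^n$, where $g_N^{(n)}(j):=\sum_{i=1}^K A_N^n(i,j)$; and moment estimates for convolution powers. For the latter, since $\phi/\Lambda$ is a probability density with finite $q$-th moment, Minkowski's inequality applied to a sum of $n$ i.i.d.\ copies gives $\int_0^\infty s^q\phi^{*n}(s)\,ds\le Cn^q\Lambda^n$, hence by Markov $\int_{\Delta}^\infty s^\beta\phi^{*n}(s)\,ds\le Cn^q\Lambda^n\Delta^{\beta-q}$ for $0\le\beta<q$, and by Cauchy--Schwarz $\int_0^\infty s^{1/2}\phi^{*n}(s)\,ds\le C\sqrt n\,\Lambda^n$; all sums over $n$ below are convergent since $a<1$. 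With this, \textbf{(i)} is a second-moment computation: writing $B_{a\Delta}^{N,K}=\tfrac1K\sum_{n\ge1}\int_\Delta^{a\Delta}\phi^{*n}(s)\sum_j g_N^{(n)}(j)(M^{j,N}_{a\Delta-s}-M^{j,N}_{a\Delta})\,ds$, one expands $\Et[(B_{a\Delta}^{N,K})^2]$, uses that martingale increments of distinct coordinates are orthogonal and $\Et[(M^{j,N}_{a\Delta}-M^{j,N}_{a\Delta-s})(M^{j,N}_{a\Delta}-M^{j,N}_{a\Delta-s'})]=\Et[Z^{j,N}_{a\Delta}-Z^{j,N}_{a\Delta-(s\wedge s')}]\le C(s\wedge s')$ on $\Omega_{N,K}$ (Lemma \ref{lambar}-(i)); the $j$-sum contributes $\sum_j g_N^{(m)}(j)g_N^{(n)}(j)\le (K^2/N)(a/\Lambda)^{m+n}$, and bounding $s\wedge s'\le\sqrt{ss'}$ with $\int_\Delta^\infty s^{1/2}\phi^{*n}\le Cn^q\Lambda^n\Delta^{1/2-q}$ yields $\Et[(B_{a\Delta}^{N,K})^2]\le (C/N)\Delta^{1-2q}$.

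For \textbf{(ii)}, apply Minkowski's inequality in $L^4(\Et)$ to the $n$-sum and the $s$-integral, reducing to bounding $\Et[(\sum_j g_N^{(n)}(j)(M^{j,N}_{a\Delta}-M^{j,N}_{a\Delta-s}))^4]$; by Burkholder--Davis--Gundy and \eqref{ee3} this is $\le C\,\Et[(\sum_j (g_N^{(n)}(j))^2(Z^{j,N}_{a\Delta}-Z^{j,N}_{a\Delta-s}))^2]$, and Cauchy--Schwarz together with $\Et[(Z^{j,N}_{a\Delta}-Z^{j,N}_{a\Delta-s})^2]\le C(1+s)^2$ (Lemma \ref{lambar}-(iii)) bounds it by $C(1+s)^2(\sum_j (g_N^{(n)}(j))^2)^2\le C(1+s)^2(K^2/N)^2(a/\Lambda)^{4n}$. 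Taking fourth roots, integrating $\int_0^\infty\phi^{*n}(s)(1+s)^{1/2}\,ds\le C\sqrt n\,\Lambda^n$, and summing $\sum_n\sqrt n\,a^n$ gives $\Et[(C_{a\Delta}^{N,K})^4]^{1/4}\le C/\sqrt N$, i.e.\ $\Et[(C_{a\Delta}^{N,K})^4]\le C/N^2$.

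For \textbf{(iii)} — the main obstacle — set $Y_a:=C_{a\Delta}^{N,K}-C_{(a-1)\Delta}^{N,K}$, which is a functional of the increments of $(M^{j,N})_j$ over $[(a-2)\Delta,a\Delta]$, hence $\mathcal F_{a\Delta}$-measurable; for $b\ge a+4$ (by symmetry) one has $a\Delta\le(b-2)\Delta$ and so $\Covt[Y_a^2,Y_b^2]=\Et[Y_a^2(\Et[Y_b^2\mid\mathcal F_{a\Delta}]-\Et[Y_b^2])]$. Writing $Y_b=\tfrac1K\sum_{n\ge1}\int_0^\Delta\phi^{*n}(s)\sum_j g_N^{(n)}(j)\Xi^{j,N,b}(s)\,ds$ with $\Xi^{j,N,b}(s)$ built from $M^{j,N}$-increments on $[(b-2)\Delta,(b-1)\Delta]$ and $[(b-1)\Delta,b\Delta]$, the bracket identity and the orthogonality of distinct coordinates reduce the conditional second moment to a double sum over $(m,n,j)$ of $g_N^{(m)}(j)g_N^{(n)}(j)\phi^{*m}(s)\phi^{*n}(s')$ times $\Et[Z^{j,N}_{\tau}-Z^{j,N}_{\tau-(s\wedge s')}\mid\mathcal F_{a\Delta}]$ for $\tau\in\{(b-1)\Delta,b\Delta\}$, and likewise for $\Et[Y_b^2]$ with the conditioning removed; hence $\Et[Y_b^2\mid\mathcal F_{a\Delta}]-\Et[Y_b^2]$ is a weighted combination of $\int_{\tau-(s\wedge s')}^{\tau}\delta^{j,N}_r\,dr$, where $\delta^{j,N}_r:=\Et[\lambda^{j,N}_r\mid\mathcal F_{a\Delta}]-\Et[\lambda^{j,N}_r]$. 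The crucial step is to bound $\delta^{j,N}_r$ for $r$ in the $b$-blocks, i.e.\ $r\ge(a+2)\Delta$: from \eqref{sssy}, $\delta^{j,N}_r$ solves the linear Volterra equation $\delta^{j,N}_r=\tfrac1N\sum_l\theta_{jl}\big(\int_0^{a\Delta}\phi(r-v)\,dU^{l,N}_v+\int_{a\Delta}^r\phi(r-v)\delta^{l,N}_v\,dv\big)$, whose resolvent series $\sum_{n\ge0}A_N^{n+1}(\phi^{*n}*\cdot)$ converges on $\Omega_{N,K}$ because $\Lambda|||A_N|||_1\le a<1$; the source term has $\phi$-argument $\ge2\Delta$, and using $\Et[|X-\Et X|]\le2\Et[X]$ for the nonnegative $X=\int_0^{a\Delta}\phi(r-v)\,dZ^{l,N}_v$ (which bypasses any tail control of $\phi^2$) together with $\Et[\lambda^{l,N}_v]\le C$ and, after integrating in $r$, $\int_{2\Delta}^\infty s\,\phi^{*n}(s)\,ds\le Cn^q\Lambda^n\Delta^{1-q}$, one obtains that this weighted combination has $\Et$-magnitude of order $N^{-1}\sqrt t\,\Delta^{1-q}$ — the $1/N$ from the single interaction edge with the coefficient bounds, the $\Delta^{1-q}$ from the $\phi^{*n}$-tail decay, and the $\sqrt t$ absorbing the crude control of the randomness accumulated over $[0,a\Delta]\subseteq[0,2t]$ entering the source. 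Combining this with $\Et[Y_a^4]^{1/2}\le C/N$ from (ii) via Cauchy--Schwarz gives $\Covt[Y_a^2,Y_b^2]\le C\sqrt t/(N\Delta^{q-1})$ on $\Omega_{N,K}$.

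The genuine difficulty is exactly this decay estimate: one must quantify, uniformly over the many index pairs and over the whole horizon $2t$, how fast the conditional law of the $b$-block increments forgets the configuration at time $a\Delta$. This forgetting is purely the consequence of the polynomial moment assumption $\int_0^\infty s^q\phi(s)\,ds<\infty$, propagated to $\int_0^\infty s^q\phi^{*n}(s)\,ds\le Cn^q\Lambda^n$, combined with the stability of the Hawkes Volterra recursion on $\Omega_{N,K}$; everything else in (iii) is bookkeeping of powers of $N$, $t$ and $\Delta$. (An alternative route replaces the Volterra analysis by coupling with an auxiliary Hawkes system reset to an empty history at a time between $a\Delta$ and $(b-2)\Delta$, independent of $\mathcal F_{a\Delta}$ given $\theta$, and controls $\Et[|\lambda^{j,N}_r-\tilde\lambda^{j,N}_r|]\le C\Delta^{-q}$ on the $b$-blocks by the same renewal bound.)
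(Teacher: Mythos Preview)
Your arguments for (i) and (ii) coincide with the paper's: the same second-moment expansion using $\Et[(M^{j,N}_{a\Delta}-M^{j,N}_{a\Delta-s})(M^{j,N}_{a\Delta}-M^{j,N}_{a\Delta-s'})]\le C\sqrt{ss'}$ together with $\int_\Delta^\infty s^{1/2}\phi^{*n}(s)\,ds\le Cn^q\Lambda^n\Delta^{1/2-q}$ for (i), and the Minkowski-plus-BDG reduction with $\int_0^\infty s^{1/2}\phi^{*n}(s)\,ds\le C\sqrt{n}\,\Lambda^n$ for (ii).

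For (iii) the paper takes a different path. It expands the covariance as a four-index sum over products of martingale increments $\zeta^{j,N}_{a\Delta,s}:=M^{j,N}_{a\Delta-s}-M^{j,N}_{a\Delta}$, uses the martingale property to kill the $j\neq j'$ terms, applies It\^o's formula to reduce $\zeta^{j,N}_{a\Delta,s}\zeta^{j,N}_{a\Delta,s'}$ (modulo a term with zero conditional mean) to a $Z^{j,N}$-increment, rewrites this as a $U$-increment, and then splits $U^{j,N}_{a\Delta}-U^{j,N}_{a\Delta-s}=R+T$ where $R$ has zero conditional expectation given $\mathcal F_{b\Delta}$ and the ``memory'' term $T$ satisfies $\Et[T^4]\le Ct^2\Delta^{-4q}$, borrowed from \cite[Lemma~30, Step~1]{A}. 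This $L^2$ bound on $T$ is paired via Cauchy--Schwarz with $L^4$ bounds on the block-$b$ increments and then summed over indices.

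Your route via $D:=\Et[Y_b^2\mid\mathcal F_{a\Delta}]-\Et[Y_b^2]$ and the Volterra equation for $\delta^{j,N}_r$ is a natural alternative, but the argument as written has a gap. The final step invokes Cauchy--Schwarz against $\Et[Y_a^4]^{1/2}\le C/N$, which requires an $L^2(\Et)$ bound on $D$; yet the only tool you supply for the source is the $L^1$ inequality $\Et[|X-\Et X|]\le 2\Et X$ with $X=\int_0^{a\Delta}\phi(r-v)\,dZ^{l,N}_v$. This yields $\Et[|\delta^{j,N}_r|]\le C\Delta^{-q}$ and hence only $\Et[|D|]\le C\Delta^{1-q}/N$ (no $\sqrt t$ appears, since $\Et[\lambda^{l,N}_v]\le C$) --- the wrong currency for that Cauchy--Schwarz. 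Upgrading to $L^2$ control on $\delta^{j,N}_r$ \emph{pointwise in $r$} would need tail decay of $\int_{2\Delta}^\infty\phi^2$, which $H(q)$ does not provide, as you yourself flag. (Your power-of-$N$ bookkeeping is also inconsistent: if $D$ really had $L^2$ size $N^{-1}\sqrt t\,\Delta^{1-q}$, Cauchy--Schwarz with $\Et[Y_a^4]^{1/2}\le C/N$ would give $C\sqrt t/(N^2\Delta^{q-1})$, not the claimed bound.) The fix is to pass to the integrated quantity $\int_I\delta^{j,N}_r\,dr=\Et[U^{j,N}_{\sup I}-U^{j,N}_{\inf I}\mid\mathcal F_{a\Delta}]$, whose $L^2$ norm is, by Jensen, at most $\Et[T^2]^{1/2}\le C\sqrt t\,\Delta^{-q}$; but this is exactly the paper's $R/T$ split, so the alternative collapses onto the same key estimate. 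Your coupling suggestion has the identical defect: it delivers $\Et[|\lambda^{j,N}_r-\tilde\lambda^{j,N}_r|]\le C\Delta^{-q}$, again only $L^1$.
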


We are now in position to  give the estimate of 
$\sum_{a=\frac{t}{\Delta}+1}^{\frac{2t}{\Delta}}\Big\{(\bar{\Gamma}^{N,K}_{(a-1)\Delta,a\Delta})^{2}-\mathbb{E}_{\theta}[(\bar{\Gamma}^{N,K}_{(a-1)\Delta,a\Delta})^{2}]\Big\}.$

\begin{lemma}\label{lem 6.5}
Assume  \eqref{H(q)}  for some $q\ge 1$,  then a.s. on the set $\Omega_{N,K}$, for $\Delta\ge1$,
 $$\frac{K}{N}\sqrt{\frac{t}{\Delta}}\frac{N}{t}\mathbb{E}_{\theta}\Big[\Big|\sum_{a=\frac{t}{\Delta}+1}^{\frac{2t}{\Delta}}\Big\{(\bar{\Gamma}^{N,K}_{(a-1)\Delta,a\Delta})^{2}-\mathbb{E}_{\theta}[(\bar{\Gamma}^{N,K}_{(a-1)\Delta,a\Delta})^{2}]\Big\}\Big|\Big]
\le \frac{CK\sqrt{t}}{N\Delta^{(q+1)}}+\frac{CK}{N\Delta}+\frac{CKt^{\frac{3}{4}}}{\Delta^{(1+\frac{q}{2})}\sqrt{N}}.$$
\end{lemma}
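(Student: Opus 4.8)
The plan is to estimate $\E_\theta\bigl[\bigl|\sum_{a}\{(\bar\Gamma^{N,K}_{(a-1)\Delta,a\Delta})^2-\E_\theta[(\bar\Gamma^{N,K}_{(a-1)\Delta,a\Delta})^2]\}\bigr|\bigr]$ by Cauchy--Schwarz, which reduces the task to bounding the conditional variance
\[
V:=\mathbb{V}{\rm ar}_\theta\Bigl[\sum_{a=\frac t\Delta+1}^{\frac{2t}\Delta}(\bar\Gamma^{N,K}_{(a-1)\Delta,a\Delta})^2\Bigr]
=\sum_{a,b}\mathbb{C}{\rm ov}_\theta\bigl[(\bar\Gamma^{N,K}_{(a-1)\Delta,a\Delta})^2,(\bar\Gamma^{N,K}_{(b-1)\Delta,b\Delta})^2\bigr].
\]
Using the decomposition $\bar\Gamma^{N,K}_{(a-1)\Delta,a\Delta}=C_{a\Delta}^{N,K}+B_{a\Delta}^{N,K}-C_{(a-1)\Delta}^{N,K}-B_{(a-1)\Delta}^{N,K}$ from \eqref{Cdelta}--\eqref{Bdelta}, I would split each square into the ``$C$-part'' $(C_{a\Delta}^{N,K}-C_{(a-1)\Delta}^{N,K})^2$, the ``$B$-part'' $(B_{a\Delta}^{N,K}-B_{(a-1)\Delta}^{N,K})^2$, and the cross term, then bound $V$ by a sum of covariances of these pieces via the elementary inequality $\mathbb{V}{\rm ar}(\sum X_k)\le (\sum \mathbb{V}{\rm ar}(X_k)^{1/2})^2$ or a direct expansion.

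For the $C$-part, Lemma~\ref{covc}-(iii) already gives $\mathbb{C}{\rm ov}_\theta[(C_{a\Delta}^{N,K}-C_{(a-1)\Delta}^{N,K})^2,(C_{b\Delta}^{N,K}-C_{(b-1)\Delta}^{N,K})^2]\le C\sqrt t/(N\Delta^{q-1})$ whenever $|a-b|\ge 4$; for the $O(t/\Delta)$ diagonal-ish pairs with $|a-b|<4$ one uses Lemma~\ref{covc}-(ii) (which gives $\E_\theta[(C_{a\Delta}^{N,K})^4]\le C/N^2$, hence each such covariance is $\le C/N^2$) together with Cauchy--Schwarz. Summing over the $\sim t/\Delta$ values of $a$ and, for each, $O(t/\Delta)$ far-away $b$ plus $O(1)$ close $b$, the $C$-part contributes $\le C(t/\Delta)^2\sqrt t/(N\Delta^{q-1}) + C(t/\Delta)/N^2$ to $V$. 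For the $B$-part, Lemma~\ref{covc}-(i) gives $\E_\theta[(B_{a\Delta}^{N,K})^2]\le C\Delta^{1-2q}/N$, so by Cauchy--Schwarz $\mathbb{V}{\rm ar}_\theta$ of the sum of $B$-squares is $\le \bigl(\sum_a \E_\theta[(B_{a\Delta}^{N,K}-B_{(a-1)\Delta}^{N,K})^2]\bigr)^2\le C(t/\Delta)^2\Delta^{2-4q}/N^2$, and the $B$--$C$ cross terms are handled by combining (i) and (ii)/(iii) through Cauchy--Schwarz. Collecting all contributions to $V$, multiplying by $\tfrac{K}{N}\sqrt{\tfrac t\Delta}\cdot\tfrac Nt=\tfrac{K}{\sqrt{t\Delta}}$, and taking the square root of $V$ should reproduce the three claimed terms $\tfrac{CK\sqrt t}{N\Delta^{q+1}}$, $\tfrac{CK}{N\Delta}$, $\tfrac{CKt^{3/4}}{\Delta^{1+q/2}\sqrt N}$ (the first from the far-away $C$-covariances, the second from the $O(t/\Delta)$ near-diagonal terms, the third from the $B$-part / cross terms after the square root).

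The main obstacle I anticipate is the bookkeeping of exponents: one must carefully track how the factor $\tfrac{K}{\sqrt{t\Delta}}$ interacts with $\sqrt V$ so that each of the three source terms lands exactly on one of the claimed bounds, and in particular verify that the dominant contribution of order $(t/\Delta)\cdot$(near-diagonal covariance) after the square-root gives precisely $CK/(N\Delta)$ rather than something larger. A secondary subtlety is justifying the ``$|a-b|\ge 4$'' cutoff used in Lemma~\ref{covc}-(iii): for $1\le |a-b|\le 3$ the increments $M^{j,N}_{a\Delta}-M^{j,N}_{(a-1)\Delta}$ and $M^{j,N}_{b\Delta}-M^{j,N}_{(b-1)\Delta}$ need not be (conditionally) independent because of the memory in $\phi$, so those finitely many near-diagonal terms must be absorbed by the crude fourth-moment bound from Lemma~\ref{covc}-(ii) rather than by any decorrelation estimate; this is routine but is where the $CK/(N\Delta)$ term comes from and must be stated precisely. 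Everything else is a mechanical application of Cauchy--Schwarz and the bounds already recorded in Lemma~\ref{covc}.
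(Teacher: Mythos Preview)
Your strategy is essentially the paper's: decompose $(\bar\Gamma)^2$ into the $(C-C')^2$, $(B-B')^2$, and cross pieces, use Lemma~\ref{covc}-(ii)--(iii) with the near/far split for the $C$-part variance, and handle the $B$ and cross pieces by cruder bounds. Two small corrections, however. First, your claimed inequality $\mathbb{V}{\rm ar}_\theta\bigl[\sum_a (B_{a\Delta}^{N,K}-B_{(a-1)\Delta}^{N,K})^2\bigr]\le\bigl(\sum_a \E_\theta[(B_{a\Delta}^{N,K}-B_{(a-1)\Delta}^{N,K})^2]\bigr)^2$ would need a fourth-moment bound on $B$, which Lemma~\ref{covc}-(i) does not supply; the paper (and you should too) simply uses $\E_\theta[|X-\E_\theta X|]\le 2\E_\theta[X]$ for the nonnegative $X=\sum_a(B-B')^2$, and likewise $\E_\theta[|Y-\E_\theta Y|]\le 2\E_\theta[|Y|]$ for the cross term, which gives the same bounds without any variance computation. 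Second, your attribution of the three final terms is swapped: the far-away $C$-covariances (contributing $Ct^{5/2}/(N\Delta^{q+1})$ to the variance) produce, after square-root and the prefactor $K/\sqrt{t\Delta}$, the term $\tfrac{CKt^{3/4}}{\sqrt N\,\Delta^{1+q/2}}$, while it is the $B$-part and the $BC$ cross term that yield $\tfrac{CK\sqrt t}{N\Delta^{q+1}}$; the near-diagonal $C$-terms give $\tfrac{CK}{N\Delta}$ as you said.
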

\begin{proof}
    We start from 
$$(\bar{\Gamma}^{N,K}_{(a-1)\Delta,a\Delta})^{2}= (C_{a\Delta}^{N,K}-C_{(a-1)\Delta}^{N,K})^{2}+2(B_{a\Delta}^{N,K}-B_{(a-1)\Delta}^{N,K})(C_{a\Delta}^{N,K}-C_{(a-1)\Delta}^{N,K})+(B_{a\Delta}^{N,K}-B_{(a-1)\Delta}^{N,K})^{2}.$$
Then, 
\begin{align*}
    &\frac{K}{N}\sqrt{\frac{t}{\Delta}}\frac{N}{t}\mathbb{E}_{\theta}\Big[\Big|\sum_{a=\frac{t}{\Delta}+1}^{\frac{2t}{\Delta}}\Big\{(\bar{\Gamma}^{N,K}_{(a-1)\Delta,a\Delta})^{2}-\mathbb{E}_{\theta}[(\bar{\Gamma}^{N,K}_{(a-1)\Delta,a\Delta})^{2}]\Big\}\Big|\Big]\\
\le& \frac{K}{\sqrt{t\Delta}}\Big\{\mathbb{E}_{\theta}\Big[\Big|\sum_{a=\frac{t}{\Delta}+1}^{\frac{2t}{\Delta}}(B_{a\Delta}^{N,K}-B_{(a-1)\Delta}^{N,K})^{2}-\mathbb{E}_{\theta}\Big[\sum_{a=\frac{t}{\Delta}+1}^{\frac{2t}{\Delta}}(B_{a\Delta}^{N,K}-B_{(a-1)\Delta}^{N,K})^{2}\Big]\Big|\Big]\\
    &+\mathbb{E}_{\theta}\Big[\Big|\sum_{a=\frac{t}{\Delta}+1}^{\frac{2t}{\Delta}}(C_{a\Delta}^{N,K}-C_{(a-1)\Delta}^{N,K})^{2}-\mathbb{E}_{\theta}\Big[\sum_{a=\frac{t}{\Delta}+1}^{\frac{2t}{\Delta}}(C_{a\Delta}^{N,K}-C_{(a-1)\Delta}^{N,K})^{2}\Big]\Big|\Big]\\
    &+2\mathbb{E}_{\theta}\Big[\Big|\sum_{a=\frac{t}{\Delta}+1}^{\frac{2t}{\Delta}}(B_{a\Delta}^{N,K}-B_{(a-1)\Delta}^{N,K})(C_{a\Delta}^{N,K}-C_{(a-1)\Delta}^{N,K})\\
&    \hskip5cm -\mathbb{E}_{\theta}\Big[(B_{a\Delta}^{N,K}-B_{(a-1)\Delta}^{N,K})(C_{a\Delta}^{N,K}-C_{(a-1)\Delta}^{N,K})\Big]\Big|\Big]\Big\}.
\end{align*}
Applying Lemma \ref{covc}-(i) gives  
\begin{align*}
&\mathbb{E}_{\theta}\Big[\Big|\sum_{a=\frac{t}{\Delta}+1}^{\frac{2t}{\Delta}}(B_{a\Delta}^{N,K}-B_{(a-1)\Delta}^{N,K})^{2}-\mathbb{E}_{\theta}\Big[\sum_{a=\frac{t}{\Delta}+1}^{\frac{2t}{\Delta}}(B_{a\Delta}^{N,K}-B_{(a-1)\Delta}^{N,K})^{2}\Big]\Big|\Big]\\
\le& 2\mathbb{E}_{\theta}\Big[\sum_{a=\frac{t}{\Delta}+1}^{\frac{2t}{\Delta}}(B_{a\Delta}^{N,K}-B_{(a-1)\Delta}^{N,K})^{2}\Big]\\
\le& \frac{Ct}{N\Delta^{2q}}.
\end{align*}
By  lemma \ref{covc}-(ii)\&(iii), we have 
\begin{align*}
 &\mathbb{E}_{\theta}\Big[\Big|\sum_{a=\frac{t}{\Delta}+1}^{\frac{2t}{\Delta}}(C_{a\Delta}^{N,K}-C_{(a-1)\Delta}^{N,K})^{2}-\mathbb{E}_{\theta}[\sum_{a=\frac{t}{\Delta}+1}^{\frac{2t}{\Delta}}(C_{a\Delta}^{N,K}-C_{(a-1)\Delta}^{N,K})^{2}]\Big|^2\Big]\\
=&\mathbb{V}ar_{\theta}\Big[\sum_{a=\frac{vt}{\Delta}+1}^{\frac{2vt}{\Delta}}(C_{a\Delta}^{N,K}-C_{(a-1)\Delta}^{N,K})^{2}\Big]\\
\le& \sum_{\substack{t/\Delta+1\le a,b \le 2t/\Delta\\ |a-b|\le 3}}
\mathbb{E}_\theta\Big[(C_{a\Delta}^{N,K}-C_{(a-1)\Delta}^{N,K})^{4}\Big]^\frac{1}{2}\mathbb{E}_\theta\Big[(C_{b\Delta}^{N,K}-C_{(b-1)\Delta}^{N,K})^{4}\Big]^\frac{1}{2}\\
& \quad+ \sum_{\substack{t/\Delta+1\le a,b \le 2t/\Delta\\ |a-b|\ge 4}}
\mathbb{C}ov_\theta\Big[(C_{a\Delta}^{N,K}-C_{(a-1)\Delta}^{N,K})^{2},(C_{b\Delta}^{N,K}-C_{(b-1)\Delta}^{N,K})^{2}\Big]\\
\le& C\Big[\frac{t}{\Delta}\frac{1}{N^{2}}+\frac{t^{\frac{5}{2}}}{\Delta^{q+1}N}\Big].
\end{align*}
Moreover,  by the Cauchy-Schwarz inequality and Lemmas  \ref{covc}-(i)\&(ii), we have 
\begin{align*}
   &\mathbb{E}_{\theta}\Big[\Big|\sum_{a=\frac{t}{\Delta}+1}^{\frac{2t}{\Delta}}(B_{a\Delta}^{N,K}-B_{(a-1)\Delta}^{N,K})(C_{a\Delta}^{N,K}-C_{(a-1)\Delta}^{N,K})
   -\mathbb{E}_{\theta}[(B_{a\Delta}^{N,K}-B_{(a-1)\Delta}^{N,K})(C_{a\Delta}^{N,K}-C_{(a-1)\Delta}^{N,K})]\Big|\Big]\\
\le& 4\sum_{a=\frac{t}{\Delta}+1}^{\frac{2t}{\Delta}}\Big\{\mathbb{E}_{\theta}\Big[\Big|B_{a\Delta}^{N,K}C_{a\Delta}^{N,K}\Big|\Big]+\mathbb{E}_{\theta}\Big[\Big|B_{(a-1)\Delta}^{N,K}C_{a\Delta}^{N,K}\Big|\Big]+\mathbb{E}_{\theta}\Big[\Big|B_{a\Delta}^{N,K}C_{(a-1)\Delta}^{N,K}\Big|\Big]\\
& \hskip5cm +\mathbb{E}_{\theta}\Big[\Big|B_{(a-1)\Delta}^{N,K}C_{(a-1)\Delta}^{N,K}\Big|\Big]\Big\}\\
\le& 4\sum_{a=\frac{t}{\Delta}+1}^{\frac{2t}{\Delta}}\Big\{\mathbb{E}_{\theta}\Big[\Big|B_{a\Delta}^{N,K}\Big|^{2}\Big]^{\frac{1}{2}}+\Big|B_{(a-1)\Delta}^{N,K}\Big|^{2}\Big]^{\frac{1}{2}}\Big\}\Big\{\mathbb{E}_{\theta}\Big[\Big|C_{a\Delta}^{N,K}\Big|^{2}\Big]^{\frac{1}{2}}+\mathbb{E}_{\theta}\Big[\Big|C_{(a-1)\Delta}^{N,K}\Big|^{2}\Big]^{\frac{1}{2}}\Big\}\\
\le& \frac{Ct}{\Delta^{q+\frac{1}{2}}}\frac{1}{N}.
\end{align*}
 \vip
Overall, we have
\begin{align*}
    &\frac{K}{N}\sqrt{\frac{t}{\Delta}}\frac{N}{t}\mathbb{E}_{\theta}\Big[\Big|\sum_{a=\frac{t}{\Delta}+1}^{\frac{2t}{\Delta}}\Big\{(\bar{\Gamma}^{N,K}_{(a-1)\Delta,a\Delta})^{2}-\mathbb{E}_{\theta}[(\bar{\Gamma}^{N,K}_{(a-1)\Delta,a\Delta})^{2}]\Big\}\Big|\Big]\\
    \le&  \frac{K}{\sqrt{t\Delta}}\Big\{\frac{1}{N}\frac{Ct}{\Delta^{2q}}+\Big[\frac{t}{\Delta}\frac{1}{N^{2}}+\frac{t^{\frac{5}{2}}}{\Delta^{q+1}N}\Big]^{\frac{1}{2}}+\frac{1}{N}\frac{Ct}{\Delta^{q+\frac{1}{2}}}\Big\}\\
    \le& C\Big\{\frac{K}{N\Delta}+\frac{Kt^{\frac{3}{4}}}{\Delta^{(1+\frac{q}{2})}\sqrt{N}}+\frac{K\sqrt{t}}{N\Delta^{(q+1)}}\Big\}.
\end{align*}
The proof is finished.
\end{proof}

Recall that $c^K_{N}(j)=\sum_{i=1}^{K}Q_{N}(i,j)$ and that  $X_{(a-1)\Delta,a\Delta}^{i,N}$ is defined in \eqref{defXa}.  Our next step is to show that the term
$\bar{X}_{(a-1)\Delta,a\Delta}^{N,K}= \frac{1}{K} \sum_{i=1}^K X_{(a-1)\Delta,a\Delta}^{i,N}$
is close to $\mathcal{Y}_{(a-1)\Delta,a\Delta}^{N,K}$ defined in \eqref{Ya1}.

\begin{lemma}\label{mathY}
Assume  \eqref{H(q)}  for some $q\ge 2$. For $\Delta\ge1$, then we have the following inequalities,
\vip
$(i)$ $\mathbb{E}_{\theta}[(\mathcal{Y}_{(a-1)\Delta,a\Delta}^{N,K}-\bar{X}_{(a-1)\Delta,a\Delta}^{N,K})^{2}] \le \frac{C}{N}\Big[\frac{1}{(a\Delta)^{2q-1}}+\frac{1}{((a-1)\Delta)^{2q-1}}\Big].
$
\vip
$(ii)$ $\frac{K}{\sqrt{t\Delta}}\mathbb{E}_{\theta}\Big[\sum_{a=\frac{t}{\Delta}+1}^{\frac{2t}{\Delta}}\Big|\mathcal{Y}_{(a-1)\Delta,a\Delta}^{N,K}-\bar{X}_{(a-1)\Delta,a\Delta}^{N,K}\Big|^{2}\Big]\le \frac{CK}{N\sqrt{t}\Delta^{2q-\frac{1}{2}}}$.
\vip
$(iii)$ $\mathbb{E}\Big[\indiq_{\Omega_{N,K}}\Big|\mathcal{Y}_{(a-1)\Delta,a\Delta}^{N,K}\Big|^{4}\Big]\le \frac{C\Delta^2}{K^2}$.\
 \vip
$(iv)$ $\frac{K}{\sqrt{\Delta t}}\mathbb{E}\Big[\boldsymbol{1}_{\Omega_{N,K}}\sum_{a=\frac{t}{\Delta}}^{\frac{2t}{\Delta}}\Big|\mathcal{Y}_{(a-1)\Delta,a\Delta}^{N,K}\Big|\Big|\mathcal{Y}_{(a-1)\Delta,a\Delta}^{N,K}-\bar{X}_{(a-1)\Delta,a\Delta}^{N,K}\Big|\Big]\le \frac{C\sqrt{K}}{\Delta^{q-\frac{1}{2}}\sqrt{Nt}}$.
\vip
$(v)$ $\mathbb{E}\Big[\boldsymbol{1}_{\Omega_{N,K}}\frac{K}{N}\sqrt{\frac{t}{\Delta}}\frac{N}{t}\Big|\sum_{a=\frac{t}{\Delta}+1}^{\frac{2t}{\Delta}}\bar{\Gamma}_{(a-1)\Delta,a\Delta}^{N,K}\bar{X}_{(a-1)\Delta,a\Delta}^{N,K}-\mathbb{E}_{\theta}[\sum_{a=\frac{t}{\Delta}+1}^{\frac{2t}{\Delta}}\bar{\Gamma}_{(a-1)\Delta,a\Delta}^{N,K}\bar{X}_{(a-1)\Delta,a\Delta}^{N,K}]\Big|\Big]$
$$\hskip-6cm \le \frac{CK}{N\Delta^q\sqrt{t}}+\frac{C\sqrt{tK}}{\Delta^{q+\frac{1}{2}}\sqrt{N}}+\frac{C\sqrt{K}}{\sqrt{N\Delta}}+\frac{Ct^\frac{3}{4}\sqrt{K}}{\Delta^{1+\frac{q}{2}}}.$$
\end{lemma}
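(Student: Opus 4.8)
The common engine for all five parts is the martingale representation \eqref{ee3}, \eqref{UPro} together with the \emph{tail weights} $r_n(T):=\Lambda^n-\int_0^T\phi^{*n}(u)\,du=\int_T^\infty\phi^{*n}(u)\,du\ge0$. Writing $\int_0^\infty u^q\phi^{*n}(u)\,du=\Lambda^n\,\E[(X_1+\dots+X_n)^q]$ for i.i.d.\ $X_i$ with density $\phi/\Lambda$, Minkowski's inequality gives $\E[(X_1+\dots+X_n)^q]\le n^q\E[X_1^q]$, so by Markov's inequality $r_n(T)\le C\,n^q\Lambda^n\,T^{-q}$ (using \eqref{H(q)}, $q\ge2$). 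From \eqref{defXa}, $Q_N=\sum_{n\ge0}\Lambda^nA_N^n$ and $c^K_N(j)=\sum_{n\ge0}\Lambda^n\sum_{i=1}^KA_N^n(i,j)$ one gets
$$
\mathcal{Y}_{(a-1)\Delta,a\Delta}^{N,K}-\bar X_{(a-1)\Delta,a\Delta}^{N,K}
=\frac1K\sum_{j=1}^N\sum_{n\ge1}\Big(\sum_{i=1}^KA_N^n(i,j)\Big)\big[r_n(a\Delta)M_{a\Delta}^{j,N}-r_n((a-1)\Delta)M_{(a-1)\Delta}^{j,N}\big],
$$
the $n=0$ term dropping out because $r_0\equiv0$. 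On $\Omega_{N,K}$ we have $\sum_{i=1}^KA_N^n(i,j)\le|||I_KA_N^n|||_1\le|||I_KA_N|||_1\,|||A_N|||_1^{\,n-1}\le\frac KN(a/\Lambda)^n$, so each coefficient above is $\le\frac CN(a\Delta)^{-q}$ (resp.\ $((a-1)\Delta)^{-q}$), since $\sum_{n\ge1}n^qa^n<\infty$. Expanding the square, using \eqref{ee3} and $\Et[Z^{j,N}_u]\le Cu$ on $\Omega_{N,K}$ (Lemma \ref{Zt}-(i) with $K=N$, $r=\infty$), only the $j=k$ terms survive; summing over $j=1,\dots,N$ gives (i).

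Part (ii) is (i) summed over $a\in\{t/\Delta+1,\dots,2t/\Delta\}$: since $t/(2\Delta)\in\mathbb N^*$ forces $t\ge2\Delta$, hence $(a-1)\Delta\ge t$ and $a\ge2$, the series $\sum_a\big((a\Delta)^{1-2q}+((a-1)\Delta)^{1-2q}\big)\le C\Delta^{1-2q}$ converges (as $q\ge2$), and multiplying by $K/\sqrt{t\Delta}$ gives the claim. For (iii), view $v\mapsto\frac1K\sum_jc^K_N(j)(M^{j,N}_{(a-1)\Delta+v}-M^{j,N}_{(a-1)\Delta})$ as a martingale: Burkholder--Davis--Gundy and \eqref{ee3} give $\Et[|\mathcal{Y}^{N,K}_{(a-1)\Delta,a\Delta}|^4]\le C\,\Et\big[\big(\tfrac1{K^2}\sum_jc^K_N(j)^2(Z^{j,N}_{a\Delta}-Z^{j,N}_{(a-1)\Delta})\big)^2\big]$, and Cauchy--Schwarz with $\sum_jc^K_N(j)^2=\|Q_N^T\boldsymbol{1}_K\|_2^2\le|||Q_N|||_2^2\,K\le CK$ and $\Et[(Z^{j,N}_{a\Delta}-Z^{j,N}_{(a-1)\Delta})^2]\le C\Delta^2$ on $\Omega_{N,K}$ (Lemma \ref{lambar}-(iii)) yield $\Et[|\mathcal{Y}^{N,K}_{(a-1)\Delta,a\Delta}|^4]\le C\Delta^2/K^2$ there, hence (iii). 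Part (iv) follows from the pointwise Cauchy--Schwarz $\Et[|\mathcal{Y}_a||\mathcal{Y}_a-\bar X_a|]\le\Et[\mathcal{Y}_a^2]^{1/2}\Et[(\mathcal{Y}_a-\bar X_a)^2]^{1/2}$ (abbreviating the indices), the $L^2$-version of the previous step $\Et[\mathcal{Y}_a^2]\le C\Delta/K$, (i), the summation $\sum_a((a-1)\Delta)^{(1-2q)/2}\le C\Delta^{-1}t^{(3-2q)/2}$, and once more $\Delta\le t/2$.

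The substance is (v). Note $\frac KN\sqrt{t/\Delta}\cdot\frac Nt=\frac K{\sqrt{t\Delta}}$. Using $\bar\Gamma^{N,K}_{(a-1)\Delta,a\Delta}=(C^{N,K}_{a\Delta}-C^{N,K}_{(a-1)\Delta})+(B^{N,K}_{a\Delta}-B^{N,K}_{(a-1)\Delta})$ from \eqref{Cdelta}--\eqref{Bdelta} and $\bar X_a=\mathcal{Y}_a+(\bar X_a-\mathcal{Y}_a)$, split $\bar\Gamma_a\bar X_a$ into the main term $(C_{a\Delta}-C_{(a-1)\Delta})\mathcal{Y}_a$ plus two remainders. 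The remainder $(C_{a\Delta}-C_{(a-1)\Delta})(\bar X_a-\mathcal{Y}_a)$ is bounded, per $a$, by $\Et[(C_{a\Delta}-C_{(a-1)\Delta})^2]^{1/2}\,\Et[(\bar X_a-\mathcal{Y}_a)^2]^{1/2}$ with $\Et[(C_{a\Delta}-C_{(a-1)\Delta})^2]\le C/N$ (Lemma \ref{covc}-(ii) and Jensen) and the second factor from (i); summing over $a$ and using $\Delta\le t/2$, its contribution is $\le CK/(N\Delta^q\sqrt t)$. The remainder $(B_{a\Delta}-B_{(a-1)\Delta})\bar X_a$ is bounded by $\Et[(B_{a\Delta}-B_{(a-1)\Delta})^2]^{1/2}\,\Et[\bar X_a^2]^{1/2}$ with $\Et[(B_{a\Delta}-B_{(a-1)\Delta})^2]\le\frac CN\Delta^{1-2q}$ (Lemma \ref{covc}-(i)) and $\Et[\bar X_a^2]\le C\Delta/K$ (from $\Et[\mathcal{Y}_a^2]\le C\Delta/K$ and (i)); this contributes $\le C\sqrt{tK}/(\Delta^{q+1/2}\sqrt N)$.

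For the main term, Cauchy--Schwarz reduces matters to bounding $\E[\omg\,\Vart(\sum_a(C_{a\Delta}-C_{(a-1)\Delta})\mathcal{Y}_a)]=\E\big[\omg\sum_{a,b}\Covt[(C_{a\Delta}-C_{(a-1)\Delta})\mathcal{Y}_a,(C_{b\Delta}-C_{(b-1)\Delta})\mathcal{Y}_b]\big]$. For $|a-b|\le3$, Cauchy--Schwarz with Lemma \ref{covc}-(ii) and (iii) gives $\Et[((C_{a\Delta}-C_{(a-1)\Delta})\mathcal{Y}_a)^2]\le\Et[(C_{a\Delta}-C_{(a-1)\Delta})^4]^{1/2}\Et[\mathcal{Y}_a^4]^{1/2}\le C\Delta/(NK)$, so the near-diagonal sum is $\le Ct/(NK)$, which after $K/\sqrt{t\Delta}$ is exactly the term $C\sqrt K/\sqrt{N\Delta}$. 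For $|a-b|\ge4$ one needs a decay-of-correlations estimate of the same nature as Lemma \ref{covc}-(iii), namely $\Covt[(C_{a\Delta}-C_{(a-1)\Delta})\mathcal{Y}_a,(C_{b\Delta}-C_{(b-1)\Delta})\mathcal{Y}_b]\le C\sqrt t/(K\Delta^{q-1})$; this is proved, as in the Appendix proofs of Lemma \ref{covc}, by rewriting each of $C_{a\Delta}$ and $\mathcal{Y}_a$ as a stochastic integral over the single window $[(a-1)\Delta,a\Delta]$ (via the Fubini identity $\int_0^\Delta\phi^{*n}(s)(M^{j,N}_{a\Delta-s}-M^{j,N}_{a\Delta})\,ds=-\int_{(a-1)\Delta}^{a\Delta}\big(\int_{a\Delta-r}^\Delta\phi^{*n}\big)\,dM^{j,N}_r$), expanding in the $M^{j,N}$, and applying the covariance bounds for products of martingale increments over separated windows from \cite[Lemma 22]{A} (as in Step 2 of the proof of Theorem \ref{VVNK}); the $\Delta^{-q}$ gain comes from the tails $r_n$ and the first-moment weights $\int_0^\Delta s\phi^{*n}(s)\,ds$. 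Summing over the $O((t/\Delta)^2)$ far pairs yields $\le Ct^{5/2}/(K\Delta^{q+1})$, and $K/\sqrt{t\Delta}$ times its square root is $\le Ct^{3/4}\sqrt K/\Delta^{1+q/2}$, completing (v). I expect this far-off-diagonal covariance estimate to be the main obstacle: unlike in Lemma \ref{lem 6.5}, where only products of $C$'s with themselves occur, here the small factor $C_{a\Delta}-C_{(a-1)\Delta}$ (of size $\sim N^{-1/2}$) is multiplied by the much larger $\mathcal{Y}_a$ (of size $\sim\sqrt{\Delta/K}$), so the mixed cross-covariances must be tracked carefully. A possibly cleaner alternative is to project $\xi_a:=(C_{a\Delta}-C_{(a-1)\Delta})\mathcal{Y}_a$ onto $\mathcal F_{(a-1)\Delta}$: since $C_{a\Delta},\mathcal{Y}_a$ are martingale increments over $[(a-1)\Delta,a\Delta]$ and $C_{(a-1)\Delta}$ is $\mathcal F_{(a-1)\Delta}$-measurable, $\sum_a\xi_a$ is a martingale-difference sum (conditional variance $\le\sum_a\Et[\xi_a^2]\le Ct/(NK)$) plus the predictable part $\sum_a\Et[C_{a\Delta}\mathcal{Y}_a\mid\mathcal F_{(a-1)\Delta}]$, whose fluctuation is controlled by the closeness of $\lambda^{j,N}_r$ to $\mu\ell_N(j)$ from Lemma \ref{lambar}-(ii).
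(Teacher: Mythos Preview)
Your treatment of (i)--(iv) matches the paper's proof in Appendix~D essentially line for line: the tail-weight representation $r_n(T)=\Lambda^n-\int_0^T\phi^{*n}$, the bound $r_n(T)\le Cn^q\Lambda^nT^{-q}$, the use of $|||I_KA_N|||_1\le CK/N$ on $\Omega_{N,K}$, BDG for (iii), and Cauchy--Schwarz for (iv). Your decomposition for (v) is a minor regrouping of the paper's (you put $(B_{a\Delta}-B_{(a-1)\Delta})$ against $\bar X_a$, the paper against $\mathcal{Y}_a$), and your near-diagonal estimate $\sum_{|a-b|\le3}\le Ct/(NK)$ via Lemma~\ref{covc}-(ii) and (iii) is exactly what the paper does.

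The gap is your far-off-diagonal covariance bound for $(C_{a\Delta}-C_{(a-1)\Delta})\mathcal{Y}_a$. You cite \cite[Lemma~22]{A} and attribute the $\Delta^{-q}$ gain to ``the tails $r_n$ and first-moment weights $\int_0^\Delta s\phi^{*n}$.'' Neither is the mechanism here. Lemma~22 of \cite{A} treats covariances of the form $\Covt[(M^i_{2t}-M^i_t)M^k_s,(M^j_{2t}-M^j_t)M^m_u]$, which does not fit the present four-window structure, and in any case carries no $\Delta^{-q}$ factor. The paper instead (Appendix~D, proof of (v)) first observes that when $|a-b|\ge4$ and $j\ne j'$ the covariance vanishes by conditioning on $\mathcal F_{b\Delta}$; for $j=j'$ it applies It\^o to collapse $\zeta^{j,N}_{a\Delta,s}\zeta^{j,N}_{a\Delta,\Delta}$ to $Z^{j,N}_{a\Delta}-Z^{j,N}_{a\Delta-s}$ (plus a martingale that disappears on conditioning), replaces $Z$ by $U$, and then splits $U^{j,N}_{a\Delta}-U^{j,N}_{a\Delta-s}=R^{j,N}+T^{j,N}$ where $R^{j,N}$ is $\mathcal F_{b\Delta}$-conditionally centered and $T^{j,N}$ is the ``past'' piece satisfying $\sup_j\Et[(T^{j,N}_{a\Delta,a\Delta-s})^4]\le Ct^2\Delta^{-4q}$ from \cite[Lemma~30, Step~1]{A}. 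This is the precise source of the $\sqrt t\,\Delta^{-q}$, and it does not factor through $r_n$. Your alternative martingale-difference route runs into the same obstruction: the predictable part $\Et[C_{a\Delta}\mathcal{Y}_a\mid\mathcal F_{(a-1)\Delta}]$ equals $\frac1{K^2}\sum_j\sum_{n\ge1}(\sum_iA_N^n(i,j))c^K_N(j)\int_0^\Delta\phi^{*n}(s)\,\Et[Z^{j,N}_{a\Delta}-Z^{j,N}_{a\Delta-s}\mid\mathcal F_{(a-1)\Delta}]\,ds$, and controlling the \emph{fluctuation} of this sum over $a$ again requires exactly the long-range $U$-decomposition above, not merely Lemma~\ref{lambar}-(ii).
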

We place the proof of Lemma \ref{mathY}  in Appendix \ref{app: prof: 6.4-6}.  Now, we can give the proof of Proposition \ref{D3}.

\begin{proof}[Proof of Proposition \ref{D3}]
Recalling  \eqref{mathbX} and \eqref{Ya1}, as well as 
Lemmas \ref{covc}, \ref{lem 6.5}, \ref{mathY}-(i),(ii),(iv)\&(v), we have
\begin{align*}
&\frac{K}{N}\sqrt{\frac{t}{\Delta}}\mathbb{E}\Big[\indiq_{\Omega_{N,K}}\Big|D_{\Delta,t}^{N,K,3}-\frac{N}{t}\mathbb{X}_{\Delta,t,1}^{N,K}\Big|\Big]\\
\le& \frac{CK}{N\Delta}+\frac{CKt^{\frac{3}{4}}}{\Delta^{(1+\frac{q}{2})}\sqrt{N}}+\frac{CK\sqrt{t}}{N\Delta^{(q+1)}}+ \frac{CK}{N\Delta^q\sqrt{t}}+\frac{C\sqrt{tK}}{\Delta^{q+\frac{1}{2}}\sqrt{N}}\\
&\qquad+\frac{C\sqrt{K}}{\sqrt{N\Delta}}+\frac{Ct^\frac{3}{4}\sqrt{K}}{\Delta^{1+\frac{q}{2}}}+\frac{C\sqrt{K}}{\Delta^{q-\frac{1}{2}}\sqrt{Nt}}+\frac{CK}{N\sqrt{t}\Delta^{2q-\frac{1}{2}}}\\
\le& \frac{CK}{N\Delta}+\frac{CKt^{\frac{3}{4}}}{\Delta^{1+\frac{q}{2}}\sqrt{N}}+\frac{C\sqrt{K}}{\sqrt{N\Delta}}+\frac{Ct^\frac{3}{4}\sqrt{K}}{\Delta^{1+\frac{q}{2}}}\\
\le&\frac{CK}{N\Delta}+\frac{C\sqrt{K}}{\sqrt{N\Delta}}+\frac{Ct^\frac{3}{4}\sqrt{K}}{\Delta^{1+\frac{q}{2}}},
\end{align*}
which completes the proof.
\end{proof}

\subsection{The convergence of $\mathbb{X}_{\Delta_t,t,v}^{N,K}$ }\label{PD3}
Recall the process $\mathbb{X}_{\Delta_t,t,v}^{N,K}$ defined in \eqref{mathbX}. The goal of this subsection is to prove the following proposition, which states that the normalized version of 
$(\mathbb{X}_{\Delta_t,t,v}^{N,K})_{v\ge 0}$  converges to a Gaussian process.

\begin{prop}\label{D33}
Assume  \eqref{H(q)}  for some $q> 3$. For $t\geq 1$, set $\Delta_t= t/(2 \lfloor t^{1-4/(q+1)}\rfloor) \sim t^{4/(q+1)}/2$
\emph{(as $t\to \infty$)}. When $(N,K,t)\to (\infty,\infty,\infty)$ such that
$\frac{K}{N} \to \gamma\le 1$ and 
$\frac 1{\sqrt K} + \frac NK \sqrt{\frac{\Delta_t}t}+ \frac{N}{t\sqrt K}+Ne^{-c_{p,\Lambda}K} \to 0$, it holds that
$$\left(\frac{K}{N}\sqrt{\frac{t}{\Delta_t}}\frac{N}{t}\mathbb{X}_{\Delta_t,t,v}^{N,K}\right)_{v\ge 0}\stackrel{d}{\longrightarrow}\sqrt{2}\mu\Big(\frac{1-\gamma}{(1-\Lambda p)}+\frac{\gamma}{(1-\Lambda p)^3}\Big)(B_{2v}-B_v)_{v\ge 0},$$
for the Skorohod topology, where $B$ is a standard Brownian motion.
\end{prop}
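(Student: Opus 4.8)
\emph{Proof proposal.} The plan is to apply the functional martingale central limit theorem of Jacod--Shiryaev \cite[Theorem VIII-3-8]{B} to a martingale approximation of $\mathbb{X}_{\Delta_t,t,v}^{N,K}$. Set $\bar{M}_s:=\frac1K\sum_{j=1}^N c_N^K(j)M^{j,N}_s$, an $(\mathcal{F}^N_s)$-martingale with $[\bar{M}]_s=\frac1{K^2}\sum_{j=1}^N (c_N^K(j))^2 Z^{j,N}_s$ by \eqref{ee3}, so that $\mathcal{Y}_{(a-1)\Delta,a\Delta}^{N,K}=\bar{M}_{a\Delta}-\bar{M}_{(a-1)\Delta}$. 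It\^{o}'s formula gives, for each block $a$,
$$(\mathcal{Y}_{(a-1)\Delta,a\Delta}^{N,K})^2-\mathbb{E}_\theta[(\mathcal{Y}_{(a-1)\Delta,a\Delta}^{N,K})^2]=2\int_{(a-1)\Delta}^{a\Delta}(\bar{M}_{s-}-\bar{M}_{(a-1)\Delta})\,d\bar{M}_s+\frac1{K^2}\sum_{j=1}^N(c_N^K(j))^2\big(U^{j,N}_{a\Delta}-U^{j,N}_{(a-1)\Delta}\big),$$
with $U^{j,N}=Z^{j,N}-\mathbb{E}_\theta[Z^{j,N}]$. Summing over $a\in\{\lfloor vt/\Delta_t\rfloor+1,\dots,\lfloor 2vt/\Delta_t\rfloor\}$, the first term becomes $H_{2v}-H_v$ with $H_v:=2\int_0^{vt}(\bar{M}_{s-}-\bar{M}_{\Delta_t\lfloor s/\Delta_t\rfloor})\,d\bar{M}_s$ a genuine martingale in $v$; the second term telescopes into $\frac1{K^2}\sum_j(c_N^K(j))^2(U^{j,N}_{2vt}-U^{j,N}_{vt})$ up to block-boundary corrections, and using $\sum_j(c_N^K(j))^2\le CK$ on $\Omega_{N,K}$ (from $\indiq_{\{i=j\}}\le Q_N(i,j)\le\indiq_{\{i=j\}}+C\Lambda N^{-1}$) together with the $L^2$ bounds of Lemma~\ref{lambar}, it is $o(\sqrt{t\Delta_t}/K)$ conditionally on $\theta$, hence negligible after multiplication by $\frac{K}{\sqrt{t\Delta_t}}$. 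This reduction is the content of Lemma~\ref{LmathbX}.

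It then remains (Proposition~\ref{main2}) to prove the CLT for $\frac{K}{\sqrt{t\Delta_t}}H_v$, which by \cite[Theorem VIII-3-8]{B} reduces to two conditions. The first (Lemma~\ref{bro}) is convergence of the predictable bracket: for each fixed $v$,
$$\frac{4K^2}{t\Delta_t}\int_0^{vt}(\bar{M}_{s-}-\bar{M}_{\Delta_t\lfloor s/\Delta_t\rfloor})^2\,d[\bar{M}]_s\;\longrightarrow\;2\mu^2\Big(\frac{1-\gamma}{1-\Lambda p}+\frac{\gamma}{(1-\Lambda p)^3}\Big)^2 v\quad\text{in probability}.$$
The second is a Lindeberg/negligible-jumps condition (Lemma~\ref{jum}), which follows from the block fourth-moment bound $\mathbb{E}[\indiq_{\Omega_{N,K}}|\mathcal{Y}_{(a-1)\Delta,a\Delta}^{N,K}|^4]\le C\Delta^2/K^2$ of Lemma~\ref{mathY}-(iii) and its eighth-moment analogue: these give a sum of fourth moments of the rescaled block increments of order $C\Delta_t/t\to0$, since $\Delta_t\sim t^{4/(q+1)}/2$ with $q>3$. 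Granting both, the theorem yields $\frac{K}{\sqrt{t\Delta_t}}H_\cdot\to\sqrt2\mu(\frac{1-\gamma}{1-\Lambda p}+\frac{\gamma}{(1-\Lambda p)^3})B$ for the Skorohod topology (tightness being automatic), and since the limit $B$ is continuous, the map $x(\cdot)\mapsto x(2\cdot)-x(\cdot)$ is continuous at it, giving $\frac{K}{\sqrt{t\Delta_t}}\mathbb{X}_{\Delta_t,t,v}^{N,K}=\frac{K}{\sqrt{t\Delta_t}}(H_{2v}-H_v)\to\sqrt2\mu(\frac{1-\gamma}{1-\Lambda p}+\frac{\gamma}{(1-\Lambda p)^3})(B_{2v}-B_v)$.

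For the bracket limit one writes $d[\bar{M}]_s=\frac1{K^2}\sum_j(c_N^K(j))^2 dZ^{j,N}_s$ and uses $\mathbb{E}_\theta[Z^{j,N}_s-Z^{j,N}_r]\approx\mu\ell_N(j)(s-r)$ uniformly for $r\ge t$ (Lemma~\ref{lambar}, Lemma~\ref{Zt}), so that $d[\bar{M}]_s\approx\frac{\mu}{K^2}A^{N,K}_{\infty,\infty}\,ds$ with $A^{N,K}_{\infty,\infty}=\sum_j(c_N^K(j))^2\ell_N(j)$, and $\mathbb{E}_\theta[(\bar{M}_{s-}-\bar{M}_{\Delta_t\lfloor s/\Delta_t\rfloor})^2]\approx\frac{\mu}{K^2}A^{N,K}_{\infty,\infty}(s-\Delta_t\lfloor s/\Delta_t\rfloor)$, which averaged over a block equals $\frac{\mu\Delta_t}{2K^2}A^{N,K}_{\infty,\infty}$; multiplying out, $\frac{4K^2}{t\Delta_t}\int_0^{vt}(\cdots)^2 d[\bar{M}]_s\approx 2\mu^2 v(A^{N,K}_{\infty,\infty}/K)^2$, and Lemma~\ref{Agamma} ($\indiq_{\Omega_{N,K}}A^{N,K}_{\infty,\infty}/K\to\frac{1-\gamma}{1-\Lambda p}+\frac{\gamma}{(1-\Lambda p)^3}$ in probability) supplies the deterministic limit, the fluctuations around it being controlled by a variance estimate analogous to Lemma~\ref{covc}-(iii) and Lemma~\ref{lem 6.5}. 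Finally, since all limits are deterministic and independent of the graph, the convergence obtained conditionally on $\theta$ for a.e.\ realization, combined with the unconditional convergence of the prefactor, upgrades to the stated convergence; this also accounts for the $\indiq_{\Omega_{N,K}}$ prefactor and for the hypothesis $Ne^{-c_{p,\Lambda}K}\to0$, which forces $\mathbb{P}(\Omega_{N,K})\to1$.

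The main obstacle is Lemma~\ref{bro}: replacing $d[\bar{M}]_s$ and $\mathbb{E}_\theta[(\bar{M}_{s-}-\bar{M}_{\Delta_t\lfloor s/\Delta_t\rfloor})^2]$ by their ``stationary'' values uniformly over $s\in[t,2vt]$ and in $L^1$, and then showing that the resulting predictable process concentrates around its mean. The delicate points are that $\bar{M}$ couples the $N$ coordinates only through the $O(1/N)$ mean-field interaction, so the coordinate increments $M^{j,N}_{a\Delta}-M^{j,N}_{(a-1)\Delta}$ are merely weakly decorrelated rather than independent; that the jump corrections arising from $[M^{j,N}]=Z^{j,N}$ (which bring in $\sum_j(c_N^K(j))^4$- and $\sum_j(c_N^K(j))^2$-type terms) must be shown to be of smaller order than the leading term for the window size $\Delta_t$; and that the residual $\mathbb{E}_\theta[Z^{j,N}_s-Z^{j,N}_r]-\mu\ell_N(j)(s-r)$ must remain summable over the $\sim t/\Delta_t$ blocks after rescaling — which is exactly where the calibration $\Delta_t\sim t^{4/(q+1)}/2$ and the hypothesis $q>3$ are used. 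All of these are dispatched by systematic use of \eqref{ee3}, the two-sided bound on $Q_N$ on $\Omega_{N,K}$, and the moment estimates of Lemmas~\ref{lambar}, \ref{Zt}, \ref{mathY} and \ref{covc}.
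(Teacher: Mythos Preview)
Your proposal is correct and follows essentially the same approach as the paper: the It\^o decomposition \eqref{KYQZ}, the reduction to the martingale via Lemma~\ref{LmathbX}, the Jacod--Shiryaev CLT (Proposition~\ref{main2}) with the jump and bracket conditions (Lemmas~\ref{jum} and~\ref{bro}), and the identification of the limiting constant via Lemma~\ref{Agamma}. The only cosmetic difference is that the paper works with the block-indexed martingale $\mathcal{L}^{t,\Delta}_{N,K}(u)=\frac{1}{K\sqrt{\Delta t}}\sum_{a=1}^{[ut/\Delta]}Q_{a,N,K}$, whose jumps are the rescaled $Q_{a,N,K}$ (controlled by a direct fourth-moment bound $\mathbb{E}_\theta[Q_{a,N,K}^4]\le C(K\Delta)^4$) rather than the $\bar M$-driven jumps of your continuous-time integral $H$, but the two coincide at block endpoints and either formulation works.
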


Recalling the definition of the martingale summation $\mathcal{Y}_{(a-1)\Delta,a\Delta}^{N,K}$ defined in \eqref{Ya1}, we apply It\^o's formula to obtain
\begin{align}\label{KYQZ}
    (K\mathcal{Y}_{(a-1)\Delta,a\Delta}^{N,K})^2= Q_{a,N,K}+\sum_{j=1}^{N}\Big(c_{N}^{K}(j)\Big)^{2}\Big(Z_{a\Delta}^{j,N}-Z^{j,N}_{(a-1)\Delta}\Big),
    \end{align}
    where $$Q_{a,N,K}=2\int_{(a-1)\Delta}^{a\Delta}\sum_{j=1}^{N}c_{N}^{K}(j)(M^{j,N}_{s-}-M^{j,N}_{(a-1)\Delta})\sum_{j=1}^{N}c_{N}^{K}(j)dM^{j,N}_{s}.$$
We can then decompose $$\frac{K}{\sqrt{\Delta t}}\mathbb{X}^{N,K}_{\Delta,t,v}=\mathcal{L}^{t,\Delta}_{N,K}(2v)-\mathcal{L}^{t,\Delta}_{N,K}(v)+rest,$$ where 
$$\mathcal{L}^{t,\Delta}_{N,K}(u):=\frac{1}{K\sqrt{\Delta t}}\sum_{a=1}^{[\frac{t}{\Delta}u]}Q_{a,N,K},\quad
\text{ for $0\le u\le 2$}.$$ 
The proof of Proposition \ref{D33} proceeds in two steps: we first show that the $``rest"$ term is negligible (see Lemma  \ref{LmathbX}), and then prove that  $\mathcal{L}^{t,\Delta}_{N,K}$ satisfies a central limit theorem (see Proposition \ref{main2}).
\begin{lemma}\label{LmathbX}
Assume  \eqref{H(q)}  for some $q\ge 1$, then for $0\le v\le 1,$ and $\Delta\ge1$,
$$
\mathbb{E}\Big[\omg\Big|\mathcal{L}^{t,\Delta}_{N,K}(2v)-\mathcal{L}^{t,\Delta}_{N,K}(v)-\frac{K}{\sqrt{\Delta t}}\mathbb{X}^{N,K}_{\Delta,t,v}\Big|\Big]\le \frac{C}{\sqrt{\Delta t}}.
$$
\end{lemma}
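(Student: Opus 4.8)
The plan is to make the error term explicit via the It\^o expansion \eqref{KYQZ} and then to control it by a conditional second-moment estimate on $\Omega_{N,K}$. Dividing \eqref{KYQZ} by $K^2$ and centering, one gets $(\mathcal{Y}_{(a-1)\Delta,a\Delta}^{N,K})^2-\Et[(\mathcal{Y}_{(a-1)\Delta,a\Delta}^{N,K})^2]=K^{-2}\big(Q_{a,N,K}+\sum_{j}(c_N^K(j))^2(U^{j,N}_{a\Delta}-U^{j,N}_{(a-1)\Delta})\big)$: here $Q_{a,N,K}$, being a stochastic integral of a bounded predictable process against the $\mathbb{P}_\theta$-martingale $s\mapsto\sum_jc_N^K(j)M^{j,N}_s$ over $[(a-1)\Delta,a\Delta]$, is a centered $\mathcal{F}^N_{(a-1)\Delta}$-martingale increment with $\Et[Q_{a,N,K}]=0$, while $Z_{a\Delta}^{j,N}-Z_{(a-1)\Delta}^{j,N}-\Et[Z_{a\Delta}^{j,N}-Z_{(a-1)\Delta}^{j,N}]=U^{j,N}_{a\Delta}-U^{j,N}_{(a-1)\Delta}$. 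Summing over $a\in\{[vt/\Delta]+1,\dots,[2vt/\Delta]\}$ and multiplying by $K/\sqrt{\Delta t}$, the $Q$-terms are summed over exactly the index set defining $\mathcal{L}^{t,\Delta}_{N,K}(2v)-\mathcal{L}^{t,\Delta}_{N,K}(v)$ (since $[\tfrac{t}{\Delta}u]=[\tfrac{ut}{\Delta}]$), so they cancel; the remaining sum over $a$ telescopes, leaving
$$
\mathcal{L}^{t,\Delta}_{N,K}(2v)-\mathcal{L}^{t,\Delta}_{N,K}(v)-\frac{K}{\sqrt{\Delta t}}\mathbb{X}^{N,K}_{\Delta,t,v}
=-\frac{1}{K\sqrt{\Delta t}}\sum_{j=1}^{N}(c_N^K(j))^2\big(U^{j,N}_{b\Delta}-U^{j,N}_{c\Delta}\big),
$$
with $c:=[vt/\Delta]$, $b:=[2vt/\Delta]$, and $b\Delta-c\Delta\le vt+\Delta\le 2t$.

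It then suffices to bound $\mathbb{E}\big[\omg\,\Et\big[\big|\sum_{j}(c_N^K(j))^2(U^{j,N}_{b\Delta}-U^{j,N}_{c\Delta})\big|\big]\big]$. On $\Omega_{N,K}$, \cite[(8)]{A} gives $\indiq_{\{i=j\}}\le Q_N(i,j)\le\indiq_{\{i=j\}}+C/N$, whence $0\le c_N^K(j)\le C$ for every $j$, $c_N^K(j)\le CK/N$ for $j>K$, and $\sum_j(c_N^K(j))^2\le CK$ (so also $\sum_j(c_N^K(j))^4\le CK$). Since $\Et[U^{j,N}_{b\Delta}-U^{j,N}_{c\Delta}]=0$, Jensen's inequality reduces matters to showing $\Vart\big[\sum_j(c_N^K(j))^2(U^{j,N}_{b\Delta}-U^{j,N}_{c\Delta})\big]\le C t K$ on $\Omega_{N,K}$; expanding the square, this is a double sum of conditional covariances, and the crux is the estimate
$$
\Covt\!\big(U^{j,N}_{b\Delta}-U^{j,N}_{c\Delta},\,U^{k,N}_{b\Delta}-U^{k,N}_{c\Delta}\big)\ \le\ C\,(b\Delta-c\Delta)\,\big(\indiq_{\{j=k\}}+N^{-1}\big)\qquad\text{on }\Omega_{N,K}.
$$
The diagonal case $j=k$ is Lemma \ref{lambar}-(iii). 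For $j\ne k$ I would expand $U^{j,N}_{b\Delta}-U^{j,N}_{c\Delta}$ by \eqref{UPro} into its martingale part $M^{j,N}_{b\Delta}-M^{j,N}_{c\Delta}$ plus $\sum_{n\ge1}\int_0^{b\Delta}\big(\phi^{*n}(b\Delta-r)-\phi^{*n}(c\Delta-r)\big)\sum_{\ell}A_N^n(j,\ell)M^{\ell,N}_r\,dr$, and use the orthogonality \eqref{ee3} of the $M^{\ell,N}$'s, $\Et[M^{\ell,N}_uM^{\ell,N}_w]=\Et[Z^{\ell,N}_{u\wedge w}]\le Ct$ on $\Omega_{N,K}$ (Lemma \ref{Zt}-(i)), $\int_0^{b\Delta}|\phi^{*n}(b\Delta-r)-\phi^{*n}(c\Delta-r)|\,dr\le2\Lambda^n$, and $\sum_{n\ge1}\Lambda^nA_N^n(j,\ell)=Q_N(j,\ell)-\indiq_{\{j=\ell\}}\le C/N$ (again \cite[(8)]{A}); this is exactly the type of bookkeeping already carried out in Step 2 of the proof of Theorem \ref{VVNK}. (Equivalently, one may regroup the bulk $j\le K$ into $K(\bar{U}^{N,K}_{b\Delta}-\bar{U}^{N,K}_{c\Delta})$, whose second moment is $\le K^2\cdot C(b\Delta-c\Delta)/K$ by Lemma \ref{lambar}-(iv), the tail $j>K$ being negligible since $(c_N^K(j))^2\le C(K/N)^2$.)

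Combining the covariance bound with $\sum_j(c_N^K(j))^4\le CK$, $N^{-1}\big(\sum_j(c_N^K(j))^2\big)^2\le CK^2/N\le CK$ (as $K\le N$), and $b\Delta-c\Delta\le 2t$ then gives $\Vart[\sum_j(c_N^K(j))^2(U^{j,N}_{b\Delta}-U^{j,N}_{c\Delta})]\le CtK$ on $\Omega_{N,K}$; taking expectations, using $\mathbb{P}(\Omega_{N,K})\le 1$, and dividing by $K\sqrt{\Delta t}$ yields the announced estimate. I expect the off-diagonal covariance bound to be the only real obstacle; everything else is routine.
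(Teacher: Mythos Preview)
Your decomposition and the identification of the remainder as $-\frac{1}{K\sqrt{\Delta t}}\sum_j(c_N^K(j))^2(U^{j,N}_{b\Delta}-U^{j,N}_{c\Delta})$ are correct, and your covariance-based $L^2$ argument does go through: on $\Omega_{N,K}$ the off-diagonal covariance is $O(t/N)$ (your claimed $C(b\Delta-c\Delta)/N$ is slightly optimistic for the $T^jT^k$ cross-term, but you immediately bound $b\Delta-c\Delta\le 2t$ anyway), and the variance estimate $\Vart[\cdot]\le CtK$ follows as you indicate. However, your final arithmetic yields $\sqrt{CtK}/(K\sqrt{\Delta t})=C/\sqrt{K\Delta}$, \emph{not} $C/\sqrt{\Delta t}$; neither bound dominates the other in general, so you have proved a (perfectly usable) variant of the lemma rather than the stated inequality.

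The paper's proof is considerably shorter and avoids all correlation bookkeeping. After the same telescoping, it invokes the uniform $L^1$ estimate $\Et\big[|Z^{j,N}_t-Z^{j,N}_s-\mu(t-s)\ell_N(j)|\big]\le C$ from \cite[Lemma~16(ii)]{A}: writing $U^{j,N}_{b\Delta}-U^{j,N}_{c\Delta}$ as $(Z^{j,N}_{b\Delta}-Z^{j,N}_{c\Delta}-\mu(b\Delta-c\Delta)\ell_N(j))$ minus its $\Et$-expectation, each piece has $L^1(\mathbb{P}_\theta)$ norm $\le C$ on $\Omega_{N,K}$ \emph{uniformly in the time difference}; summing against the nonnegative weights $(c_N^K(j))^2$ costs only $\sum_j(c_N^K(j))^2\le CK$, and dividing by $K\sqrt{\Delta t}$ gives exactly $C/\sqrt{\Delta t}$. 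Your $L^2$ route trades this one-line $L^1$ input for a genuine off-diagonal covariance computation; this is more work but recovers a bound $C/\sqrt{K\Delta}$ that is actually sharper whenever $K\gg t$.
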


\begin{proof}
Noting that $\mathbb{E}[Q_{a,N,K}|\mathcal{F}_{(a-1)\Delta}]=0$, the process  $\mathcal{L}^{t,\Delta}_{N,K}(u)$ is a martingale with respect to  the filtration $\mathcal{F}_{[\frac{t}{\Delta}u]\Delta}.$ 
In view of equality (\ref{KYQZ}) and definition of $\mathbb{X}^{N,K}_{\Delta,t,v}$ in \eqref{mathbX}, it remains to verify that
\begin{align*}
    \mathbb{E}_{\theta}\Big[\Big|\frac{1}{K\sqrt{\Delta t}}\sum_{a=[\frac{vt}{\Delta}]+1}^{[\frac{2vt}{\Delta}]}\sum_{j=1}^{N}\Big(c_{N}^{K}(j)\Big)^{2}\Big(Z_{a\Delta}^{j,N}-Z_{(a-1)\Delta}^{j,N}-\mathbb{E}_{\theta}[Z_{a\Delta}^{j,N}-Z_{(a-1)\Delta}^{j,N}]\Big)\Big|\Big]\le \frac{C}{\sqrt{\Delta t}}.
\end{align*}
We first decompose
\begin{align*}
&\frac{1}{K\sqrt{\Delta t}}\sum_{a=[\frac{vt}{\Delta}]+1}^{[\frac{2vt}{\Delta}]}\sum_{j=1}^{N}\Big(c_{N}^{K}(j)\Big)^{2}\Big(Z_{a\Delta}^{j,N}-Z_{(a-1)\Delta}^{j,N}-\mathbb{E}_{\theta}[Z_{a\Delta}^{j,N}-Z_{(a-1)\Delta}^{j,N}]\Big)\\
&=\frac{1}{K\sqrt{\Delta t}}\Big\{\sum_{j=1}^{N} \Big(c_{N}^{K}(j)\Big)^{2}\Big(Z_{2vt}^{j,N}-Z_{vt}^{j,N}-\mu vt\ell_{N}(j)\Big)\\
&\hskip2cm+\sum_{j=1}^{N} \mathbb{E}_{\theta}\Big[\Big(c_{N}^{K}(j)\Big)^{2}\Big(\mu vt\ell_{N}(j)-Z_{2vt}^{j,N}+Z_{vt}^{j,N}\Big)\Big]\Big\}.
\end{align*}
From \cite[(8)]{A}, on the event $ \Omega_{N,K}\subset \Omega_N^1$,  we  have
$\boldsymbol{1}_{\{i=j\}}\le Q_{N}(i,j)\le \boldsymbol{1}_{\{i=j\}}+\frac{C}{N}$ for all $i,j=1,...,N.$
Recalling that $c_{N}^{K}(i)=\sum_{j=1}^{K}Q_{N}(j,i)$, $i=1,\cdots,N$, we obtain
\begin{align}\label{CNK}
1\le c_{N}^{K}(i)\le 1+\frac{CK}{N}\hbox{  when } 1\le i\le K \hbox{ and }
0\le c_{N}^{K}(i)\le \frac{CK}{N} \hbox{ when } (K+1)\le i\le N.
\end{align}
Moreover, by \cite[Lemma 16-(ii)]{A}, we have
$$
\max_{j=1,...,N}\mathbb{E}_{\theta}\Big[ \Big|\Big(Z_{2vt}^{j,N}-Z_{vt}^{j,N}-\mu v t\ell_{N}(j)\Big)\Big|\Big]\le C.
$$
Consequently,
\begin{align*}
    &\mathbb{E}_{\theta}\Big[\frac{1}{K\sqrt{\Delta t}}\sum_{j=1}^{N} \Big|\Big(c_{N}^{K}(j)\Big)^{2}\Big(Z_{2vt}^{j,N}-Z_{vt}^{j,N}-\mu v t\ell_{N}(j)\Big)\Big|\Big]\\
   & \le \frac{C}{K\sqrt{\Delta t}}\Big[\sum_{j=1}^{K} \Big(c_{N}^{K}(j)\Big)^{2}+\sum_{j=K}^{N} \Big(c_{N}^{K}(j)\Big)^{2}\Big]
    \le \frac{C}{\sqrt{\Delta t}}.
\end{align*}
Therefore,
\begin{align*}
    \mathbb{E}_{\theta}\Big[\Big|\frac{1}{K\sqrt{\Delta t}}\sum_{a=[\frac{vt}{\Delta}]+1}^{[\frac{2vt}{\Delta}]}\sum_{j=1}^{N}\Big(c_{N}^{K}(j)\Big)^{2}\Big(Z_{a\Delta}^{j,N}-Z_{(a-1)\Delta}^{j,N}-\mathbb{E}_{\theta}[Z_{a\Delta}^{j,N}-Z_{(a-1)\Delta}^{j,N}]\Big)\Big|\Big]\le\frac{C}{\sqrt{\Delta t}},
\end{align*}
which ends  the proof.
\end{proof}

We now turn to prove the convergence of $\mathcal{L}^{t,\Delta}_{N,K}(u)$ to a Brownian motion.  
\begin{prop}\label{main2}
Assume $K\le N$. For $t\geq 1$, define $\Delta_t:= t/(2 \lfloor t^{1-4/(q+1)}\rfloor) \sim t^{4/(q+1)}/2$ \emph{(for $t$ large)}.
Let   $(N,K,t)\to (\infty,\infty,\infty)$ satisfy 
$\frac 1{\sqrt K} + \frac NK \sqrt{\frac{\Delta_t}t}+ \frac{N}{t\sqrt K}+Ne^{-c_{p,\Lambda}K} \to 0$ and
$\frac{K}{N}\to \gamma\le 1$. Then, in the Skorokhod topology,
$$(\mathcal{L}^{t,\Delta_t}_{N,K}(u))_{u\ge 0} \stackrel{d}{\longrightarrow}\mu\sqrt{2}\Big(\frac{1-\gamma}{(1-\Lambda p)}+\frac{\gamma}{(1-\Lambda p)^3}\Big)(B_{u})_{u\ge 0},$$
where $B$ is a standard Brownian motion.
\end{prop}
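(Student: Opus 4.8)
The plan is to apply the functional martingale central limit theorem of Jacod--Shiryaev \cite[Theorem VIII-3-8]{B}, exactly as in Lemma \ref{Gauss}, to the piecewise-constant martingale $u\mapsto\indiq_{\Omega_{N,K}}\mathcal{L}^{t,\Delta_t}_{N,K}(u)$; it suffices to treat this truncated version since $\mathbb{P}(\Omega_{N,K})\to1$ by Lemma \ref{ONK} and $Ne^{-c_{p,\Lambda}K}\to0$. Set $\tilde M^{N,K}_s:=\sum_{j=1}^Nc_N^K(j)M^{j,N}_s$; by orthogonality \eqref{ee3} one has $[\tilde M^{N,K}]_s=\sum_j(c_N^K(j))^2Z^{j,N}_s$ and $\langle\tilde M^{N,K}\rangle_s=\int_0^s\sum_j(c_N^K(j))^2\lambda^{j,N}_r\,dr$, and $Q_{a,N,K}=2\int_{(a-1)\Delta_t}^{a\Delta_t}(\tilde M^{N,K}_{r-}-\tilde M^{N,K}_{(a-1)\Delta_t})\,d\tilde M^{N,K}_r$ is an $\mathcal{F}_{(a-1)\Delta_t}$-conditionally centred increment. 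Hence $\mathcal{L}^{t,\Delta_t}_{N,K}$ jumps only at $u=a\Delta_t/t$ with increment $\xi_a:=Q_{a,N,K}/(K\sqrt{\Delta_t t})$, and $[\mathcal{L}^{t,\Delta_t}_{N,K}]_u=\sum_{a\le[ut/\Delta_t]}\xi_a^2$. Writing $\kappa:=\frac{1-\gamma}{1-\Lambda p}+\frac{\gamma}{(1-\Lambda p)^3}$, \cite[Theorem VIII-3-8]{B} reduces the statement to checking, for each fixed $u\ge0$, that (a) $\indiq_{\Omega_{N,K}}[\mathcal{L}^{t,\Delta_t}_{N,K}]_u\stackrel{\mathbb{P}}{\longrightarrow}2\mu^2\kappa^2u$ and (b) $\indiq_{\Omega_{N,K}}\sup_u|\Delta\mathcal{L}^{t,\Delta_t}_{N,K}(u)|\stackrel{\mathbb{P}}{\longrightarrow}0$.

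For (a): replacing $\xi_a^2$ by $\mathbb{E}_{\theta}[\xi_a^2\mid\mathcal{F}_{(a-1)\Delta_t}]$ costs an $a$-indexed martingale whose conditional variance is at most $\sum_a\mathbb{E}_{\theta}[\xi_a^4]$ (bounded in (b)), so it is enough to analyse $\sum_a\mathbb{E}_{\theta}[\xi_a^2\mid\mathcal{F}_{(a-1)\Delta_t}]=\frac1{K^2\Delta_t t}\sum_a\mathbb{E}_{\theta}[\langle R^a\rangle_{a\Delta_t}\mid\mathcal{F}_{(a-1)\Delta_t}]$, where $\langle R^a\rangle_{a\Delta_t}=4\int_{(a-1)\Delta_t}^{a\Delta_t}(\tilde M^{N,K}_{r-}-\tilde M^{N,K}_{(a-1)\Delta_t})^2\sum_j(c_N^K(j))^2\lambda^{j,N}_r\,dr$. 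I would make two successive replacements inside this quantity: $\sum_j(c_N^K(j))^2\lambda^{j,N}_r\rightsquigarrow\mu A^{N,K}_{\infty,\infty}$ and $\mathbb{E}_{\theta}[(\tilde M^{N,K}_{r-}-\tilde M^{N,K}_{(a-1)\Delta_t})^2\mid\mathcal{F}_{(a-1)\Delta_t}]=\sum_j(c_N^K(j))^2\mathbb{E}_{\theta}[Z^{j,N}_r-Z^{j,N}_{(a-1)\Delta_t}\mid\mathcal{F}_{(a-1)\Delta_t}]\rightsquigarrow\mu(r-(a-1)\Delta_t)A^{N,K}_{\infty,\infty}$; each error is controlled by conditional Cauchy--Schwarz together with $\mathbb{E}_{\theta}[(\tilde M^{N,K}_r-\tilde M^{N,K}_{(a-1)\Delta_t})^4]^{1/2}\le CK\Delta_t$ (Burkholder, the estimate $\sum_j(c_N^K(j))^2\le CK$ from \eqref{CNK}, and Lemma \ref{lambar}-(iii)), by Lemma \ref{lambar}-(ii) and by Lemma \ref{Zt}-(ii), the low-index blocks (where $r$ is close to $0$, e.g.\ $a=1$, and the equilibrium approximations are weakest) being handled by splitting off $r\in[0,1]$ and using trivial bounds there. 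This leaves $\mathbb{E}_{\theta}[\langle R^a\rangle_{a\Delta_t}\mid\mathcal{F}_{(a-1)\Delta_t}]\approx4\mu^2(A^{N,K}_{\infty,\infty})^2\int_0^{\Delta_t}s\,ds=2\mu^2(A^{N,K}_{\infty,\infty})^2\Delta_t^2$, hence $\sum_a\mathbb{E}_{\theta}[\xi_a^2\mid\mathcal{F}_{(a-1)\Delta_t}]\approx2\mu^2u\,(A^{N,K}_{\infty,\infty}/K)^2$, and Lemma \ref{Agamma} (which gives $\indiq_{\Omega_{N,K}}A^{N,K}_{\infty,\infty}/K\to\kappa$ in probability) yields the limit $2\mu^2\kappa^2u$.

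For (b): from \eqref{KYQZ}, $|Q_{a,N,K}|\le(\tilde M^{N,K}_{a\Delta_t}-\tilde M^{N,K}_{(a-1)\Delta_t})^2+\sum_j(c_N^K(j))^2(Z^{j,N}_{a\Delta_t}-Z^{j,N}_{(a-1)\Delta_t})$, so Burkholder, \eqref{CNK} and Lemma \ref{lambar}-(iii) give $\indiq_{\Omega_{N,K}}\mathbb{E}_{\theta}[Q_{a,N,K}^4]\le CK^4\Delta_t^4$; summing, $\sum_{a\le[ut/\Delta_t]}\mathbb{E}[\indiq_{\Omega_{N,K}}\xi_a^4]\le Cu\Delta_t/t\to0$ because $\Delta_t\sim t^{4/(q+1)}/2=o(t)$ for $q>3$, which both closes the martingale estimate in (a) and gives $\indiq_{\Omega_{N,K}}\sup_u|\Delta\mathcal{L}^{t,\Delta_t}_{N,K}(u)|\stackrel{\mathbb{P}}{\longrightarrow}0$. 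Then \cite[Theorem VIII-3-8]{B} applies and yields convergence in law, for the Skorokhod topology on $D([0,\infty))$, of $\indiq_{\Omega_{N,K}}\mathcal{L}^{t,\Delta_t}_{N,K}(\cdot)$ to the continuous Gaussian martingale with bracket $u\mapsto2\mu^2\kappa^2u$, namely $\mu\sqrt2\,\kappa\,(B_u)_{u\ge0}$, which is the assertion. I expect the main obstacle to be the bookkeeping in step (a): showing that the two replacement errors, once summed over the $[ut/\Delta_t]\asymp ut^{1-4/(q+1)}$ blocks and divided by $K^2\Delta_t t$, are $o(1)$ — in particular that the $r^{-q}$ and $N^{-1/2}$ contributions coming from Lemma \ref{lambar}-(ii) and Lemma \ref{Zt}-(ii), together with the anomalous small-$r$ blocks, all wash out under the stated regime and the assumption $q>3$.
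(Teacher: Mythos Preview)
Your approach is correct and follows the same skeleton as the paper—Jacod--Shiryaev Theorem VIII.3.8 applied to the (truncated) martingale, with Lemma \ref{Agamma} supplying the limiting constant. The paper handles (b) via exactly your fourth-moment bound (its Lemma \ref{jum}), and (a) via Lemma \ref{bro}.

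The interesting difference is in (a). The paper rewrites $\mathcal{L}^{t,\Delta}_{N,K}$ as a continuous-time stochastic integral and splits its quadratic variation as $\mathcal{A}^{u,1}+\mathcal{A}^{u,2}+\mathcal{A}^{u,3}$ (the $dM$, the $(\lambda-\mu\ell)\,ds$, and the $\mu\ell\,ds$ pieces). Controlling the variance of the random main term $\mathcal{A}^{u,3}$ then requires the cross-block covariance estimate from \cite[Lemma 30, Step 6]{A}. Your route—first replacing $\xi_a^2$ by its predictable projection $\mathbb{E}_\theta[\xi_a^2\mid\mathcal{F}_{(a-1)\Delta_t}]$ (the error being a discrete martingale with second moment $\le\sum_a\mathbb{E}_\theta[\xi_a^4]$), and only then analysing the resulting predictable process block by block in $L^1$—is a genuine simplification: it kills the cross-block randomness up front and so dispenses with that external covariance lemma entirely. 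One caveat: your second replacement asks for $\mathbb{E}_\theta[Z_r^{j,N}-Z_{(a-1)\Delta_t}^{j,N}\mid\mathcal{F}_{(a-1)\Delta_t}]\approx\mu(r-(a-1)\Delta_t)\ell_N(j)$, but Lemma \ref{Zt}-(ii) is an \emph{un}conditional estimate and does not give this directly. The fix is short—write the conditional expectation as $\int_{(a-1)\Delta_t}^r\mathbb{E}_\theta[\lambda_s^{j,N}\mid\mathcal{F}_{(a-1)\Delta_t}]\,ds$, use conditional Jensen to reduce the $L^1$ error to $\mathbb{E}_\theta[|\lambda_s^{j,N}-\mu\ell_N(j)|]$, and then invoke Lemma \ref{lambar}-(ii)—but it is Lemma \ref{lambar}-(ii), not Lemma \ref{Zt}-(ii), that closes the gap.
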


According to Lemma \ref{Agamma}, to prove Proposition \ref{main2},  it suffices, by \cite[Theorem VIII.3.8]{B}, to verify the following two conditions:\\

1. The jump size of $\mathcal{L}^{t,\Delta}_{N,K}(u)$ is not large (Lemma \ref{jum}).
\vip
2. Its quadratic variation increases linearly in time (Lemma \ref{bro}).
\vip
The first condition  is addressed by  the following lemma.

\begin{lemma}\label{jum}
Assume  \eqref{H(q)}  for some $q\ge 1$,  and for $\Delta\ge1$,
$$\boldsymbol{1}_{\Omega_{N,K}}\mathbb{E}_{\theta}\Big[\sup_{0\le u\le 2}\Big|\mathcal{L}^{t,\Delta}_{N,K}(u)-\mathcal{L}^{t,\Delta}_{N,K}(u-)\Big|\Big]\le C\Big(\frac{\Delta}{t}\Big)^{\frac{1}{4}}.$$
\end{lemma}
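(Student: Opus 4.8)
The plan is to reduce the estimate to a fourth-moment bound on the martingale blocks $Q_{a,N,K}$. Since the map $u\mapsto\mathcal{L}^{t,\Delta}_{N,K}(u)=\frac{1}{K\sqrt{\Delta t}}\sum_{a=1}^{[tu/\Delta]}Q_{a,N,K}$ is a pure-jump step process whose jump at $u=a\Delta/t$ equals $\frac{1}{K\sqrt{\Delta t}}Q_{a,N,K}$, one has $\sup_{0\le u\le 2}|\mathcal{L}^{t,\Delta}_{N,K}(u)-\mathcal{L}^{t,\Delta}_{N,K}(u-)|=\frac{1}{K\sqrt{\Delta t}}\max_{1\le a\le 2t/\Delta}|Q_{a,N,K}|$. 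Bounding the maximum of a finite family by its $\ell^4$ norm and then applying Jensen's inequality, $\Et[\max_a|Q_{a,N,K}|]\le\big(\sum_{a=1}^{\lfloor 2t/\Delta\rfloor}\Et[Q_{a,N,K}^4]\big)^{1/4}$, so it suffices to establish the key estimate
\begin{equation*}
\omg\,\Et[Q_{a,N,K}^4]\le CK^4\Delta^4\qquad\text{for every }a\in\{1,\dots,\lfloor 2t/\Delta\rfloor\},
\end{equation*}
since this gives $\frac{1}{K\sqrt{\Delta t}}\big(\tfrac{2t}{\Delta}CK^4\Delta^4\big)^{1/4}=C(\Delta/t)^{1/4}$, which is the claim.

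To prove the key estimate I would set $R_s:=\sum_{j=1}^N c_N^K(j)M_s^{j,N}$, so that $Q_{a,N,K}=2\int_{(a-1)\Delta}^{a\Delta}(R_{s-}-R_{(a-1)\Delta})\,dR_s$ and, by \eqref{ee3}, $[R]_s=\sum_{j=1}^N(c_N^K(j))^2 Z_s^{j,N}$. By the Burkholder--Davis--Gundy inequality, $\Et[Q_{a,N,K}^4]\le C\,\Et[(\int_{(a-1)\Delta}^{a\Delta}(R_{s-}-R_{(a-1)\Delta})^2\,d[R]_s)^2]$, and I would split $d[R]_s=\sum_j(c_N^K(j))^2\,dM_s^{j,N}+\sum_j(c_N^K(j))^2\lambda_s^{j,N}\,ds$ into a martingale part (treated by a second application of BDG) and a drift part. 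Two structural facts drive all the bounds: on $\Omega_{N,K}$, \eqref{CNK} (coming from \cite[(8)]{A}) gives $\sup_j c_N^K(j)\le C$ and $\sum_{j=1}^N(c_N^K(j))^2\le CK$; and the weighted power-mean inequality $(\sum_j w_j\zeta_j)^m\le(\sum_j w_j)^{m-1}\sum_j w_j\zeta_j^m$ for $w_j\ge0$, $m\in\{2,4\}$, reduces every weighted sum to per-coordinate terms, to which Lemma \ref{lambar}-(iii), namely $\Et[(Z_{a\Delta}^{j,N}-Z_{(a-1)\Delta}^{j,N})^4]\le C\Delta^4$, applies. Concretely, Doob's $L^8$ inequality followed by BDG yields $\Et[\sup_{(a-1)\Delta\le s\le a\Delta}(R_s-R_{(a-1)\Delta})^8]\le C\,\Et[([R]_{a\Delta}-[R]_{(a-1)\Delta})^4]\le CK^4\Delta^4$; and the drift integral is handled through the identity $\int_{(a-1)\Delta}^{a\Delta}\sum_j(c_N^K(j))^2\lambda_s^{j,N}\,ds=([R]_{a\Delta}-[R]_{(a-1)\Delta})-\sum_j(c_N^K(j))^2(M_{a\Delta}^{j,N}-M_{(a-1)\Delta}^{j,N})$, whose fourth moment is again $\le CK^4\Delta^4$ by the same tools, so that no moment of $\lambda_s^{j,N}$ beyond what Lemma \ref{lambar}-(i) already supplies is needed. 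Combining these factors via Cauchy--Schwarz bounds both the martingale and the drift contribution to $\Et[Q_{a,N,K}^4]$ by $CK^4\Delta^4$.

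The main obstacle is precisely this fourth-moment estimate. The naive routes --- bounding $\max_a|Q_{a,N,K}|$ by $\sum_a|Q_{a,N,K}|$, or combining $\Et[Q_{a,N,K}^2]\le CK^2\Delta^2$ with Doob's inequality --- only produce an $O(1)$ bound on the supremum of the jumps, so one genuinely needs the decay $(\text{number of blocks})^{1/4}=(2t/\Delta)^{1/4}$ obtained from a fourth rather than a second moment. Getting the sharp power $CK^4\Delta^4$ (rather than losing an extra factor of $K$) hinges on systematically extracting $\sum_j(c_N^K(j))^2\le CK$ in each weighted sum instead of crudely using $\sup_j c_N^K(j)\le C$ over all $N$ coordinates, and on rewriting the $\int\lambda\,ds$ contributions as quadratic-variation increments so that only the increment moment bound of Lemma \ref{lambar}-(iii) is invoked. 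Once the key estimate is in hand, the remaining steps are routine bookkeeping with BDG, Doob, Cauchy--Schwarz and the compensation formula.
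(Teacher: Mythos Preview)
Your proposal is correct and follows essentially the same strategy as the paper: reduce to $\max_a|Q_{a,N,K}|$, bound the maximum by the $\ell^4$ norm, and establish the key fourth-moment estimate $\omg\,\Et[Q_{a,N,K}^4]\le CK^4\Delta^4$ via BDG. The only difference is cosmetic: to control $\Et\big[\big(\int(R_{s-}-R_{(a-1)\Delta})^2\,d[R]_s\big)^2\big]$ the paper bounds the integral by $\sup_s(R_s-R_{(a-1)\Delta})^2\cdot([R]_{a\Delta}-[R]_{(a-1)\Delta})$ and then uses $AB\le\tfrac12(A^2+B^2)$ together with the averaged bound $\sum_j(c_N^K(j))^2\,dZ^{j,N}_s\le C\big(K\,d\bar Z^{N,K}_s+\tfrac{K^2}{N}\,d\bar Z^N_s\big)$ and Lemma~\ref{lambar}-(iv), whereas you split $d[R]_s$ into martingale and drift parts and invoke the per-coordinate bound Lemma~\ref{lambar}-(iii) via the weighted power-mean inequality; both routes give the same $CK^4\Delta^4$.
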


\begin{proof}
First, note that $\mathcal{L}^{t,\Delta}_{N,K}(u)$ is a pure jump process. Hence, at a jump time we have 
$$\mathcal{L}^{t,\Delta}_{N,K}(u)-\mathcal{L}^{t,\Delta}_{N,K}(u-)=\frac{1}{K\sqrt{\Delta t}}Q_{[\frac{t}{\Delta}u],N,K}.$$
Next, we are going to show that $$\mathbb{E}_{\theta}[(Q_{a,N,K})^{4}]\le   C(K\Delta)^4.$$
For $0\le u\le 1,$ we define
\begin{align*}
q_{a,N,K}(u):=\int_{(a-1)\Delta}^{[(a-1)+u]\Delta}\sum_{j=1}^{N}c_{N}^{K}(j)(M^{j,N}_{s}-M^{j,N}_{(a-1)\Delta})d\Big(\sum_{j=1}^{N}c_{N}^{K}(j)M^{j,N}_{s}\Big).
\end{align*}
Clearly $q_{a,N,K}(1)=\frac{1}{2}Q_{a,N,K}$ and its quadratic variation 
\begin{align*}
[q_{a,N,K}(.),q_{a,N,K}(.)]_{u}=\int_{(a-1)\Delta}^{(a-1+u)\Delta}\Big(\sum_{j=1}^{N}c_{N}^{K}(j)(M_{s}^{j,N}-M_{(a-1)\Delta}^{j,N})\Big)^{2}\sum_{j=1}^{N}\Big(c_{N}^{K}(j)\Big)^{2}d Z_{s}^{j,N}.
\end{align*}
Consequently, recalling $\bar{Z}^{N}_{t}=N^{-1}\sum_{i=1}^N Z^{i,N}_t$, $\bar{Z}^{N,K}_{t}=K^{-1}\sum_{i=1}^K Z^{i,N}_t$ and using \eqref{CNK}, we obtain 
$$
\sum_{j=1}^{N}\Big(c_{N}^{K}(j)\Big)^{2}dZ_{s}^{j,N}\le C\Big(Kd\bar{Z}^{N,K}_s+\frac{K^2}{N}d\bar{Z}^{N}_s\Big).
$$
On $\Omega_{N,K}$,  using the Burkholder–Davis–Gundy inequality, we obtain
\begin{align*}
&\mathbb{E}_{\theta}[(q_{a,N,K}(v))^{4}]\le    4\mathbb{E}_{\theta}[([q_{a,N,K}(.),q_{a,N,K}(.)]_{v})^{2}]
 \\
=&4\mathbb{E}_{\theta}\Big[\Big(\int_{(a-1)\Delta}^{(a-1+v)\Delta}\Big(\sum_{j=1}^{N}c_{N}^{K}(j)(M_{s}^{j,N}-M_{(a-1)\Delta}^{j,N})\Big)^{2}d\sum_{j=1}^{N}\Big(c_{N}^{K}(j)\Big)^{2}Z_{s}^{j,N}\Big)^{2}\Big]\\
\le& 4\mathbb{E}_{\theta}\Big[\sup_{0\le s\le v\Delta}\Big(\sum_{j=1}^Nc_N^K(j)(M_{(a-1)\Delta+s}^{j,N}-M_{(a-1)\Delta}^{j,N})\Big)^4\Big(\sum_{j=1}^{N}\Big(c_{N}^{K}(j)\Big)^{2}(Z_{(a-1+v)\Delta}^{j,N}-Z_{(a-1)\Delta}^{j,N})\Big)^{2}\Big]\\
\le& 8\mathbb{E}_{\theta}\Big[\sup_{0\le s\le v\Delta}\Big(\sum_{j=1}^Nc_N^K(j)(M_{(a-1)\Delta+s}^{j,N}-M_{(a-1)\Delta}^{j,N})\Big)^8+\Big(\sum_{j=1}^{N}\Big(c_{N}^{K}(j)\Big)^{2}(Z_{(a-1+v)\Delta}^{j,N}-Z_{(a-1)\Delta}^{j,N})\Big)^{4}\Big]\\
\le& C\mathbb{E}_{\theta}\Big[\Big(\sum_{j=1}^{N}\Big(c_{N}^{K}(j)\Big)^{2}(Z_{(a-1+v)\Delta}^{j,N}-Z_{(a-1)\Delta}^{j,N})\Big)^{4}\Big]\\
\le& C\mathbb{E}_{\theta}\Big[\Big(K(\bar{Z}_{(a-1+v)\Delta}^{N,K}-\bar{Z}_{(a-1)\Delta}^{N,K})+\frac{K^2}{N}(\bar{Z}_{(a-1+v)\Delta}^{N}-\bar{Z}_{(a-1)\Delta}^{N})\Big)^4\Big]\\
\le& C(Kv\Delta)^4.
\end{align*}
Here the fourth step uses the Burkholder–Davis–Gundy inequality, while the last bound follows from Lemma \ref{lambar}-(iv).
 
\vip
Finally, applying the Cauchy–Schwarz inequality at the third step,  we  obtain
\begin{align*}
&\mathbb{E}_{\theta}\Big[\sup_{0\le u\le 2}\Big|\mathcal{L}^{t,\Delta}_{N,K}(u)-\mathcal{L}^{t,\Delta}_{N,K}(u-)\Big|\Big]\\=&\frac{1}{K\sqrt{\Delta t}}\mathbb{E}_{\theta}\Big[\sup_{\{i=1...[\frac{2t}{\Delta}]\}}|Q_{i,N,K}|\Big]
\le \frac{1}{K\sqrt{\Delta t}}\mathbb{E}_{\theta}\Big[\Big(\sum_{i=1}^{[\frac{2t}{\Delta}]}|Q_{i,N,K}|^{4}\Big)^{\frac{1}{4}}\Big]\\
&\le \frac{1}{K\sqrt{\Delta t}}\mathbb{E}_{\theta}\Big[\sum_{i=1}^{[\frac{2t}{\Delta}]}|Q_{i,N,K}|^{4}\Big]^{\frac{1}{4}}
\le C\Big(\frac{\Delta}{t}\Big)^{\frac{1}{4}}.
\end{align*}
This completes the proof.
\end{proof}

\begin{lemma}\label{bro}
We assume  \eqref{H(q)}  for some $q\ge 1$. For $0\le u\le 2,$ and $\Delta\ge1$,
\begin{align*}
\mathbb{E}\Big[\boldsymbol{1}_{\Omega_{N,K}}\Big|[\mathcal{L}^{t,\Delta}_{N,K}(.),\mathcal{L}^{t,\Delta}_{N,K}(.)]_{u}-\frac{2u(\mu A^{N,K}_{\infty,\infty})^{2}}{K^{2}}\Big|\Big]\le C\Big(\frac{1}{K\Delta}+\frac{1}{\sqrt{N}}+\Big(\frac{K\sqrt{t}}{\Delta^{q+1}}\Big)^\frac{1}{2}+\sqrt{\frac{\Delta}{t}}\Big).
\end{align*}
\end{lemma}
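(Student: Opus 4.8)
\emph{Setup and quadratic‑variation formula.} The process $u\mapsto\mathcal{L}^{t,\Delta}_{N,K}(u)$ is pure jump, with jumps of size $Q_{a,N,K}/(K\sqrt{\Delta t})$ at the times $u=a\Delta/t$, so that
$$[\mathcal{L}^{t,\Delta}_{N,K}(\cdot),\mathcal{L}^{t,\Delta}_{N,K}(\cdot)]_u=\frac{1}{K^{2}\Delta t}\sum_{a=1}^{[\frac{t}{\Delta}u]}(Q_{a,N,K})^{2}.$$
On $\Omega_{N,K}$ one has $A^{N,K}_{\infty,\infty}\le CK$ (by \eqref{CNK} and Lemma \ref{lo}), hence $\frac{2u(\mu A^{N,K}_{\infty,\infty})^{2}}{K^{2}}=\frac{1}{K^{2}\Delta t}[\frac{t}{\Delta}u]\,2\mu^{2}\Delta^{2}(A^{N,K}_{\infty,\infty})^{2}$ up to a discretisation error of order $\Delta/t\le\sqrt{\Delta/t}$, absorbed in the claimed bound. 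It therefore suffices to estimate $\frac{1}{K^{2}\Delta t}\,\mathbb{E}\big[\omg\,\big|\sum_{a=1}^{[\frac{t}{\Delta}u]}\big((Q_{a,N,K})^{2}-2\mu^{2}\Delta^{2}(A^{N,K}_{\infty,\infty})^{2}\big)\big|\big]$.

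\emph{The fluctuation part.} Centering, the variables $(Q_{a,N,K})^{2}-\mathbb{E}_{\theta}[(Q_{a,N,K})^{2}\mid\mathcal{F}_{(a-1)\Delta}]$ are martingale increments for $(\mathcal{F}_{a\Delta})_{a}$, hence orthogonal under $\mathbb{E}_\theta$; since $\omg$ is $\mathcal{F}_\theta$‑measurable,
$$\mathbb{E}\Big[\omg\Big(\sum_{a=1}^{[\frac{t}{\Delta}u]}\big((Q_{a,N,K})^{2}-\mathbb{E}_{\theta}[(Q_{a,N,K})^{2}\mid\mathcal{F}_{(a-1)\Delta}]\big)\Big)^{2}\Big]\le\mathbb{E}\Big[\omg\sum_{a=1}^{[\frac{t}{\Delta}u]}\mathbb{E}_{\theta}[(Q_{a,N,K})^{4}\mid\mathcal{F}_{(a-1)\Delta}]\Big].$$
By the fourth‑moment estimate $\mathbb{E}_{\theta}[(Q_{a,N,K})^{4}]\le C(K\Delta)^{4}$ on $\Omega_{N,K}$ established inside the proof of Lemma \ref{jum}, the right‑hand side is $\le C\frac{t}{\Delta}(K\Delta)^{4}$; dividing by $(K^{2}\Delta t)^{2}$ and taking a square root yields the term $\sqrt{\Delta/t}$.

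\emph{The conditional‑mean part.} Put $\mathcal{M}^{N,K}_{s}:=\sum_{j=1}^{N}c^{K}_{N}(j)M^{j,N}_{s}$. By Itô's formula $Q_{a,N,K}=2\int_{(a-1)\Delta}^{a\Delta}(\mathcal{M}^{N,K}_{s-}-\mathcal{M}^{N,K}_{(a-1)\Delta})\,d\mathcal{M}^{N,K}_{s}$, whose optional quadratic variation is, by \eqref{ee3}, $[Q_{a,N,K}]_{a\Delta}=4\int_{(a-1)\Delta}^{a\Delta}(\mathcal{M}^{N,K}_{s-}-\mathcal{M}^{N,K}_{(a-1)\Delta})^{2}\sum_{j=1}^{N}(c^{K}_{N}(j))^{2}\,dZ^{j,N}_{s}$. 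Taking $\mathbb{E}_{\theta}[\,\cdot\mid\mathcal{F}_{(a-1)\Delta}]$, the integrand is predictable so the $dM^{j,N}$ part of $dZ^{j,N}$ drops, leaving
$$\mathbb{E}_{\theta}[(Q_{a,N,K})^{2}\mid\mathcal{F}_{(a-1)\Delta}]=4\,\mathbb{E}_{\theta}\Big[\int_{(a-1)\Delta}^{a\Delta}(\mathcal{M}^{N,K}_{s}-\mathcal{M}^{N,K}_{(a-1)\Delta})^{2}\sum_{j=1}^{N}(c^{K}_{N}(j))^{2}\lambda^{j,N}_{s}\,ds\,\Big|\,\mathcal{F}_{(a-1)\Delta}\Big].$$
I would then perform two substitutions. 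First, replace $\sum_{j}(c^{K}_{N}(j))^{2}\lambda^{j,N}_{s}$ by the $\mathcal{F}_\theta$‑measurable constant $\mu A^{N,K}_{\infty,\infty}=\mu\sum_{j}(c^{K}_{N}(j))^{2}\ell_{N}(j)$, controlling the error by bounding $(\mathcal{M}^{N,K}_{s}-\mathcal{M}^{N,K}_{(a-1)\Delta})^{2}$ in $L^{2}$ via \eqref{ee3} and Lemma \ref{lambar}-(iii)--(iv), and $\sum_{j}(c^{K}_{N}(j))^{2}|\lambda^{j,N}_{s}-\mu\ell_{N}(j)|$ via Lemma \ref{lambar}-(i)--(ii), splitting indices $j\le K$ (where $c^{K}_{N}(j)=1+O(K/N)$) from $j>K$ (where $c^{K}_{N}(j)=O(K/N)$) using \eqref{CNK}, and using the $L^{1}$‑contraction of conditional expectation. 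Second, replace $\mathbb{E}_{\theta}[(\mathcal{M}^{N,K}_{s}-\mathcal{M}^{N,K}_{(a-1)\Delta})^{2}\mid\mathcal{F}_{(a-1)\Delta}]=\sum_{j}(c^{K}_{N}(j))^{2}\mathbb{E}_{\theta}[Z^{j,N}_{s}-Z^{j,N}_{(a-1)\Delta}\mid\mathcal{F}_{(a-1)\Delta}]$ by $\mu(s-(a-1)\Delta)A^{N,K}_{\infty,\infty}$, with error coming (again after the $L^{1}$‑contraction) from Lemma \ref{Zt}-(ii), the crude bound of Lemma \ref{lambar}-(i) handling the $O(1)$ transient blocks $a=O(1)$. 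Since $\int_{(a-1)\Delta}^{a\Delta}(s-(a-1)\Delta)\,ds=\Delta^{2}/2$, this gives $\mathbb{E}_{\theta}[(Q_{a,N,K})^{2}\mid\mathcal{F}_{(a-1)\Delta}]=2\mu^{2}\Delta^{2}(A^{N,K}_{\infty,\infty})^{2}+R_{a}$; summing $\mathbb{E}[\omg|R_{a}|]$ over $a\le 2t/\Delta$, dividing by $K^{2}\Delta t$ and using $A^{N,K}_{\infty,\infty}\le CK$, produces the remaining terms $\frac{1}{K\Delta}$, $\frac{1}{\sqrt N}$ and $(\frac{K\sqrt t}{\Delta^{q+1}})^{1/2}$, the last one needing a covariance‑decay bound between distant blocks in the spirit of Lemma \ref{covc}-(iii) rather than a blunt triangle inequality.

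\emph{Main obstacle.} The delicate part is the bookkeeping of the errors $R_{a}$: controlling the tail contributions $j>K$ (weight $O((K/N)^{2})$, $N-K$ of them) so that they land at order $N^{-1/2}$ and not worse; handling the first few blocks near the origin, where the transient constant $\chi^{N}_{j}$ in $\mathbb{E}_{\theta}[Z^{j,N}_{s}]=\mu\ell_{N}(j)s+\chi^{N}_{j}+O(s^{1-q})$ prevents polynomial closeness, so that the $O(1)$‑per‑block error must be shown to contribute only at order $\Delta/t$ after normalisation; and, for the sharp exponent in $(\frac{K\sqrt t}{\Delta^{q+1}})^{1/2}$, estimating the cross‑block covariances of the centred conditional variances rather than summing individual block bounds crudely.
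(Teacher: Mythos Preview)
Your approach is sound but organised quite differently from the paper's. Two substantive points.

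\textbf{The quadratic-variation formula.} You use the block formula $[\mathcal L,\mathcal L]_u=\frac{1}{K^2\Delta t}\sum_a Q_{a,N,K}^2$, which is indeed the quadratic variation of the step function $u\mapsto\mathcal L^{t,\Delta}_{N,K}(u)$ as defined. The paper instead silently replaces $\mathcal L$ by its continuous-time stochastic-integral version $\tilde{\mathcal L}(u)=\frac{2}{K\sqrt{\Delta t}}\int_0^{tu}H_s\,d\mathcal M_s$ and computes $[\tilde{\mathcal L},\tilde{\mathcal L}]_u=\frac{4}{K^2\Delta t}\int_0^{tu}H_s^2\sum_j(c_N^K(j))^2\,dZ_s^j$ directly. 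It then splits the integrand as $\mathcal A^1+\mathcal A^2+\mathcal A^3$ according to $dZ^j=dM^j+(\lambda^j-\mu\ell_N(j))\,ds+\mu\ell_N(j)\,ds$. Your discrete formula and the paper's continuous one are genuinely different objects (the block sum of squares versus the integrated square), but both serve equally well for the application to Proposition~\ref{main2}.

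\textbf{Where your bookkeeping simplifies.} In the paper's route, the terms $\frac{1}{K\Delta}$ and $(K\sqrt t/\Delta^{q+1})^{1/2}$ appear respectively from the $L^2$ bound on the martingale piece $\mathcal A^1$ and from $\Vart(\mathcal A^3)$; the latter requires the four-point covariance estimate of \cite[Lemma~30, Step~6]{A} for $M$-increments in distant blocks. Your conditional-mean route sidesteps this: once you condition on $\mathcal F_{(a-1)\Delta}$ and take $L^1$ norms, both substitutions reduce to controlling $\sum_j(c_N^K(j))^2\,\Et[(\lambda_r^j-\mu\ell_N(j))^2]^{1/2}$, which by Lemma~\ref{lambar}-(ii) (applied with $K$ and with $K=N$, valid since $\Omega_{N,K}\subset\Omega_{N,N}$) is at most $CK(r^{-q}+N^{-1/2})$. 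Integrating in $r$ and summing over blocks gives, after division by $K^2\Delta t$, a contribution of order $N^{-1/2}$ plus $t^{-1}\Delta^{1-q}$ (or $t^{-1}\log t$ when $q=1$). Combined with the martingale-difference piece (your $\sqrt{\Delta/t}$) and the discretisation $\Delta/t$, this already implies the lemma's stated bound. So your worry in the ``main obstacle'' paragraph about needing a cross-block covariance-decay argument to recover $(K\sqrt t/\Delta^{q+1})^{1/2}$ is misplaced: that term is an artefact of the paper's decomposition, not an intrinsic feature of the problem, and your route does not need it.
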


\begin{proof}
For $s\geq 0$, we introduce $\phi_{t,\Delta}(s)=a\Delta$, where $a$ is the unique integer such that
$a\Delta\le s< (a+1)\Delta$. Then we have
$$\mathcal{L}^{t,\Delta}_{N,K}(u)=\frac{2}{K\sqrt{\Delta t}}\int_{0}^{tu}\sum_{j=1}^{N}c_{N}^{K}(j)(M_{s}^{j,N}-M_{\phi_{t,\Delta}(s)}^{j,N}) \sum_{i=1}^{N}c_{N}^{K}(i)dM_{s}^{i,N}. $$
Noting that $Z_t^{i,N}=M_t^{i,N}+\int_0^t \lambda_s^{i,N}ds$ for $i=1,\dots,N$, we have 
\begin{align*}
[\mathcal{L}^{t,\Delta}_{N,K}(.),\mathcal{L}^{t,\Delta}_{N,K}(.)]_{u}
=&\frac{4}{K^{2}\Delta t}\int_{0}^{tu}\Big(\sum_{j=1}^{N}c_{N}^{K}(j)(M_{s}^{j,N}-M^{j,N}_{\phi_{t,\Delta}(s)})\Big)^{2}\sum_{i=1}^{N}\Big(c_{N}^{K}(i)\Big)^{2}dZ_{s}^{i,N}\\
=& 4(\mathcal{A}^{u,1}_{N,K}+\mathcal{A}^{u,2}_{N,K}+\mathcal{A}^{u,3}_{N,K}),
\end{align*}
where
\begin{align*}
    \mathcal{A}^{u,1}_{N,K}:=&\frac{1}{K^{2}\Delta t}\int_{0}^{tu}\Big(\sum_{j=1}^{N}c_{N}^{K}(j)(M_{s}^{j,N}-M^{j,N}_{\phi_{t,\Delta}(s)})\Big)^{2} \sum_{i=1}^{N}\Big(c_{N}^{K}(i)\Big)^{2}dM_{s}^{i,N},\\
    \mathcal{A}^{u,2}_{N,K}:=&\frac{1}{K^{2}\Delta t}\int_{0}^{tu}\Big(\sum_{j=1}^{N}c_{N}^{K}(j)(M_{s}^{j,N}-M_{\phi_{t,\Delta}(s)}^{j,N})\Big)^{2}\sum_{i=1}^{N}\Big(c_{N}^{K}(i)\Big)^{2}\Big(\lambda_{s}^{i,N}-\mu\ell_{N}(i)\Big)ds,\\
    \mathcal{A}^{u,3}_{N,K}:=&\Big[\mu\sum_{i=1}^{N}\Big(c_{N}^{K}(i)\Big)^{2}\ell_{N}(i)\Big]\frac{1}{K^{2}\Delta t}\int_{0}^{tu}\Big(\sum_{j=1}^{N}c_{N}^{K}(j)(M_{s}^{j,N}-M_{\phi_{t,\Delta}(s)}^{j,N})\Big)^{2}ds.
\end{align*}

First,  we derive an upper-bound for $\mathcal{A}^{u,1}_{N,K}$. Recalling (\ref{ee3}), we obtain 
\begin{align*}
    &\mathbb{E}_{\theta}\Big[\Big(\mathcal{A}^{u,1}_{N,K}\Big)^2\Big]
     =\frac{1}{K^{4}(\Delta t)^{2}}\mathbb{E}_{\theta}\Big[\int_{0}^{tu}\Big(\sum_{j=1}^{N}c_{N}^{K}(j)(M_{s}^{j,N}-M^{j,N}_{\phi_{t,\Delta}(s)})\Big)^{4} \sum_{i=1}^{N}\Big(c_{N}^{K}(i)\Big)^{4}dZ_{s}^{i,N}\Big]\\
    \le& \frac{C}{K^{4}(\Delta t)^{2}}\mathbb{E}_{\theta}\Big[\max_{a=1,...,[\frac{2t}{\Delta}]}\sup_{0\le s\le \Delta}\Big(\sum_{j=1}^Nc_N^K(j)(M_{(a-1)\Delta+s}^{j,N}-M_{(a-1)\Delta}^{j,N})\Big)^4\sum_{j=1}^{N}\Big(c_{N}^{K}(j)\Big)^{4}Z_{2t}^{j,N}\Big]\\
    \le& \frac{C}{K^{4}(\Delta t)^{2}}\mathbb{E}_{\theta}\Big[\max_{a=1,...,[\frac{2t}{\Delta}]}\sup_{0\le s\le \Delta}\Big(\sum_{j=1}^Nc_N^K(j)(M_{(a-1)\Delta+s}^{j,N}-M_{(a-1)\Delta}^{j,N})\Big)^8+\Big(\sum_{j=1}^{N}\Big(c_{N}^{K}(j)\Big)^{4}Z_{2t}^{j,N}\Big)^2\Big].
    \end{align*}
Using Doob's inequality, Lemma \ref{lambar}-(iii) together with \eqref{CNK}, the last expression is bounded by
\begin{align*}     
     & \frac{C}{K^{4}(\Delta t)^{2}}\mathbb{E}_{\theta}\Big[\sum^{[\frac{2t}{\Delta}]}_{a=1}\sup_{0\le s\le \Delta}\Big(\sum_{j=1}^Nc_N^K(j)(M_{(a-1)\Delta+s}^{j,N}-M_{(a-1)\Delta}^{j,N})\Big)^8\Big]+\frac{C}{K^2\Delta^2}\\
   &\le  \frac{C}{K^{4}(\Delta t)^{2}}\mathbb{E}_{\theta}\Big[\sum^{[\frac{2t}{\Delta}]}_{a=1}\Big(\sum_{j=1}^N(c_N^K(j))^2(Z_{a\Delta}^{j,N}-Z_{(a-1)\Delta}^{j,N})\Big)^4\Big]+\frac{C}{K^2\Delta^2}\\
   & \le \frac{C\Delta}{t}+\frac{C}{K^2\Delta^2}.
\end{align*}

\vip
For the second term, applying the Cauchy-Schwarz inequality,  the Burkholder–Davis–Gundy inequality and  \eqref{CNK}  yields that   on  $\Omega_{N,K}$,
\begin{align*}
    &\mathbb{E}_{\theta}\Big[\Big|\mathcal{A}^{u,2}_{N,K}\Big|\Big]\\
    \le& \frac{1}{K^{2}\Delta t}\int_{0}^{tu}\mathbb{E}_{\theta}\Big[\Big(\sum_{j=1}^{N}c_{N}^{K}(j)\Big(M_{s}^{j,N}-M_{\phi_{t,\Delta}(s)}^{j,N}\Big)\Big)^{4}\Big]^{\frac{1}{2}}\mathbb{E}_{\theta}\Big[\Big|\sum_{i=1}^{N}\Big(c_{N}^{K}(i)\Big)^{2}\Big(\lambda_{s}^{i,N}-\mu\ell_{N}(i)\Big)\Big|^{2}\Big]^{\frac{1}{2}}ds\\
    \le& \frac{1}{K^{2}\Delta t}\int_{0}^{tu}\Et\Big[\Big(\sum_{j=1}^{N}\Big(c_{N}^{K}(j)\Big)^2\Big(Z_{s}^{j,N}-Z_{\phi_{t,\Delta}(s)}^{j,N}\Big)\Big)^{2}\Big]^{\frac{1}{2}}\mathbb{E}_{\theta}\Big[\Big|\sum_{i=1}^{N}\Big(c_{N}^{K}(i)\Big)^{2}\Big(\lambda_{s}^{i,N}-\mu\ell_{N}(i)\Big)\Big|^{2}\Big]^{\frac{1}{2}}ds\\
    \le& \frac{C}{K^{2}\Delta t}\int_{0}^{tu}\Et\Big[\Big(K\Big(\bar{Z}_{s}^{N,K}-\bar{Z}_{\phi_{t,\Delta}(s)}^{N,K}\Big)
    +\frac{K^2}{N}\Big(\bar{Z}_{s}^{N}-\bar{Z}_{\phi_{t,\Delta}(s)}^{N}\Big)\Big)^{2}\Big]^{\frac{1}{2}}\\ &\qquad\qquad\qquad\times\mathbb{E}_{\theta}\Big[\Big|\sum_{i=1}^{N}\Big(c_{N}^{K}(i)\Big)^{2}\Big(\lambda_{s}^{i,N}-\mu\ell_{N}(i)\Big)\Big|^{2}\Big]^{\frac{1}{2}}ds.
\end{align*}
Now, applying  Lemma \ref{Zt}-(ii), which states that  on $\Omega_{N,K}$ 
$$\max_{i=1,\dots,N}\Et[Z^{i,N}_{t}-Z^{i,N}_{s}]\le C(t-s),$$ together with \eqref{CNK}  and Lemma \ref{lambar}-(ii),  we  further obtain 
 \begin{align*}
  \mathbb{E}_{\theta}\Big[\Big|\mathcal{A}^{u,2}_{N,K}\Big|\Big]
    \le& \frac{C}{K t}\int_{0}^{tu}\sum_{i=1}^{N}\Big(c_{N}^{K}(i)\Big)^{2}\mathbb{E}_{\theta}\Big[\Big|(\lambda_{s}^{i,N}-\mu\ell_{N}(i))\Big|^{2}\Big]^{\frac{1}{2}}ds\\
    \le& \frac{C}{Kt}\int_{0}^{tu}\sum_{i=1}^{K}\mathbb{E}_{\theta}\Big[\Big|\lambda_{s}^{i,N}-\mu\ell_{N}(i)\Big|^{2}\Big]^{\frac{1}{2}}ds+\frac{C}{Nt}\int_{0}^{tu}\sum_{i=1}^{N}\mathbb{E}_{\theta}\Big[\Big|\lambda_{s}^{i,N}-\mu\ell_{N}(i)\Big|^{2}\Big]^{\frac{1}{2}}ds\\
     \le& \frac{C}{\sqrt{N}}+\frac{C}{t^{q}}.
\end{align*}
For the third term, we first recall $A^{N,K}_{\infty,\infty}=\sum_{i=1}^{N}\Big(c_{N}^{K}(i)\Big)^{2}\ell_{N}(i)$ with $c_{N}^{K}(i)=\sum_{j=1}^{K}Q_{N}(j,i)$. Then we rewrite
\begin{align*}
    &\mathbb{V}ar_{\theta}(\mathcal{A}^{u,3}_{N,K})\\
    &=\frac{(\mu A^{N,K}_{\infty,\infty})^2}{K^{4}\Delta^{2}t^{2}}\mathbb{V}ar_{\theta}\Big[\int_{0}^{ut}\Big(\sum_{j=1}^{N}c_{N}^{K}(j)(M_{s}^{j,N}-M_{\phi_{t,\Delta}(s)}^{j,N})\Big)^{2}ds\Big]
    \\ &=\frac{(\mu A^{N,K}_{\infty,\infty})^2}{K^{4}\Delta^{2}t^{2}}\int_{0}^{ut}\!\!\int_{0}^{ut}\!\!
    \mathbb{C}ov_{\theta}\Big[\Big(\sum_{i=1}^{N}c_{N}^{K}(i)(M_{s}^{i,N}-M_{\phi_{t,\Delta}(s)}^{i,N})\Big)^{2},\Big(\sum_{j=1}^{N}c_{N}^{K}(j)(M_{s'}^{j,N}-M_{\phi_{t,\Delta}(s')}^{j,N})\Big)^{2}\Big]dsds'\\
    &=\frac{(\mu A^{N,K}_{\infty,\infty})^2}{K^{4}\Delta^{2}t^{2}}\int_{0}^{ut}\int_{0}^{ut}
    \sum_{1\le i,i',j,j'\le N}\mathbb{C}ov_{\theta}\Big[c_{N}^{K}(i)c_{N}^{K}(i')(M_{s}^{i,N}-M^{i,N}_{\phi_{t,\Delta}(s)})(M_{s}^{i',N}-M^{i',N}_{\phi_{t,\Delta}(s)}),\\
    &\hskip5cm c_{N}^{K}(j)c_{N}^{K}(j')(M_{s'}^{j,N}-M^{j,N}_{\phi_{t,\Delta}(s')})(M_{s'}^{j',N}-M^{j',N}_{\phi_{t,\Delta}(s')})\Big]dsds'\\
    &= \frac{(\mu A^{N,K}_{\infty,\infty})^2}{K^{4}\Delta^{2}t^{2}}\int_{0}^{ut}\int_{0}^{ut} \Big( \boldsymbol{1}_{\{|s-s'|> 3\Delta\}}+\boldsymbol{1}_{\{|s-s'|\le 3\Delta\}}\Big)\sum_{1\le i,i',j,j'\le N}\\
    &\hskip3cm \mathbb{C}ov_{\theta}\Big[c_{N}^{K}(i)c_{N}^{K}(i')(M_{s}^{i,N}-M^{i,N}_{\phi_{t,\Delta}(s)})(M_{s}^{i',N}-M^{i',N}_{\phi_{t,\Delta}(s)}),\\
    &\hskip5cm c_{N}^{K}(j)c_{N}^{K}(j')(M_{s'}^{j,N}-M^{j,N}_{\phi_{t,\Delta}(s')})(M_{s'}^{j',N}-M^{j',N}_{\phi_{t,\Delta}(s')})\Big]dsds'.
    \end{align*}
But on $\Omega_{N,K},$ we have
    \begin{align*}
    &\sum_{i,i',j,j'=1}^{N}\int_{0}^{ut}\int_{0}^{ut}\boldsymbol{1}_{\{|s-s'|\le 3\Delta\}}\mathbb{C}ov_{\theta}\Big[c_{N}^{K}(i)c_{N}^{K}(i')(M_{s}^{i,N}-M^{i,N}_{\phi_{t,\Delta}(s)})(M_{s}^{i',N}-M^{i',N}_{\phi_{t,\Delta}(s)}),\\
    &\hskip3cm c_{N}^{K}(j)c_{N}^{K}(j')(M_{s'}^{j,N}-M^{j,N}_{\phi_{t,\Delta}(s')})(M_{s'}^{j',N}-M^{j',N}_{\phi_{t,\Delta}(s')})\Big]dsds'\\
    \le &\int_{0}^{ut}\int_{0}^{ut}\boldsymbol{1}_{\{|s-s'|\le 3\Delta\}}\mathbb{E}_{\theta}\Big[\Big(\sum_{i=1}^{N}c_{N}^{K}(i)(M_{s}^{i,N}-M^{i,N}_{\phi_{t,\Delta}(s)})\Big)^{4}\Big]^\frac{1}{2}\\
    &\times\mathbb{E}_{\theta}\Big[\Big(\sum_{i=1}^{N}c_{N}^{K}(i)(M_{s'}^{i,N}-M^{i,N}_{\phi_{t,\Delta}(s')})\Big)^{4}\Big]^\frac{1}{2}dsds'\\
    \le &\int_{0}^{ut}\int_{0}^{ut}\boldsymbol{1}_{\{|s-s'|\le 3\Delta\}}\mathbb{E}_{\theta}\Big[\Big(\sum_{i=1}^{N}(c_{N}^{K}(i))^2(Z_{s}^{i,N}-Z^{i,N}_{\phi_{t,\Delta}(s)})\Big)^{2}\Big]^\frac{1}{2}\\
    &\times\mathbb{E}_{\theta}\Big[\Big(\sum_{i=1}^{N}(c_{N}^{K}(i))^2(Z_{s'}^{i,N}-Z^{i,N}_{\phi_{t,\Delta}(s')})\Big)^{2}\Big]^\frac{1}{2}dsds'\\
    \le& C\int_{0}^{ut}\int_{0}^{ut} \boldsymbol{1}_{\{|s-s'|\le 3\Delta\}}\Et\Big[\Big(K\Big(\bar{Z}_{s}^{N,K}-\bar{Z}_{\phi_{t,\Delta}(s)}^{N,K}\Big)+\frac{K^2}{N}\Big(\bar{Z}_{s}^{N}-\bar{Z}_{\phi_{t,\Delta}(s)}^{N}\Big)\Big)^{2}\Big]^{\frac{1}{2}}\\
    &\hskip3cm\times\Et\Big[\Big(K\Big(\bar{Z}_{s'}^{N,K}-\bar{Z}_{\phi_{t,\Delta}(s')}^{N,K}\Big)+\frac{K^2}{N}\Big(\bar{Z}_{s'}^{N}-\bar{Z}_{\phi_{t,\Delta}(s')}^{N}\Big)\Big)^{2}\Big]^{\frac{1}{2}}dsds'\\
    \le& Ct\Delta^{3}K^{2}.
\end{align*}
By \cite[Step 6 of the proof of Lemma 30]{A}, we already have,
when $|s-s'|\ge 3\Delta,$ that
\begin{align*}
    &\mathbb{C}ov_{\theta}[(M_{s}^{i,N}-M^{i,N}_{\phi_{t,\Delta}(s)})(M_{s}^{i',N}-M^{i',N}_{\phi_{t,\Delta}(s)}),
    (M_{s'}^{j,N}-M^{j,N}_{\phi_{t,\Delta}(s')})(M_{s'}^{j',N}-M^{j',N}_{\phi_{t,\Delta}(s')})]\\
    \le&  C (\indiq_{\{i=i'\}}+\indiq_{\{j=j'\}})  t^{1/2}\Delta^{1-q}.
\end{align*}
Hence,
\begin{align*}
     &\boldsymbol{1}_{\Omega_{N,K}}\sum_{i,i',j,j'=1}^{N}\int_{0}^{t}\int_{0}^{t}\boldsymbol{1}_{\{|s-s'|\ge 3\Delta\}}\mathbb{C}ov_{\theta}\Big[c_{N}^{K}(i)c_{N}^{K}(i')\Big(M_{s}^{i,N}-M^{i,N}_{\phi_{t,\Delta}(s)}\Big)\Big(M_{s}^{i',N}-M^{i',N}_{\phi_{t,\Delta}(s)}\Big),\\
&\hskip4cm c_{N}^{K}(j)c_{N}^{K}(j')\Big(M_{s'}^{j,N}-M^{j,N}_{\phi_{t,\Delta}(s')}\Big)\Big(M_{s'}^{j',N}-M^{j',N}_{\phi_{t,\Delta}(s')}\Big)\Big]dsds'
   \\
\le& \indiq_{\Omega_{N,K}}  C t^{5/2}\Delta^{1-q}\Big(\sum_{i=1}^{N}(c_{N}^{K}(i))^2\Big)\Big(\sum_{i=1}^{N}c_{N}^{K}(i)\Big)^{2}\le CK^{3} t^{5/2}\Delta^{1-q}.
\end{align*}
Overall,  we have, on $\Omega_{N,K}$
\begin{align*}
    &\mathbb{V}ar_{\theta}(\mathcal{A}^{u,3}_{N,K})
    \le \frac{1}{K^{4}\Delta^{2}t^{2}}\Big(\mu A^{N,K}_{\infty,\infty}\Big)^{2}\Big(\frac{K^{3} t^{5/2}}{\Delta^{q-1}}+t\Delta^{3}K^{2}\Big)
    \le C\Big(\frac{K\sqrt{t}}{\Delta^{q+1}}+\frac{\Delta}{t}\Big),
\end{align*}
due to the fact that on $\Omega_{N,K}$, $ |\ell_N(i)|\le C$ for all $1\le i\le N,$ and \eqref{CNK}.

Noting that $A^{N,K}_{\infty,\infty}=\sum_{j=1}^{N}\Big(c_{N}^{K}(j)\Big)^{2}\ell_{N}(j)$ and that $\int_{0}^{ut} (s-\phi_{t,\Delta}(s)) ds=\frac{u\Delta t}{2}$ , we have on $\Omega_{N,K},$
\begin{align*}
    & \Big|\mathbb{E}_{\theta}[\mathcal{A}^{u,3}_{N,K}]-\frac{u(\mu A^{N,K}_{\infty,\infty})^{2}}{2K^{2}}\Big|\\
    =&\frac{1}{\Delta tK^{2}}\Big|\mu A^{N,K}_{\infty,\infty}\int_{0}^{ut}\sum_{j=1}^{N}\Big\{\Big(c_{N}^{K}(j)\Big)^{2}\mathbb{E}_{\theta}\Big[Z_{s}^{j,N}-Z^{j,N}_{\phi_{t,\Delta}(s)}\Big]\Big\}ds-\frac{u\Delta t(\mu A^{N,K}_{\infty,\infty})^{2}}{2}\Big|\\
    =&\frac{1}{\Delta tK^{2}}\Big|\mu A^{N,K}_{\infty,\infty}\int_{0}^{ut}\sum_{j=1}^{N}\Big\{\Big(c_{N}^{K}(j)\Big)^{2}\mathbb{E}_{\theta}\Big[Z_{s}^{j,N}-Z^{j,N}_{\phi_{t,\Delta}(s)}-\mu(s-\phi_{t,\Delta}(s))\ell_{N}(j)\Big]\Big\}ds\Big|.
    \end{align*}
By  \eqref{CNK} and \cite[Lemma 16-(ii)]{A}, we obtain 
\[  \Big|\mathbb{E}_{\theta}[\mathcal{A}^{u,3}_{N,K}]-\frac{u(\mu A^{N,K}_{\infty,\infty})^{2}}{2K^{2}}\Big|\le \frac{C}{\Delta^q}.\]
Gathering all the previous computations, we obtain
\begin{align*}
    &\mathbb{E}\Big[\boldsymbol{1}_{\Omega_{N,K}}\Big|\big[\mathcal{L}^{t,\Delta}_{N,K}(.),\mathcal{L}^{t,\Delta}_{N,K}(.)\big]_{u}-\frac{2u(\mu A^{N,K}_{\infty,\infty})^{2}}{K^{2}}\Big|\Big]\\
    \le& 4 \mathbb{E}\Big[\boldsymbol{1}_{\Omega_{N,K}}\Big\{\Big|\mathcal{A}^{u,1}_{N,K}\Big|+\Big|\mathcal{A}^{u,2}_{N,K}\Big|+\mathbb{V}ar_{\theta}(\mathcal{A}^{u,3}_{N,K})^\frac{1}{2}+\Big|\mathbb{E}_{\theta}[\mathcal{A}^{u,3}_{N,K}]-\frac{u(\mu A^{N,K}_{\infty,\infty})^{2}}{2K^{2}}\Big|\Big\}\Big]\\
\le&  \frac{C}{K\Delta}+C\sqrt{\frac{\Delta}{t}}+\frac{C}{\sqrt{N}}+\frac{C}{t^q}+C\Big(\frac{K\sqrt{t}}{\Delta^{q+1}}+\frac{\Delta}{t}\Big)^{\frac{1}{2}}+\frac{C}{\Delta^q}\\ 
\le& C\Big(\frac{1}{K\Delta}+\frac{1}{\sqrt{N}}+\Big(\frac{K\sqrt{t}}{\Delta^{q+1}}\Big)^\frac{1}{2}+\sqrt{\frac{\Delta}{t}}\Big).
\end{align*}
The proof is finished.
\end{proof}

Next, we prove Proposition \ref{D33}.
\begin{proof}[Proof of Proposition \ref{D33}]

A direct application of Lemma  \ref{LmathbX}, Proposition \ref{main2} together with Lemma \ref{Agamma} gives 
$$
\frac{K}{\sqrt{t\Delta_t}}(\mathbb{X}_{\Delta_t,t,v}^{N,K})_{v\ge0} \stackrel{d}{\longrightarrow}\mu\sqrt{2}\Big(\frac{1-\gamma}{(1-\Lambda p)}+\frac{\gamma}{(1-\Lambda p)^3}\Big)(B_{2v}-B_v)_{v\ge0},$$
as desired.
\end{proof}

\subsection{Proof of Theorem \ref{corX}}\label{pf:corX}
 We recall that $\mathbb{X}_{\Delta_t,t,v}^{N,K}$ is defined in \eqref{mathbX} and note that $\mathbb{X}_{2\Delta_t,t,1}^{N,K}=\mathbb{X}_{\Delta_t,t,\frac{1}{2}}^{N,K}.$ By Proposition \ref{D33}, we have
$$
\frac{K}{N}\sqrt{\frac{t}{\Delta_t}}\frac{N}{t}\Big(-\mathbb{X}_{\Delta_t,t,1}^{N,K}+2\mathbb{X}_{\Delta_t,t,\frac{1}{2}}^{N,K}\Big)\stackrel{d}{\longrightarrow}\mathcal{N}\Big(0,6\mu^2\Big(\frac{1-\gamma}{(1-\Lambda p)}+\frac{\gamma}{(1-\Lambda p)^3}\Big)^2\Big).
$$
By Proposition \ref{D3}, we conclude that
\begin{align*}
\frac{K}{N}\sqrt{\frac{t}{\Delta_t}}\mathbb{E}\Big[\indiq_{\Omega_{N,K}}\Big|-D_{\Delta_t,t}^{N,K,3}+2D_{2\Delta_t,t}^{N,K,3}-2\frac{N}{t}\mathbb{X}_{2\Delta_t,t,1}^{N,K}+\frac{N}{t}\mathbb{X}_{\Delta_t,t,1}^{N,K}\Big|\Big]
\le \frac{CK}{N\Delta_t}+\frac{C\sqrt{K}}{\sqrt{N\Delta_t}}+\frac{Ct^\frac{3}{4}\sqrt{K}}{\Delta_t^{1+\frac{q}{2}}}.
\end{align*}
Consequently, by  Lemma \ref{D124}, we obtain the following convergence in probability:  as $(N,K,t)\to (\infty,\infty,\infty)$ such that
$\frac{K}{N} \to \gamma\le 1$ and 
$\frac 1{\sqrt K} + \frac NK \sqrt{\frac{\Delta_t}t}+ \frac{N}{t\sqrt K}+Ne^{-c_{p,\Lambda}K} \to 0$, the limit of 
$$\omg\frac{K}{N}\sqrt{\frac{t}{\Delta_t}}\Big(\mathcal{X}_{\Delta_t,t}^{N,K}-\cX^{N,K}_{\infty,\infty}\Big)$$ equals  the limit of $$\omg\frac{K}{N}\sqrt{\frac{t}{\Delta_t}}\Big\{-D_{\Delta_t,t}^{N,K,3}+2D_{2\Delta_t,t}^{N,K,3}\Big\},$$ which in turn equals the limit of
$$ \frac{K}{N}\sqrt{\frac{t}{\Delta_t}}\frac{N}{t}\Big(-\mathbb{X}_{\Delta_t,t,1}^{N,K}+2\mathbb{X}_{\Delta_t,t,\frac{1}{2}}^{N,K}\Big).$$
This finally converges in distribution to $
\mathcal{N}\Big(0,6\mu^2\Big(\frac{1-\gamma}{(1-\Lambda p)}+\frac{\gamma}{(1-\Lambda p)^3}\Big)^2\Big)$.
 
\section{Proof of the main result}\label{finsecsub}
In this section, we present the proofs of the main results stated in Section \ref{MR}. First, we recall the estimators $\varepsilon_{t}^{N,K}$, $\mathcal{V}_{t}^{N,K}$, and $\mathcal{X}_{\Delta_{t},t}^{N,K}$ defined in Section \ref{MR}, as well as the function $$\Psi^{(3)}(u,v,w)=\frac{u^2(1-\sqrt{\frac{u}{w}})^2}{v+u^2(1-\sqrt{\frac{u}{w}})^{2}}\quad \text{if $u>0$, $v>0$, $w>0$}\quad \text{and \ $\Psi^{(3)}(u,v,w)=0$ otherwise,}$$ and 
$$\hat p_{N,K,t}:= \Psi^{(3)}(\varepsilon_{t}^{N,K},\mathcal{V}_{t}^{N,K},\mathcal{X}_{\Delta_{t},t}^{N,K}), \quad\text {with  $\Delta_t=(2 \lfloor t^{1-4/(q+1)}\rfloor)^{-1}t$.}$$
We now proceed to the proof of  Theorem \ref{mainsubcr}.

\begin{proof}[Proof of Theorem \ref{mainsubcr}] 
It can be directly verified that
$\Psi^{(3)}\Big(\frac{\mu}{1-\Lambda p},\frac{(\mu\Lambda)^{2}p(1-p)}{(1-\Lambda p)^{2}},
\frac{\mu}{(1-\Lambda p)^{3}}\Big)=p.$
By the mean value theorem, there exist some vectors $\boldsymbol{C_{N,K,t}^{i}}$ for $i=1,2,3,$ lying on the segment between $(\varepsilon_{t}^{N,K},\mathcal{V}_{t}^{N,K},\mathcal{X}_{\Delta_t,t}^{N,K})$ and 
$\boldsymbol{C}:=\Big(\frac{\mu}{1-\Lambda p},\frac{(\mu\Lambda)^2 p(1-p)}{(1-\Lambda p)^2},\frac{\mu}{(1-\Lambda p)^3}\Big)$, such that
\begin{align*}
\hat p_{N,K,t}-p=&\Psi^{(3)}(\varepsilon_{t}^{N,K},\mathcal{V}_{t}^{N,K},\mathcal{X}_{\Delta_t,t}^{N,K})-p\\
     =&\Psi^{(3)}(\varepsilon_{t}^{N,K},\mathcal{V}_{t}^{N,K},\mathcal{X}_{\Delta_t,t}^{N,K})-\Psi^{(3)}\Big(\frac{\mu}{1-\Lambda p},\frac{(\mu\Lambda)^{2}p(1-p)}{(1-\Lambda p)^{2}},\frac{\mu}{(1-\Lambda p)^{3}}\Big)\\
    =&\frac{\partial\Psi^{(3)}}{\partial u}(\boldsymbol{C_{N,K,t}^{1}})\Big(\varepsilon_{t}^{N,K}-\frac{\mu}{1-\Lambda p}\Big)+
\frac{\partial\Psi^{(3)}}{\partial v}(\boldsymbol{C_{N,K,t}^{2}})\Big(\mathcal{V}_{t}^{N,K}-\frac{(\mu\Lambda)^{2}p(1-p)}{(1-\Lambda p)^{2}}\Big)\\
     &+\frac{\partial\Psi^{(3)}}{\partial w}(\boldsymbol{C_{N,K,t}^{3}})\Big(\mathcal{X}_{\Delta_t,t}^{N,K}-\frac{\mu}{(1-\Lambda p)^{3}}\Big).
     \end{align*}
From the first paragraph of \cite[Section 10]{D}, it is established that, when   
$(N,K,t)\to (\infty,\infty,\infty)$
and  $\frac 1{\sqrt K} + \frac NK \sqrt{\frac{\Delta_t}t}+ \frac{N}{t\sqrt K}+Ne^{-c_{p,\Lambda}K} \to 0$, $(\varepsilon_{t}^{N,K},\mathcal{V}_{t}^{N,K},\mathcal{X}_{\Delta_t,t}^{N,K})$ converges in probability to $\boldsymbol{C}$.  Consequently, the three vectors $\boldsymbol{C}_{N,K,t}^{i}$, $i=1,2,3$, all converge to 
$\boldsymbol{C}:=\Big(\frac{\mu}{1-\Lambda p},\frac{(\mu\Lambda)^2 p(1-p)}{(1-\Lambda p)^2},\frac{\mu}{(1-\Lambda p)^3}\Big)$ in probability.

\vip
We define the following functions from $D':= \{(u, v, w)\in \mathbb{R}^{3}: w >u> 0\  and\  v> 0\}$ to $\mathbb{R}^{3}$ by
$$\Psi^{(1)}(u,v,w)=u\sqrt{\frac{u}{w}},\quad  \Psi^{(2)}(u,v,w)=\frac{v+(u-\Psi^{(1)})^{2}}{u(u-\Psi^{(1)})}.$$
Then, on $D'$, we have $\Psi^{(3)}(u,v,w)=\frac{1-u^{-1}\Psi^{(1)}}{\Psi^{(2)}}$.
\vip
A series of tedious but straightforward calculations yields
\begin{gather*}
\quad\quad\frac{\partial\Psi^{(1)}}{\partial v}(\boldsymbol{C})=0,\quad \frac{\partial\Psi^{(1)}}{\partial w}(\boldsymbol{C})=\frac{-(1-\Lambda p)^3}{2},\quad  \frac{\partial\Psi^{(2)}}{\partial v}(\boldsymbol{C})=\frac{(1-\Lambda p)^2}{\mu^2\Lambda p},\\ 
\frac{\partial\Psi^{(2)}}{\partial w}(\boldsymbol{C})=\Big\{\frac{-2\frac{\partial\Psi^{(1)}}{\partial w}}{u}+\frac{\Psi^{(2)}\frac{\partial\Psi^{(1)}}{\partial w}}{(u-\Psi^{(1)})}\Big\}(\boldsymbol{C})=\frac{(1-\Lambda p)^4(2p-1)}{2\mu p},\\
\hskip-0.3cm \frac{\partial\Psi^{(3)}}{\partial v}(\boldsymbol{C})=-\frac{\frac{\Psi^{(2)}\frac{\partial\Psi^{(1)}}{\partial v}}{u}+(1-\frac{\Psi^{(1)}}{u})\frac{\partial\Psi^{(2)}}{\partial v}}{(\Psi^{(2)})^2}(\boldsymbol{C})=-\frac{(1-\Lambda p)^2}{(\mu\Lambda)^2},\\
\hskip0.4cm \frac{\partial\Psi^{(3)}}{\partial w}(\boldsymbol{C})=-\frac{\frac{\Psi^{(2)}\frac{\partial\Psi^{(1)}}{\partial w}}{u}+(1-\frac{\Psi^{(1)}}{u})\frac{\partial\Psi^{(2)}}{\partial w}}{(\Psi^{(2)})^2}(\boldsymbol{C})=\frac{(1-\Lambda p)^4(1-p)}{\mu \Lambda}.
\end{gather*}

{\it Case 1.}
The dominant term is $\frac 1{\sqrt K}$, i.e. when
$[\frac 1{\sqrt K}]/[ \frac NK \sqrt{\frac{\Delta_t}t}+ \frac{N}{t\sqrt K}]\to \infty$,
we write
\begin{align*}
\sqrt{K}\Big(\hat p_{N,K,t}-p\Big) =&\sqrt{K}\frac{\partial\Psi^{(3)}}{\partial u}(\boldsymbol{C_{N,K,t}^{1}})\Big(\varepsilon_{t}^{N,K}-\frac{\mu}{1-\Lambda p}\Big)\\
&+
\sqrt{K}\frac{\partial\Psi^{(3)}}{\partial v}(\boldsymbol{C_{N,K,t}^{2}})\Big(\mathcal{V}_{t}^{N,K}-\frac{(\mu\Lambda)^{2}p(1-p)}{(1-\Lambda p)^{2}}\Big)\notag\\
&+\sqrt{K}\frac{\partial\Psi^{(3)}}{\partial w}(\boldsymbol{C_{N,K,t}^{3}})\Big(\cX_{\Delta_{t},t}^{N,K}-\frac{\mu}{(1-\Lambda p)^{3}}\Big).\notag
\end{align*}
Based on Lemmas \ref{ellp}, \ref{barell} and \ref{W} and  Theorem \ref{corX}, we obtain
$$
\sqrt{K}\frac{\partial\Psi^{(3)}}{\partial u}(\boldsymbol{C_{N,K,t}^{1}})\Big(\varepsilon_{t}^{N,K}-\frac{\mu}{1-\Lambda p}\Big)+\sqrt K \frac{\partial\Psi^{(3)}}{\partial w}(\boldsymbol{C_{N,K,t}^{3}})\Big(\cX_{\Delta_{t},t}^{N,K}-\frac{\mu}{(1-\Lambda p)^{3}}\Big)\stackrel{d}{\longrightarrow} 0.
$$
Next, we observe that as $(N,K,t)\to(\infty,\infty,\infty)$,
$$\frac{\partial\Psi^{(3)}}{\partial v}(\boldsymbol{C_{N,K,t}^{2}}){\longrightarrow}-\frac{(1-\Lambda p)^{2}}{(\mu\Lambda)^{2}} \quad \textit {in probability}.$$
Therefore, by Theorems \ref{21} and \ref{VVNK}, we conclude that
$$
\sqrt{K}\frac{\partial\Psi^{(3)}}{\partial v}(\boldsymbol{C_{N,K,t}^{2}})\Big(\mathcal{V}_{t}^{N,K}-\frac{(\mu\Lambda)^{2}p(1-p)}{(1-\Lambda p)^{2}}\Big)\stackrel{d}{\longrightarrow} \mathcal{N}\Big(0,p^2(1-p)^{2}\Big),
$$
which in turn implies that 
\[\sqrt{K}\Big(\hat p_{N,K,t}-p\Big)\stackrel{d}{\longrightarrow} \mathcal{N}\Big(0,p^2(1-p)^{2}\Big).\]

{\it Case 2.}
The dominant term is $\frac{N}{t\sqrt K}$, i.e. when
$[\frac{N}{t\sqrt K}]/[\frac 1{\sqrt K}+\frac NK \sqrt{\frac{\Delta_t}t}]\to \infty$,
we write 
\begin{align*}
\frac{t\sqrt{K}}N\Big(\hat p_{N,K,t}-p\Big) =&\frac{t\sqrt{K}}N\frac{\partial\Psi^{(3)}}{\partial u}(\boldsymbol{C_{N,K,t}^{1}})\Big(\varepsilon_{t}^{N,K}-\frac{\mu}{1-\Lambda p}\Big)\\
&+
\frac{t\sqrt{K}}N\frac{\partial\Psi^{(3)}}{\partial v}(\boldsymbol{C_{N,K,t}^{2}})\Big(\mathcal{V}_{t}^{N,K}-\frac{(\mu\Lambda)^{2}p(1-p)}{(1-\Lambda p)^{2}}\Big)\notag\\
&+\frac{t\sqrt{K}}N\frac{\partial\Psi^{(3)}}{\partial w}(\boldsymbol{C_{N,K,t}^{3}})\Big(\cX_{\Delta_{t},t}^{N,K}-\frac{\mu}{(1-\Lambda p)^{3}}\Big).\notag
\end{align*}
Similarly, according to Lemmas \ref{ellp}, \ref{barell} and \ref{W} and Theorem \ref{corX}, we obtain
$$
\frac{t\sqrt{K}}N\frac{\partial\Psi^{(3)}}{\partial u}(\boldsymbol{C_{N,K,t}^{1}})\Big(\varepsilon_{t}^{N,K}-\frac{\mu}{1-\Lambda p}\Big)+\frac{t\sqrt{K}}N\frac{\partial\Psi^{(3)}}{\partial w}(\boldsymbol{C_{N,K,t}^{3}})\Big(\cX_{\Delta_{t},t}^{N,K}-\frac{\mu}{(1-\Lambda p)^{3}}\Big)\stackrel{d}{\longrightarrow} 0.
$$
Finally, using Theorems \ref{21} and \ref{VVNK}, we find that
\begin{align*}
&\frac{t\sqrt{K}}{N}\Big(\hat p_{N,K,t}-p\Big)\stackrel{d}{\longrightarrow} 
\mathcal{N}\Big(0,\frac{2 (1-\Lambda p)^2}{\mu^2 \Lambda^4}\Big).
\end{align*}

{\it Case 3.}
The  dominant term is $\frac{N}{K}\sqrt{\frac{\Delta_t}t}$, i.e. when
$[\frac{N}{K}\sqrt{\frac{\Delta_t}t}]/[\frac 1{\sqrt K}+ \frac{N}{t\sqrt K} ]\to \infty$ and $\frac{K}{N}\to\gamma\le 1$. Using Lemmas \ref{ellp},  \ref{barell} and Theorems \ref{21} and \ref{VVNK},  we have
\begin{align*}
\frac{K}{N}\sqrt{\frac{t}{\Delta_{t}}}\Big\{\frac{\partial\Psi^{(3)}}{\partial u}(\boldsymbol{C_{N,K,t}^{1}})\Big(\varepsilon_{t}^{N,K}-\frac{\mu}{1-\Lambda p}\Big)+\frac{\partial\Psi^{(3)}}{\partial v}(\boldsymbol{C_{N,K,t}^{2}})\Big(\cV_{t}^{N,K}-\frac{(\mu\Lambda)^{2}p(1-p)}{(1-\Lambda p)^{2}}\Big)\Big\}\stackrel{d}{\longrightarrow} 0.
\end{align*}
Applying Lemma \ref{W} gives
\[\frac{K}{N}\sqrt{\frac{t}{\Delta_{t}}}\frac{\partial\Psi^{(3)}}{\partial w}(\boldsymbol{C_{N,K,t}^{3}})\Big(\cX_{\infty,\infty}^{N,K}-\frac{\mu}{(1-\Lambda p)^{3}}\Big)\stackrel{d}{\longrightarrow} 0.\]
Consequently, it suffices to analyze
$$
\frac{K}{N}\sqrt{\frac{t}{\Delta_{t}}} \frac{\partial\Psi^{(3)}}{\partial w}(\boldsymbol{C_{N,K,t}^{3}})
\Big( \cX_{\Delta_t,t}^{N,K} - \cX_{\infty,\infty}^{N,K}\Big).
$$
Since
$$\frac{\partial\Psi^{(3)}}{\partial w}(\boldsymbol{C_{N,K,t}^{3}}){\longrightarrow}\frac{(1-\Lambda p)^4(1-p)}{\mu \Lambda}\ \textit{ in probability},$$
 and by Theorem \ref{corX}, we finally conclude that
\begin{align*}
&\frac{K}{N}\sqrt{\frac{t}{\Delta_{t}}}\Big(\hat p_{N,K,t}-p\Big)\stackrel{d}{\longrightarrow} \mathcal{N}\Bigg(0,\frac{6(1-p)^2}{\Lambda^2}\Big((1-\gamma)(1-\Lambda p)^{3}+\gamma(1-\Lambda p)\Big)^2\Bigg).
\end{align*}
\end{proof}

Next, we move to prove  Proposition \ref{pzero}.

\begin{proof}[Proof of Proposition \ref{pzero}]
We note that for the case $p=0,$  the conclusion of Theorem \ref{corX} remains valid (and the limit of $\frac{K}{N}$ is no longer required). One can verify directly that $\bar{\ell}_{N}^{K}=1$, $\cV_\infty^{N,K}=0$ and  $\cX^{N,K}_{\infty,\infty}=\mu.$ Define 
 \begin{align*}
f(u,v,w):=\frac{u(w-u)}{w+\sqrt{wu}}\quad when \text{ $u>0$, $w>0$}\qquad \text{and  $f=0$ otherwise.}
  \end{align*}
 By \cite[Lemma 7.3]{D}, 
 $$\omg\frac{K}{N}\sqrt{\frac{t}{\Delta_{t}}}(\varepsilon_{t}^{N,K}-\mu)\longrightarrow 0  \quad \textit{in probability.}$$
 Hence,  applying  Theorem \ref{corX},  we obtain
 \begin{align*}
    \omg\frac{K}{N}\sqrt{\frac{t}{\Delta_{t}}}(\mathcal{X}_{\Delta_t,t}^{N,K}-\varepsilon_{t}^{N,K})\stackrel{d}{\longrightarrow}\mathcal{N}\Big(0,2\mu^2\Big).
 \end{align*}
 From \cite[Lemma 7.3, Corollary 9.9]{D}, when $(N,K,t)\to(\infty,\infty,\infty)$ and $ \frac NK \sqrt{\frac{\Delta_t}t}+ \frac{N}{t\sqrt K}+Ne^{-c_{p,\Lambda}K} \to 0$, both
 $\varepsilon_{t}^{N,K}$ and $\mathcal{X}_{\Delta_t,t}^{N,K}$  converge to $ \mu$ in probability.
Consequently,
$$
\omg\frac{K}{N}\sqrt{\frac{t}{\Delta_{t}}}f(\varepsilon_{t}^{N,K},\mathcal{V}_{t}^{N,K},\mathcal{X}_{\Delta_{t},t}^{N,K})\stackrel{d}{\longrightarrow} \mathcal{N}\Big(0,\frac{\mu^2}{2}\Big).
$$ 
By Theorem \ref{VVNK}, we obtain
$$\omg\frac{t\sqrt{K}}{N}\cV_{t}^{N,K}\stackrel{d}{\longrightarrow}\mathcal{N}
\Big(0,2\mu^2\Big).$$
Therefore, if  $[\frac{N}{t\sqrt K}]/[\frac NK \sqrt{\frac{\Delta_t}t}]^2\to \infty$,  then 
$$
 \Big[\frac NK \sqrt{\frac{\Delta_t}t}\Big]^{-2}\Big|\mathcal{V}_{t}^{N,K}\Big| \longrightarrow \infty \textit{ in probability.}$$
Since $\Psi^{(3)}(u,v,w)=\frac{f^2}{v+f^2} 1_{\{ v>0\}}$, it follows that
$$
\hat p_{N,K,t} = \Psi^{(3)}(\varepsilon_{t}^{N,K},\mathcal{V}_{t}^{N,K},\mathcal{X}_{\Delta_{t},t}^{N,K})\stackrel{d}{\longrightarrow} 0.
$$
On the other hand, when $\Big[\frac{N}{K}\sqrt{\frac{\Delta_t}t}\Big]^2/[ \frac{N}{t\sqrt K} ]\to \infty$, we have
$$
 \big[\frac NK \sqrt{\frac{\Delta_t}t}\Big]^{-2}\Big|\mathcal{V}_{t}^{N,K}\Big| \longrightarrow 0 \textit{ in probability.}$$
Thus,
$$ \frac{f^2(\varepsilon_{t}^{N,K},\mathcal{V}_{t}^{N,K},\mathcal{X}_{\Delta_{t},t}^{N,K})}{\mathcal{V}_{t}^{N,K}+ f^2(\varepsilon_{t}^{N,K},\mathcal{V}_{t}^{N,K},\mathcal{X}_{\Delta_{t},t}^{N,K})} \longrightarrow 1\,  \textit{in probability},$$
which holds whenever
$$
\mathbb{P}\Big(\mathcal{V}_{t}^{N,K}>0\Big)\longrightarrow\frac{1}{2}.
$$
Hence,
$\hat p_{N,K,t} \stackrel{d}{\longrightarrow} X.$
\end{proof}

\appendix

\section{Proof of Lemma \ref{lambar} }\label{prof: lambar}

\subsection{Proof of Lemma \ref{lambar} (i)} Observing from $\Et[Z^{i,N}_t]=\int_0^t \Et[\lambda^{i,N}_s] ds$, a direct computation yields that 
$$
\max_{i=1,...,N}\mathbb{E}_\theta[\lambda_t^{i,N}]=\mu +\max_{i=1,...,N}\Big\{\sum_{j=1}^NA_N(i,j)\int_0^t\phi(t-s)\mathbb{E}_\theta[\lambda_s^{j,N}] ds\Big\}.
$$
Define $a_N(t):=\sup_{0\le s\le t}\max_{i=1,...,N}\mathbb{E}_\theta[\lambda_s^{i,N}].$ 
On the event $\Omega_{N,K}$,  we have 
$\Lambda\max_{i=1,...,N}\{\sum_{j=1}^NA_N(i,j)\}\le a<1$.  Since $\Lambda=\int_0^\infty \phi(s)ds$, it follows that
$$
a_N(t)\le \mu +a_N(t)a,
$$
which immediately implies the desired result.
\vip 
For the second part, recalling the definition of  $M^{i,N}_t$ in Section \ref{aux}, we have $M^{i,N}_t=Z^{i,N}_t-\int_0^t \lambda^{i,N}_s ds$. We express the intensity process $\lambda_{t}^{i,N}$ defined in \eqref{sssy} as 
\[\lambda_{t}^{i,N}=\mu+\frac{1}{N}\sum_{j=1}^{N}\theta_{ij}\int_{0}^{t}\phi(t-s)dM_{s}^{j,N}+\frac{1}{N}\sum_{j=1}^{N}\theta_{ij} \int_{0}^{t}\phi(t-s)\lambda_{s}^{j,N}ds.\]
An application of Minkowski’s inequality then yields
\begin{align}\label{lam2}
\mathbb{E}_{\theta}\Big[(\lambda_{t}^{i,N})^{2}\Big]^{\frac{1}{2}}
\le \mu&+ \mathbb{E}_{\theta}\Big[\Big(\frac{1}{N}\sum_{j=1}^{N}\theta_{ij}\int_{0}^{t}\phi(t-s)dM_{s}^{j,N}\Big)^{2}\Big]^{\frac{1}{2}}\\
&\hskip2cm+\mathbb{E}_{\theta}\Big[\Big(\frac{1}{N}\sum_{j=1}^{N}\theta_{ij}\int_{0}^{t}\phi(t-s)\lambda_{s}^{j,N}ds\Big)^{2}\Big]^{\frac{1}{2}}.\notag
\end{align}
Using  \eqref{ee3} and $0\le\theta_{ij}\le1$, we obtain
\begin{align*}
 \mathbb{E}_{\theta}\Big[\Big(\frac{1}{N}\sum_{j=1}^{N}\theta_{ij}\int_{0}^{t}\phi(t-s)dM_{s}^{j,N}\Big)^{2}\Big]^{\frac{1}{2}}
=&\frac{1}{N}\mathbb{E}_{\theta}\Big[\sum_{j=1}^{N}\int_{0}^{t}\Big(\theta_{ij}\phi(t-s)\Big)^2dZ_{s}^{j,N}\Big]^{\frac{1}{2}}\\
=&\frac{1}{N}\mathbb{E}_{\theta}\Big[\sum_{j=1}^{N}\int_{0}^{t}\Big(\theta_{ij}\phi(t-s)\Big)^2\lambda_{s}^{j,N}ds\Big]^{\frac{1}{2}}\\
\le& \frac{1}{\sqrt{N}}\Big[\int_{0}^{t}\Big(\phi(t-s)\Big)^2\max_{j=1,...,N}\mathbb{E}_{\theta}[\lambda_{s}^{j,N}]ds\Big]^{\frac{1}{2}}.
\end{align*}
From the first part of Lemma \ref{lambar}-(i) and  assumption  \eqref{H(q)}, it follows that
\begin{align}\label{MES}
 \mathbb{E}_{\theta}\Big[\Big(\frac{1}{N}\sum_{j=1}^{N}\theta_{ij}\int_{0}^{t}\phi(t-s)dM_{s}^{j,N}\Big)^{2}\Big]^{\frac{1}{2}} \le \frac{C}{\sqrt{N}}. 
\end{align}
Now, applying  Minkowski’s inequality to the third term of \eqref{lam2} yields 
\begin{align*}
\mathbb{E}_{\theta}\Big[\Big(\frac{1}{N}\sum_{j=1}^{N}\theta_{ij}\int_{0}^{t}\phi(t-s)\lambda_{s}^{j,N}ds\Big)^{2}\Big]^{\frac{1}{2}}\le  \frac{1}{N}\sum_{j=1}^{N}\theta_{ij}\int_{0}^{t}\phi(t-s)\mathbb{E}_\theta\Big[(\lambda_{s}^{j,N})^{2}\Big]^{\frac{1}{2}}ds.
\end{align*}
Therefore,
\begin{align*}
\max_{i=1,...,N}\mathbb{E}_{\theta}\Big[(\lambda_{t}^{i,N})^{2}\Big]^{\frac{1}{2}}\le \mu+ \frac{C}{\sqrt{N}}+\max_{i=1,...,N}\Big\{\frac{1}{N}\sum_{j=1}^{N}\theta_{ij}\int_{0}^{t}\phi(t-s)\mathbb{E}_\theta\Big[(\lambda_{s}^{j,N})^{2}\Big]^{\frac{1}{2}}ds\Big\}.
\end{align*}
Define $b_N(t):=\sup_{0\le s \le t}\max_{i=1,...,N}\mathbb{E}_{\theta}\big[(\lambda_{s}^{i,N})^{2}\big]^{\frac{1}{2}}$,  then we have 
\[b_N(t)\le \mu+\frac{C}{\sqrt{N}}+\Lambda\max_{i=1,...,N}\{\frac{1}{N}\sum_{j=1}^{N}\theta_{ij}\}b_N(t).\] 
Recalling that $A_N(i,j)=\frac{1}{N}\theta_{ij}$, and that  on  $\Omega_{N,K}$, 
$\Lambda\max_{i=1,...,N}\{\sum_{j=1}^NA_N(i,j)\}\le a<1$,  we conclude that 
\[b_N(t)\le \mu+C+ab_N(t),\] 
which completes the proof.

\subsection{Proof of Lemma \ref{lambar} (ii)}
Starting from the definition $\bl_{N}=Q_{N}\boldsymbol{1}_{N}=(I-\Lambda A_N)^{-1}\indiq_N$, we obtain  $\bl_{N}=\indiq_{N}+\Lambda A_{N}\bl_{N}.$ Recalling \eqref{sssy} and  writing $\Lambda=\int_{0}^{t}\phi(t-s)ds + \int_t^\infty \phi(s)ds$, we obtain
$$\lambda_{t}^{i,N}-\mu\ell_{N}(i)=\frac{1}{N}\sum_{j=1}^{N}\theta_{ij}\Big(\int_{0}^{t}\phi(t-s)dZ_{s}^{j,N}-\mu\ell_N(j)\int_{0}^{t}\phi(t-s)ds\Big)-\frac{\mu}{N}\sum_{j=1}^{N}\theta_{ij}\ell_N(j)\int_{t}^{\infty}\phi(s)ds.$$
Applying Minkowski’s inequality, we obtain
\begin{align*}
\mathbb{E}_{\theta}\Big[\Big(\lambda_{t}^{i,N}-\mu\ell_{N}(i)\Big)^{2}\Big]^{\frac{1}{2}}
\le& \mathbb{E}_{\theta}\Big[\Big(\frac{1}{N}\sum_{j=1}^{N}\theta_{ij}\Big(\int_{0}^{t}\phi(t-s)dZ_{s}^{j,N}-\mu\ell_{N}(j)\int_{0}^{t}\phi(t-s)ds\Big)\Big)^{2}\Big]^{\frac{1}{2}}\\
&\qquad+\mu\mathbb{E}_{\theta}\Big[\Big(\frac{1}{N}\sum_{j=1}^{N}\theta_{ij}\ell_N(j)\int_{t}^{\infty}\phi(s)ds\Big)^{2}\Big]^{\frac{1}{2}}.
\end{align*}
As in the proof of Lemma \ref{lambar}-(i), we reformulate the first right hand side term of the above inequality via the process  $M^{i,N}_t$ defined in Section \ref{aux}. In addition, since $\ell_N(j)$ is uniformly bounded on $\Omega_{N,K}$, it follows that 
\begin{align*}
\mathbb{E}_{\theta}\Big[\Big(\lambda_{t}^{i,N}-\mu\ell_{N}(i)\Big)^{2}\Big]^{\frac{1}{2}}\le& \mathbb{E}_{\theta}\Big[\Big(\frac{1}{N}\sum_{j=1}^{N}\theta_{ij}\int_{0}^{t}\phi(t-s)dM_{s}^{j,N}\Big)^{2}\Big]^{\frac{1}{2}}
\\&+\mathbb{E}_{\theta}\Big[\Big(\frac{1}{N}\sum_{j=1}^{N}\theta_{ij}\int_{0}^{t}\phi(t-s)|\lambda_{s}^{j,N}-\mu\ell_{N}(j)|ds\Big)^{2}\Big]^{\frac{1}{2}}
+C\int_{t}^{\infty}\phi(s)ds.
\end{align*}
Define $F^{N,K}_{t}:=\frac{1}{K}\sum_{i=1}^{K}\mathbb{E}_{\theta}[(\lambda_{t}^{i,N}-\mu\ell_{N}(i))^{2}]^{\frac{1}{2}}$. Using\eqref{MES} and  Minkowski’s inequality, we obtain
\begin{align*}
F^{N,K}_{t}\le&  \frac{1}{KN}\sum_{j=1}^{N}\sum_{i=1}^{K}\theta_{ij}\int_{0}^{t}\phi(t-s)\mathbb{E}_\theta\Big[\Big|\lambda_{s}^{j,N}-\mu\ell_N(j)\Big|^{2}\Big]^{\frac{1}{2}}ds+C\int_{t}^{\infty}\phi(s)ds+\frac{C}{\sqrt{N}}\\
\le& \int_{0}^{t}\frac{N}{K}|||I_{K}A_{N}|||_{1}\phi(t-s)F^{N,N}_{s}ds+C\int_{t}^{\infty}\phi(s)ds+\frac{C}{\sqrt{N}}.
\end{align*}
Using  $N |||I_{K}A_{N}|||_{1}=\max_{j=1,\dots,N}\sum_{i=1}^K \theta_{ij}$ and  the bound  $\frac{N}{K}|||I_KA_{N}|||_{1}\leq a/\Lambda $ on $\Omega_{N,K},$ we obtain 
\begin{align*}
F^{N,K}_{t}
\le \int_{0}^{t}\frac{a}{\Lambda}\phi(t-s)F^{N,N}_{s}ds+C\int_{t}^{\infty}\phi(s)ds+\frac{C}{\sqrt{N}}.
\end{align*}
\vip
Defining $g_N(t):=C\int_{t}^{\infty}\phi(s)ds+\frac{C}{\sqrt{N}}$, we have  on $\Omega_{N,K}$, 
for all $K=1,\dots,N$,
\begin{equation}\label{ggg}
F^{N,K}_t\leq \int_{0}^{t}\frac{a}{\Lambda}\phi(t-s)F^{N,N}_{s}ds+g_N(t).
\end{equation}
From assumption   \eqref{H(q)}, namely $\int_{0}^{\infty}(1+s^{q})\phi(s)ds<\infty$ , it follows that $g_N(t)\leq C(\frac{1}{t^{q}}\land 1)+C N^{-1/2}$.
Moreover, due to Lemma \ref{lambar}-(i) and  the uniform boundness of $\ell_N(j)$ on $\Omega_{N,K},$,  we deduce that
 $\sup_{t\ge0}F^{N,N}_{t}\le C$, so that  $\int_0^{t}(\frac{a}{\Lambda})^n\phi^{*n}(t-s)F^{N,N}_{s}ds\leq Ca^n \to 0$ as $n\to\infty.$ 
Hence, iterating \eqref{ggg} (using it once with some fixed $K\in \{1,\dots,N\}$ and then always with $K=N$), 
one concludes that on  $\Omega_{N,K}$, 
\begin{align*}
F^{N,N}_t\leq& \sum_{n\ge 1}\int_0^{t}\Big( \frac{a}{\Lambda}\Big)^n\phi^{*n}(t-s)g_N(s)ds+g_N(t)\\
\le& \sum_{n\ge 1}\int_0^{\frac{t}{2}}\Big(\frac{a}{\Lambda}\Big)^n\phi^{*n}(t-s)g_N(s)ds+\sum_{n\ge 1}\int_{\frac{t}{2}}^{t}\Big(\frac{a}{\Lambda}\Big)^n\phi^{*n}(t-s)g_N(s)ds+g_N(t)\\
\le& C \sum_{n\ge 1}\int_{\frac{t}{2}}^t\Big(\frac{a}{\Lambda}\Big)^n\phi^{*n}(s)ds+g_N\Big(\frac{t}{2}\Big)\sum_{n\ge 1}\int_{0}^{\infty}\Big(\frac{a}{\Lambda}\Big)^n\phi^{*n}(s)ds+g_N(t),
\end{align*}
because $g_N$ is non-increasing and bounded. Recalling that 
$\int_0^\infty \phi^{*n}(s)ds =\Lambda^n$ and,  as shown in  \cite[Proof of Lemma 15-(ii)]{A}, that
$$
\int_r^\infty \phi^{* n}(u)du \leq C n^q\Lambda^n  r^{-q},
$$ 
we conclude that (since $a \in (0,1)$)
\begin{align*}
F^{N,N}_t
\le& C \Big(\frac{t}{2}\Big)^{-q}\sum_{n\ge 1}n^qa^n+ g_N\Big(\frac{t}{2}\Big)\frac{a}{1-a}+g_N(t)
\le \frac{C}{t^q}+\frac{C}{\sqrt{N}}.
\end{align*}
This completes the proof.

\subsection{Proof of Lemma \ref{lambar} (iii)\&(iv)} We restrict our proof to part (iii), since the argument for part (iv) is virtually identical. Recall from \eqref{ee2} that
$$
U^{i,N}_t =  \sum_{n\geq 0} \intot \phi^{*n}(t-s) \sum_{j=1}^N A_N^n(i,j)M^{j,N}_sds.
$$ 
We set $\phi(s)=0$ for $s\le 0$. Separating the cases $n=0$ and $n\geq 1$,  
using $A_N^0(i,j)=\boldsymbol{1}_{\{i=j\}}$ and Minkowski's  inequality implies that on $\Omega_{N,K}$,
\begin{align*}
\mathbb{E}_{\theta}[(U^{i,N}_{t}-U_{s}^{i,N})^4]^\frac{1}{4}
\leq& \mathbb{E}_{\theta}[(M^{i,N}_{t}-M_{s}^{i,N})^4]^\frac{1}{4}\\
&+ \sum_{n\geq 1} \int_0^{t} \Big(\phi^{*n}(t-u)- \phi^{*n}(s-u)\Big)
\Et\Big[\Big(\sum_{j=1}^N A_N^n(i,j)M^{j,N}_u\Big)^4\Big]^\frac{1}{4} du.
\end{align*}
For the first term ($n=0$),  an application of  (\ref{ee3}) and Burkholder's inequality gives
\begin{align*}
\mathbb{E}_{\theta}[(M^{i,N}_{t}-M_{s}^{i,N})^4]
\le& C\mathbb{E}_{\theta}\Big[(Z^{i,N}_{t}-Z_{s}^{i,N})^2\Big].
\end{align*}
By \cite[Lemma 16-(iii)]{A}, on $\Omega_{N,K}$, we have,
$\max_{i=1,\dots,N} \Et [ (Z^{i,N}_{t}-Z^{i,N}_s)^2]\le C(t-s)^2,$
and therefore
\begin{align}\label{M4}
\mathbb{E}_{\theta}[(M^{i,N}_{t}-M_{s}^{i,N})^4]\le C (t-s)^2.
\end{align}
For the second term ($n\ge 1$), another application of (\ref{ee3}) and  Burkholder's inequality yields  
\begin{align*}
    \Et\Big[\Big(\sum_{j=1}^N A_N^n(i,j)M^{j,N}_u\Big)^4\Big]\le&  C\Et\Big[\Big(\big[ \sum_{j=1}^N A_N^n(i,j)M^{j,N},\sum_{j=1}^N A_N^n(i,j)M^{j,N} \big]_u\Big)^2\Big] \\
    \le& C\Et\Big[\Big(\sum_{j=1}^N( A_N^n(i,j))^2Z^{j,N}_u\Big)^2\Big]\\
=&C \sum_{j,j'=1}^N (A_N^n(i,j))^2(A_N^n(i,j'))^2 \Et[ Z^{j,N}_u Z^{j',N}_u].
\end{align*}
By the Cauchy-Schwarz inequality, $\Et[ Z^{j,N}_u Z^{j',N}_u]\le \sqrt{\Et[ (Z^{j,N}_u)^2]\Et[ (Z^{j',N}_u)^2]}$,  and from  \cite[Lemma 16-(iii)]{A}, we have $\max_{i=1,\dots,N} \Et [ (Z^{i,N}_u)^2]\le Cu^2$.  Therefore,
\begin{align*}
\Et\Big[\Big(\sum_{j=1}^N A_N^n(i,j)M^{j,N}_u\Big)^4\Big]\le  C\Big(\sum_{j=1}^N( A_N^n(i,j))^2\Big)^2u^2
    \le C\Big(\sum_{j=1}^NA_N^n(i,j)\Big)^4u^2
    \le C|||A_{N}|||^{4n}_{\infty}u^2.
\end{align*}
This implies that 
\begin{align*}
    & \sum_{n\geq 1} \int_0^{\infty} \Big(\phi^{*n}(t-u)- \phi^{* n}(s-u)\Big)
\Et\Big[\Big(\sum_{i=1}^K\sum_{j=1}^N A_N^n(i,j)M^{j,N}_u\Big)^4\Big]^\frac{1}{4} du\\
\le& C \sum_{n\geq 1} |||A_N|||_\infty^{n}\intot \sqrt{u} \Big(\phi^{*n}(t-u)- \phi^{*n}(s-u)\Big) du\\
\leq& C(t-s)^{1/2} \sum_{n\geq 1} \Lambda^n |||A_N|||_\infty^{n} \leq C (t-s)^{1/2}.
\end{align*}
To justify the second inequality, we use  the following estimate, which holds for all $n \ge 1$:
\begin{align*}
\intot \sqrt{u} (\phi^{*n}(t-u)- \phi^{* n}(s-u)) du=&\intot \sqrt{t-u} \phi^{* n}(u) du - 
\int_0^s \sqrt{s-u}\phi^{* n}(u) du\\
\leq & \int_0^s [\sqrt{t-u}-\sqrt{s-u}] \phi^{* n}(u) du +\int_s^t \sqrt{t-u} \phi^{* n}(u) du \\
\leq & 2 \sqrt{t-s} \int_0^\infty \phi^{* n}(u) du \leq 2 \Lambda^n \sqrt{t-s}.
\end{align*}
Moreover, on $\Omega_{N,K}$, we have $\Lambda |||A_N|||_\infty \leq a<1$. This completes the proof of of  the first part of (iii).

\vip

For the second part, we recall from Lemma \ref{Zt}-(ii) with $K=N$ and $r=\infty$ that we have
$\max_{i=1,\dots,N}\Et[Z^{i,N}_{t}-Z^{i,N}_{s}]\le C(t-s)$ on $\Omega_{N,N}\supset \Omega_{N,K}$,  it follows that
$$
\max_{i=1,\dots,N}\Et[(Z^{i,N}_{t}-Z^{i,N}_{s})^4]\le 8\Big\{\max_{i=1,\dots,N}\Et[Z^{i,N}_{t}-Z^{i,N}_{s}]^4
+\max_{i=1,\dots,N}\Et[(U^{i,N}_{t}-U^{i,N}_{s})^4]\Big\}\le C (t-s)^{4}
$$
as desired.

\section{Proof of Lemma \ref{simple}}\label{app: prof: 4.4}
Recall that $\bX_{N}^{K}=(X_N^K(i))_{i=1,\dots,N}$ with $X_{N}^{K}(i)=(L_{N}(i)-\bar{L}^{K}_{N})\indiq_{\{i\leq K\}}$, where $L_{N}(i):=\sum_{j=1}^{N}A_{N}(i,j)=\frac{1}{N}\sum_{j=1}^{N}\theta_{ij}$ and that $\bar{L}^K_N:=\frac{1}{K}\sum_{i=1}^KL_N(i)$ and $\boldsymbol{X}_{N}:=\boldsymbol{X}_{N}^{N}$ defined in Section \ref{subimn}. Here, $(\theta_{ij})_{i,j=1,...,N}$ is a family  of i.i.d. Bernoulli($p$) random variables, $\boldsymbol{x}^{K}_{N}=
(x_N^K(i))_{i=1,\dots,N}$ with $x_{N}^{K}(i)=(\ell_{N}(i)-\bar{\ell}^{K}_{N})\indiq_{\{i\leq K\}}$, and  $\boldsymbol{x}_{N}$ are defined in Section \ref{subimn}.

\subsection{ Proof of Lemma \ref{simple} (i)} 
Since $(\theta_{ij})_{i,j=1,...,N}$ are  i.i.d. Bernoulli($p$) random variables.  By symmetry, we have
\begin{align*}
\mathbb{E}[\|(I_{K}A_{N})^{T}\bX_{N}^{K}\|_{2}^{2}]
=&\frac{K}{N^{2}}\mathbb{E}\Big[\Big(\sum_{j=1}^{K}\theta_{j1}(L_{N}(j)-\bar{L}_{N}^{K})\Big)^{2}\Big]\\
\le& \frac{2K}{N^{2}}\Big\{\mathbb{E}\Big[\Big(\sum_{j=1}^{K}\theta_{j1}(L_{N}(j)-p)\Big)^{2}\Big]+\mathbb{E}\Big[\Big(\sum_{j=1}^{K}\theta_{j1}(p-\bar{L}_{N}^{K})\Big)^{2}\Big]\Big\}.
\end{align*}
On the one hand, since $\bar{L}^K_N=\frac{1}{NK}\sum_{i=1}^K\sum_{j=1}^N \theta_{ij}$ with 
$(\theta_{ij})_{i,j=1,\dots,N}$ being i.i.d. Bernoulli($p$) random variables and since $\theta_{j1} \leq 1$, it directly follows that
$$
\frac{K}{N^{2}}\mathbb{E}\Big[\Big(\sum_{j=1}^{K}\theta_{j1}(p-\bar{L}_{N}^{K})\Big)^{2}\Big]\le \frac{K^{3}}{N^{2}}\mathbb{E}[(p-\bar{L}_{N}^{K})^{2}]\le \frac{CK^{2}}{N^{3}}.
$$
On the other hand, writing  
$L_{N}(j)=\frac{1}{N}\sum_{i=1}^{N}\theta_{ji}=\frac{1}{N}\big(\sum_{i=2}^{N}\theta_{ji}+\theta_{j1}\big)$, we obtain
\begin{align*}
    &\frac{K}{N^{2}}\mathbb{E}\Big[\Big(\sum_{j=1}^{K}\theta_{j1}(L_{N}(j)-p)\Big)^{2}\Big]\\
    \le& \frac{2K}{N^{4}}\Big\{\mathbb{E}\Big[\Big(\sum_{j=1}^{K}\sum_{i=2}^{N}\theta_{j1}(\theta_{ji}-p)\Big)^{2}\Big]
    +\mathbb{E}\Big[\Big(\sum_{j=1}^{K}\theta_{j1}(\theta_{j1}-p)\Big)^{2}\Big]\Big\}\\
    \le& \frac{4K}{N^{4}}\Big\{\mathbb{E}\Big[\Big(\sum_{j=1}^{K}\sum_{i=2}^{N}(\theta_{j1}-p)(\theta_{ji}-p)\Big)^{2}\Big]+p^{2}\mathbb{E}\Big[\Big(\sum_{j=1}^{K}\sum_{i=2}^{N}(\theta_{ji}-p)\Big)^{2}\Big]+\mathbb{E}\Big[\Big(\sum_{j=1}^{K}\theta_{j1}(\theta_{j1}-p)\Big)^{2}\Big]\Big\}.
\end{align*}
Applying the family $\{(\theta_{ji}-p), i=2,\dots,N,j=1,\dots,K\}$ is independent and centered, it yields that 
\begin{align*}
&\mathbb{E}\Big[\Big(\sum_{j=1}^{K}\sum_{i=2}^{N}(\theta_{j1}-p)(\theta_{ji}-p)\Big)^{2}\Big]\\
&=\mathbb{E}\Big[\sum_{j,j'=1}^{K}\sum_{i,i'=2}^{N}(\theta_{j1}-p)(\theta_{j'1}-p)(\theta_{ji}-p)(\theta_{ji'}-p)\Big]\\
&=\mathbb{E}\Big[\sum_{j=1}^{K}\sum_{i=2}^{N}(\theta_{j1}-p)^2(\theta_{ji}-p)^2\Big]\le CNK.
\end{align*}
Similarly, we have $\mathbb{E}[(\sum_{j=1}^{K}\sum_{i=2}^{N}(\theta_{ji}-p))^{2}]\le CNK$. Furthermore,  
since $\theta_{j1} \leq 1$ and $|\theta_{j1}-p| \leq 1$, we obtain  $\mathbb{E}[(\sum_{j=1}^{K}\theta_{j1}(\theta_{j1}-p))^{2}]\le CK^2$. Consequently,  
 $\frac{K}{N^{2}}\mathbb{E}\Big[\Big(\sum_{j=1}^{K}\theta_{j1}(L_{N}(j)-p)\Big)^{2}\Big]\le C K^2/N^3$, as desired.
 
\subsection{ Proof of Lemma \ref{simple} (ii)}

By the definition of $\bX_{N}$ and $\bX^K_{N}$ in Section \ref{subimn}, we have 
\begin{align*}
(I_{K}A_{N}\bX_{N},\bX_{N}^{K})=\sum_{i,j=1}^{K}A_N(i,j)X_{N}(j)X_{N}^{K}(i).
\end{align*}
Since $A_N(i,j)=\frac{1}{N}\theta_{ij}$ and $\sum_{i=1}^KX_{N}^{K}(i)=\sum_{i=1}^K(L_{N}(i)-\bar{L}^{K}_{N})=0$, we have 
\begin{align*}
 (I_{K}A_{N}\bX_{N},\bX_{N}^{K})=&\frac{1}{N}\sum_{i,j=1}^{K}(\theta_{ij}-p)X_{N}(j)X_{N}^{K}(i)\\
 =&\frac{1}{N}\Big[\sum_{i,j=1}^{K}(\theta_{ij}-p)(L_{N}(j)-p)X_{N}^{K}(i)+(p-\bar{L}_{N})\sum_{i,j=1}^{K}(\theta_{ij}-p)X_{N}^{K}(i)\Big]\\
 =&\frac{1}{N}\Big[\sum_{i,j=1}^{K}(\theta_{ij}-p)(L_{N}(j)-p)(L_{N}(i)-p)+(p-\bar{L}_{N}^{K})\sum_{i,j=1}^{K}(\theta_{ij}-p)(L_{N}(j)-p)\\
 &\qquad+(p-\bar{L}_{N})\sum_{i,j=1}^{K}(\theta_{ij}-p)(L_{N}(i)-p)+(p-\bar{L}_{N})(p-\bar{L}_{N}^{K})\sum_{i,j=1}^{K}(\theta_{ij}-p)\Big].
\end{align*}

We start by analyzing the first term. Recalling that $L_{N}(i):=\sum_{j=1}^{N}A_{N}(i,j)=\frac{1}{N}\sum_{j=1}^{N}\theta_{ij}$, we obtain 
\begin{align*}
    &\mathbb{E}\Big[\Big(\sum_{i,j=1}^{K}(\theta_{ij}-p)(L_{N}(j)-p)(L_{N}(i)-p)\Big)^{2}\Big]\\
    =&\frac{1}{N^{4}}\mathbb{E}\Big[\Big(\sum_{i,j=1}^{K}\sum_{m,n=1}^{N}
    (\theta_{ij}-p)(\theta_{jm}-p)(\theta_{in}-p)\Big)^{2}\Big]\\
   =&\frac{1}{N^{4}}\mathbb{E}\Big[\sum_{i,j,i',j'=1}^{K}\sum_{m,n,m',n'=1}^{N}(\theta_{ij}-p)(\theta_{jm}-p)(\theta_{in}-p)(\theta_{i'j'}-p)(\theta_{j'm'}-p)(\theta_{i'n'}-p)\Big] \le \frac{CK^{2}}{N^{2}},
\end{align*}
since the family $\{(\theta_{ij}-p),i,j=1,\dots,N\}$ is i.i.d., centered, and bounded.
\vip
For the second term, applying the Cauchy–Schwarz inequality yields
\begin{align*}
\mathbb{E}\Big[\Big|(p-\bar{L}_{N}^{K})\sum_{i,j=1}^{K}(\theta_{ij}-p)(L_{N}(j)-p)\Big|\Big]
\le \frac{1}{N}\mathbb{E}[(p-\bar{L}_{N}^{K})^{2}]^\frac{1}{2}
\mathbb{E}\Big[\Big(\sum_{i,j=1}^{K}\sum_{k=1}^{N}(\theta_{ij}-p)(\theta_{jk}-p)\Big)^{2}\Big]^{\frac{1}{2}}.
\end{align*}
This quantity is bounded  by $\frac{\sqrt{K}}{N}$, since on the one hand, we have
\begin{align}\label{varL}
\mathbb{E}[(p-\bar{L}_{N}^{K})^{2}]=\frac{1}{N^2K^2}\mathbb{E}[(\sum_{i=1}^K\sum_{j=1}^N(\theta_{ij}-p))^2]
=\frac{\mathbb{E}[(\theta_{11}-p)^2]}{NK}\le\frac{C}{NK},
\end{align}
 and on the other hand,
\begin{align*}
    &\mathbb{E}\Big[\Big(\sum_{i,j=1}^{K}\sum_{k=1}^{N}(\theta_{ij}-p)(\theta_{jk}-p)\Big)^{2}\Big]\\
    =&\mathbb{E}\Big[\sum_{i,j,i',j'=1}^{K}\sum_{k,k'=1}^{N}(\theta_{ij}-p)(\theta_{i'j'}-p)(\theta_{jk}-p)(\theta_{j'k'}-p)\Big]\le CNK^2.
\end{align*}

For the third term, by \eqref{varL},
we have $\E[(p-\bar{L}_{N})^{2}]=\E[(p-\bar{L}_{N}^N)^{2}]\leq \frac C{N^2}$.  Applying the Cauchy-Schwarz inequality, we obtain
\begin{align*}
&\mathbb{E}\Big[\Big|(p-\bar{L}_{N})\sum_{i,j=1}^{K}(\theta_{ij}-p)(L_{N}(i)-p)\Big|\Big]\\
\le& \frac{1}{N}\mathbb{E}[(p-\bar{L}_{N})^{2}]^{\frac{1}{2}}\mathbb{E}\Big[\Big(\sum_{i,j=1}^{K}\sum_{k=1}^{N}(\theta_{ij}-p)(\theta_{ik}-p)\Big)^{2}\Big]^{\frac{1}{2}}\\
\le& \frac{1}N \sqrt\frac{C}{N^2} \mathbb{E}\Big[\sum_{i,j,i',j'=1}^{K}\sum_{k,k'=1}^{N}(\theta_{ij}-p)(\theta_{ik}-p)(\theta_{i'j'}-p)(\theta_{i'k'}-p)\Big]^{\frac{1}{2}}\\
\le&  \frac{1}N \sqrt\frac{C}{N^2} \sqrt{K^2 N+K^4}= C \frac K{N^{3/2}}+C \frac {K^2}{N^2}\le C \frac K{N}. 
\end{align*}

\vip

Finally, we analyze the last term. Note that
$\mathbb{E}[(\sum_{i,j=1}^{K}(\theta_{ij}-p))^{2}]=\mathbb{E}[\sum_{i,j=1}^{K}(\theta_{ij}-p)^{2}]=CK^2$ 
and $\mathbb{E}[(p-\bar{L}_{N}^{K})^{4}]=\frac{1}{N^4K^4}\E[(\sum_{i=1}^K\sum_{j=1}^N(\theta_{ij}-p))^4]
\le\frac{C}{N^2K^2}$. Therefore, applying the  Cauchy–Schwarz inequality again, we obtain
\begin{align*}
    &\mathbb{E}\Big[\Big|(p-\bar{L}_{N})(p-\bar{L}_{N}^{K})\sum_{i,j=1}^{K}(\theta_{ij}-p)\Big|\Big]\\
    \le& \mathbb{E}[(p-\bar{L}_{N})^{4}]^{\frac{1}{4}}\mathbb{E}[(p-\bar{L}_{N}^{K})^{4}]^{\frac{1}{4}}\mathbb{E}\Big[\Big(\sum_{i,j=1}^{K}(\theta_{ij}-p)\Big)^{2}\Big]^{\frac{1}{2}}\\
    \le& C \Big(\frac1 {N^4} \Big)^{1/4}\Big(\frac1 {N^2 K^2} \Big)^{1/4} \sqrt{K^2}=\frac{\sqrt K}{ N \sqrt N}\le C\frac{K}{ N }.
\end{align*}
Together, the preceding arguments complete the proof.

\subsection{ Proof of Lemma \ref{simple} (iii)} Recall that 
$\boldsymbol{x}^{K}_{N}=
(x_N^K(i))_{i=1,\dots,N}$, where  $x_{N}^{K}(i)=(\ell_{N}(i)-\bar{\ell}^{K}_{N})\indiq_{\{i\leq K\}}$, and that $\bX^K_{N}$, $\ell_{N}(i)$,  $\bar{\ell}^{K}_{N}$ and $\bar{\ell}_{N}$ are defined in Section \ref{subimn}.  Here, $\cA_{N}$ is defined in \eqref{mA}. For any $\boldsymbol{x}, \boldsymbol{y}\in \R^N$, $a>0$, it's not hard to verify the following elementary equality
\[\big| \|\boldsymbol{x}\|_2^2-a^2\|\boldsymbol{y}\|_2^2-\|\boldsymbol{x}-a\boldsymbol{y}\|_2^2 \big|=2\big| a \big(\boldsymbol{x}-a\boldsymbol{y}, \boldsymbol{y}\big)\big|.\]
Then, putting  $\boldsymbol{x}=\boldsymbol{x}^{K}_{N}$, $\boldsymbol{y=}\bX^{K}_{N}$ and $a=\Lambda\bar{\ell}_{N}$, we have 
\begin{align*}
&\mathbb{E}\Big[\boldsymbol{1}_{\Omega_{N,K}\cap \cA_{N}}\Big|\Big(\|\bx^{K}_{N}\|_{2}^{2}-(\Lambda\bar{\ell}_{N})^{2}\|\bX_{N}^{K}\|^{2}_{2}\Big)-\|\bx_{N}^{K}-\Lambda\bar{\ell}_{N} \bX_{N}^{K}\|^{2}_{2}\Big|\Big]\\
&=2\mathbb{E}\Big[\boldsymbol{1}_{\Omega_{N,K}\cap \cA_{N}}\Big|\Lambda\bar{\ell}_{N}\big( \bx^{K}_{N}-\Lambda\bar{\ell}_{N}\bX_{N}^{K}, \bX_{N}^{K} \big)\Big|\Big].
\end{align*}
By \cite[Lemma 5.11]{D}, it holds that 
$$\boldsymbol{x}_{N}^{K}-\Lambda\bar{\ell}_{N}\boldsymbol{X}_{N}^{K}
=\Lambda I_{K}A_{N}(\boldsymbol{x}_{N}-\Lambda\bar{\ell}_{N}\boldsymbol{X}_{N})-\frac{\Lambda}{K}(I_{K}A_{N}\boldsymbol{x}_{N}, \boldsymbol{1}_{K})\boldsymbol{1}_{K}+\bar{\ell}_{N}\Lambda^{2} I_{K}A_{N}\boldsymbol{X}_{N}.
$$
Since $X_{N}^{K}(i)=(L_{N}(i)-\bar{L}^{K}_{N})\indiq_{\{i\leq K\}}$ and $\boldsymbol{1}_K$ is a vector with entries  $\boldsymbol{1}_K(i)=\indiq_{\{1\leq i\leq K\}}$,  it follows that  $(\boldsymbol{1_{K}},\bX_{N}^{K})=\sum_{i=1}^KX_{N}^{K}(i)=0$.  Consequently,
\begin{align*}
(\bx_{N}^{K}-\Lambda\bar{\ell}_{N}\bX_{N}^{K},\bX_{N}^{K})=&\Lambda (I_{K}A_{N}(\bx_{N}-\Lambda \bar{\ell}_{N}\bX_{N}),\bX_{N}^{K})
+\Lambda^2 \bar{\ell}_{N}(I_{K}A_{N}\bX_{N},\bX_{N}^{K}):= e_{N,K,1}+e_{N,K,2}.
\end{align*}
An application of the Cauchy–Schwarz inequality gives
\begin{align*}
\mathbb{E}[\boldsymbol{1}_{\Omega_{N,K}\cap \cA_{N}}e_{N,K,1} ]  =&\mathbb{E}\Big[\boldsymbol{1}_{\Omega_{N,K}\cap \cA_{N}}\Lambda \Big((\bx_{N}-\Lambda \bar{\ell}_{N}\bX_{N}),(I_{K}A_{N})^{T}\bX_{N}^{K}\Big)\Big]\\
    \le& \Lambda \mathbb{E}\Big[\boldsymbol{1}_{\Omega_{N,K}\cap \cA_{N}}\|\bx_{N}-\Lambda \bar{\ell}_{N}\bX_{N}\|^2_{2}\Big]^\frac{1}{2}\E\Big[\|(I_{K}A_{N})^{T}\bX_{N}^{K}\|^2_{2}\Big]^\frac{1}{2}.
\end{align*}
From \cite[Lemma 5.11]{D}, the first term $\mathbb{E}\Big[\boldsymbol{1}_{\Omega_{N,K}\cap \cA_{N}}\|\bx_{N}-\Lambda \bar{\ell}_{N}\bX_{N}\|^2_{2}\Big]$ is bounded by $C N^{-1}$, and Lemma \ref{simple}-(i) bounds the second, yielding
\[\mathbb{E}[\boldsymbol{1}_{\Omega_{N,K}\cap \cA_{N}}e_{N,K,1} ] \le \frac{CK}{N^2}. \]
Furthermore, $\bar{\ell}_N$ is bounded on $\Omega_{N,K}$ by Lemma \ref{lo}, and  Lemma  \ref{simple}-(ii) implies 
\begin{align*}
\mathbb{E}[\boldsymbol{1}_{\Omega_{N,K}}e_{N,K,2}]\leq \frac{CK}{N^2},
\end{align*}
which completes the proof.

\subsection{ Proof of Lemma \ref{simple} (iv)}

Recall  from \cite[Proposition 14]{A} that  $\E[\indiq_{\Omega_N^1} |\bar\ell_N - \frac{1}{1-\Lambda p} |^2] \leq \frac C {N^2}$. Furthermore, Lemma \ref{lo} guarantees that $\bar{\ell}_N$ is bounded by a constant $C$ on $\Omega_{N,K}$. In addition, one can verify (see, e.g., \cite[Equation (9)]{D}) that
$\mathbb{E}[\frac{N^2}{K^2} \|\boldsymbol{X}_{N}^{K}\|_{2}^{4}]\le C$. Together with the Cauchy–Schwarz inequality, these results  imply
\begin{align*}
&\sqrt{K}\frac{N}{K}\E\Big[\omg\Big|(\bar{\ell}_{N})^{2}-\Big(\frac{1}{1-\Lambda p}\Big)^{2}\Big|\|\bX_{N}^{K}\|_{2}^{2}\Big]\\
\le& C \sqrt{K}\mathbb{E}\Big[\frac{N^2}{K^2} \|\boldsymbol{X}_{N}^{K}\|_{2}^{4}\Big]^\frac{1}{2}\E\Big[\omg\Big|\bar{\ell}_{N}-\frac{1}{1-\Lambda p}\Big|^{2}\Big]^\frac{1}{2}\le\frac{C\sqrt{K}}{N} \leq \frac{C}{\sqrt{N}}.
\end{align*}
To complete the proof, it  suffices to show that
$\omg\sqrt{K}[\frac{N}{K}\|\bX_{N}^{K}\|_{2}^{2}-p(1-p)]\stackrel{d}{\longrightarrow} \mathcal{N}(0,p^2(1-p)^2).$
Since $\omg$ tends to $1$ in probability,  it is enough to verify that
$\sqrt{K}[\frac{N}{K}\|\bX_{N}^{K}\|_{2}^{2}-p(1-p)]\stackrel{d}{\longrightarrow} \mathcal{N}(0,p^2(1-p)^2).$
Now observe that
$$
\|\bX_{N}^{K}\|_{2}^{2}=\sum_{i=1}^{K}(L_{N}(i)-\bar{L}^{K}_{N})^{2}=\sum_{i=1}^{K}(L_{N}(i)-p)^{2}-K(p-\bar{L}_{N}^{K})^{2}.
$$
As shown in the proof of Lemma \ref{simple}-(ii), we have  $\mathbb{E}[(p-\bar{L}_{N}^{K})^{2}]\le \frac{C}{NK},$ 
so that $\sqrt K \frac NK \mathbb{E}[K(p-\bar{L}_{N}^{K})^{2}]\le \frac{C}{\sqrt K}$. Therefore, it remains to show that
$$
\xi_{N,K}:=\sqrt{K}\Big[\frac{N}{K}\sum_{i=1}^{K}(L_{N}(i)-p)^{2}-p(1-p)\Big]\stackrel{d}{\longrightarrow} 
\mathcal{N}(0,p^2(1-p)^2).
$$
Recalling that $L_{N}(i)=N^{-1}\sum_{j=1}^N \theta_{ij}$, a direct computation shows that
\begin{align*}
\xi_{N,K}= \frac 1{N\sqrt K}\sum_{i=1}^{K}\sum_{j=1}^{N}[(\theta_{ij}-p)^{2}-p(1-p)]
+ \frac 1{N\sqrt K} \sum_{i=1}^{K}\sum_{j=1}^N \sum_{j'=1,j'\neq j}^{N}(\theta_{ij}-p)(\theta_{ij'}-p).
\end{align*}
Since the variables ${(\theta_{ij}-p)^2 - p(1-p)}$ are i.i.d. with mean zero and finite variance, the central limit theorem implies the convergence in distribution of 
$\frac 1{\sqrt {NK}}\sum_{i=1}^{K}\sum_{j=1}^{N}[(\theta_{ij}-p)^{2}-p(1-p)]$. Therefore, the first term tends to $0$ in probability.  For the second term, applying the the central limit theorem again, we obtain
\begin{align*}
    \frac{1}{N\sqrt{K}}\sum_{i=1}^{K}\sum_{j=1}^N \sum_{j'=1,j'\neq j}^{N}(\theta_{ij}-p)(\theta_{ij'}-p)\stackrel{d}{\longrightarrow} \mathcal{N}(0,p^{2}(1-p)^{2}),
\end{align*}
which completes the proof.

\section{convolution of $\phi$}
We first present two lemmas concerning the convolution of the function $\phi$ introduced in Section \ref{notation}. These will be useful in proving Lemmas \ref{covc} and \ref{mathY}.

\begin{lemma}\label{phisq}
We consider 
$\phi:[0,\infty)\to [0,\infty)$ such that $\Lambda=\int_0^\infty \phi(s)ds <\infty$ and, for some $q\ge 1$, 
$\int_0^\infty s^q\phi(s)ds <\infty.$ 
Then, for all $n\ge 1$ and $r\ge 1,$
$$
\int_{r}^{\infty}\sqrt{s}\phi^{*n}(s)ds\le  C \Lambda^{n}n^qr^{\frac{1}{2}-q} 
\quad \hbox {and} \quad
\int^{\infty}_{0}\sqrt{s}\phi^{*n}(s)ds\le C\sqrt{n}\Lambda^{n}.
$$
\end{lemma}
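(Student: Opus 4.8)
The plan is to reduce everything to known tail estimates for the iterated convolutions $\phi^{*n}$, which are already used implicitly in the paper (e.g.\ in the proof of Lemma~\ref{lambar}-(ii), where the bound $\int_r^\infty \phi^{*n}(u)du \le C n^q \Lambda^n r^{-q}$ is quoted from \cite[Proof of Lemma 15-(ii)]{A}). The two quantities to estimate are a weighted \emph{tail} integral $\int_r^\infty \sqrt s\,\phi^{*n}(s)ds$ and a weighted \emph{full} integral $\int_0^\infty \sqrt s\,\phi^{*n}(s)ds$, and they require slightly different treatments.

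For the full integral, I would apply the Cauchy--Schwarz inequality with respect to the (sub-probability, after normalising by $\Lambda^n$) measure $\phi^{*n}(s)ds$:
\begin{align*}
\int_0^\infty \sqrt s\,\phi^{*n}(s)ds
= \int_0^\infty \sqrt s\,\phi^{*n}(s)ds
\le \Big(\int_0^\infty \phi^{*n}(s)ds\Big)^{1/2}\Big(\int_0^\infty s\,\phi^{*n}(s)ds\Big)^{1/2}
= \Lambda^{n/2}\Big(\int_0^\infty s\,\phi^{*n}(s)ds\Big)^{1/2}.
\end{align*}
It then remains to bound the first moment $\int_0^\infty s\,\phi^{*n}(s)ds$. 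Writing $\phi^{*n}$ as an $n$-fold convolution and using $s = s_1+\dots+s_n$ when $s$ is the sum of the arguments, one gets $\int_0^\infty s\,\phi^{*n}(s)ds = n\,\Lambda^{n-1}\int_0^\infty s\,\phi(s)ds = n\Lambda^{n-1}m_1$, where $m_1 = \int_0^\infty s\phi(s)ds < \infty$ (finite since $q\ge 1$, by interpolating between $\int\phi<\infty$ and $\int s^q\phi<\infty$, or directly if $q\ge 1$). Substituting gives $\int_0^\infty \sqrt s\,\phi^{*n}(s)ds \le \Lambda^{n/2}(n\Lambda^{n-1}m_1)^{1/2} = \sqrt{m_1/\Lambda}\,\sqrt n\,\Lambda^n \le C\sqrt n\,\Lambda^n$, which is the second claim.

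For the tail integral, on $\{s\ge r\}$ with $r\ge 1$ I would use $\sqrt s \le s^q/r^{q-1/2}$ when $q \ge 1/2$ (valid since $s\ge r$ forces $s^{q-1/2}\ge r^{q-1/2}$, hence $\sqrt s = s^q / s^{q-1/2}\le s^q/r^{q-1/2}$), so that
\begin{align*}
\int_r^\infty \sqrt s\,\phi^{*n}(s)ds \le r^{1/2-q}\int_r^\infty s^q\,\phi^{*n}(s)ds \le r^{1/2-q}\int_0^\infty s^q\,\phi^{*n}(s)ds.
\end{align*}
The $q$-th moment $\int_0^\infty s^q\phi^{*n}(s)ds$ is controlled by the same multinomial expansion: with $s = s_1+\dots+s_n$ and $(s_1+\dots+s_n)^q \le n^{q-1}(s_1^q+\dots+s_n^q)$ (power-mean / convexity, for $q\ge 1$), one obtains $\int_0^\infty s^q\phi^{*n}(s)ds \le n^{q-1}\cdot n\,\Lambda^{n-1}\,m_q = n^q\Lambda^{n-1}m_q$ where $m_q = \int_0^\infty s^q\phi(s)ds < \infty$. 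This yields $\int_r^\infty \sqrt s\,\phi^{*n}(s)ds \le (m_q/\Lambda)\,\Lambda^n n^q r^{1/2-q} \le C\Lambda^n n^q r^{1/2-q}$, the first claim. The only mild subtlety is the case $q\in[1/2,1)$ versus $q\ge 1$ in the convexity step for the $q$-th moment; for $q\in[1,\infty)$ the inequality $(\sum s_i)^q \le n^{q-1}\sum s_i^q$ is standard, and since the hypothesis here is $q\ge 1$ (as in \eqref{H(q)}) there is no real obstacle. The main — though still routine — point to get right is the bookkeeping in the multinomial expansion of $\int s^q \phi^{*n}(s)\,ds$ and tracking the constants $m_1, m_q$; everything else is Cauchy--Schwarz and an elementary pointwise bound on $\sqrt s$.
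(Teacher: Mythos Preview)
Your proof is correct and follows essentially the same approach as the paper's: the paper introduces i.i.d.\ random variables $X_1,\dots,X_n$ with density $\Lambda^{-1}\phi$ and $S_n=X_1+\dots+X_n$, then argues via $\E[\sqrt{S_n}\indiq_{\{S_n\ge r\}}]\le r^{1/2-q}\E[S_n^q]$ with $\E[S_n^q]\le n^q\E[X_1^q]$ (Minkowski) for the first bound, and $\E[\sqrt{S_n}]\le \sqrt{n\E[X_1]}$ (Cauchy--Schwarz) for the second. Your integral/multinomial formulation is exactly the same argument stripped of the probabilistic language, with the power-mean inequality playing the role of Minkowski.
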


\begin{proof}
We introduce some i.i.d. random variables $X_1,X_2,\dots$ with density $\Lambda^{-1}\phi$ and
set $S_0=0$ as well as $S_n=X_1+\dots+X_n$ for all $n\geq 1$. By the Minkowski inequality and since  $\E[X_1^q]=\Lambda^{-1}\int_0^\infty s^q \phi(s)ds<\infty$ by assumption, we obtain $\E[S_n^q]\le n^q\E[X_1^q]\le Cn^q$. Consequently,
\begin{align*}
\int_{r}^{\infty}\sqrt{s}\phi^{*n}(s)ds=
\Lambda^{n}\E[\sqrt{S_n}\indiq_{\{S_n\ge r\}}] \leq \Lambda^{n}r^{\frac{1}{2}-q}\E[S_n^q]\leq 
\Lambda^{n}n^qr^{\frac{1}{2}-q} \E[X_1^q]\leq C \Lambda^{n}n^qr^{\frac{1}{2}-q}.
\end{align*}
For the second part, we write
$$
\int^{\infty}_{0}\sqrt{s}\phi^{*n}(s)ds=\Lambda^{n}\E[\sqrt{S_n}] \le \sqrt{n}\Lambda^{n}\sqrt{\E[{X_1}]}\le C\sqrt{n}\Lambda^{n}
$$
by the Cauchy-Schwarz inequality.
\end{proof}
\begin{lemma}\label{phi}
Under the same conditions as in Lemma \ref{phisq}, we have,
for $n\in \mathbb{N}_+$ and $r\ge 1,$
$$\Big|\int_{0}^{t}\phi^{*n}(s)ds-\Lambda^{n}\Big|\le n\Lambda^{n-1} \int^\infty_{\frac{t}{n}}\phi(s)ds.$$
\end{lemma}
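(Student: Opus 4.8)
The plan is to reuse the probabilistic representation from the proof of Lemma \ref{phisq}. Let $X_1, X_2, \dots$ be i.i.d. random variables with density $\Lambda^{-1}\phi$ (here one uses $\Lambda \in (0,\infty)$), and set $S_0 = 0$, $S_n = X_1 + \cdots + X_n$ for $n \ge 1$. Since the density of $S_n$ is $\Lambda^{-n}\phi^{*n}$, we have the identity
$$\int_{0}^{t}\phi^{*n}(s)\,ds = \Lambda^{n}\,\mathbb{P}(S_n \le t).$$
First I would observe that the left-hand side is bounded above by $\Lambda^{n} = \int_0^\infty \phi^{*n}(s)\,ds$, so the quantity inside the absolute value in the statement is nonpositive and equals $-\Lambda^{n}\mathbb{P}(S_n > t)$; hence
$$\Big|\int_{0}^{t}\phi^{*n}(s)\,ds - \Lambda^{n}\Big| = \Lambda^{n}\,\mathbb{P}(S_n > t).$$

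The key step is then the elementary inclusion $\{S_n > t\} \subseteq \bigcup_{i=1}^{n}\{X_i > t/n\}$, valid because $X_i \le t/n$ for every $i$ forces $S_n \le t$. A union bound gives $\mathbb{P}(S_n > t) \le n\,\mathbb{P}(X_1 > t/n) = n\Lambda^{-1}\int_{t/n}^{\infty}\phi(s)\,ds$. Combining this with the previous display yields
$$\Big|\int_{0}^{t}\phi^{*n}(s)\,ds - \Lambda^{n}\Big| \le \Lambda^{n}\cdot n\Lambda^{-1}\int_{t/n}^{\infty}\phi(s)\,ds = n\Lambda^{n-1}\int_{t/n}^{\infty}\phi(s)\,ds,$$
which is exactly the claimed bound.

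There is essentially no obstacle: once the random-walk representation is in place the argument is a single union bound, and in fact the moment hypothesis $\int_0^\infty s^q\phi(s)\,ds < \infty$ and the restriction $r \ge 1$ play no role in this particular estimate. The only minor points worth a remark are that the statement is only asserted for $n \in \mathbb{N}_+$ (so no division-by-zero issue with $\Lambda^{n-1}$), and that the degenerate case $t = 0$ is covered trivially since both sides then equal $\Lambda^{n}$.
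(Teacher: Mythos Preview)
Your proof is correct and follows essentially the same approach as the paper: both use the random-walk representation $\int_0^t \phi^{*n}(s)\,ds = \Lambda^n \mathbb{P}(S_n \le t)$ with $S_n$ a sum of i.i.d.\ variables of density $\Lambda^{-1}\phi$, then bound $\mathbb{P}(S_n > t)$ via the inclusion $\{S_n > t\} \subseteq \{\max_i X_i > t/n\}$ and a union bound.
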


\begin{proof}
Consider $n$ i.i.d random variables $\{X_i\}_{i=1,...,n}$ with density $\phi(s)/\Lambda$ and write
\begin{align*}
\Big|\int_{0}^{t}\phi^{*n}(s)ds-\Lambda^{n}\Big|=\Lambda^n \mathbb{P} \Big(\sum_{i=1}^n X_i\ge t\Big)
\leq \Lambda^n \mathbb{P}  \Big(\max_{i=1,\dots,n} X_i\ge t/n\Big) \leq 
n \Lambda^{n} \mathbb{P}  (X_1 \geq t/n),
\end{align*}
which completes the proof.
\end{proof}

\section{Proof of Lemmas \ref{covc} \& \ref{mathY}}\label{app: prof: 6.4-6}

\subsection{Proof of Lemma \ref{covc} (i)}
We work on  the set $\Omega_{N,K}$.
Recall (\ref{ee3}). By \cite[Lemma 16-(iii)]{A} and the Cauchy-Schwarz inequality, we have
\begin{align*}
\mathbb{E}_{\theta}[(M^{j,N}_{(a\Delta-s)}-M^{j,N}_{a\Delta})(M_{(a\Delta-s')}^{j',N}-M_{a\Delta}^{j',N})]&\le \indiq_{\{j=j'\}}\mathbb{E}_{\theta}[Z^{j,N}_{a\Delta}-Z^{j,N}_{(a\Delta-s)}]^\frac{1}{2}\mathbb{E}_{\theta}[Z^{j,N}_{a\Delta}-Z^{j,N}_{(a\Delta-s')}]^\frac{1}{2}\\&\le \indiq_{\{j=j'\}}\sqrt{ss'}.
\end{align*}
From Lemma \ref{phisq}, we already have
$
\int_r^\infty \sqrt{u}\phi^{* n}(u)du \leq C \Lambda^{n}n^qr^{\frac{1}{2}-q}.$ Recalling \eqref{Bdelta}, we obtain
\begin{align*}
\mathbb{E}_{\theta}[(B_{a\Delta}^{N,K})^{2}]&= \frac{1}{K^{2}}\sum_{i,i'=1}^{K}\sum_{j,j'=1}^{N}\sum_{n,m\ge 1}\int^{a\Delta}_{\Delta}\int^{a\Delta}_{\Delta}\phi^{*n}(s)\phi^{*m}(s')A_{N}^{n}(i,j)A_{N}^{m}(i',j')\\
&\hskip4cm\times\mathbb{E}_{\theta}[(M^{j,N}_{(a\Delta-s)}-M^{j,N}_{a\Delta})(M_{(a\Delta-s')}^{j',N}-M_{a\Delta}^{j',N})]dsds'\\
&\le \frac{CN}{K^2}\Big(\sum_{n,m\ge 1} |||I_KA_{N}|||^{2}_1|||A_{N}|||^{n+m-2}_{1}\int_{\Delta}^{\infty}\int_{\Delta}^{\infty}\sqrt{ss'}\phi^{*n}(s)\phi^{*m}(s')dsds'\Big)\\
&\le \frac{C}{N}\Big(\sum_{n\ge 1} n^q\Lambda^{n}|||A_{N}|||^{n-1}_{1}\Big)^{2}\Delta^{1-2q}\le \frac{C}{N}\Delta^{1-2q},
\end{align*}
where the last inequality follows because on $\Omega_{N,K}$, $\Lambda |||A_{N}|||_{1}\le a<1$ and $\sum_{n\ge 1} n^qa^{n-1}<\infty$.

\subsection{Proof of Lemma \ref{covc} (ii)} Recalling \eqref{Cdelta}, we write 
$$
C_{a\Delta}^{N,K}=\sum_{n\ge 1}\int^{\Delta}_{0}\phi^{*n}(s)O_{s,s,a\Delta}^{N,K,n},
$$
where for $r\ge 0$ and $0\le s\le a\Delta,$
$$
O_{r,s,a\Delta}^{N,K,n}:=\frac{1}{K}\sum_{i=1}^{K}\sum_{j=1}^{N}A_{N}^{n}(i,j)(M_{(a\Delta-s)}^{j,N}-M^{j,N}_{a\Delta-s+r}).
$$
For fixed $s$, $\{O_{r,s,a\Delta}^{N,K,n}\}_{r\ge 0}$ is a family of martingale w.r.t the filtration $(\mathcal{F}_{a\Delta-s+r})_{r\ge 0}$.
By (\ref{ee3}), we have $[M^{i,N},M^{j,N}]_{t}=\boldsymbol{1}_{\{i=j\}}Z_{t}^{i,N}$. Hence, 
for $n\ge 1$, on $\Omega_{N,K},$
\begin{align*}
[O_{.,s,a\Delta}^{N,K,n},O_{.,s,a\Delta}^{N,K,n}]_{r}=&\frac{1}{K^{2}}\sum_{j=1}^{N}\Big(\sum_{i=1}^{K}A^{n}_{N}(i,j)\Big)^{2}(Z_{a\Delta-s+r}^{j,N}-Z_{a\Delta-s}^{j,N})
\\\le& \frac{N}{K^{2}}|||I_{K}A^n_{N}|||^{2}_{1}(\bar{Z}_{a\Delta-s+r}^{N}-\bar{Z}_{a\Delta-s}^{N})\\
\le& \frac{N}{K^{2}}|||I_{K}A_{N}|||^{2}_{1}|||A_{N}|||^{2n-2}_{1}(\bar{Z}_{a\Delta-s+r}^{N}-\bar{Z}_{a\Delta-s}^{N}).
\end{align*}
Since $|||I_{K}A_{N}|||^{2}_{1}\le \frac{1}{\Lambda^2}\frac{K^2}{N^2}$ on $\Omega_{N,K}$, we obtain 
\[[O_{.,s,a\Delta}^{N,K,n},O_{.,s,a\Delta}^{N,K,n}]_{r}\le \frac{C}{N}|||A_{N}|||^{2n-2}_{1}(\bar{Z}_{a\Delta-s+r}^{N}-\bar{Z}_{a\Delta-s}^{N}).\]
Applying the Burkholder-Davis-Gundy inequality yields, on $\Omega_{N,K}$,
\begin{align*}
    \mathbb{E}_\theta[(O_{r,s,a\Delta}^{N,K,n})^4]\le 4\mathbb{E}_\theta\Big[\Big([O_{,s,a\Delta}^{N,K,n},O_{,s.a\Delta}^{N,K,n}]_{r}\Big)^2\Big]\le \frac{C|||A_{N}|||^{4n-4}_{1}}{N^2}\mathbb{E}_\theta[(\bar{Z}_{a\Delta-s+r}^{N}-\bar{Z}_{a\Delta-s}^{N})^2].
\end{align*}
From \cite [lemma 16-(iii)]{A}, we already have $\sup_{i=1,\dots,N}\Et[(Z^{i,N}_t-Z^{i,N}_s)^2] \leq C (t-s)^2.$
Using the second part of Lemma \ref{phisq} together with  the Minkowski inequality, we obtain
\begin{align*}
\mathbb{E}_{\theta}[(C_{a\Delta}^{N,K})^{4}]^\frac{1}{4}\le& \sum_{n\ge 1}\int^{\Delta}_{0}\phi^{*n}(s)\mathbb{E}_\theta[(O_{s,s,a\Delta}^{N,K,n})^4]^\frac{1}{4}ds\\
\le& \frac{1}{\sqrt{N}}\sum_{n\ge 0}|||A_{N}|||^{n}_{1}\int^{\Delta}_{0}\sqrt{s}\phi^{*(n+1)}(s)ds\\
\le& \frac{1}{\sqrt{N}}\sum_{n\ge 0}\sqrt{n+1}\,\Lambda^{n+1}|||A_{N}|||^{n}_{1}\le \frac{C}{\sqrt{N}}.
\end{align*}
This completes the proof.

\subsection{Proof of Lemma \ref{covc} (iii)}

Because
\begin{align*}
&\mathbb{C}ov_{\theta}[(C_{a\Delta}^{N,K}-C_{(a-1)\Delta}^{N,K})^{2},(C_{b\Delta}^{N,K}-C_{(b-1)\Delta}^{N,K})^{2}]\\
=&\frac{1}{K^{4}}\sum_{i,k,i',k'=1}^{K}\sum_{j,l,j',l'=1}^{N}\sum_{m,n,m',n'\ge 1}
\int_{0}^{\Delta}\int_{0}^{\Delta}\int_{0}^{\Delta}\int_{0}^{\Delta}
\phi^{*n}(s)\phi^{*m}(s)\phi^{*n'}(s')\phi^{*m'}(s') \\
&\hskip2cm \times A_{N}^{n}(i,j)A_{N}^{m}(k,l)A_{N}^{n'}(i',j')A_{N}^{m'}(k',l')\\
&\hskip2cm \times\mathbb{C}ov_{\theta}[(M^{j,N}_{(a\Delta-s)}-M^{j,N}_{a\Delta}-M^{j,N}_{((a-1)\Delta-s)}+M^{j,N}_{(a-1)\Delta})\\
&\hskip4cm \times (M^{j',N}_{(a\Delta-s')}-M^{j',N}_{a\Delta}-M^{j',N}_{((a-1)\Delta-s')}+M^{j',N}_{(a-1)\Delta}),\\
&\hskip3.3cm  (M^{l,N}_{(b\Delta-r)}-M^{l,N}_{b\Delta}-M^{l,N}_{((b-1)\Delta-r)}+M^{l,N}_{(b-1)\Delta})\\
&\hskip4cm \times (M^{l',N}_{(b\Delta-r')}-M^{l',N}_{b\Delta}+(M^{l',N}_{((b-1)\Delta-r')}-M^{l',N}_{(b-1)\Delta})]dsdrds'dr'.
\end{align*}
Define $\zeta^{j,N}_{a\Delta,s}:=M^{j,N}_{(a\Delta-s)}-M^{j,N}_{a\Delta}$ for $0\le s\le \Delta.$ Then we  rewrite 
\begin{align*}
&\mathbb{C}ov_{\theta}[(M^{j,N}_{(a\Delta-s)}-M^{j,N}_{a\Delta}-M^{j,N}_{((a-1)\Delta-s)}+M^{j,N}_{(a-1)\Delta})\\
&\hskip4cm \times (M^{j',N}_{(a\Delta-s')}-M^{j',N}_{a\Delta}-M^{j',N}_{((a-1)\Delta-s')}+M^{j',N}_{(a-1)\Delta}),\\
&\hskip1cm (M^{l,N}_{(b\Delta-r)}-M^{l,N}_{b\Delta}-M^{l,N}_{((b-1)\Delta-r)}+M^{l,N}_{(b-1)\Delta})\\
&\hskip4cm \times (M^{l',N}_{(b\Delta-r')}-M^{l',N}_{b\Delta}+(M^{l',N}_{((b-1)\Delta-r')}-M^{l',N}_{(b-1)\Delta})]\\
=&\mathbb{C}ov_{\theta}[(\zeta^{j,N}_{a\Delta,s}-\zeta^{j,N}_{(a-1)\Delta,s})(\zeta^{j',N}_{a\Delta,s'}-\zeta^{j',N}_{(a-1)\Delta,s'}),
(\zeta^{l,N}_{b\Delta,r}-\zeta^{l,N}_{(b-1)\Delta,r})(\zeta^{l',N}_{b\Delta,r'}-\zeta^{l',N}_{(b-1)\Delta,r'})].
\end{align*}
Since $0\le s,s',r,r'\le \Delta,$ we have
$$
\Et[\zeta^{j,N}_{(a-1)\Delta,s}\zeta^{j',N}_{a\Delta,s'}]=\Et[\zeta^{j,N}_{a\Delta,s}\zeta^{j',N}_{(a-1)\Delta,s'}]=\Et[\zeta^{l,N}_{(b-1)\Delta,r}\zeta^{l',N}_{b\Delta,r'}]=\Et[\zeta^{l,N}_{b\Delta,r}\zeta^{l',N}_{(b-1)\Delta,r'}]=0.
$$ 
Without loss of generality, assume $a-b\ge 4$ and $s\le s'$. First, note that
\begin{align*}
    &\mathbb{C}ov_{\theta}[\zeta^{j,N}_{a\Delta,s}\zeta^{j',N}_{(a-1)\Delta,s'},(\zeta^{l,N}_{b\Delta,r}-\zeta^{l,N}_{(b-1)\Delta,r})(\zeta^{l',N}_{b\Delta,r'}-\zeta^{l',N}_{(b-1)\Delta,r'})]\\
    =&\Et\Bigg[\Et\Big[\zeta^{j,N}_{a\Delta,s}|\mathcal{F}_{(a-1)\Delta}\Big]\zeta^{j',N}_{(a-1)\Delta,s'}\\
    \times&\Big((\zeta^{l,N}_{b\Delta,r}-\zeta^{l,N}_{(b-1)\Delta,r})(\zeta^{l',N}_{b\Delta,r'}-\zeta^{l',N}_{(b-1)\Delta,r'})-\Et\Big[(\zeta^{l,N}_{b\Delta,r}-\zeta^{l,N}_{(b-1)\Delta,r})(\zeta^{l',N}_{b\Delta,r'}-\zeta^{l',N}_{(b-1)\Delta,r'})\Big]\Big)\Bigg]=0.
\end{align*}
For the same reason, we also obtain
$$
\mathbb{C}ov_{\theta}[\zeta^{j,N}_{(a-1)\Delta,s}\zeta^{j',N}_{a\Delta,s'},(\zeta^{l,N}_{b\Delta,r}-\zeta^{l,N}_{(b-1)\Delta,r})(\zeta^{l',N}_{b\Delta,r'}-\zeta^{l',N}_{(b-1)\Delta,r'})]=0.
$$
If $j\ne j'$, the covariance vanishes because 
$$\Et[(\zeta^{j,N}_{a\Delta,s}-\zeta^{j,N}_{(a-1)\Delta,s})(\zeta^{j',N}_{a\Delta,s'}-\zeta^{j',N}_{(a-1)\Delta,s'})|\mathcal{F}_{b\Delta}]=0.$$ 
Now assume $j=j',$ then
\begin{align*}
\mathcal{K}:=& \mathbb{C}ov_{\theta}[(\zeta^{j,N}_{a\Delta,s}-\zeta^{j,N}_{(a-1)\Delta,s})(\zeta^{j',N}_{a\Delta,s'}-\zeta^{j',N}_{(a-1)\Delta,s'}),
(\zeta^{l,N}_{b\Delta,r}-\zeta^{l,N}_{(b-1)\Delta,r})(\zeta^{l',N}_{b\Delta,r'}-\zeta^{l',N}_{(b-1)\Delta,r'})]\\
=& \mathbb{C}ov_{\theta}[(\zeta^{j,N}_{a\Delta,s}\zeta^{j',N}_{a\Delta,s'}+\zeta^{j,N}_{(a-1)\Delta,s}\zeta^{j',N}_{(a-1)\Delta,s'}),
(\zeta^{l,N}_{b\Delta,r}-\zeta^{l,N}_{(b-1)\Delta,r})(\zeta^{l',N}_{b\Delta,r'}-\zeta^{l',N}_{(b-1)\Delta,r'})].
\end{align*}
Since $\Et[\zeta^{j,N}_{a\Delta,s}\zeta^{j,N}_{a\Delta,s'}|\mathcal{F}_{a\Delta-s'}]=\Et[(M^{j,N}_{a\Delta})^2-(M^{j,N}_{a\Delta-s})^2|\mathcal{F}_{a\Delta-s'}]$, and  as usual 
$(M^{j,N}_{a\Delta})^2-(M^{j,N}_{a\Delta-s})^2=2\int^{a\Delta}_{a\Delta-s}M^{j,N}_{\tau-}
dM^{j,N}_\tau +Z^{j,N}_{a\Delta}-Z^{j,N}_{a\Delta-s}$.We note that  $\Et\big[\int^{a\Delta}_{a\Delta-s} M^{j,N}_{\tau-}dM^{j,N}_\tau|\mathcal{F}_{a\Delta-s}\big]=0$. Moreover, because $a-b\ge4$, we obtain \begin{align*}
&\mathbb{C}ov_{\theta}[(M^{j,N}_{a\Delta})^2-(M^{j,N}_{a\Delta-s})^2, (\zeta^{l,N}_{b\Delta,r}-\zeta^{l,N}_{(b-1)\Delta,r})(\zeta^{l',N}_{b\Delta,r'}-\zeta^{l',N}_{(b-1)\Delta,r'})]\\
&=\mathbb{C}ov_{\theta}[Z^{j,N}_{a\Delta}-Z^{j,N}_{a\Delta-s}, (\zeta^{l,N}_{b\Delta,r}-\zeta^{l,N}_{(b-1)\Delta,r})(\zeta^{l',N}_{b\Delta,r'}-\zeta^{l',N}_{(b-1)\Delta,r'})].
\end{align*}
Consequently,
\begin{align*}
    \mathcal{K}=&\mathbb{C}ov_{\theta}[Z^{j,N}_{a\Delta}-Z^{j,N}_{a\Delta-s}+Z^{j,N}_{(a-1)\Delta}-Z^{j,N}_{(a-1)\Delta-s},
(\zeta^{l,N}_{b\Delta,r}-\zeta^{l,N}_{(b-1)\Delta,r})(\zeta^{l',N}_{b\Delta,r'}-\zeta^{l',N}_{(b-1)\Delta,r'})]\\
=&\mathbb{C}ov_{\theta}[U^{j,N}_{a\Delta}-U^{j,N}_{a\Delta-s}+U^{j,N}_{(a-1)\Delta}-U^{j,N}_{(a-1)\Delta-s},
(\zeta^{l,N}_{b\Delta,r}-\zeta^{l,N}_{(b-1)\Delta,r})(\zeta^{l',N}_{b\Delta,r'}-\zeta^{l',N}_{(b-1)\Delta,r'})].
\end{align*}
Recall that $\beta_n(x,z,r)=\phi^{\star n}(z-r)-\phi^{\star n}(x-r)$. We can write 
$$
U^{i,N}_{a\Delta}-U^{i,N}_{a\Delta-s}=\sum_{n\geq 0} \int_0^{a\Delta} \beta_n(a\Delta-s,a\Delta,r) \sum_{j=1}^N A_N^n(i,j) M^{j,N}_rdr=R^{i,N}_{a\Delta,a\Delta-s}+T^{i,N}_{a\Delta,a\Delta-s},
$$
where
\begin{align*}
R^{i,N}_{a\Delta,a\Delta-s}=&  \sum_{n\geq 0} \int_{(a-1)\Delta-s}^{a\Delta} \beta_n(x,z,r) \sum_{j=1}^N A_N^n(i,j) 
(M^{j,N}_r  -M^{j,N}_{(a-1)\Delta-s})dr,\\
T^{i,N}_{a\Delta,a\Delta-s}=&  \sum_{n\geq 0} \Big(\int_{(a-1)\Delta-s}^{a\Delta} \beta_n(x,z,r) dr \Big)\sum_{j=1}^N A_N^n(i,j) 
M^{j,N}_{(a-1)\Delta-s} \\ 
&+ \sum_{n\geq 0} \int_0^{(a-1)\Delta-s}\beta_n(x,z,r) \sum_{j=1}^N A_N^n(i,j) M^{j,N}_rdr.
\end{align*}
The conditional expectation of $R^{i,N}_{a\Delta,a\Delta-s}$ given $\mathcal{F}_{b\Delta}$ is zero.
Therefore,
\begin{align*}
    \mathcal{K}=\mathbb{C}ov_{\theta}[T^{i,N}_{a\Delta,a\Delta-s}+T^{i,N}_{(a-1)\Delta,(a-1)\Delta-s},
(\zeta^{l,N}_{b\Delta,r}-\zeta^{l,N}_{(b-1)\Delta,r})(\zeta^{l',N}_{b\Delta,r'}-\zeta^{l',N}_{(b-1)\Delta,r'})].
\end{align*}
Referring to the proof of Lemma 30, Step 1 in \cite{A} (noting that $T^{i,N}_{a\Delta,a\Delta-s}$ 
coincides with  $X^{i,N}_{a\Delta-s,a\Delta}$ in \cite{A}), we have 
$\sup_{i=1,\dots,N} \Et[(T^{i,N}_{a\Delta,a\Delta-s})^4] \leq C t^2 \Delta^{-4q}.$ Since $r\le \Delta$ and by \cite[Lemma 16-(iii)]{A}, we obtain
\begin{align*}
    \Et[(\zeta^{l,N}_{b\Delta,r}-\zeta^{l,N}_{(b-1)\Delta,r})^4]^\frac{1}{4}&\le \Et[(M^{l,N}_{(b\Delta-r)}-M^{l,N}_{b\Delta})^4]^\frac{1}{4}+\Et[(M^{l,N}_{((b-1)\Delta-r)}-M^{l,N}_{(b-1)\Delta})^4]^\frac{1}{4}\\
    &\le C\sqrt{\Delta},
\end{align*}
Hence, applying \cite[Lemma 16-(iii)]{A} once more,
\begin{align*}
|\mathcal{K}|\le&\{\Et[(T^{i,N}_{a\Delta,a\Delta-s})^2]^\frac{1}{2}+\Et[(T^{i,N}_{(a-1)\Delta,(a-1)\Delta-s})^2]^\frac{1}{2}\}\\
&\qquad \times 
\Et[(\zeta^{l,N}_{b\Delta,r}-\zeta^{l,N}_{(b-1)\Delta,r})^4]^\frac{1}{4}\Et[(\zeta^{l',N}_{b\Delta,r'}-\zeta^{l',N}_{(b-1)\Delta,r'})^4]^\frac{1}{4}\\
\le&  C t^{1/2} \Delta^{-q} \Delta.
\end{align*}
Moreover, by symmetry, we conclude that for $|a-b|\ge 4$,
\begin{align*}
&\mathbb{C}ov_{\theta}[(\zeta^{j,N}_{a\Delta,s}-\zeta^{j,N}_{(a-1)\Delta,s})(\zeta^{j',N}_{a\Delta,s'}-\zeta^{j',N}_{(a-1)\Delta,s'}),(\zeta^{l,N}_{b\Delta,r}-\zeta^{l,N}_{(b-1)\Delta,r})(\zeta^{l',N}_{b\Delta,r'}-\zeta^{l',N}_{(b-1)\Delta,r'})]\\
\le& C\Big(\boldsymbol{1}_{\{l=l'\}}+\boldsymbol{1}_{\{j=j'\}}\Big)\sqrt{t}\Delta^{1-q}.
\end{align*}
Recalling the definition of $\Omega_{N,K}$, we have $$|||I_KA^n_{N}|||_1\le |||I_KA_{N}|||_1|||A_{N}|||^{n-1}_1\le \frac{CK}{N}|||A_{N}|||^{n-1}_1,$$ which implies
\begin{align*}
   &\mathbb{C}ov_{\theta}[(C_{a\Delta}^{N,K}-C_{(a-1)\Delta}^{N,K})^{2},(C_{b\Delta}^{N,K}-C_{(b-1)\Delta}^{N,K})^{2}]\\
   \le& \frac{C\sqrt{t}\Delta^{1-q}}{K^{4}}\sum_{i,k,i',k'=1}^{K}\sum_{j,l,j',l'=1}^{N}\sum_{m,n,m',n'\ge 1}\Lambda^{n+m+n'+m'}
A_{N}^{n}(i,j)A_{N}^{m}(k,l)A_{N}^{n'}(i',j')A_{N}^{m'}(k',l')\\
&\hskip11cm\times\Big(\boldsymbol{1}_{\{l=l'\}}+\boldsymbol{1}_{\{j=j'\}}\Big)\\
\le& \frac{C\sqrt{t}\Delta^{1-q}}{K^{4}}N^3\sum_{m,n,m',n'\ge 1}
\Lambda^{n+m+n'+m'}
|||I_KA^n_{N}|||_1|||I_KA^m_{N}|||_1|||I_KA^{n'}_{N}|||_1|||I_KA^{m'}_{N}|||_1
\\
   \le& \sum_{n,m,n',m'\ge 1}\frac{1}{K^{4}}N^{3}\Big(\frac{K}{N}\Big)^{4} \Lambda^4 (\Lambda|||A_{N}|||_1)^{n+m+n'+m'-4}
    \frac{C\sqrt{t}}{\Delta^{q-1}}
    \le \frac{C\sqrt{t}}{N\Delta^{q-1}}.
\end{align*}
This completes the proof.

\subsection{Proof of Lemma \ref{mathY} (i)\&(ii) }
Recall  $c^K_{N}(j)=\sum_{i=1}^{K}Q_{N}(i,j)$ with $Q_{N}=\sum_{n\ge0}\Lambda^n A_N^n$. Remind $\bar{X}_{(a-1)\Delta,a\Delta}^{N,K}= \frac{1}{K} \sum_{i=1}^K X_{(a-1)\Delta,a\Delta}^{i,N}$, $\mathcal{Y}_{(a-1)\Delta,a\Delta}^{N,K}$ and $X_{(a-1)\Delta,a\Delta}^{i,N}$ defined in \eqref{Ya1} and \eqref{defXa}, respectively. 
We first rewrite 
\begin{align*}
\mathcal{Y}_{(a-1)\Delta,a\Delta}^{N,K}=\frac{1}{K}\sum_{j=1}^{N}\sum_{i=1}^{K}\sum_{n\ge0}\Lambda^n A_N^n(i,j)(M^{j,N}_{a\Delta}-M^{j,N}_{(a-1)\Delta}).
\end{align*}
Then, using \eqref{ee3} and Lemma \ref{phi}, we have, on $\Omega_{N,K}$
\begin{align*}
&\boldsymbol{1}_{\Omega_{N,K}}
\mathbb{E}_{\theta}[(\mathcal{Y}_{(a-1)\Delta,a\Delta}^{N,K}-\bar{X}_{(a-1)\Delta,a\Delta}^{N,K})^{2}]\\
\le& 
\frac{2}{K^2}\Et\Big[\Big|\sum_{j=1}^{N}\Big\{\sum_{n\ge 0}\sum_{i=1}^{K}\Big(\int_{0}^{a\Delta}\phi^{*n}(a\Delta-s)ds-\Lambda^{n}\Big) A_{N}^{n}(i,j)\Big\}M^{j,N}_{a\Delta}\Big|^2\Big]\\
&+\frac{2}{K^2}\Et\Big[\Big|\sum_{j=1}^{N}\Big\{\sum_{n\ge 0}\sum_{i=1}^{K}\Big(\int_{0}^{(a-1)\Delta}\phi^{*n}((a-1)\Delta-s)ds-\Lambda^{n}\Big) A_{N}^{n}(i,j)\Big\}M^{j,N}_{(a-1)\Delta}\Big|^2\Big]\\
=& 
\frac{2}{K^2}\sum_{j=1}^{N}\Big\{\sum_{n\ge 0}\sum_{i=1}^{K}\Big(\int_{0}^{a\Delta}\phi^{*n}(a\Delta-s)ds-\Lambda^{n}\Big) A_{N}^{n}(i,j)\Big\}^2\Et[Z^{j,N}_{a\Delta}]\\
&+\frac{2}{K^2}\sum_{j=1}^{N}\Big\{\sum_{n\ge 0}\sum_{i=1}^{K}\Big(\int_{0}^{(a-1)\Delta}\phi^{*n}((a-1)\Delta-s)ds-\Lambda^{n}\Big) A_{N}^{n}(i,j)\Big\}^2\Et[Z^{j,N}_{(a-1)\Delta}]\\
\le& 
\frac{2}{K^2}\sum_{j=1}^{N}\Big\{\sum_{n\ge 1}n\int_{(a\Delta)/n}^{\infty}\phi(s)ds\Lambda^{n-1}|||I_{K}A^{n}_{N}|||_1\Big)\Big\}^2\Et[Z^{j,N}_{a\Delta}]\\
&+\frac{2}{K^2}\sum_{j=1}^{N}\Big\{\sum_{n\ge 1}n\int_{(a-1)\Delta/n}^{\infty}\phi(s)ds\Lambda^{n-1}|||I_{K}A^{n}_{N}|||_1\Big\}^2\Et[Z^{j,N}_{(a-1)\Delta}].
\end{align*}
 Noting that on $\Omega_{N,K}$, $|||I_KA^n_{N}|||_1\le |||I_KA_{N}|||_1|||A_{N}|||^{n-1}_1\le \frac{CK}{N}|||A_{N}|||^{n-1}_1$, we have
\begin{align*}
&\boldsymbol{1}_{\Omega_{N,K}}
\mathbb{E}_{\theta}[(\mathcal{Y}_{(a-1)\Delta,a\Delta}^{N,K}-\bar{X}_{(a-1)\Delta,a\Delta}^{N,K})^{2}]\\
&\le \frac{C}{N^2}\sum_{j=1}^{N}\Big\{\sum_{n\ge 1}n\int_{(a\Delta)/n}^{\infty}\phi(s)ds\Lambda^{n-1}|||A_{N}|||^{n-1}_1\Big)\Big\}^2\Et[Z^{j,N}_{a\Delta}]\\
&+\frac{C}{N^2}\sum_{j=1}^{N}\Big\{\sum_{n\ge 1}n\int_{(a-1)\Delta/n}^{\infty}\phi(s)ds\Lambda^{n-1}|||A_{N}|||^{n-1}_1\Big\}^2\Et[Z^{j,N}_{(a-1)\Delta}]\\
\le& \frac{C}{N^2}\sum_{j=1}^{N}\Big\{\sum_{n\ge 1}n^{1+q}(a\Delta)^{-q}\int_{0}^{\infty}s^q\phi(s)ds\Lambda^{n-1}|||A_{N}|||^{n-1}_1\Big)\Big\}^2\Et[Z^{j,N}_{a\Delta}]\\
&+\frac{C}{N^2}\sum_{j=1}^{N}\Big\{\sum_{n\ge 1}n^{1+q}[(a-1)\Delta]^{-q}\int_{0}^{\infty}s^q\phi(s)ds\Lambda^{n-1}|||A_{N}|||^{n-1}_1\Big\}^2\Et[Z^{j,N}_{(a-1)\Delta}]\\
\le& \frac{C}{N^2(a\Delta)^{2q}}\mathbb{E}_{\theta}\Big[\sum_{j=1}^{N}Z^{j,N}_{a\Delta}\Big]+\frac{C}{N^2((a-1)\Delta)^{2q}}\mathbb{E}_{\theta}\Big[\sum_{j=1}^{N}Z^{j,N}_{(a-1)\Delta}\Big]\\
    \le& \frac{C}{N}\Big[\frac{1}{(a\Delta)^{2q-1}}+\frac{1}{((a-1)\Delta)^{2q-1}}\Big].
\end{align*}
The last inequality uses $\max_{j=1,...,N}\Et[Z^{j,N}_t]\le Ct$, proved in Lemma \ref{Zt}-(i) with  $r=\infty$, which yields exactly (i).
For $q\ge 2$, the series $\sum_{a=1}^{\infty}a^{1-2q}<+\infty,$ which concludes (ii).

\subsection{Proof of Lemma \ref{mathY} (iii)\&(iv) }
Recalling the definition of $\mathcal{Y}_{(a-1)\Delta,a\Delta}^{N,K}$ in \eqref{Ya1} and (\ref{ee3}),
and applying the Burkholder-Davis-Gundy inequality, we have on $\Omega_{N,K}$
\begin{align*}
&\mathbb{E}\Big[\Big|\mathcal{Y}_{(a-1)\Delta,a\Delta}^{N,K}\Big|^{4}\Big]\\
    \le& \frac{4}{K^4}\mathbb{E}\Bigg[\E_\theta\Big[\sum_{j=1}^N\Big(c_N^K(j)\Big)^2(Z^{j,N}_{a\Delta}-Z^{j,N}_{(a-1)\Delta})\Big]^2\Bigg]\\
    =&\frac{4}{K^4}\mathbb{E}\Bigg[\sum_{j,j'=1}^N\Big(c_N^K(j)\Big)^2\Big(c_N^K(j')\Big)^2\E_\theta\Big[(Z^{j,N}_{a\Delta}-Z^{j,N}_{(a-1)\Delta})(Z^{j',N}_{a\Delta}-Z^{j',N}_{(a-1)\Delta})\Big]\Bigg]\\
    \le& \frac{4}{K^4}\mathbb{E}\Bigg[\sum_{j,j'=1}^N\Big(c_N^K(j)\Big)^2\Big(c_N^K(j')\Big)^2\E_\theta\Big[(Z^{j,N}_{a\Delta}-Z^{j,N}_{(a-1)\Delta})^2\Big]^{1/2}\E_\theta\Big[(Z^{j',N}_{a\Delta}-Z^{j',N}_{(a-1)\Delta})^2\Big]^{1/2}\Bigg].
\end{align*}
From \cite [lemma 16-(iii)]{A}, we already have on $\Omega_{N,K}$,  $\sup_{i=1,\dots,N}\Et[(Z^{i,N}_t-Z^{i,N}_s)^2] \leq C (t-s)^2$. 
Moreover, from \eqref{CNK}, we have, on $\Omega_{N,K}$, 
\begin{align*}
\sum_{j=1}^N\big(c_{N}^{K}(j)\big)^{2}=\Big(\sum_{j=1}^K\big(c_{N}^{K}(j)\big)^{2}+\sum_{j=K}^N\big(c_{N}^{K}(j)\big)^{2}\Big)\le CK.
\end{align*}
Consequently,
\[\mathbb{E}\Big[\indiq_{\Omega_{N,K}}\Big|\mathcal{Y}_{(a-1)\Delta,a\Delta}^{N,K}\Big|^{4}\Big] \le\frac{C\Delta^2}{K^2},\]
which completes the proof of (iii).
From Lemma \ref{mathY}-(i)\&(iii) and using the Cauchy-Schwarz inequality,  we obtain
\begin{align*}
    &\frac{K}{\sqrt{\Delta t}}\mathbb{E}\Big[\boldsymbol{1}_{\Omega_{N,K}}\sum_{a=\frac{t}{\Delta}+1}^{\frac{2t}{\Delta}}\Big|\mathcal{Y}_{(a-1)\Delta,a\Delta}^{N,K}\Big|\Big|\mathcal{Y}_{(a-1)\Delta,a\Delta}^{N,K}-\bar{X}_{(a-1)\Delta,a\Delta}^{N,K}\Big|\Big]\\
    \le& \frac{K}{\sqrt{\Delta t}}\sum_{a=\frac{t}{\Delta}+1}^{\frac{2t}{\Delta}}\mathbb{E}\Big[\boldsymbol{1}_{\Omega_{N,K}}\Big|\mathcal{Y}_{(a-1)\Delta,a\Delta}^{N,K}\Big|^{4}\Big]^\frac{1}{4}\mathbb{E}\Big[\boldsymbol{1}_{\Omega_{N,K}}\Big|\mathcal{Y}_{(a-1)\Delta,a\Delta}^{N,K}-\bar{X}_{(a-1)\Delta,a\Delta}^{N,K}\Big|^{2}\Big]^\frac{1}{2}\\
    \le& \frac{C\sqrt{K}}{\Delta^{q-\frac{1}{2}}\sqrt{Nt}}\sum_{a=\frac{t}{\Delta}}^{\frac{2t}{\Delta}}a^{\frac{1}{2}-q} \le \frac{C\sqrt{K}}{\Delta^{q-\frac{1}{2}}\sqrt{Nt}}.
\end{align*}
In the last step, we used that for  $q\ge 2$, the series $\sum_{a=1}^{\infty}a^{\frac{1}{2}-q}$ converges.

\subsection{Proof of Lemma \ref{mathY} (v) }
Recalling that $\bar{\Gamma}_{(a-1)\Delta,a\Delta}^{N,K}=C_{a\Delta}^{N,K}+B_{a\Delta}^{N,K}-C_{(a-1)\Delta}^{N,K}-B_{(a-1)\Delta}^{N,K},$ where $C_{a\Delta}^{N,K}$ and  $B_{a\Delta}^{N,K}$ are defined in \eqref{Cdelta} and \eqref{Bdelta}, respectively. We write
\begin{align*}
&\bar{\Gamma}_{(a-1)\Delta,a\Delta}^{N,K}\bar{X}_{(a-1)\Delta,a\Delta}^{N,K}\\
=&\bar{\Gamma}_{(a-1)\Delta,a\Delta}^{N,K}(\bar{X}_{(a-1)\Delta,a\Delta}^{N,K}-\mathcal{Y}_{(a-1)\Delta,a\Delta}^{N,K})+\mathcal{Y}_{(a-1)\Delta,a\Delta}^{N,K}\bar{\Gamma}_{(a-1)\Delta,a\Delta}^{N,K}\\
=&\bar{\Gamma}_{(a-1)\Delta,a\Delta}^{N,K}(\bar{X}_{(a-1)\Delta,a\Delta}^{N,K}-\mathcal{Y}_{(a-1)\Delta,a\Delta}^{N,K})+\mathcal{Y}_{(a-1)\Delta,a\Delta}^{N,K}(C_{a\Delta}^{N,K}+B_{a\Delta}^{N,K}-C_{(a-1)\Delta}^{N,K}-B_{(a-1)\Delta}^{N,K}).
\end{align*}
Applying the Cauchy–Schwarz inequality together with Lemmas \ref{mathY}-(i)  and \ref{covc}-(i)\&(ii), we obtain
\begin{align*}
    &\mathbb{E}\Big[\boldsymbol{1}_{\Omega_{N,K}}\Big|\bar{\Gamma}_{(a-1)\Delta,a\Delta}^{N,K}\Big(\bar{X}_{(a-1)\Delta,a\Delta}^{N,K}-\mathcal{Y}_{(a-1)\Delta,a\Delta}^{N,K}\Big)\Big|\Big]^{2}\\
    \le&\mathbb{E}\Big[\boldsymbol{1}_{\Omega_{N,K}}\Big|\mathcal{Y}_{(a-1)\Delta,a\Delta}^{N,K}-\bar{X}_{(a-1)\Delta,a\Delta}^{N,K}\Big|^{2}\Big]\mathbb{E}\Big[\boldsymbol{1}_{\Omega_{N,K}}(\bar{\Gamma}_{(a-1)\Delta,a\Delta}^{N,K})^{2}\Big]\\
 =&\mathbb{E}\Big[\boldsymbol{1}_{\Omega_{N,K}}\Big|\mathcal{Y}_{(a-1)\Delta,a\Delta}^{N,K}-\bar{X}_{(a-1)\Delta,a\Delta}^{N,K}\Big|^{2}\Big]\mathbb{E}\Big[\boldsymbol{1}_{\Omega_{N,K}}\Big(C_{a\Delta}^{N,K}+B_{a\Delta}^{N,K}-C_{(a-1)\Delta}^{N,K}-B_{(a-1)\Delta}^{N,K}\Big)^{2}\Big]\\
    \le& 4\mathbb{E}\Big[\boldsymbol{1}_{\Omega_{N,K}}\Big|\mathcal{Y}_{(a-1)\Delta,a\Delta}^{N,K}-\bar{X}_{(a-1)\Delta,a\Delta}^{N,K}\Big|^{2}\Big]\\
    &\times\mathbb{E}\Big[\boldsymbol{1}_{\Omega_{N,K}}\Big\{\Big(C_{a\Delta}^{N,K}\Big)^{2}+\Big(B_{a\Delta}^{N,K}\Big)^{2}+\Big(C_{(a-1)\Delta}^{N,K}\Big)^{2}+\Big(B_{(a-1)\Delta}^{N,K}\Big)^{2}\Big\}\Big]\\
    \le&  \frac{C}{N}\Big[(a\Delta)^{1-2q}+\Big((a-1)\Delta\Big)^{1-2q}\Big]\Big(\frac{1}{N}+\frac{1}{N}\Delta^{1-2q}\Big)\\
    \le& \Big[(a\Delta)^{1-2q}+\Big((a-1)\Delta\Big)^{1-2q}\Big]\frac{C}{N^2}.
\end{align*}
Similarly, using the Cauchy–Schwarz inequality  again and Lemmas \ref{covc}-(i) and \ref{mathY}-(iii), we have
\begin{align*}
    &\indiq_{\Omega_{N,K}}\mathbb{E}_{\theta}\Big[\Big|\mathcal{Y}_{(a-1)\Delta,a\Delta}^{N,K}(B_{a\Delta}^{N,K}-B_{(a-1)\Delta}^{N,K})\Big|\Big]^{2}\\
    \le& \indiq_{\Omega_{N,K}}\mathbb{E}_{\theta}\Big[\Big|\mathcal{Y}_{(a-1)\Delta,a\Delta}^{N,K}\Big|^{2}\Big]\mathbb{E}_{\theta}\Big[\Big(B_{a\Delta}^{N,K}-B_{(a-1)\Delta}^{N,K}\Big)^{2}\Big]\\
    \le& \indiq_{\Omega_{N,K}}\mathbb{E}_{\theta}\Big[\Big|\mathcal{Y}_{(a-1)\Delta,a\Delta}^{N,K}\Big|^{4}\Big]^{1/2}\mathbb{E}_{\theta}\Big[\Big(B_{a\Delta}^{N,K}-B_{(a-1)\Delta}^{N,K}\Big)^{2}\Big]
    \le  \frac{C}{NK\Delta^{2q-2}}.
\end{align*}
Next, we consider the term $\mathcal{Y}_{(a-1)\Delta,a\Delta}^{N,K}(C_{a\Delta}^{N,K}-C_{(a-1)\Delta}^{N,K}).$  Recalling $\mathcal{Y}_{(a-1)\Delta,a\Delta}^{N,K}$ defined in \eqref{Ya1} and $C_{a\Delta}^{N,K}$ defined  in \eqref{Cdelta}, we  write
\begin{align*}
    \mathcal{Y}_{(a-1)\Delta,a\Delta}^{N,K}(C_{a\Delta}^{N,K}-C_{(a-1)\Delta}^{N,K})=\frac{1}{K^{2}}\sum_{i=1}^{K}\sum_{j,j'=1}^N\sum_{n\ge 1}\int_{0}^{\Delta}\phi^{*n}(s)A_{N}^{n}(i,j)c_{N}^{K}(j')\hskip2cm\\
    (M^{j,N}_{a\Delta-s}-M^{j,N}_{a\Delta}-M^{j,N}_{(a-1)\Delta-s}+M^{j,N}_{(a-1)\Delta})(M^{j',N}_{a\Delta}-M^{j',N}_{(a-1)\Delta}).
\end{align*}
We set for $1\le j,j',l,l'\le N$ and $a,  b\in \{t/(2\Delta)+1,...,2t/\Delta\}$,
\begin{align*}
\mathcal{T}_{a,b}(j,j',l,l'):=
   &\mathbb{C}ov_{\theta}[(M^{j,N}_{a\Delta-s}-M^{j,N}_{a\Delta}-M^{j,N}_{(a-1)\Delta-s}+M^{j,N}_{(a-1)\Delta})(M^{j',N}_{a\Delta}-M^{j',N}_{(a-1)\Delta}),\\
   &\hskip1cm(M^{l,N}_{b\Delta-s}-M^{l,N}_{b\Delta}-M^{l,N}_{(b-1)\Delta-s}+M^{l,N}_{(b-1)\Delta})(M^{l',N}_{b\Delta}-M^{l',N}_{(b-1)\Delta})].
\end{align*}
Using \cite[Lemma 16-(iii)]{A}, it is obvious that without any condition on $(a,b)$, on $\Omega_{N,K}$ 
\begin{align*}
&|\mathcal{T}_{a,b}(j,j',l,l')|\\
\le& \Big\{\mathbb{E}_{\theta}\Big[\Big(M^{j,N}_{a\Delta}-M^{j,N}_{(a-1)\Delta}\Big)^4\Big]^\frac{1}{4}+\mathbb{E}_{\theta}\Big[\Big(M^{j,N}_{a\Delta-s}-M^{j,N}_{(a-1)\Delta-s}\Big)^4\Big]^\frac{1}{4}\Big\}\mathbb{E}_{\theta}\Big[\Big(M^{j',N}_{a\Delta}-M^{j',N}_{(a-1)\Delta}\Big)^4\Big]^\frac{1}{4}\\
&\Big\{\mathbb{E}_{\theta}\Big[\Big(M^{l,N}_{b\Delta}-M^{l,N}_{(b-1)\Delta}\Big)^4\Big]^\frac{1}{4}+\mathbb{E}_{\theta}\Big[\Big(M^{l,N}_{b\Delta-s}-M^{l,N}_{(b-1)\Delta-s}\Big)^4\Big]^\frac{1}{4}\Big\}\mathbb{E}_{\theta}\Big[\Big(M^{l',N}_{b\Delta}-M^{l',N}_{(b-1)\Delta}\Big)^4\Big]^\frac{1}{4}\\
\le& C\Delta^2,
\end{align*}
and $\boldsymbol{1}_{\{\#\{j,j',l,l'\}=4\}}|\mathcal{T}_{a,b}(j,j',l,l')|=0.$
\vip
We now consider the case when  $a-b\ge 4$.
Recall that $\zeta^{j,N}_{a\Delta,s}:=M^{j,N}_{(a\Delta-s)}-M^{j,N}_{a\Delta}$ for $0\le s\le \Delta$. Then,
\begin{align*}
   \mathcal{T}_{a,b}(j,j',l,l')=&\mathbb{C}ov_{\theta}[(\zeta^{j,N}_{a\Delta,s}-\zeta^{j,N}_{(a-1)\Delta,s})\zeta^{j',N}_{a\Delta,\Delta},(\zeta^{l,N}_{b\Delta,r}-\zeta^{l,N}_{(b-1)\Delta,r})\zeta^{l',N}_{b\Delta,\Delta}]\\
    =&\mathbb{C}ov_{\theta}[\zeta^{j,N}_{a\Delta,s}\zeta^{j',N}_{a\Delta,\Delta},(\zeta^{l,N}_{b\Delta,r}-\zeta^{l,N}_{(b-1)\Delta,r})\zeta^{l',N}_{b\Delta,\Delta}].
\end{align*}
Using the same strategy as the proof of Lemma \ref{covc}, we have
\begin{align*}
&|\mathbb{C}ov_{\theta}[\zeta^{j,N}_{a\Delta,s}\zeta^{j',N}_{a\Delta,\Delta},(\zeta^{l,N}_{b\Delta,r}-\zeta^{l,N}_{(b-1)\Delta,r})\zeta^{l',N}_{b\Delta,\Delta}]|\\
=&|\mathbb{C}ov_{\theta}[T^{j,N}_{a\Delta,(a-1)\Delta},
(\zeta^{l,N}_{b\Delta,r}-\zeta^{l,N}_{(b-1)\Delta,r})\zeta^{l',N}_{b\Delta,\Delta}]|\\
\le& \{\Et[(T^{j,N}_{a\Delta,(a-1)\Delta})^2]^\frac{1}{2}\}
\Et[(\zeta^{l,N}_{b\Delta,r}-\zeta^{l,N}_{(b-1)\Delta,r})^4]^\frac{1}{4}\Et[(\zeta^{l',N}_{b\Delta,\Delta})^4]^\frac{1}{4}\\
\le&  C t^{1/2} \Delta^{1-q}.
\end{align*}
Hence, by symmetry, for $|a-b|\ge 4$,  $|\mathcal{T}_{a,b}(j,j',l,l')|\le C\Big(\boldsymbol{1}_{\{l=l'\}}+\boldsymbol{1}_{\{j=j'\}}\Big)\sqrt{t}\Delta^{1-q}.$
Consequently, still for $|a-b|\ge 4,$
\begin{align*}
&\mathbb{E}\Big[\boldsymbol{1}_{\Omega_{N,K}}\Big|\mathbb{C}ov_{\theta}\Big[\cY_{(a-1)\Delta,a\Delta}^{N,K}(C_{a\Delta}^{N,K}-C_{(a-1)\Delta}^{N,K}),\cY_{(b-1)\Delta,b\Delta}^{N,K}(C_{b\Delta}^{N,K}-C_{(b-1)\Delta}^{N,K})\Big]\Big|\Big]\\
=&\frac{1}{K^{4}}\mathbb{E}\Big[\boldsymbol{1}_{\Omega_{N,K}}\Big|\sum_{i,i'=1}^{K}\sum_{l,l',j,j'=1}^{N}\sum_{n,n'\ge 1}\int_{0}^{\Delta}\int_{0}^{\Delta}\phi^{*n}(s)\phi^{*n'}(s')A_{N}^{n}(i,j)A_{N}^{n'}(i',l)\\
&\hskip7cm\times c_{N}^{K}(j')c_{N}^{K}(l')\mathcal{T}_{a,b}(j,j',l,l')dsds'\Big|\Big]\\
    \le& \frac{t^{1/2}}{K^{4}\Delta^{q-1}}\mathbb{E}\Big[\boldsymbol{1}_{\Omega_{N,K}}\Big|\sum_{j=1}^N\Big(c_{N}^{K}(j)-1\Big)c_{N}^{K}(j)\Big|\Big|\sum_{l=1}^Nc_{N}^{K}(l)\Big|\sum_{n\ge 1}N\Lambda^n|||I_KA_N|||_1|||A_N|||^{n-1}_1\Big]\\
    \le& \frac{Ct^{1/2}}{K\Delta^{q-1}}.
\end{align*}
The last step follows from \eqref{CNK}, which implies that on  $\Omega_{N,K}$, $\sum_{j=1}^N\big(c_{N}^{K}(j)\big)^{2}\le CK$, together with the facts that
on $\Omega_{N,K}$,
$|||I_KA_N|||_1\le \frac{K}{N},$ $|\sum_{l=1}^Nc_{N}^{K}(l)|=K|\bar{\ell}_N^K|\le CK.$ 

\vip

Next, when $|a-b|\le 4,$

\begin{align*}
    &\mathbb{E}\Big[\boldsymbol{1}_{\Omega_{N,K}}\Big|\mathbb{C}ov_{\theta}\Big[\cY_{(a-1)\Delta,a\Delta}^{N,K}(C_{a\Delta}^{N,K}-C_{(a-1)\Delta}^{N,K}),\cY_{(b-1)\Delta,b\Delta}^{N,K}(C_{b\Delta}^{N,K}-C_{(b-1)\Delta}^{N,K})\Big]\Big|\Big]\\
    \le& \mathbb{E}\Big[\indiq_{\Omega_{N,K}}\mathbb{V}ar_{\theta}\Big[\mathcal{Y}_{(a-1)\Delta,a\Delta}^{N,K}(C_{a\Delta}^{N,K}-C_{(a-1)\Delta}^{N,K})\Big]\Big]^{\frac{1}{2}}\mathbb{E}\Big[\indiq_{\Omega_{N,K}}\mathbb{V}ar_{\theta}\Big[\mathcal{Y}_{(b-1)\Delta,b\Delta}^{N,K}(C_{b\Delta}^{N,K}-C_{(b-1)\Delta}^{N,K})\Big]\Big]^{\frac{1}{2}}\\
    \le& \mathbb{E}\Big[\indiq_{\Omega_{N,K}}\mathbb{E}_{\theta}\Big[\Big|\mathcal{Y}_{(a-1)\Delta,a\Delta}^{N,K}\Big|^{4}\Big]^{\frac{1}{4}}\mathbb{E}_{\theta}\Big[\Big(C_{a\Delta}^{N,K}-C_{(a-1)\Delta}^{N,K}\Big)^{4}\Big]^{\frac{1}{4}}\mathbb{E}_{\theta}\Big[\Big|\mathcal{Y}_{(b-1)\Delta,b\Delta}^{N,K}\Big|^{4}\Big]^{\frac{1}{4}}\\
    &\hskip4cm\times \mathbb{E}_{\theta}\Big[\Big(C_{b\Delta}^{N,K}-C_{(b-1)\Delta}^{N,K}\Big)^{4}\Big]^{\frac{1}{4}}\Big]\\
    \le& \frac{C\Delta}{NK}.
\end{align*}
Finally, 
\begin{align*}
\mathbb{E}\Big[\indiq_{\Omega_{N,K}}\mathbb{V}ar_{\theta}\Big[\sum_{a=\frac{t}{\Delta}+1}^{\frac{2t}{\Delta}}\mathcal{Y}_{(a-1)\Delta,a\Delta}^{N,K}(C_{a\Delta}^{N,K}-C_{(a-1)\Delta}^{N,K})\Big]\Big]\le \frac{Ct}{NK}+\frac{Ct^{5/2}}{K\Delta^{q+1}}.
\end{align*}
Overall we conclude that 
\begin{align*}
&\mathbb{E}\Big[\boldsymbol{1}_{\Omega_{N,K}}\frac{K}{N}\sqrt{\frac{t}{\Delta}}\frac{N}{t}\Big|\sum_{a=\frac{t}{\Delta}+1}^{\frac{2t}{\Delta}}\bar{\Gamma}_{(a-1)\Delta,a\Delta}^{N,K}\bar{X}_{(a-1)\Delta,a\Delta}^{N,K}-\mathbb{E}_{\theta}[\sum_{a=\frac{t}{\Delta}+1}^{\frac{2t}{\Delta}}\bar{\Gamma}_{(a-1)\Delta,a\Delta}^{N,K}\bar{X}_{(a-1)\Delta,a\Delta}^{N,K}]\Big|\Big]\\
\le& 
C\frac{K}{\sqrt{\Delta t}}\Big\{\mathbb{E}\Big[\boldsymbol{1}_{\Omega_{N,K}}\sum_{a=\frac{t}{\Delta}+1}^{\frac{2t}{\Delta}}\Big(\Big|\bar{\Gamma}_{(a-1)\Delta,a\Delta}^{N,K}(\bar{X}_{(a-1)\Delta,a\Delta}^{N,K}-\cY_{(a-1)\Delta,a\Delta}^{N,K})\Big|\\
&+\Big|\mathcal{Y}_{(a-1)\Delta,a\Delta}^{N,K}(B_{a\Delta}^{N,K}-B_{(a-1)\Delta}^{N,K})\Big|\Big)\Big]
 +\mathbb{E}\Big[\boldsymbol{1}_{\Omega_{N,K}}\mathbb{V}ar_{\theta}\Big[\sum_{a=\frac{t}{\Delta}+1}^{\frac{2t}{\Delta}}\cY_{(a-1)\Delta,a\Delta}^{N,K}(C_{a\Delta}^{N,K}-C_{(a-1)\Delta}^{N,K})\Big]\Big]^{\frac{1}{2}}\Big\}\\
\le& \frac{CK}{N\Delta^q\sqrt{t}}+\frac{C\sqrt{tK}}{\Delta^{q+\frac{1}{2}}\sqrt{N}}+\frac{C\sqrt{K}}{\sqrt{N\Delta}}+\frac{Ct^\frac{3}{4}\sqrt{K}}{\Delta^{1+\frac{q}{2}}}.
\end{align*}
The proof is finished.

\section*{Acknowledgements}
We would like to express our sincere gratitude to N. Fournier and S. Delattre for their invaluable support of this research. This work would not have been possible without their insightful ideas and patient guidance.

\bibliographystyle{amsplain}  
\bibliography{CLT}      

\end{document}